\newcounter{dummy}
\newcommand\myitem[1][]{\item[#1]\refstepcounter{dummy}\def\@currentlabel{#1}}
\let\oldtocsection=\tocsection
\let\oldtocsubsection=\tocsubsection
\let\oldtocsubsubsection=\tocsubsubsection
\renewcommand{\tocsection}[2]{\hspace{0em}\vspace{0.5mm}\oldtocsection{#1}{#2}\vspace{0.5mm}}
\renewcommand{\tocsubsection}[2]{\hspace{2em}\vspace{0.25mm}\oldtocsubsection{#1}{#2}\vspace{0.25mm}}
\renewcommand{\tocsubsubsection}[2]{\hspace{2em}\oldtocsubsubsection{#1}{#2}}
\numberwithin{equation}{section}
\newtheorem{theorem}{Theorem}[section]
\newtheorem{lemma}[theorem]{Lemma}
\newtheorem{prop}[theorem]{Proposition}
\newtheorem{cor}[theorem]{Corollary}
\newtheorem*{thm*}{Theorem}
\newtheorem{corollary}[theorem]{Corollary}
\theoremstyle{definition}
\newtheorem{remark}[theorem]{Remark}
\newtheorem{definition}[theorem]{Definition}
\newcommand{\be}{\begin{equation}}
\newcommand{\ee}{\end{equation}}
\newcommand{\bes}{\begin{equation*}}
\newcommand{\ees}{\end{equation*}}
\newcommand{\cH}{\mathcal{H}}
\newcommand{\cR}{\mathcal{R}}
\newcommand{\cS}{\mathcal{S}}
\newcommand{\cU}{\mathcal{U}}
\newcommand{\lel}{\left\langle}
\newcommand{\rir}{\right\rangle}
\newcommand{\bB}{\mathbb{B}}
\newcommand{\bC}{\mathbb{C}}
\newcommand{\bF}{\mathbb{F}}
\newcommand{\bH}{\mathbb{H}}
\newcommand{\bN}{\mathbb{N}}
\newcommand{\lip}{\langle}
\newcommand{\rip}{\rangle}
\newcommand{\ip}[1]{\lip #1 \rip}
\newcommand{\bip}[1]{\big\lip #1 \big\rip}
\newcommand{\Bip}[1]{\Big\lip #1 \Big\rip}
\newcommand{\supp}{\operatorname{supp}}
\newcommand{\col}{\operatorname{col}}
\newcommand{\ba}{{\mathbf{a}}}
\newcommand{\sB}{\scr{B}}
\newcommand{\sD}{\scr{D}}
\newcommand{\sH}{\scr{H}}
\newcommand{\sS}{\scr{S}}
\newcommand{\foral}{\text{ for all }}
\newcommand{\qand}{\quad\text{and}\quad}
\newcommand{\AND}{\text{ and }}
\def\C{\mathbb{C}}
\def\B{\mathbb{B}}
\def\D{\mathbb{D}}
\def\cJ{\mathcal{J}}
\def\cK{\mathcal{K}}
\def\mr{\mathrm}
\def\scr{\mathscr}
\def\mult{\mathbb{H}^\infty_d}
\def\hardy{\mathbb{H}^2_d}
\def\nbker{\mathrm{Ker} \,}
\def\nbran{\mathrm{Ran} \,}
\def\nbdom{\mathrm{Dom} \,}
\def\bi{\begin{itemize}}
\def\ei{\end{itemize}}
\def\mbf{\mathbf}
\def\ga{\gamma}
\def\Ga{\Gamma}
\def\N{\mathbb{N}}
\def\F{\mathbb{F} _d ^+}
\def\Om{\Omega}
\def\hardy{\mathbb{H} ^2 _d}
\def\ncu{\mathbb{C} ^{(\N \times \N) \cdot d}}
\def\rball{\mathbb{B} ^d _\N}
\def\bdn{\mathbb{B}  ^d _n}
\def\cdn{\mathbb{C} ^{(n\times n)\cdot d}}
\def\cdm{\mathbb{C} ^{(m\times m)\cdot d}}
\def\bpm{\begin{pmatrix}}
\def\epm{\end{pmatrix}}
\newcommand{\bsm}{\left[\begin{smallmatrix}}
\newcommand{\esm}{\end{smallmatrix} \right] }
\newcommand{\ipr}[2]{\ensuremath{\left\langle {#1} , {#2} \right\rangle}}
\newcommand{\BMob}[1]{\ensuremath{B^{\left\langle {#1} \right\rangle}}}
\newcommand{\Mob}[2]{\ensuremath{{#1}^{\left\langle {#2} \right\rangle}}}
\def\fz{\mathfrak{z}}
\def\om{\omega}
\def\vs{\vspace{0.1cm}}
\def\vsm{\vspace{-0.1cm}}
\def\ba{\begin{eqnarray*}}
\def\ea{\end{eqnarray*}}
\def\mrt{\mathrm{t}}
\def\wt{\widetilde}
\def\ov{\overline}
\def\la{\lambda}
\def\nbdim{\mathrm{dim} \,}
\begin{document}

\nobreakdash

\title{A non-commutative de Branges--Rovnyak model for row contractions}

\author{Robert T.W. Martin}
\address{Department of Mathematics, University of Manitoba, Winnipeg, Canada}
\email{Robert.Martin@umanitoba.ca}

\author{Jeet Sampat}
\address{Department of Mathematics, University of Manitoba, Winnipeg, Canada}
\email{Jeet.Sampat@umanitoba.ca}


\thanks{RTWM is partially supported by NSERC Discovery Grant 2020-05683. JS acknowledges partial funding support from the Pacific Institute for the Mathematical Sciences.}

\begin{abstract}
We extend the de Branges--Rovnyak model for completely non-coisometric (CNC) linear contractions on a Hilbert space to the non-commutative multivariate setting of CNC row contractions. Namely, we show that any CNC contraction from several copies of a Hilbert space into a single copy is unitarily equivalent to the adjoint of the restricted backward right shifts acting on the de Branges--Rovnyak space of a contractive left multiplier between vector-valued ``free Hardy spaces" of square--summable power series in several non-commuting (NC) variables. This contractive, operator--valued left multiplier, the \emph{characteristic function of the CNC row contraction}, is a complete unitary invariant and it is always \emph{column--extreme} as a contractive left multiplier.

Our construction builds a model reproducing kernel Hilbert space of NC functions using a ``non-commutative resolvent" of the row contraction, $T$, which is the inverse of the monic, affine linear pencil of $T$ in a certain NC unit row-ball of the NC universe of all row tuples of square matrices of all finite sizes. 

\vspace{2mm}

\noindent \textbf{Keywords.} de Branges--Rovnyak spaces, non-commutative (NC) de Branges--Rovnyak model, free Hardy space, Gleason solutions, NC reproducing kernel Hilbert spaces, NC function theory.

\end{abstract}

\maketitle

\section{Introduction}\label{sec:intro}

The de Branges--Rovnyak and Nagy--Foais models are two widely-used and equivalent models for completely non-unitary (CNU) linear contractions on Hilbert spaces \cite{dBmodel,dBss,BallK-model,Nik1986,Sarason-dB,NF}. The de Branges--Rovnyak model for a completely non-coisometric (CNC) contraction is decidedly natural and compelling as it shows that any such CNC contraction, $T$, can be represented as the adjoint of the restriction of the backward shift from a vector--valued Hardy space in the complex unit disk, $\D$, to the de Branges--Rovnyak space of a contractive (operator--valued) analytic function, $b_T$, in $\D$. (The full de Branges--Rovnyak model for an arbitrary CNU contraction is considerably more complicated \cite{BallK-model,dBmodel}.) This contractive $b_T$, the \emph{characteristic function of $T$}, is necessarily an extreme point of the closed, convex set of contractive operator--valued functions. The characteristic function, $b_T$, is unique up to a natural isomorphism (conjugation by constant unitaries) and is a complete unitary invariant for $T$. Moreover, many properties of $T$, including spectral information, are encoded in properties of its characteristic function. 

Recall that the classical Hardy space, $H^2$, of the complex unit disk, can be defined as the Hilbert space of all square--summable Taylor series (at $0$), equipped with the $\ell^2-$inner product of the Taylor coefficients. Such power series have radius of convergence at least $1$ and hence define analytic functions in $\D$. It is clear from this definition that the \emph{shift}, $S=M_z$, is an isometry on $H^2$, and this operator plays an important role in Hardy space theory. Its adjoint, the \emph{backward shift} acts as a difference quotient at $0$, 
$$ (S^*h) (z) = \frac{h(z)-h(0)}{z}; \quad \quad z \neq 0, $$ and $(S^*h) (0) = h'(0)$. One can show that the unital \emph{Hardy algebra}, $H^\infty$, of all uniformly bounded analytic functions in $\D$, equipped with the supremum norm, is the \emph{multiplier algebra} of $H^2$. That is, given any function, $h$ in $\D$, $h \in H^\infty$ if and only if $h\cdot f \in H^2$ for all $f \in H^2$. Moreover for any $h \in H^\infty$ one has that the operator norm of the multiplication operator, $h(S) = M_h$ is equal to the supremum norm of its symbol, $h$, in $\D$. It follows that an analytic function, $b$, is contractive in $\D$ if and only if it defines a contractive left multiplier of $H^2$. Given any such contractive $b \in [H^\infty ]_1$, one can define its \emph{de Branges--Rovnyak space}, $\cH (b)$, as the \emph{operator--range space}, $\scr{R} (\sqrt{I - b(S)b(S)^*})$, of the defect operator, $\sqrt{I-b(S)b(S)^*}$. While this space is equal to $\nbran \sqrt{I-b(S)b(S)^*}$ as a vector space, it is equipped with an inner product that `makes' $\sqrt{I-b(S)b(S)^*}$ a coisometry onto its range, see \cite{FM1,FM2,Sarason-dB}. In particular, every de Branges--Rovnyak space, $\cH (b)$, is contained, contractively, in $H^2$, and is always co-invariant for the shift. 

The main goal of this paper is to extend the de Branges--Rovnyak model for CNC contractions, and in particular the Kre\u{\i}n--Liv\v{s}ic resolvent model construction of \cite{AMR,Krein,GMR,Livsicone,Livsic,Martin-ext,MR19} to the non-commutative (NC) and multivariate setting of CNC \emph{row contractions}, \emph{i.e.} CNC contractions from several copies of a Hilbert space into itself. (An NC Sz.-Nagy--Foais model has been constructed and studied in \cite{Popchar,Popmodel}.) In this NC and multivariable de Branges--Rovnyak model, the Hardy space of square--summable Taylor series in the complex unit disk is replaced by the \emph{free Hardy space}, $\hardy$, or full Fock space over $\C ^d$, which can be defined as the Hilbert space of square--summable ``free" formal power series (FPS) in $d \in \N$ non-commuting variables. This can be viewed as a Hilbert space of non-commutative (NC) functions acting on a certain \emph{NC unit row-ball} centered at the origin, $0=(0, \cdots, 0) \in \C ^{1\times d}$, of the $d-$dimensional, complex \emph{NC universe} of all row $d-$tuples of complex, square matrices, of any finite size $n \in \N$. Namely, the NC universe is
$$ \ncu := \bigsqcup _{n=1} ^\infty \cdn; \quad \quad \cdn := \C ^{n\times n} \otimes \C ^{1\times d}, $$ and the NC unit row-ball is
$$ \rball := \bigsqcup _{n=1} ^\infty \B ^d _n; \quad \quad \B ^d _n := \left\{ \left. X = (X_1, \cdots, X_d ) \in \cdn \right| \ \| X \| _{\scr{B} (\C ^n \otimes \C ^d, \C ^n)} < 1 \right\}. $$

By \cite[Theorem 1.1]{Pop-freeholo}, elements of the free Hardy space have ``radius of convergence" at least $1$ and hence define holomorphic and analytic NC functions in the NC unit row-ball, in the sense of modern NC function theory, which is a canonical ``quantization" or extension of complex analysis to several NC variables \cite{AgMcY,KVV,Taylor2,Taylor,Voic,Voic2}. 

The theory of the free Hardy space, $\hardy$, is an extremely close parallel to classical Hardy space theory, see \emph{e.g.} \cite{DP-inv,JM-subFock,JM-ncFatou,JMS-ncBSO,Pop-factor,Pop-multi,Pop-entropy,Pop-freeholo}. In particular, left multiplication by any of the $d$ independent NC variables, $\fz _1, \cdots, \fz _d$ define a $d-$tuple of isometries, $L_k := M^L _{\fz _k}$ on $\hardy$, the \emph{left free shifts}, and these play the role of the \emph{shift} operator, $S=M_z$, in this NC Hardy space theory. Similarly, right multiplications by the $\fz _k$, $R_k := M^R _{\fz _k}$ also define a $d-$tuple of isometries, the \emph{right free shifts}, and free Hardy space theory is left--right symmetric. Moreover, a (holomorphic) NC function, $h$, is uniformly bounded in the unit row-ball if and only if it defines a bounded left multiplication operator on $\hardy$, in which case the operator-norm of $h(L) := M^L _h$ is the same as the supremum norm of its symbol, $h$, in the unit row-ball \cite[Theorem 3.1]{SSS,Pop-freeholo}. That is, as before, the \emph{NC Hardy algebra}, $\mult$, of uniformly bounded NC functions in the unit row-ball, is equal to the left multiplier algebra of $\hardy$, $\mr{Mult} ^L \, \hardy = \mult$, with equality of norms, $\| h(L) \| _{\scr{B} (\hardy)} = \| h \| _\infty$. In particular, a left multiplier, $B$, between vector-valued free Hardy spaces is contractive in the NC unit row-ball if and only if it is contractive as a left multiplication operator. One can define the (left) NC de Branges--Rovnyak space, $\scr{H} (B)$, of such a contractive $B$ as the operator--range space of the defect operator, $\sqrt{I - B(L) B(L)^*}$, as before. Here, $B(L) = M^L _B$ is the contractive linear operator of left multiplication by $B$, and $\scr{H} (B)$ is always \emph{right shift co-invariant}, in this NC setting \cite{BBF-ncSchur}.

Our main result is an exact non-commutative analogue of the de Branges--Rovnyak model for CNC row contractions. Given a row contraction, $T=(T_1,\cdots, T_d): \cH \otimes \C^d \rightarrow \cH$, define the \emph{defect operators},
\begin{equation*}
    D_T := \sqrt{I_\cH \otimes I_d - T^* T}, \quad \mbox{and} \quad D_{T^*} := \sqrt{I_\cH  - TT^*},
\end{equation*}
and the \emph{defect spaces}, $\scr{D} _T := \nbran D_T ^{-\| \cdot \|}$, $\scr{D} _{T^*} := \nbran D_{T^*} ^{-\| \cdot \|}$. In the theorem statement below, the term \emph{column--extreme (CE)} is a concept that generalizes (and reduces to) the notion of an extreme point of the closed convex set of contractive univariate analytic functions in the complex unit disk. 

\begin{thm*}[\ref{thm:main.model}]
A row contraction, $T = (T_1, \cdots, T_d) : \cH \otimes \C ^d \rightarrow \cH$, is CNC, if and only if there exists a (purely) contractive and column--extreme (operator--valued) NC function, $B_T \in \mult \otimes \scr{B} (\scr{D} _{T} , \scr{D} _{T^*})$, so that $T$ is unitarily equivalent to $X$ acting on the (left) NC de Branges--Rovnyak space, $\scr{H} (B_T)$, where $X^*  = R^* \otimes I_{\scr{D} _{T^*}}| _{\scr{H} (B_T)} \in \scr{B} (\scr{H} (B), \scr{H} (B) \otimes \C ^d)$. 
\end{thm*}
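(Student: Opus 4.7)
The plan is to follow the non-commutative Kreĭn--Livšic resolvent approach, generalizing the classical scalar construction referenced in the excerpt to the NC row-contraction setting. The goal is to build an explicit isometric intertwiner $V_T: \cH \to \hardy \otimes \scr{D}_{T^*}$ of $T^*$ with the restriction of the backward right shift $R^* \otimes I_{\scr{D}_{T^*}}$. The CNC hypothesis should enter precisely as the condition ensuring that $V_T$ is isometric, not merely contractive.

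\textbf{Construction of $V_T$.} The central object is the NC resolvent $(I - Z \cdot T)^{-1}$ promised in the abstract, where $Z \cdot T$ denotes the monic affine linear pencil of $T$ at an evaluation point $Z \in \rball$. For each $h \in \cH$, I would define $V_T h$ as the NC function on $\rball$ whose value is, up to the appropriate tensor-slot conventions, $D_{T^*}$ applied to the NC resolvent acting on $h$. Since $T$ is a row contraction and $\|Z\| < 1$ on $\rball$, the Neumann expansion converges and identifies $V_T h$ with a formal power series lying in $\hardy \otimes \scr{D}_{T^*}$. Expanding $\|V_T h\|^2$ level by level in the free-shift grading and telescoping against the defect identity $I - TT^* = D_{T^*}^2$ gives
\begin{equation*}
\|V_T h\|^2 \;=\; \|h\|^2 \;-\; \lim_{n \to \infty} \bigl\| (\text{residual term involving iterated } T^* \text{ on } h) \bigr\|^2,
\end{equation*}
and the CNC hypothesis is exactly what kills the residual, making $V_T$ isometric.

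\textbf{Characteristic function and identification with $\scr{H}(B_T)$.} Define $B_T: \rball \to \scr{B}(\scr{D}_T, \scr{D}_{T^*})$ by the NC analogue of the Sz.-Nagy--Foias formula, schematically $B_T(Z) = \bigl( -T + D_{T^*} \cdot (\text{NC resolvent correction}) \cdot D_T \bigr)\big|_{\scr{D}_T}$. A Halmos-type row-contractive dilation of $T$ shows that $B_T$ is an NC row contraction on $\rball$, and by the Popescu/Salomon--Shalit--Shamovich bounded-multiplier theorem cited in the excerpt, $B_T$ is then a contractive left multiplier, $B_T \in [\mult \otimes \scr{B}(\scr{D}_T, \scr{D}_{T^*})]_1$. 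Computing $V_T V_T^*$ and matching it with $I - B_T(L) B_T(L)^*$ identifies $\nbran V_T$ isometrically with $\scr{H}(B_T)$. Simultaneously, the intertwining $V_T T^* = (R^* \otimes I_{\scr{D}_{T^*}})\big|_{\scr{H}(B_T)} V_T$ follows from an NC Gleason-type resolvent identity that is immediate from the definition of $(I - Z \cdot T)^{-1}$. Pure contractivity is built into the construction (the formula is arranged to vanish on the ``coisometric directions''), and column--extremality should be a direct defect-space computation encoding the fact that $\nbran D_T$ and $\nbran D_{T^*}$ are already dense in $\scr{D}_T$ and $\scr{D}_{T^*}$ respectively.

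\textbf{Converse and main obstacle.} For the converse, given purely contractive column--extreme $B$, one must show that the model operator $X$ on $\scr{H}(B)$ is CNC. Any proper subspace on which $X^*$ acts isometrically would, via the defect factorization coming from $I - B(L)B(L)^*$, force a relation among column vectors in $\nbran \sqrt{I - B(L)^*B(L)}$ that contradicts column--extremality of $B$. The principal obstacle is twofold. First, in the forward direction the non-commutativity of the evaluation variables $Z_i$ prevents a direct transcription of the scalar Kreĭn--Livšic manipulations, so the Neumann expansion and defect telescoping must be tracked carefully through the free-shift tower on $\hardy$ level by level. Second, and more conceptually, one must show that the column--extreme condition is the \emph{correct} NC analogue of the classical extremality of the characteristic function --- that is, that it precisely cuts out the CNC class among all row contractions. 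This equivalence is the structural heart of the theorem, and will require a careful analysis of how column--extremality interacts with the reducing structure of $R^* \otimes I_{\scr{D}_{T^*}}\big|_{\scr{H}(B_T)}$.
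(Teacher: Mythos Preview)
Your telescoping step contains a genuine error. The residual
\[
\lim_{n\to\infty}\sum_{|\omega|=n}\|T^{*\omega}h\|^{2}
\]
vanishes for all $h$ precisely when $T$ is \emph{pure} (class $C_{\cdot 0}$), not when $T$ is CNC. These conditions are distinct: CNC only forbids a $T^*$-invariant subspace on which $T^*$ is isometric, and there exist CNC row contractions for which the above limit is strictly positive. Consequently your map $V_T$ is in general only a contraction into $\hardy\otimes\scr{D}_{T^*}$ with its Hardy norm, and you cannot identify $\nbran V_T$ with $\scr{H}(B_T)$ by matching $V_TV_T^*$ with $I-B_T(L)B_T(L)^*$: the $\scr{H}(B_T)$-norm is the operator--range norm of $\sqrt{I-B_T(L)B_T(L)^*}$, which does not agree with the Hardy norm unless $B_T$ is inner (equivalently, $T$ is pure).

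Your overall strategy can be repaired, and in fact it becomes the argument the paper sketches in Appendix~\ref{appx:Model.via.TFR}: form the Julia colligation $U_T=\bsm T^* & D_T\\ D_{T^*} & -T\esm$, observe that it is coisometric, and note that its \emph{observability} $\bigvee_\omega T^\omega\scr{D}_{T^*}=\cH$ is exactly the CNC condition by \eqref{CNCformula}. Uniqueness of coisometric observable realizations (from \cite{BBF-ncSchur}) then gives the unitary equivalence with the canonical de Branges--Rovnyak realization of $B_T$. So CNC enters as observability of the colligation, not as vanishing of the telescoping residual. The paper's main proof takes a substantially different route: it first passes through the isometric--pure decomposition $T=V+C$, builds an explicit graded-RKHS model for the partial isometry $V$ from the NC resolvent, identifies that model with $\scr{H}(B_V)$ via a unitary left multiplier $M^L_{D^\gamma}$, and then recovers $B_T$ from $B_V$ by an operator M\"obius transformation. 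This buys an explicit ``numerator/denominator'' formula for $B_V$ and isolates the pure contractive part as the M\"obius parameter, at the cost of a longer development; your (corrected) route is shorter but less constructive.
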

We say that two operator--valued left multipliers, $B \in \mult \otimes \scr{B} (\cH, \cJ)$ and $B' \in \mult \otimes \scr{B} (\cH', \cJ')$ \emph{coincide unitarily} and write $B \simeq B'$ if there exists surjective isometries, $U: \cH \twoheadrightarrow \cH'$ and $V: \cJ \twoheadrightarrow \cJ'$ so that $B' = V^* B U$. That is, for any $Z= (Z_1, \cdots, Z_d)$, $Z_j \in \C ^{n\times n}$, in the unit row-ball, $B' (Z) = [V^* \otimes I_n] B(Z) [U \otimes I_n]$.

The \emph{support} of $B \in \sS_d (\cH,\cJ)$ is the subspace,
$$ \mr{supp} (B) := \bigvee _{\substack{Z \in \bdn; \\ y,v \in \C^n}} \nbran y^* \otimes I_\cH B(Z) ^* v \otimes I_\cJ \subseteq \cH. $$ By construction, $\cH = \mr{supp} (B) \oplus \cH _0$, where $B(Z) v \otimes h_0 =0$ for any $h \in \cH _0$, and $\cH _0$ is the largest subspace of $\cH$ with this property. Similarly, if we define the \emph{support of $B^*$} as 
\be \mr{supp} (B^*) := \bigvee _{\substack{Z \in \bdn; \\ y,v \in \C^n}} \nbran y^* B(Z)  v \subseteq \cJ \label{support}, \ee Then $\cH = \mr{supp} (B) \oplus \cH _0$, $\cJ = \mr{supp} (B^*) \otimes \cJ _0$, and $B$ has the direct sum decomposition 
$$ B = B_0 \oplus \mbf{0}_{\cH _0, \cJ _0}, $$ where $B_0 \in \sS _d (\mr{supp} (B), \mr{supp} (B^*)) := [ \mult \otimes \scr{B} (\mr{supp} (B), \mr{supp} (B^*))]_1$ and $\mbf 0_{\cH _0, \cJ _0} : \cH _0 \rightarrow \cJ _0$ is identically zero. Given $B \in \sS _d (\cH, \cJ)$ and $B' \in \sS _d (\cH', \cJ')$, we say that $B$ and $B'$ \emph{coincide weakly} and write $B \sim B'$, if $B_0 = B | _{\mr{supp}(B)}$ coincides unitarily with $B'_0 | _{\mr{supp} (B')}$. 
\begin{thm*}[\ref{cor:char.map.is.uni.inv}]
The column--extreme \emph{characteristic function}, $B_T$, of a CNC row contraction, $T$, is unique up to weak coincidence, and it is a complete unitary invariant: CNC row contractions, $T_1$ and $T_2$ are unitarily equivalent if and only if $B_{T_1} \sim B_{T_2}$. 
\end{thm*}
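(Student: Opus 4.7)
My plan is to prove the biconditional and to derive the uniqueness assertion as a consequence of the forward direction.

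For the reverse direction, $B_{T_1} \sim B_{T_2} \Rightarrow T_1 \simeq T_2$, the model theorem reduces the task to producing a unitary $W : \scr{H}(B_{T_1}) \to \scr{H}(B_{T_2})$ intertwining the backward right shifts. Weak coincidence provides unitaries $U, V$ realizing a unitary coincidence of the support parts $B_{T_1,0}$ and $B_{T_2,0}$. Tensoring with $I_{\hardy}$, these intertwine $B_{T_i,0}(L) B_{T_i,0}(L)^*$ and, by functional calculus, the associated defect operators, thereby identifying $\scr{H}(B_{T_1,0})$ and $\scr{H}(B_{T_2,0})$ as operator-range spaces by a map that also intertwines the right shifts (which act trivially on coefficient spaces). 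On the complementary free-shift summands in the block decomposition $\scr{H}(B_{T_i}) = \scr{H}(B_{T_i,0}) \oplus (\hardy \otimes (\cJ_i)_0)$ arising from $B_{T_i} = B_{T_i,0} \oplus \mbf{0}$, any unitary $(\cJ_1)_0 \to (\cJ_2)_0$ gives a canonical intertwiner of the free-shift structure, and combining yields $W$.

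For the forward direction, $T_1 \simeq T_2 \Rightarrow B_{T_1} \sim B_{T_2}$, I would first observe that any intertwining unitary $U : \cH_1 \to \cH_2$ commutes with the defect operators and therefore restricts to surjective isometries between the defect spaces $\scr{D}_{T_i}$ and $\scr{D}_{T_i^*}$. The NC resolvent construction defining $B_T$ --- the inverse of the monic affine linear pencil of $T$ evaluated against the defect operators, as described in the abstract --- transforms covariantly under $U$ at each $Z$ in the NC unit row-ball, yielding a unitary coincidence of the canonical resolvent-built characteristic functions, and in particular a weak coincidence. To upgrade this to the full statement, I would establish as a lemma that any CE characteristic function in the sense of the model theorem is weakly coincident with the canonical resolvent-built one, using the canonicity of the supports of both sides. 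Uniqueness of $B_T$ up to weak coincidence then follows by applying this with $T_1 = T_2 = T$: two CE characteristic functions of $T$ are each weakly coincident with the canonical one, and hence with each other.

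The main obstacle I anticipate is the matching of the null summands in the reverse direction --- showing $\dim (\cJ_1)_0 = \dim (\cJ_2)_0$ so that a unitary between them exists. I expect this to follow from the fact that both models $\scr{H}(B_{T_i})$ are unitarily equivalent to the common Hilbert space carrying $T$, combined with the CE hypothesis, which pins down the support decomposition and thereby forces agreement of the complementary null multiplicities. A secondary technical point will be verifying that the defect-operator intertwining on $\hardy$-valued spaces passes cleanly through the square root to the operator-range inner product structure on $\scr{H}(B_{T_i,0})$, which should follow from the standard operator-range formalism.
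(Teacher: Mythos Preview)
Your overall strategy matches the paper's. For the forward direction, the paper transports the model triple for the isometric part of $T_1$ through the intertwining unitary to obtain a model triple for $T_2$, then checks that the characteristic maps and defect points agree; this is exactly your ``resolvent covariance'' argument in different language, since the canonical model map is the NC resolvent. For the reverse direction, the paper encapsulates your construction in a lemma (Lemma~\ref{lem:weak.coincidence.equiv}): weak coincidence of $B,B'$ is equivalent to the existence of a constant unitary $U:\cK\twoheadrightarrow\cK'$ for which $I_{\hardy}\otimes U$ is a unitary of $\scr{H}(B)$ onto $\scr{H}(B')$; uniqueness of the contractive Gleason solution then immediately gives $X'=(I\otimes U)X(I\otimes U^*\otimes I_d)$, hence $T_1\simeq T_2$.

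There is a genuine gap in your proposed resolution of the dimension-matching obstacle. You write that ``both models $\scr{H}(B_{T_i})$ are unitarily equivalent to the common Hilbert space carrying $T$,'' but there is no common $T$: there are two row contractions $T_1,T_2$ acting on \emph{a priori} different Hilbert spaces $\cH_1,\cH_2$, and the conclusion you want is precisely that these spaces are isomorphic (indeed, that the operators are unitarily equivalent). So you cannot subtract $\dim\scr{H}(B_{T_i,0})$ from $\dim\cH_i$ to force $\dim(\cJ_1)_0=\dim(\cJ_2)_0$. Nor does the CE hypothesis by itself ``pin down'' the range-null multiplicity: a map of the form $B_0\oplus\mbf{0}_{\{0\},\cJ_0}$ is CE whenever $B_0$ is, regardless of $\dim\cJ_0$. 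The paper avoids this issue entirely by not decomposing on the range side: once one has a constant unitary between the full range spaces $\cK\to\cK'$ (which is what Lemma~\ref{lem:weak.coincidence.equiv} extracts from weak coincidence), the operator-range spaces $\scr{H}(B)$ and $\scr{H}(B')$ are identified globally by $I_{\hardy}\otimes U$, and no matching of complementary summands is needed.
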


The above two theorems can be established, fairly readily, from the NC de Branges--Rovnyak realization and transfer--function theory of \cite{BBF-ncSchur} (and we will sketch a proof in Appendix \ref{appx:Model.via.TFR}). However, our approach is significantly different, provides additional information, and we believe it is of independent interest. Namely, given any CNC row partial isometry, $V: \cH \otimes \C ^d \rightarrow \cH$, we explicitly build an NC reproducing kernel Hilbert space (NC-RKHS) in the sense of Ball--Marx--Vinnikov \cite{BMV-NCrkhs} out of an ``NC resolvent function" for $V$,
$$ \left( I_\cH \otimes I_n - \sum _{j=1} ^d V_j \otimes Z_j ^* \right) ^{-1} =: (I - VZ^*) ^{-1}; \quad \quad Z \in \B ^d _n, $$ for $Z$ in the NC unit row-ball, $\rball$, and we prove that there is a unitary left multiplier from this NC-RKHS onto a (vector--valued) left de Branges--Rovnyak space, $\scr{H} (B)$. Note that the inverse of this ``NC resolvent", 
$$ I_\cH \otimes I_n - \sum _{j=1} ^d V_j \otimes Z_j ^* =: L_V (Z^*); \quad \quad  Z^* = \bsm Z_1 ^* \vsm \\  \vdots \vs \\ Z_d ^* \esm $$ is the \emph{linear pencil} of $V=(V_1, \cdots, V_d)$, evaluated at $Z^*$. Linear pencils of operator tuples appear throughout NC function theory, they are central objects in the realization theory of NC functions, in free real algebraic geometry, and in matrix convexity theory \cite{AMS-opreal,Ball-sys,CR-realize,CR-freefield,Fliess1,Fliess2,HelMc-annals,HMS-realize,HelMcV-convex,Hup-realize,Hup-realize2,Kalman,freefield,Schut}. Any row contraction, $T: \cH \otimes \C ^d \rightarrow \cH$ has a unique \emph{isometric--pure} decomposition, $T = V+C$, where $V: \cH \otimes \C ^d \rightarrow \cH$ is a row partial isometry and $C$ is \emph{purely contractive} in the sense that $\| C \mbf{x} \| < \| \mbf{x} \|$ for any $\mbf{x} \in \cH \otimes \C ^d$, see Lemma \ref{isopure}. Moreover, $V$ is CNC if and only if $T$ is CNC. Using this decomposition, we extend our model from CNC row partial isometries to arbitrary CNC row contractions. 
\begin{thm*}[\ref{thm:main.model.row.partial.iso}) and (\ref{thm:description.char.map.of.CNC.row.contraction}]
Let $T$ be a CNC row contraction on $\cH$ with isometric--pure decomposition $T=V+C$. If $B_T \in \mult \otimes \scr{B} (\sD_T, \sD_{T^*})$ is the characteristic function of $T$, then $C=0$ if and only if $B_T (0) =0$, and $B_T$ is an operator--M\"obius transformation of  the characteristic function, $B_V$, of $V$ by a fixed pure contraction, $\delta _T \in \scr{B} (\cH, \cJ)$, that depends only on $C$, 
$$ B_T(Z) = \big[ D_{\delta_T^*} \otimes I_n \big] \Big( B_V(Z) + \big[ \delta_T \otimes I_n \big] \Big) \Big( I_{\sD_T \otimes \bC^n} + \big[ {\delta_T}^* \otimes I_n \big] B_V(Z) \Big)^{-1} \big[ D_{\delta_T} \otimes I_n \big] $$
for any $Z \in \B ^d _n, \ n \in \N$. Moreover, $B_V$ is unique up to weak coincidence, and is given by
    \begin{eqnarray*}
      B_V(Z) & = &  D_V(Z)^{-1} N_V(Z); \\
  \mbox{where,} \quad      D_V (Z) &= & [I - VV^*] [I - ZV^*]^{-1} [I - VV^*] \otimes I_n \quad \mbox{and} \nonumber \\
        N_V(Z) &= & [I - VV^*] [I - ZV^*]^{-1} [I_\cH \otimes Z] \, [I - V^*V]  \otimes I_n. \nonumber \end{eqnarray*}
\end{thm*}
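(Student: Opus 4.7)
The plan is to prove the row--partial--isometry case first, and then to bootstrap to the general CNC row contraction case via the isometric--pure decomposition together with an operator M\"obius transformation.

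For the row--partial--isometry case, observe that if $V$ is a row partial isometry then both $V^*V$ and $VV^*$ are orthogonal projections, so the defect operators $D_V$ and $D_{V^*}$ are themselves the projections onto $\sD_V$ and $\sD_{V^*}$, respectively. The NC resolvent $\bigl(I_\cH\otimes I_n - \sum_j V_j\otimes Z_j^*\bigr)^{-1}$ converges for $Z\in\rball$ by the Neumann series, since $\|V\|\le 1$ and $\|Z\|<1$. Following the NC--RKHS construction sketched in the introduction, I would build the model Hilbert space as the range of an appropriately normalized compression of this resolvent, identify its reproducing kernel as a de Branges--Rovnyak kernel, and read off the characteristic function in the form $B_V(Z) = D_V(Z)^{-1} N_V(Z)$ with $D_V(Z), N_V(Z)$ as in the theorem statement; here the inverse of $D_V(Z)$ is understood on the appropriate fiber $\sD_{V^*}\otimes \C^n$. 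Uniqueness of $B_V$ up to weak coincidence is then immediate from Corollary \ref{cor:char.map.is.uni.inv} applied to $V$.

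To pass from $V$ to a general CNC row contraction $T=V+C$ (Lemma \ref{isopure}), I would read off from $C$ a specific pure contraction $\delta_T \in \scr{B}(\sD_T, \sD_{T^*})$ that encodes how the defect geometry of $T$ differs from that of $V$. The key step is to establish intertwining identities at the level of the defect squares $D_T^2 = I - T^*T$ and $D_{T^*}^2 = I - TT^*$, relating them to $D_V^2$ and $D_{V^*}^2$ through contractions built from $\delta_T, D_{\delta_T}$, and $D_{\delta_T^*}$, and then to use these identities to transport the NC--RKHS model of $V$ to one for $T$. Combining this transport with the uniqueness of characteristic functions (Corollary \ref{cor:char.map.is.uni.inv}) identifies $B_T$ with the claimed operator M\"obius transformation of $B_V$. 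Automatic preservation of contractivity and column--extremality under operator M\"obius maps ensures the resulting $B_T$ is again a valid characteristic function. For the ``$B_T(0)=0 \iff C=0$'' equivalence, plugging $Z=0$ into the explicit formulas gives $N_V(0)=0$ and hence $B_V(0)=0$; the M\"obius formula then reduces to $B_T(0) = D_{\delta_T^*}\,\delta_T\,D_{\delta_T}$, which vanishes if and only if $\delta_T=0$, which is in turn equivalent to $C=0$ by construction of $\delta_T$.

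The main obstacle will be this passage from $V$ to $T$: identifying the correct pure contraction $\delta_T$ and the precise unitary identifications of $\sD_T,\sD_{T^*}$ with $\sD_V,\sD_{V^*}$ modulo $\delta_T$, and verifying the M\"obius identity cleanly. Once these identifications are in place the proof reduces to algebraic manipulation, since the row--partial--isometry step is a direct NC--RKHS construction from the NC resolvent, and the vanishing--at--the--origin check is a short computation at $Z=0$.
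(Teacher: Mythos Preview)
Your outline for the partial--isometry case is broadly aligned with the paper's Section~\ref{sec:model.for.CNC.row.partial.iso}: one builds the model space from the NC resolvent, computes the graded kernel, and factors it through a de Branges--Rovnyak kernel to read off $B_V=D_V^{-1}N_V$. However, your appeal to Corollary~\ref{cor:char.map.is.uni.inv} for the uniqueness of $B_V$ is circular: that corollary sits downstream of the full model theorem (Theorem~\ref{thm:main.model}), which in turn rests on the very results you are proving. The paper obtains uniqueness directly, by comparing any two model triples for the same $V$ (Proposition~\ref{prop:uniqueness.of.graded.model}) and showing the resulting characteristic maps coincide unitarily; no invariance statement about general CNC row contractions is invoked.

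The more serious gap is in the passage from $V$ to $T$. You propose to compare $D_T^2$ with $D_V^2$ via unspecified intertwining identities and then ``transport'' the model; you also invoke Corollary~\ref{cor:char.map.is.uni.inv} a second time to pin down $B_T$. Besides the circularity just noted, this misses the actual mechanism. The paper does not attack the forward direction $T\mapsto B_T$ by defect--operator algebra. Instead it first solves the \emph{reverse} problem (Section~\ref{subsec:model.constr.Gleason.soln}): given a CE $B$, compute the characteristic function of the partial--isometric part of its Gleason solution, and discover that it is the M\"obius shift $B^{\langle 0\rangle}$ of $B$ (Theorem~\ref{thm:char.map.gleason.soln}). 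This is what motivates, and justifies, \emph{defining} $B_T$ as the $\delta_T$--Frostman shift of $B_V$, with the explicit choice $\delta_T=-\gamma(0)^*C\,\gamma(\infty)$ (Definition~\ref{def:char.map.CNC.row.contraction}); the proof that this $B_T$ really models $T$ then goes through the NC Crofoot multiplier $M^{\langle\alpha\rangle}$ (Theorems~\ref{thm:NC.Crofoot.transf} and~\ref{thm:M.alpha.preserves.Gleason.soln}, Proposition~\ref{prop:partial.iso.from.Crofoot.transf}), which is a unitary left multiplier between $\scr{H}(B_V)$ and $\scr{H}(B_T)$ intertwining the respective Gleason solutions. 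None of this apparatus---the explicit $\delta_T$, the reverse computation, or the Crofoot transform---appears in your plan, and without it the M\"obius identity for $B_T$ is an assertion rather than a proof. Your $Z=0$ check is fine once the formula is in hand, but it is the easy part.
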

Our construction builds on ideas originating in the work of M.S. Liv\v{s}ic and M.G. Kre\u{\i}n on the representation theory of partial isometries and generally unbounded symmetric operators \cite{Krein,Krein1949,Livsicone,Livsic}, and this paper can be viewed as sequel to an earlier paper by the first author and A. Ramanantoanina, in which a de Branges--Rovnyak model for a large class of CNC row contractions was constructed using the commutative de Branges--Rovnyak spaces for contractive multipliers between vector--valued Drury--Arveson spaces \cite{MR19}. This commutative model can be recovered from the NC de Branges--Rovnyak model of this paper by ``restricting to level one" of the NC unit row-ball, and we will describe the relationship between these two models in detail in Section \ref{sec:NCvscomm}. Lastly, in Appendix \ref{appx:compare.with.NFP.model}, we compare our characteristic function with Popescu's \cite{Popchar} extension of the classical Sz.-Nagy--Foias characteristic function for a CNC contraction to the case of tuples of CNC row contractions and show that the two coincide weakly.

\section{Background}

All inner products and sesquilinear forms in this paper are assumed to be conjugate linear in their first argument and linear in their second argument.

\subsection{NC function theory}

As described in the introduction, we will be considering Hilbert spaces of \emph{non-commutative (NC) holomorphic functions} in a certain \emph{NC unit row ball}. Recall that the $d-$dimensional complex \emph{NC universe}, $\ncu$, is the set of all row $d-$tuples of complex $n\times n$ matrices, of any finite size, $n \in \N$,
$$ \ncu = \bigsqcup \cdn; \quad \quad \cdn = \C ^{n\times n} \otimes \C ^{1\times d}, $$
and the \emph{NC unit row-ball} is the subset of the NC universe,
$$ \B ^d _\N =\bigsqcup \B ^d _n; \quad \quad \B ^d _n = \left\{ \left. X = (X_1, \cdots, X_d) \in \cdn \ \right| \ \| X \| _{\scr{B} (\C ^n \otimes \C ^d, \C^n)} < 1 \right\}, $$ consisting of all finite--dimensional strict row contractions. Given $X = (X_1, \cdots, X_d) \in \cdn$, we will write
$$ \| X \| _{\mr{row}} := \| X \| _{\scr{B} (\C ^n \otimes \C ^d, \C^n)}, $$ and call this quantity the \emph{row-norm} of $X$. Given $X \in \cdn$, we will write $X^* := \bsm X_1 ^* \vsm \\ \vdots \vs \\ X_d ^* \esm : \C ^n \rightarrow \C ^n \otimes \C ^d$ for the Hilbert space adjoint of $X: \C ^n \otimes \C ^d \rightarrow \C ^n$, and we will call
$$ \| X^* \| _{\mr{col}} := \| X ^* \| _{\scr{B} (\C ^n, \C^n \otimes \C ^d)}, $$ the \emph{column-norm} of $X^*$. 

A subset, $\Om \subseteq \ncu$ is called an \emph{NC set}, if it is closed under direct sums. Namely, if $\Om _n := \Om \cap \cdn$, we write $\Om := \bigsqcup \Om _n$, and $\Om$ is an NC set if $X \in \Om _m$, $Y\in \Om _n$ imply that the component-wise direct sum, $X\oplus Y = (X_1 \oplus Y_1, \cdots, X_d \oplus Y_d ) \in \Om _{m+n}$. Although we will not make significant use of it, the \emph{uniform topology} is defined as follows. Given any $Y \in \cdn$ and $r>0$, consider the  \emph{NC row-ball},
$$ r\cdot \B ^d _{\N n} (Y) := \bigsqcup _{m=1} ^\infty \B ^d _{mn} (Y); \quad \quad \B ^d _{mn} (Y) := \left\{ \left. X \in \C ^{(mn \times mn) \cdot d} \ \right| \ \| X - I_m \otimes Y \| _{\mr{row}} < r \right\}. $$ The \emph{uniform topology} on $\ncu$ is the topology on $\ncu$ generated by the sub-base of open NC sets $r \cdot \B ^d _{\N n } (Y)$, for any $Y \in \cdn$, $n \in \N$ and $r>0$. In particular, the \emph{NC unit row-ball}, $\B ^d _\N$ is  $\B ^d _{\N \cdot 1} (0)$, where $0 := (0,\cdots, 0) \in \C ^{1\times d}$ is the \emph{origin} of the NC universe. 

Given an NC set, $\Om \subseteq \ncu$, a function, $f: \Om \rightarrow \C ^{\N \times \N}$, \emph{i.e.} with range in the $1-$dimensional NC universe, is called a \emph{non-commutative (NC) function}, if it obeys:
\bi
    \item[(i)] $f : \Om _n \rightarrow \C ^{n\times n}$, \emph{i.e.} $f$ \emph{respects the grading},
    \item[(ii)] Given $X \in \Om _m$, $Y\in \Om _m$, 
    $$ f \bpm X & 0 \\ 0 & Y \epm = \bpm f(X) & 0 \\ 0 & f (Y) \epm, $$ \emph{i.e.} $f$ \emph{respects direct sums}. 
    \item[(iii)] Given any $S \in \mr{GL} _n$, if $X \in \Om _n$ and $S ^{-1} X S := ( S^{-1} X_1 S, \cdots, S^{-1} X_d S ) \in \Om _n$, then 
    $$ f (S^{-1} X S ) = S ^{-1} f(X) S, $$ \emph{i.e.} $f$ \emph{respects joint similarities}. 
\ei

We will also need to consider $\cJ-$valued and $\scr{B} (\cH, \cJ)-$valued NC functions, where $\cJ, \cH$ are complex, finite or separable Hilbert spaces. To be precise, given an NC set, $\Om \subseteq \ncu$,  we will say that a function $f : \Om \rightarrow \C ^{\N \times \N} \otimes \cJ$ is an NC $\cJ-$valued function if for any fixed $g \in \cJ$ and $Z \in \Om _n$, $g^* \otimes I_n f(Z) =: f_g (Z)$ is an NC function obeying the axioms (i--iii) above. Similarly, $A : \Om \rightarrow \C ^{\N \times \N} \otimes \scr{B} (\cH, \cJ)$ is an NC $\scr{B} (\cH, \cJ)-$valued function if given any $h \in \cH$ and $g \in \cJ$, 
\begin{equation}\label{eqn:def.op-val.NC.map}
    Z \in \Om _n \ \mapsto \  A _{h,g} (Z) := h^* \otimes I_n A(Z) g \otimes I_n,
\end{equation}
is an NC function. That is, for example, an NC $\C ^{m\times n}-$valued function is an $m\times n$ matrix of NC functions.  

\begin{remark} \label{matrixeval}
It is easily seen that an NC $\cJ-$valued or $\scr{B} (\cH, \cJ)-$valued function, as defined above, need not respect direct sums, \emph{i.e.} axiom (ii). For example, if $A$ is an NC $\C ^{2\times 2}-$valued function, then 
$$ A (Z \oplus W) = \bsm  a_{11} (Z) & 0 & a_{12} (Z) & 0 \\ 0 & a_{11} (W) & 0 & a_{12} (W) \\ a_{21} (Z) & 0 & a_{22} (Z) & 0 \\ 0 & a_{21} (W) & 0 & a_{22} (W) \esm \neq \bsm a_{11} (Z) & a_{12} (Z) & 0 & 0 \\ a_{21} (Z) & a_{22} (Z) & 0 & 0 \\ 0 & 0 & a_{11} (W) & a_{12} (W) \\ 0 & 0 & a_{21} (W) & a_{22} (W) \esm = A(Z) \oplus A(W). $$
One can conjugate $A (Z \oplus W)$ by certain constant permutation matrices, so that if one redefines evaluation of $A$ at $Z \oplus W$ by conjugating by these ``shuffle matrices", then $A(Z\oplus W) = A(Z) \oplus A(W)$ \cite{PV1}.

To be precise, if $H$ is an NC $\scr{B} (\cH, \cJ)-$valued function, fix orthonormal bases $(h_i)$ and $(g_j)$ of $\cH$ and $\cJ$, respectively. When we evaluate $H$ at a point $Z \in \B ^d _n$, we are evaluating each ``matrix--entry", of $H$ at $Z$, so that $\mr{ev}_Z (H) \in \C ^{n \times n} \otimes \scr{B} (\cH, \cJ)$. Let $U_\cH : \C ^{n} \otimes \cH \twoheadrightarrow \cH \otimes \C ^{n}$ and $U_\cJ$ be the ``tensor swap" unitaries defined by
$U_\cH v \otimes h = h \otimes v$, for $v \in \C ^n$ and $h \in \cH$. If we redefine evaluation at $Z \in \B ^d _n$, for every $n \in \N$, by 
\be H \ \mapsto \ U_\cH \mr{ev} _Z (H) U_\cJ ^* =: \wt{H} (Z), \label{modeval} \ee then, with this definition of evaluation, $\wt{H}$ respects direct sums, see \cite[Section 4]{PV1}, and hence is a bona fide $\scr{B} (\cH, \cJ)-$valued NC function that respects all three axioms (i--iii).  For the remainder of the paper, if $H$ is an NC $\scr{B} (\cH, \cJ)-$valued NC function on an NC set, $\Om$, for any finite or separable Hilbert spaces $\cH,\cJ$, we define the \emph{evaluation of $H$ at $Z$} as $H(Z) := U_\cH \mr{ev} _Z (H) U_\cJ ^* $ as above, so that $H(Z)$ respects direct sums. 
\end{remark}

Any ``free polynomial" in $d$ non-commuting variables, \emph{i.e.} any element, $p$, of the free associative algebra over $\C$, $p \in \C \langle \fz _1, \cdots, \fz _d \rangle$, is an NC function on the entire NC universe. Remarkably, for any NC function, $f$, defined in a uniformly open NC set, $\Om \subseteq \ncu$, the following are equivalent: (a) $f$ is locally bounded in the uniform topology, (b) $f$ is continuous in the uniform topology, (c) $f$ is holomorphic in the sense that given any $Y \in \Om _m$ and any ``direction" $H \in \cdm$, the directional or G\^ateaux derivative of $f$ at $Y$ in the direction $H$ exists:
$$ \lim _{t \rightarrow 0} \frac{f(Y +tH) - f(Y)}{t} =: \partial _H f (Y) \quad \exists, $$ (moreover, $f$ has a total or Fr\'echet derivative at $Y$), and (d) $f$ is \emph{uniformly analytic} in a uniformly open neighbourhood at $Y$, which means that it has an `absolutely' convergent Taylor-type power series expansion about $Y$ that converges in an NC row-ball, $r \cdot \B ^d _m\N (Y)$, of positive radius, $r>0$ \cite[Corollary 7.26 and Corollary 7.28]{KVV}. 

We will be particularly interested in NC functions that are bounded (hence uniformly analytic) in the NC unit row ball, $\B ^d _\N$. First, consider a \emph{free formal power series (FPS)}, $f \in \C \langle \!  \langle \fz  \rangle \!  \rangle = \C \langle \!  \langle \fz _1, \cdots, \fz _d \rangle  \! \rangle$. That is, $f$ is a formal power series in the $d$ non-commuting variables, $\fz = (\fz _1, \cdots, \fz _d)$ with complex coefficients,
$$ f (\fz) = \sum _{\om \in \F} \hat{f} _\om \fz ^\om; \quad \quad \hat{f} _\om \in \C. $$ In the above,  $\F$ denotes the free monoid, the set of all words, $\om =i_1 \cdots i_k$, comprised of letters, $i_j$, chosen from the alphabet $\{ 1, \cdots, d\}$. If $\om = i_1\cdots i_k$, we write $\fz ^\om := \fz _{i_1} \cdots \fz _{i_k}$, and if $\om = \emptyset$, we write, $\fz ^\emptyset =:1$, the unit of the ring of free FPS. Here, $\F$ is a monoid with product given by concatenation of words and unit, $\emptyset$, the empty word. Given $\om =i_1 \cdots i_k \in \F$ we will write $|\om | =k$ for the \emph{length} of $\om$, and we define the \emph{transpose} of $\om$ as $\om ^\mrt := i_k \cdots i_1$, an involution on $\F$.

The \emph{free Hardy space}, $\hardy$, is then the Hilbert space of all free FPS with square--summable complex coefficients, 
$$ \hardy = \left\{ \left. f(\fz ) = \sum _{\om \in \F} \hat{f} _\om \fz ^\om \right| \ \sum _{\om \in \F} |\hat{f} _\om | ^2 < +\infty \right\}, $$ equipped with the $\ell ^2-$inner product of the coefficients. (Although we will not use this point of view, $\hardy$ can be readily identified with $\ell ^2 (\F )$ and with $\scr{F} (\C ^d)$, the \emph{full Fock space over $\C ^d$}.) By \cite[Theorem 1.1]{Pop-freeholo}, any $f \in \hardy$ has ``radius of convergence" at least one, and hence defines a uniformly analytic NC function in the unit row-ball, $\B ^d _\N$. It is clear from the definition of $\hardy$ that left multiplication by any of the $d$ NC variables, $L_j := M^L _{\fz _j}$ defines isometries on $\hardy$ with pairwise orthogonal ranges so that $L := (L_1, \cdots, L_d) : \hardy \otimes \C ^d \rightarrow \hardy$ is an isometry from $d$ copies of $\hardy$ into one copy. (Such an isometry is called a \emph{row isometry}.) The free monomials, $\{ \fz ^\om \} _{\om \in \F}$, form a standard orthonormal basis of $\hardy$, and so the \emph{transpose unitary}, $U_\mrt : \hardy \rightarrow \hardy$ defined by $U_\mrt \fz ^\om = \fz ^{\om ^\mrt}$ is a self-adjoint unitary involution on $\hardy$. One can check that the images of the left free shifts under $U_\mrt$ are $R_k := U_\mrt L_k U_\mrt = M^R _{\fz _k}$, the \emph{right free shifts} which are the isometries of right multiplication by the independent NC variables.  

We will also need the \emph{NC Hardy algebra},
$$ \mult := \left\{ h \in \scr{O} (\B ^d _\N ) \left| \ \sup _{Z \in \B ^d _\N} \| h (Z) \| < + \infty \right. \right\}, $$ of all uniformly bounded (hence analytic) NC functions in the NC unit row-ball. This is a unital Banach algebra, when equipped with the supremum norm on $\B ^d _\N$, and it is an operator algebra. Namely, a uniformly analytic NC function in the NC unit row-ball, $h \in \scr{O} _d (\B ^d _\N)$, is uniformly bounded in $\B ^d _\N$, \emph{i.e.} $h \in \mult$, if and only if $h$ defines a \emph{left multiplier} on $\hardy$. That is, $h \in \mult$ if and only if $h\cdot f \in \hardy$ for all $f \in \hardy$ \cite[Theorem 3.1]{SSS} \cite[Theorem 3.1]{Pop-freeholo}, and the operator norm of the linear operator of left multiplication by $h$, $h(L) := M^L _h : \hardy \rightarrow \hardy$ coincides with the supremum norm of its symbol, $\| h (L) \| = \| h \| _\infty$, over the NC unit row-ball. The Hardy algebra, $\mult$, can also be identified with the weak operator topology (WOT)-closure of $\mr{Alg}\{I,L_1, \cdots, L_d \}$, the unital operator algebra generated by the left free shifts.

\subsection{The operator--valued NC Schur classes} \label{ss:NCSchur}

The set of all $\scr{B} (\cH, \cJ)-$valued NC functions which define bounded left multipliers from $\hardy \otimes \cH$ into $\hardy \otimes \cJ$ will be denoted by $\mult \otimes \scr{B} (\cH, \cJ)$. This is an operator space that is closed in the weak operator topology and its closed unit ball, the \emph{NC operator--valued Schur class}, will be denoted by $\sS _d (\cH, \cJ)$. Given a Banach space, $X$, its open and closed unit balls will be denoted by $(X) _1$ and $[X]_1$, respectively.

As before $B \in [\mult \otimes \scr{B} (\cH , \cJ) ] _1 = \sS _d (\cH, \cJ)$ defines a contractive (operator--valued) left multiplier if and only if its supremum--norm on the NC unit row-ball is at most $1$. In general, we say that a linear contraction, $T \in \scr{B} (\cH, \cJ)$ is a \emph{strict contraction}, if $\| T \| <1$ and a \emph{pure contraction} if $\| T h \| <1$ for all $h \in \cH$. Similarly, we will say that an NC Schur class $B \in \sS _d (\cH, \cJ)$ is \emph{strictly} or \emph{purely contractive}, if each $B (Z)$, $Z \in \rball$, is a strict or pure contraction, respectively. 

\begin{prop} \label{pureuni}
Given any $B \in \sS _d (\cH, \cJ)$, there exists a unique orthogonal decomposition $\cH = \cH _0 \oplus \cH'$ and $\cJ = \cJ _0 \oplus \cJ '$ so that $B = B_0 \oplus B'$ where $B_0 \in \sS _d (\cH _0, \cJ _0)$ is purely contractive and $B' \in \sS _d (\cH' , \cJ')$ is a unitary constant. That is, for any $Z \in \B ^d _n$, $B' (Z) = B' (0_n)$ where $B' (0_n) h' = B(0_n) h'$ and $\| B (0_n ) h' \| = \| h' \|$ for any $h' \in \cH ' \otimes \C ^n$. 
\end{prop}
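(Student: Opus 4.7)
The strategy is to identify the ``unitary constant'' part of $B$ directly from the single datum $B(0)$: set $\cH' := \ker\bigl(I_\cH - B(0)^* B(0)\bigr)$, $\cJ' := B(0)(\cH')$, $\cH_0 := \cH \ominus \cH'$, and $\cJ_0 := \cJ \ominus \cJ'$. The proof then turns on a single Schwarz--Pick-type rigidity lemma: for every $h \in \cH'$, every $n \in \bN$, every $Z \in \B^d_n$, and every $v \in \C^n$, $B(Z)(h \otimes v) = (B(0) h) \otimes v$. Establishing this rigidity is the main technical step.

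To prove the rigidity I first restrict to level $n = 1$. For $h \in \cH'$, the map $F_h(z) := B(z) h$ is a holomorphic $\cJ$-valued function on the open row-ball $\B^d_1 \subset \C^{1 \times d}$, bounded in norm by $\|h\|$ throughout and satisfying $\|F_h(0)\| = \|h\|$. The scalar holomorphic function $z \mapsto \ip{F_h(0), F_h(z)}$ is then bounded in modulus by $\|h\|^2$ and attains this bound at the interior point $0$, so by the maximum modulus principle it is constantly equal to $\|h\|^2$. Applying the equality case of Cauchy--Schwarz to $\|F_h(z) - F_h(0)\|^2$ then forces $F_h(z) = F_h(0)$ on $\B^d_1$. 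Matching Taylor coefficients of $B$ at $0$ shows every non-empty-word coefficient of $B$ annihilates $h$, and the shuffle-matrix evaluation convention of Remark \ref{matrixeval} propagates the identity to every level.

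With the rigidity in hand, $B(Z)$ acts on $\cH' \otimes \C^n$ as $B(0)|_{\cH'} \otimes I_n$, a surjective isometry onto $\cJ' \otimes \C^n$. To obtain the full block decomposition it suffices to rule out an off-diagonal contribution from $\cH_0 \otimes \C^n$ to $\cJ' \otimes \C^n$. For $k' \in \cH' \otimes \C^n$, $k_0 \in \cH_0 \otimes \C^n$, and $\mu \in \C$, the contractivity inequality $\|B(Z)(k' + \mu k_0)\|^2 \le \|k'\|^2 + |\mu|^2 \|k_0\|^2$, together with the substitution $B(Z) k' = (B(0)|_{\cH'} \otimes I_n) k'$ afforded by the rigidity, forces the cross term $\ip{(B(0)|_{\cH'} \otimes I_n) k', B(Z) k_0}$ to vanish (by letting $\mu$ sweep out $\C$). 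Since $B(0)|_{\cH'}$ maps $\cH'$ onto $\cJ'$, this produces $B(Z)(\cH_0 \otimes \C^n) \subseteq \cJ_0 \otimes \C^n$, yielding the direct sum decomposition $B = B_0 \oplus B'$ in which $B'$ is the asserted unitary constant.

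It remains to verify that $B_0$ is purely contractive at every level and to establish uniqueness. The operator $B(0)|_{\cH_0}$ is a pure contraction because $\cH_0 = \ker(I_\cH - B(0)^* B(0))^\perp$, and a routine spectral computation upgrades this to pure contractivity of $B(0)|_{\cH_0} \otimes I_n$ on $\cH_0 \otimes \C^n$. If $\|B_0(Z_0) k\| = \|k\|$ for some nonzero $k \in \cH_0 \otimes \C^n$ and some $Z_0 \in \B^d_n$, then the one-variable slice $\lambda \mapsto B_0(\lambda Z_0) k$ is holomorphic on $\{|\lambda| < \|Z_0\|_{\mr{row}}^{-1}\}$, which strictly contains the closed unit disk, is bounded by $\|k\|$, and attains this bound at the interior point $\lambda = 1$; the same maximum-modulus/Cauchy--Schwarz argument as before yields $B_0(0) k = B_0(Z_0) k$, contradicting pure contractivity of $B(0)|_{\cH_0} \otimes I_n$. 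Uniqueness is then immediate, because any decomposition of the stated form satisfies $\cH' \subseteq \ker(I_\cH - B(0)^* B(0))$ (since $B'(0)$ is isometric) and $\cH_0 \cap \ker(I_\cH - B(0)^* B(0)) = \{0\}$ (since $B_0(0)$ is a pure contraction), so $\cH' = \ker(I_\cH - B(0)^* B(0))$ and $\cJ' = B(0)(\cH')$ are uniquely determined. The only non-routine step is the level-one rigidity, from which everything else propagates.
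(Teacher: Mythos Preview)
Your overall architecture matches the paper's: define $\cH' = \ker(I_\cH - B(0)^*B(0))$, prove a rigidity statement $B(Z)(h'\otimes v) = (B(0)h')\otimes v$, deduce the block structure, check pure contractivity of $B_0$, and argue uniqueness. The off-diagonal argument via the quadratic $\|B(Z)(k'+\mu k_0)\|^2$ and the pure-contractivity argument via the one-variable slice $\lambda\mapsto B_0(\lambda Z_0)k$ are both valid (the latter in fact handles a point the paper's own proof leaves implicit, since the paper only explicitly checks pure contractivity of $B_0$ at $0_n$).

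There is, however, a genuine gap in your rigidity step. From the level-one identity $B(z)h = B(0)h$ for all $z\in\B^d_1$ you conclude that ``every non-empty-word coefficient of $B$ annihilates $h$.'' That inference fails when $d\ge 2$: at level one the entries of $z$ commute, so the free monomials $z^\omega$ with a common abelianization coincide. What level-one rigidity gives you is only that, for each nonzero commutative multi-index $\alpha$, the \emph{symmetrized sum} $\sum_{\omega\mapsto\alpha}\hat B_\omega h$ vanishes, not that each individual $\hat B_\omega h$ does. The shuffle-matrix convention of Remark~\ref{matrixeval} cannot rescue this: it makes $B$ respect direct sums, so it only lets you reach points $Z$ that are direct sums of $1\times 1$ blocks (diagonal $Z$), not arbitrary $Z\in\B^d_n$.

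The fix is exactly what the paper does, and costs you nothing: run the same maximum-modulus / Cauchy--Schwarz argument directly at level $n$. For $h\in\cH'$, $v\in\C^n$ and $Z\in\B^d_n$, the slice $\lambda\mapsto B(\lambda Z)(h\otimes v)$ is holomorphic on $\{\,|\lambda|<\|Z\|_{\mr{row}}^{-1}\,\}\supset\overline{\D}$, is bounded by $\|h\|\,\|v\|$, and has norm $\|h\|\,\|v\|$ at $\lambda=0$ (since $B(0_n)=B(0)\otimes I_n$), so your one-variable argument gives $B(Z)(h\otimes v)=B(0_n)(h\otimes v)$ directly, with no coefficient-matching needed. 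With this correction in place your proof goes through; aside from this point it is essentially the paper's argument, with the one cosmetic difference that the paper kills the off-diagonal block by first proving the dual rigidity $B(Z)^*g' = B(0_n)^*g'$ for $g'\in\cJ'\otimes\C^n$ and pairing, rather than your direct contractivity trick.
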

This proposition and its proof are based on \cite[Proposition 2.1, Chapter V]{NF}. For the remainder of this paper, we will only consider purely contractive $B \in \sS _d (\cH, \cJ)$, \emph{i.e.} we assume our NC Schur class functions have no constant unitary part. 
\begin{proof}
For any $n \in \N$, define $\cH' _n \subseteq \cH \otimes \C ^n =: \cH _n$ and $\cJ' _n \subseteq \cJ \otimes \C ^n =: \cJ _n$ by 
$$ \cH' _n := \{ h' \in \cH _n | \ h' = B(0_n) ^* B(0_n) h'  \} $$ and
$$ \cJ' _n := \{ g' \in \cJ _n | \ g' = B(0_n)  B(0_n) ^* g' \}, $$ 
where $0_n := (0_n, \cdots, 0_n) \in \B ^d _n$. Since $B$ is NC, it is clear that $\cJ' _n = \cJ' \otimes \C ^n$ and $\cH' _n = \cH' \otimes \C ^n$ where $\cJ' = \cJ' _1$, $\cH' = \cH' _1$. It is also clear, from this definition that $B(0_n) : \cH ' _n \twoheadrightarrow \cJ' _n$ is a surjective isometry. 

Given any $Z \in \B ^d _n$ and $h' \in \cH ' _n$, consider $b_{z,h'} (\la) := B (\la Z) h' \in H^2 (\D) \otimes \cH' _n$. Then calculate,
\ba \ipr{B(0_n h'}{B(\la Z)h'}_{H^2 \otimes \cH' _n} & = & \int _{\partial \D} \ipr{B(0_n) h'}{B(\zeta Z) h'} _{\cH' _n} m (d\zeta) \\
& = & \| B(0_n) h' \| ^2  = \| h' \| ^2 = \| h' \| \| B(0_n) h' \| \\
& \geq & \| b_{Z,h'} \| _{H^2 \otimes \cH' _n} \| B(0_n) h' \| _{H^2 \otimes \cH' _n}. \ea 
It follows, from equality in the Cauchy--Schwarz inequality, that 
$$ b_{Z,h'} (\la) = \zeta B(0_n ) h' = \zeta b_{Z,h'} (0), $$ for any $\la \in \D$, and hence 
$\zeta =1$ and $B(Z) h' = B(0_n ) h'$. Since $Z \in \B ^d _n$ and $n \in \N$ were arbitrary, we have $B(Z) h' = B(0_n) h'$ for every $n\in \N$ and $Z \in \B ^d _n$. 

Repeating the same argument with $b ^\dag _{Z,g'} (\la ) := B (\ov{\la} Z) ^* g'$, for $Z \in \B ^d _n$ and $g' \in \cJ ' _n$, shows that $B(Z)^* g' = B(0_n ) ^* g'$ for every $g' \in \cJ ' _n$. Now set $\cH _n ^{(0)} := \cH _n \ominus \cH _n '$, $\cJ _n ^{(0)} := \cJ _n \ominus \cJ ' _n$. If $h_0 \in \cH _n ^{(0)}$, $g' \in \cJ _n '$, then
$$ \ipr{g'}{B(Z) h_0} = \ipr{B(Z) ^* g'}{h_0} = \ipr{B(0_n)^* g'}{h_0} =0, $$ so that $B(Z) | _{\cH _n ^{(0)}} : \cH _n ^{(0)} \rightarrow \cJ ^{(0)} _n$. We claim that $B_0 (Z) := B(Z) |_{\cH _n ^{(0)}}$ is purely contractive. Indeed, if there exists $h_0 \in \cH ^{(0)} _n$ so that $\| B_0 (0_n) h_0 \| = \| h_0 \|$, then
\ba \ipr{h_0}{I- B(0_n) ^* B(0_n) h_0} & = & \| h_0 \| ^2 - \| B(0_n) h_0 \| ^2 \\
& = & \| h_0 \| ^2 - \| B_0 (0_n) h_0 \| ^2 =0. \ea Hence $h_0 = B(0_n) ^* B(0_n) h_0$, so that $h_0 \in \cH ' _n \cap \cH _n ^{(0)} = \{ 0 \}$. The decomposition follows and uniqueness is readily verified.
\end{proof}

\begin{lemma} \label{strict}
An element, $B \in \sS _d (\cH , \cJ)$ is strictly contractive if and only if $B(0) \in \sS _d (\cH, \cJ)$ is a strict contraction. 
\end{lemma}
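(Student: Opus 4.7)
The forward implication should be essentially a restriction: if $B$ is strictly contractive, then taking $Z = 0_1$ gives $\|B(0_1)\| < 1$. For use in the converse, I would also observe that, under the modified evaluation of Remark~\ref{matrixeval}, $B(0_n)$ equals $I_n \otimes B(0_1)$ up to the tensor swap unitaries, so $\|B(0_n)\| = \|B(0_1)\|$ for every $n \in \N$; in particular $B(0)$ is a strict contraction in $\scr{B}(\cH,\cJ)$ if and only if $\|B(0_n)\| < 1$ for every $n$.

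For the converse the plan is a slicing argument combined with the Banach-space-valued maximum modulus principle. Assume $\|B(0_1)\| < 1$, and fix an arbitrary $Z \in \B^d_n$ with $r := \|Z\|_{\mathrm{row}} < 1$. I would consider the operator-valued slice
\[
g(\lambda) := B(\lambda Z), \qquad |\lambda| < 1/r.
\]
For such $\lambda$ we have $\|\lambda Z\|_{\mathrm{row}} < 1$, so $\lambda Z \in \B^d_n$ and $\|g(\lambda)\| \leq 1$. Analyticity of $g$ in $\lambda$ follows either from the free formal power series expansion of $B$ restricted to the line $\lambda \mapsto \lambda Z$, or directly from the uniform analyticity of $B$ on $\rball$. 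Crucially, the disk $\{|\lambda| < 1/r\}$ strictly contains the closed unit disk, with $g(0) = B(0_n)$ and $g(1) = B(Z)$.

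The heart of the argument is then the maximum modulus principle for Banach-space-valued analytic functions: if $\|g\|$ attains its supremum at an interior point of a connected domain, then $\|g\|$ is constant there. This reduces to the scalar case by choosing a unit norming functional via Hahn--Banach and applying the classical maximum modulus to $\varphi \circ g$. Supposing toward a contradiction that $\|B(Z)\| = 1$, we would have $\|g(1)\| = 1$ attaining the maximum of $\|g\|$ on $\{|\lambda| < 1/r\}$ at the interior point $\lambda = 1$; maximum modulus then forces $\|g(0)\| = \|B(0_n)\| = 1$, contradicting $\|B(0_n)\| = \|B(0_1)\| < 1$. Hence $\|B(Z)\| < 1$ for every $Z \in \rball$, and $B$ is strictly contractive.

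I do not anticipate a major obstacle; the only point requiring care is ensuring that the slice $g$ is genuinely analytic on a disk of radius strictly larger than $1$, which is exactly what guarantees that $\lambda = 1$ is an interior (rather than boundary) point where the Banach-space-valued maximum modulus principle can be invoked.
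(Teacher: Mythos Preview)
Your argument is correct, and it takes a genuinely different route from the paper's proof. The paper applies the operator--M\"obius transformation $\Phi_{B(0)}$ to $B$ (which is well-defined on the operator level since $D_{B(0)}$ and $D_{B(0)^*}$ are invertible when $\|B(0)\|<1$) to produce a new Schur-class element $B_0$ with $B_0(0)=0$, and then invokes Popescu's NC Schwarz lemma to obtain $\|B_0(Z)\|\leq\|Z\|_{\mathrm{row}}<1$; an explicit defect identity then transfers strict contractivity back from $B_0$ to $B$. Your slicing-plus-maximum-modulus argument is more elementary in that it avoids both the M\"obius machinery and the NC Schwarz lemma, using only classical one-variable complex analysis after restricting $B$ to the disk $\lambda\mapsto\lambda Z$. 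The paper's approach, on the other hand, yields the quantitative bound $\|B_0(Z)\|\leq\|Z\|_{\mathrm{row}}$ and meshes naturally with the Frostman-shift framework developed later in Section~\ref{subsec:NC.Frostman.shifts}. One minor remark: your intermediate claim that $\|g\|$ is constant when the supremum is attained at an interior point is indeed true (and follows exactly from your Hahn--Banach reduction, since the norming functional forces $\|g(\lambda)\|\geq M$ everywhere), though for the contradiction you only need $\|g(0)\|\geq 1$.
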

\begin{proof}
If $B(0)$ is a strict contraction, then by \cite[Lemma 5.2.ii]{MR19}, the operator--M\"obius transformation, 
$$ B_0 (L) := \big(I_{\hardy} \otimes D _{B(0)^*}\big) \big(I - B(L) I_{\hardy} \otimes B(0) ^* \big) ^{-1} \big(B (L) - I_{\hardy} \otimes B(0)\big) I_{\hardy} \otimes D_{B(0)} ^{-1}, $$ belongs to the NC operator--valued Schur class, $B_0 \in \sS _d (\cH, \cJ)$. Moreover, by the proof of \cite[Lemma 5.2.ii]{MR19}, since $D_{B(0)} = \sqrt{I_\cH - B(0) ^* B(0)}$ is invertible, for any $h \in \C^n \otimes \cH$ and $Z \in \B ^d _n$, setting $0_n = (0_n, \cdots, 0_n) \in \B ^d _n$, 
\begin{align*}    
& \| h \| ^2 - \| B_0 (Z) h \| ^2 \\
&= \ipr{h}{D_{B(0_n)} (I_{\C^n \otimes \cH} - B(L) ^* B(0_n) I  ) ^{-1} (I_{\C^n \otimes \cH} - B(Z) ^* B(Z) ) ( I_{\C ^n \otimes \cH} - B(0_n) ^* B(Z)) ^{-1} D _{B(0_n)} h }.
\end{align*}
It follows that $B$ will be strictly contractive in $\rball$ if and only if $B_0$ is, where $B_0 (0) =0$. For any unit vectors, $h \in \cH$ and $g \in \cJ$, the scalar NC function, $b _{g,h} (Z) := g^* B_0 (Z) h$, is contractive on $\rball$ and obeys $b _{g,h} (0) =0$. Hence, by the NC Schwarz Lemma of \cite[Corollary 2.5]{Pop-freeholo}, $\| b_{g,h} (Z) \| \leq \| Z \| _{\mr{row}}$. Since $h,g$ are arbitrary unit vectors, it follows that $\| B_0 (Z) \| \leq \| Z \| _{\mr{row}} <1$, so that $B_0$ is strictly contractive in $\rball$ and hence so is $B$. 
\end{proof}

\begin{cor}
If $B \in \sS _d (\cH, \cJ)$ is purely contractive and $\nbdim \cH < +\infty$ or $\nbdim \cJ < +\infty$, then $B$ is strictly contractive. 
\end{cor}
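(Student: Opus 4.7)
The plan is to reduce the corollary to the condition that the initial value $T := B(0) \in \scr{B}(\cH, \cJ)$ is a strict contraction, since Lemma \ref{strict} then upgrades this to strict contractivity of $B$ on all of $\rball$. By hypothesis $B$ is purely contractive, so in particular $T$ is a pure contraction, \emph{i.e.} $\| T h \| < \| h \|$ for every nonzero $h \in \cH$. The task is thus to promote this pointwise strict inequality to the uniform bound $\| T \| < 1$ using the assumed finite-dimensionality of $\cH$ or of $\cJ$.

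If $\nbdim \cH < \infty$, this is immediate from compactness: the unit sphere of $\cH$ is compact, the continuous function $h \mapsto \| T h \|$ attains its maximum there at some $h_0$, and pure contractivity gives $\| T \| = \| T h_0 \| < 1$. If instead $\nbdim \cJ < \infty$, compactness on $\cH$ is unavailable and one has to work slightly harder. The plan here is to use the standard fact that the nonzero spectra of $T^* T \in \scr{B}(\cH)$ and $T T^* \in \scr{B}(\cJ)$ coincide. Since $T T^*$ acts on the finite-dimensional space $\cJ$, $\| T \|^2 = \| T T^* \|$ is realized as its largest eigenvalue. Suppose, for contradiction, that $\| T \| = 1$ and pick a unit eigenvector $j \in \cJ$ with $T T^* j = j$. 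Setting $h := T^* j$, one computes $\| h \|^2 = \ip{j}{T T^* j} = 1$ and $T h = T T^* j = j$, so $\| T h \| = 1 = \| h \|$, violating pure contractivity of $T$.

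In either case $B(0)$ is a strict contraction, and Lemma \ref{strict} delivers the corollary. I do not anticipate a serious obstacle; the only mildly subtle point is that pure contractivity of $T$ does not in general transfer to $T^*$, which is why one cannot simply apply the compactness argument to whichever of $\cH, \cJ$ happens to be finite-dimensional, and must invoke the spectral coincidence when only $\cJ$ is finite-dimensional.
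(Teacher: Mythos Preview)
Your proof is correct and follows essentially the same strategy as the paper: reduce to showing $\|B(0)\|<1$ and invoke Lemma \ref{strict}. The only difference is in the $\nbdim\cJ<\infty$ case: the paper simply notes that $B(0)^*$ is also a pure contraction and applies the compactness argument to the unit sphere of $\cJ$, whereas you run a spectral argument through $TT^*$.

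One comment on your closing remark: pure contractivity of a Hilbert-space contraction \emph{always} transfers to its adjoint. Indeed, if $TT^*g=g$ for some nonzero $g$, set $h:=T^*g$; then $h\neq 0$ (else $g=TT^*g=0$) and $T^*Th=T^*(TT^*g)=T^*g=h$, so $\|Th\|=\|h\|$. Your own spectral argument is a special case of this. So the paper's symmetric approach is not exploiting any extra structure of NC Schur class functions --- it is using the same elementary fact you proved, just stated more directly.
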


\begin{proof}
If $B \in \sS _d (\cH, \cJ)$ is purely contractive then for every $h \in \cH$ and $g \in \cJ$, $\| B(0) h \| < \| h \|$ and $\| B(0) ^* g \| < \| g \|$. If $\nbdim \cH < +\infty$, then it is clear that $\| B(0) \| < 1$. Similarly if $\nbdim \cJ < +\infty$ then $\| B(0) \| = \| B(0) ^* \| <1$. The claim now follows from the previous lemma. 
\end{proof}

As defined in \cite{JMfree,JM-freeCE}, $B \in \sS_d (\cH,\cJ)$ is said to be \emph{column--extreme} (CE) if, given any $A \in \sS_d (\cH, \cJ')$, 
$$ \bpm B \\ A \epm \in \sS _d (\cH, \cJ \oplus \cJ') \quad \Rightarrow \quad A =0. $$ As proven in \cite[Theorem 6.4]{JM-freeCE}, the following are equivalent:
\bi
    \item[(i)] $B \in \sS_d (\cH, \cJ)$ is CE,
    \item[(ii)] $B 1 \otimes h \notin \scr{H} (B)$ for any $h \in \cH$.
\ei
Classically, $b \in \sS (\cH ,\cJ)$ is CE if and only if it is an extreme point of the contractive, analytic $\scr{B} (\cH, \cJ)-$valued functions in $\D$.

\subsection{NC reproducing kernel Hilbert spaces} Let $\Om \subseteq \ncu$ be an NC set. A Hilbert space of NC functions, $\cH$, on $\Om$, is a \emph{non-commutative reproducing kernel Hilbert space} (NC-RKHS), if for each $Z \in \Om _n$, $n \in \N$, the linear point evaluation map $\ell _Z : \cH \rightarrow (\C ^{n\times n}, \mr{tr} )$, from $\cH$ into the Hilbert space $\C ^{n \times n}$ (where $\mr{tr}$ denotes the trace), equipped with the Hilbert--Schmidt (trace) inner product, $f \in \cH \stackrel{\ell _Z}{\mapsto} f(Z)$, is bounded \cite{BMV-NCrkhs}. In this case, letting $K_Z := (\ell _Z ) ^* : (\C ^{n\times n}, \mr{tr}) \rightarrow \cH$ denote the Hilbert space adjoint, we have that for any rank-one matrix, $yv^* \in \C ^{n\times n}$, $y,v \in \C ^n$, 
$$ \ipr{K_Z yv^*}{f}_{\cH} = \mr{tr} \, vy^*f(Z) = y^* f(Z) v. $$ That is, inner products against the \emph{NC kernel vector}, $K\{Z,y,v \} := K_Z (yv^*)$, give the bounded linear functional 
$\ell _{Z,y,v} : \cH \rightarrow \C$ defined by $\ell _{Z,y,v} (f) := y^* f(Z) v$. If $\cH$ is any NC-RKHS on $\Om$ with NC kernel vectors $K \{ Z, y , v \}$, one defines the \emph{non-commutative reproducing kernel} of $\cH$, $K : \Om _m \times \Om _n \rightarrow \scr{B} ( \C ^{m \times n})$, by: Given $Z \in \Om _m$, $y,v \in \C ^m$, $X \in \Om_n$ and $x,u \in \C ^n$, 
$$ y^* K(Z,W) [vu^*] x := \ipr{K\{Z,y,v\}}{K\{W,x,u\}}_\cH. $$ This defines a collection of (completely bounded) linear maps $$\{ K (Z,W) [ \cdot ] \in \scr{B} (\C ^{m\times n}) | \ Z \in \Om _m, W \in \Om _n \}, $$ so that $K(Z,Z) [\cdot ] : \C ^{m \times m} \rightarrow \C ^{m \times m}$ is completely positive for each $m \in \N$. Hence, this NC reproducing kernel is called a \emph{completely positive NC (CPNC) kernel}. As shown in \cite[Section 2]{BMV-NCrkhs} one can define a CPNC kernel, $K(Z,W)[\cdot]$, on an NC set $\Om$ as a function on $\Om \times \Om$, taking values in completely bounded linear maps, and that has natural invariance properties with respect to the grading, direct sums and joint similarities. Much of the classical reproducing kernel theory of Aronszajn and Moore extends naturally to this non-commutative context \cite{BMV-NCrkhs}. In particular, there is a bijective correspondence between CPNC kernels, $K$, on an NC set, $\Om$, and NC-RKHS of NC functions on $\Om$: If $\cH$ is an NC-RKHS on $\Om$, then its NC reproducing kernel is a CPNC kernel, and conversely, given a CPNC kernel, $K$ on $\Om$, one can construct an NC-RKHS, $\cH$, of NC functions on $\Om$, so that $K$ is its NC reproducing kernel \cite[Theorem 3.1]{BMV-NCrkhs}. This motivates the notation $\cH = \cH _{nc} (K)$, if $\cH$ is an NC-RKHS with CPNC reproducing kernel, $K$. 

In particular, the free Hardy space, $\hardy$, is an NC-RKHS on the NC set $\Om = \B ^d _\N$, the NC unit row-ball. Given $Z \in \B ^d _m$, $W\in \B ^d _n$ and $P \in \C ^{m \times n}$, the CPNC reproducing kernel of $\hardy$ is the \emph{NC Szeg\"o kernel},
$$ K(Z,W) [P] := \sum _{\ell =0} ^\infty \mr{Ad} _{Z,W^*} ^{\circ \ell} (P) = \sum _{\om \in \F} Z^\om P W^{\om *}, $$
where $\mr{Ad} _{Z,W^*} : \C ^{m \times n} \rightarrow \C ^{m \times n}$ is the linear map of \emph{adjunction} by $Z$ and $W^*$, 
$$ \mr{Ad} _{Z,W^*} (P) := \sum _{j=1} ^d Z_j P W_j ^*, $$ and $\mr{Ad} _{Z,W^*} ^{\circ \ell}$ denotes its $\ell-$fold self composition, and $\mr{Ad} _{Z,W^*} ^{\circ 0} := \mr{id} _{m\times n}$ is defined as the linear identity map on $m\times n$ matrices. (See, for example, \cite{BMV-NCrkhs,JM-freeCE,SSS}.) In the above $W ^{\om *} := (W^\om) ^* = (W^*) ^{\om ^\mrt}$.  

All of the NC-RKHS that we consider in this paper will be NC-RKHS of uniformly analytic NC functions on the NC unit row-ball. However, we will also need to consider vector--valued NC-RKHS. To be precise, let $\cJ$ be a finite or separable Hilbert space. A Hilbert space, $\cH$, of NC $\cJ-$valued functions on $\Om \subseteq \ncu$ is a (vector--valued) NC-RKHS, if for any $Z \in \Om _n$, $n \in \N$, the linear point evaluation map $\ell _Z : \cH \rightarrow \C ^{n \times n} \otimes \cJ$, is a bounded linear map into the Hilbert space $\C^{n \times n} \otimes \cJ$ where $\C^{n\times n}$ is equipped with the Hilbert--Schmidt inner product. In this case, as before, for any $Z \in \Om _m$, $y,v\in \C ^m$, and $g \in \cJ$, the NC kernel vectors, 
$$ k \{ Z , y\otimes g, v \} := \ell _Z ^* ( yv^* \otimes g ) \in \cH, $$ implement the linear functionals of point evaluation
$$ \ipr{k\{Z,y\otimes g,v\}}{f}_{\cH _{nc}(k)} = \ipr{y \otimes g}{f(Z) v}_{\C^m \otimes \cJ}. $$ Also as before,
for $Z \in \Om _m$ and $W \in \Om _n$, one defines the NC reproducing kernel $k$, 
$k (Z,W) [\cdot ] \in \scr{B} ( \C ^{m \times n} , \C ^{m \times n} \otimes \scr{B} (\cJ) )$, by the formula 
$$ (y \otimes g) ^* k(Z,W) [vu^*] (x \otimes g') := \ipr{k\{Z, y\otimes g, v\}}{k\{W, x \otimes g', u\}}_{\cH}, $$ and we write $\cH = \cH _{nc} (k)$.

\subsection{NC de Branges--Rovnyak spaces}

To construct our NC de Branges--Rovnyak model, we will need to introduce (left) NC de Branges--Rovnyak spaces of contractive and operator--valued left multipliers between vector--valued free Hardy spaces. Let $\cH, \cJ$ be complex and separable (or finite) Hilbert spaces and consider the vector--valued free Hardy spaces $\hardy \otimes \cH$ and $\hardy \otimes \cJ$, which can be viewed as NC-RKHS of NC $\cH$ and $\cJ-$valued functions on $\B ^d _\N$, respectively. Here, the $\scr{B} (\cH)-$valued CPNC Szeg\"o kernel, $K$, of $\hardy \otimes \cH$ is
$$ K(Z,W) [\cdot ] \otimes I_\cH [ P \otimes A] := K(Z,W)[P] \otimes A; \quad \quad Z\in \B ^d _m, W \in \B ^d _m, \ P \in \C ^{m \times n}, \ A \in \scr{B} (\cH), $$ where $K$ denotes the NC Szeg\"o kernel of $\hardy$.

We define the (left) \emph{de Branges--Rovnyak space} of $B$, as $\scr{H} (B) := \cH _{nc} (K^B)$, where $K^B$ is the CPNC \emph{de Branges--Rovnyak kernel} defined by 
$$ K^B (Z,W) [P \otimes A] := K(Z,W)[P] \otimes A - B(Z) \cdot  \left( K(Z,W) [P] \otimes A \right) \cdot B(W) ^*, $$ where $K$ denotes, as above, the CPNC Sz\"ego kernel of $\hardy$. (One can also define right NC de Branges--Rovnyak spaces of contractive, operator--valued right multipliers. We have chosen to work with left NC de Branges--Rovnyak spaces.) It is easy to check that as a vector space, $\scr{H} (B) \subseteq \hardy \otimes \cJ$, and the linear embedding $\mr{e} : \scr{H} (B) \hookrightarrow \hardy \otimes \cJ$ is a contraction. That is, $\scr{H} (B)$ is \emph{contractively contained} in $\hardy \otimes \cJ$. 

As in the classical setting, NC de Branges--Rovnyak spaces can be equivalently defined as operator--range spaces \cite{FM2,Sarason-dB}. Given any (bounded) linear operator, $A \in \scr{B} (\cH, \cJ)$, the \emph{operator--range space} of $A$, $\scr{R} (A)$, is a Hilbert space so that $\scr{R} (A) = \nbran A \subseteq \cJ$ as a complex vector space, and is a Hilbert space with respect to the inner product,
$$ \ipr{Ax}{Ay}_{\scr{R} (A)} := \ipr{x}{(I- P_{\nbker A}) y}_\cH. $$ It follows that the embedding, $\mr{e} : \scr{R} (A) \hookrightarrow \cJ$, is bounded with $\| \mr{e} \| = \| A \|$. In particular, if $A$ is a linear contraction, $\| A \| _{\scr{B} (\cH, \cJ )} \leq 1$, then this embedding is contractive and we say $\scr{R} (A)$ is contractively contained in $\cJ$. Moreover, if $A$ is a linear contraction, then one defines the \emph{complementary space} of $A$ as
$\scr{R} ^{\mr{c}} (A) := \scr{R} ( \sqrt{I_\cJ - A A ^*} )$, the operator--range space of the defect operator, $D_{A^*} = \sqrt{I-AA^*}$. See \cite[Chapter 16]{FM2} for more details on operator--range spaces, and \cite[Subsection 2.2]{BMN-LDvRK} for applications to reproducing kernel theory. As shown in \cite{Sarason-dB}, if $b \in [H^\infty _1]$ is a contractive analytic function in $\D$, then its de Branges--Rovnyak space can be defined as $\cH (b) := \scr{R} ^c (b(S))$, where $b(S) = M_b : H^2 \rightarrow H^2$ is the contractive linear map of multiplication by $b$. That is, $\cH (b) = \scr{R} ( \sqrt{I-b(S) b(S) ^*})$. Similarly, given any $B \in [ \mult \otimes \scr{B} (\cH , \cJ) ] _1$, one can show that $\scr{H} (B) = \scr{R} ^c (B(L) )$, where $B(L) := M^L _B : \hardy \otimes \cH \rightarrow \hardy \otimes \cJ$ is the contractive linear map of left multiplication by $B$ \cite{JMfree,JM-freeCE}. NC de Branges--Rovnyak spaces are a faithful analogue of classical, single--variable de Branges--Rovnyak spaces, and much of the classical theory extends to this NC setting, see \emph{e.g.} \cite{JM-subFock,JMfree,JM-freeCE}. In particular, as described in \cite{BBF-ncSchur}, any (left) NC de Branges--Rovnyak space, $\scr{H} (B)$, $B \in [ \mult \otimes \scr{B} (\cH, \cJ) ]_1$ is co-invariant for the \emph{right} free shifts, $R_j ^* \otimes I_\cJ \scr{H} (B) \subseteq \scr{H} (B)$, and while $B 1 \otimes h$, $h \in \cH$, generally does not belong to $\scr{H} (B)$, its backward right shifts always do:
$$ R_j ^* B 1 \otimes h \in \scr{H} (B); \quad \quad h \in \cH, \ 1 \leq j \leq d. $$ In the above, $1 = \fz ^\emptyset$ is the constant function (or ``vacuum vector") in the free Hardy space.

\subsection{Properties of CNC row contractions}

We follow notation from \cite{MR19} and introduce \emph{completely non-coisometric (CNC) row contractions}. Let $\cH$ be a complex, separable, or finite--dimensional Hilbert space. We say that a linear map, $T = (T_1, \cdots, T_d ): \cH \otimes \C ^d \rightarrow \cH$ is a \emph{row contraction} if 
$ \| T \| _{\scr{B} (\cH \otimes \C ^d, \cH)} \leq 1$,  or equivalently 
$$ \| T T^* \| _{\scr{B} (\cH) } = \left\| \sum _{j=1} ^d T_j T_J ^* \right\| \leq 1. $$
If a strict inequality holds in the above, we say that $T$ is a \emph{strict} row contraction. 
\begin{definition}\label{def:cnc.row.contraction}
A row contraction, $T = (T_1,\dots,T_d) : \cH \otimes \bC^d \to \cH$, is \emph{completely non-coisometric (CNC)} if there is no non-trivial subspace $\cK \subseteq \cH$ such that:
    \begin{enumerate}
        \item[(i)] $T_j^* \cK \subseteq \cK$ for each $1 \leq j \leq d$, \emph{i.e.} $\cK$ is co-invariant for $T$, and,
        \item[(ii)] $T^* \vert_{\cK} : \cK \to \cK \otimes \bC^d$ is a column isometry.
    \end{enumerate}
\end{definition}

It will be convenient to first consider \emph{CNC row partial isometries}, \emph{i.e.} CNC row contractions $V : \cH \otimes \bC^d \to \cH$ such that $V^*V$ (and hence $VV^*$) are projections, in which case $V^* V = I_{\nbker V^\perp}$. Any contraction, $T \in \scr{B} (\cH, \cJ)$, has an  \emph{isometric--pure decomposition}, $T=V+C$, into a partial isometry, $V$, and a pure contraction, $C$. Recall that for any row contraction $T$ on $\cH \otimes \bC^d$ the defect operators are defined as $$D_T := \sqrt{I_\cH \otimes I_d -T^* T} \qand D_{T^*} := \sqrt{I_\cH - T T^*}.$$ 

\begin{lemma}{\cite[Lemma 3.15]{MR19}}\label{isopure}
    Every row contraction $T : \cH \otimes \bC^d \to \cH$ can be uniquely decomposed as $T = V + C$, where
    \begin{enumerate}
        \item[$(i)$] $V := T P_{\nbker D_T}$ is a row partial isometry on $\cH \otimes \bC^d$,
        \item[$(ii)$] $C := T P_{\nbker D_T^\perp}$ is \emph{pure}, i.e., $\|C \mbf{h} \| < \|\mbf{h}\| \foral \mbf{h} \in \cH \otimes \bC^d$,
        \item[$(iii)$] $\nbker V = \nbker D_T^\perp$ and $\nbran V = \nbran D_{T^*}^\perp \subseteq \nbran C^\perp$.
    \end{enumerate}

    Consequently, $T$ is CNC if and only if $V$ is CNC.
\end{lemma}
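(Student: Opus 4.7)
The plan is to extract the decomposition from the spectral structure of $T^*T$. Since $D_T = \sqrt{I_\cH \otimes I_d - T^*T}$, the subspace $\nbker D_T$ is precisely the eigenspace of $T^*T$ for eigenvalue $1$, equivalently the maximal subspace on which $T$ acts isometrically. I set $V := T P_{\nbker D_T}$ and $C := T P_{\nbker D_T^\perp}$, so that $T = V + C$ by linearity. Since $\nbker D_T$ is a spectral subspace, $T^*T$ commutes with $P_{\nbker D_T}$, and $V^*V = P_{\nbker D_T} T^*T P_{\nbker D_T} = P_{\nbker D_T}$, showing $V$ is a row partial isometry with initial space $\nbker D_T$. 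For (ii), I decompose a nonzero $\mbf{h} = \mbf{h}_1 + \mbf{h}_2$ with $\mbf{h}_1 \in \nbker D_T$ and $\mbf{h}_2 \in \nbker D_T^\perp$, and compute $\|C\mbf{h}\|^2 = \|T\mbf{h}_2\|^2 = \|\mbf{h}_2\|^2 - \|D_T\mbf{h}_2\|^2$; since $D_T$ is injective on $\nbker D_T^\perp$, this is strictly less than $\|\mbf{h}\|^2$ in every case.

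For (iii), the identity $\nbker V = \nbker D_T^\perp$ is immediate from $V^*V = P_{\nbker D_T}$. For the range identity $\nbran V = \nbker D_{T^*}$, I argue by double inclusion: if $x = V\mbf{h}$ with $\mbf{h} \in \nbker D_T$, then $TT^*x = TT^*T\mbf{h} = T(T^*T\mbf{h}) = T\mbf{h} = x$, so $x \in \nbker D_{T^*}$; conversely, if $TT^*x = x$, then $\|T^*x\|^2 = \ipr{x}{TT^*x} = \|x\|^2$, so $T$ is isometric on $T^*x$, i.e.\ $T^*x \in \nbker D_T$, and hence $VT^*x = TT^*x = x \in \nbran V$. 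This calculation also yields a \emph{key identity} I will reuse: for $x = V\mbf{h}$ with $\mbf{h} \in \nbker D_T$, both $V^*x = P_{\nbker D_T}\mbf{h} = \mbf{h}$ and $T^*x = T^*T\mbf{h} = \mbf{h}$, so $T^*|_{\nbran V} = V^*|_{\nbran V}$. Orthogonality $\nbran V \perp \nbran C$ then follows from $V^*C = P_{\nbker D_T} T^*T P_{\nbker D_T^\perp} = T^*T P_{\nbker D_T} P_{\nbker D_T^\perp} = 0$.

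For uniqueness, I suppose $T = V' + C'$ is any decomposition satisfying (i)--(iii), so in particular $\nbran V' \perp \nbran C'$. On $\nbker V'^\perp$ the orthogonality gives $\|T\mbf{h}\|^2 = \|\mbf{h}\|^2 + \|C'\mbf{h}\|^2$, and the contractivity of $T$ forces $C'\mbf{h} = 0$ and $\mbf{h} \in \nbker D_T$; on $\nbker V'$, purity of $C'$ gives $\|T\mbf{h}\| = \|C'\mbf{h}\| < \|\mbf{h}\|$ for nonzero $\mbf{h}$, so $\mbf{h} \in \nbker D_T^\perp$. Thus $\nbker V' = \nbker D_T^\perp$, forcing $V' = V$ and $C' = C$.

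Finally, for the CNC equivalence the key identity $T^* = V^*$ on $\nbran V = \nbker D_{T^*}$ does all the work. If $\cK$ is a nontrivial subspace witnessing failure of CNC for $T$, then $TT^*x = x$ for $x \in \cK$, so $\cK \subseteq \nbker D_{T^*} = \nbran V$; on this common domain $V^* = T^*$, hence $V^*\cK = T^*\cK \subseteq \cK \otimes \bC^d$ and $V^*|_\cK$ is a column isometry, contradicting CNC of $V$. The converse is symmetric, using that for the partial isometry $V$ one has $\nbker D_{V^*} = \nbran V$ from $VV^* = P_{\nbran V}$. I expect the main obstacle to be pinning down the range identity $\nbran V = \nbker D_{T^*}$ and extracting from it the key identity $T^*|_{\nbran V} = V^*|_{\nbran V}$; once these are established, both uniqueness and the CNC equivalence follow with minimal additional effort.
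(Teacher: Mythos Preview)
The paper does not give its own proof of this lemma; it is quoted verbatim from \cite[Lemma 3.15]{MR19}, so there is no in-paper argument to compare against. Your proof is correct and self-contained. One small point in the uniqueness step: from ``$\mbf{h}\in\nbker V'$ nonzero implies $\|T\mbf{h}\|<\|\mbf{h}\|$'' you only get $\nbker V'\cap\nbker D_T=\{0\}$, not directly $\nbker V'\subseteq\nbker D_T^\perp$; however, combined with the inclusion $\nbker V'^\perp\subseteq\nbker D_T$ you already established, a one-line orthogonal decomposition argument gives the equality $\nbker V'=\nbker D_T^\perp$, so the conclusion stands.
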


Let us introduce some useful objects associated to a given row partial isometry $V$ on $\cH \otimes \bC^d$. For any $Z \in \bB^d_n$ and $n \in \bN$, define the corresponding \emph{restricted range space} in $\cH \otimes \bC^n$ as
\begin{align}
    \cR(V-Z) &= \nbran \big[(V \otimes I_n - I_\cH \otimes Z) (V^*V \otimes I_n)\big] \label{eqn:rest.ran.sp.1} \\
    &= \big[I_\cH \otimes I_n - (V_1^* \otimes Z_1 + \dots + V_d^* \otimes Z_d)\big] (\nbran V \otimes \bC^n). \label{eqn:rest.ran.sp.2}
\end{align}
We will typically refer to the spaces, $\cR (V-Z) ^\perp = \cH \ominus \cR (V-Z)$ as the \emph{$Z$-defect spaces of $V$}.
For notational convenience, we write \eqref{eqn:rest.ran.sp.1} and \eqref{eqn:rest.ran.sp.2} as
\begin{equation}\label{eqn:rest.ran.sp.3}
    \cR(V-Z) = \nbran [(V - Z) V^* V] = [I - ZV^*] (\nbran V \otimes \bC^n),
\end{equation}
where 
$$ ZV^* := (I _\cH \otimes Z_1, \cdots, I_\cH \otimes Z_d ) \bsm V_1 ^* \otimes I_n \vsm \\  \vdots  \vs \\ V_d ^* \otimes I_n \esm = \sum _{j=1} ^d V_j ^* \otimes Z_j. $$ 

For each $n\in\bN$ consider the subspaces,
\begin{equation}\label{eqn:def.H_n'}
    \cH_n' := \bigcap_{Z \in \bB^d_n} \cR(V-Z) \subset \cH \otimes \bC^n,
\end{equation}
and
\begin{equation}\label{eqn:def.H'}
    \cH' := \{ h \in \cH \ | \ h \otimes v \subseteq \cH_n', \ \forall v \in \bC^n,\, n \in \bN \} \subseteq \cH.
\end{equation}

A key relationship between these objects was observed in \cite{MR19}, we mention these results here for the reader's convenience. In the statement below, given a row partial isometry, $V = (V_1, \cdots, V_d) : \cH \otimes \C ^d \rightarrow \cH$ and a row contraction $T: \cJ \otimes \C ^d \rightarrow \cJ$, we say that $T$ \emph{extends} $V$ and write $V \subseteq T$, if $\cH \subseteq \cJ$ and $V = T V^*V$. (Note that this does not necessarily imply that $\nbker V \subseteq \nbker T$.)

\begin{lemma}{\cite[Lemma 3.9]{MR19}}\label{lem:RanVperp.and.R(V-Z).are.iso}
    Let $\cH \subseteq \cJ$ be complex, finite or separable Hilbert spaces, let $V$ be a row partial isometry on $\cH$ and $T$ be a row contraction on $\cJ$ so that $V \subseteq T$. Then,
    \begin{equation*}
        [I - T Z^*] : \cR(V - Z)^\perp \twoheadrightarrow \nbran V^\perp \otimes \bC^n
    \end{equation*}
    is a linear isomorphism (\emph{i.e.} a linear bijection) for each $Z \in \bB^d_n$ and $n \in \bN$.
\end{lemma}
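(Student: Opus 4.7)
The strategy is to exploit the fact that $I - TZ^*$ is a linear bijection on the ambient space $\cJ \otimes \bC^n$, and then separately check that its restriction to $\cR(V-Z)^\perp$ has image exactly $\nbran V^\perp \otimes \bC^n$ (understanding both orthogonal complements inside $\cJ \otimes \bC^n$). Global invertibility is immediate: since $Z \in \bB^d_n$ is strict and $T$ is a row contraction, $\|TZ^*\|_{\cB(\cJ \otimes \bC^n)} \leq \|T\|_\mathrm{row}\|Z^*\|_\mathrm{col} < 1$, so $I - TZ^*$ has a Neumann inverse on $\cJ \otimes \bC^n$; its restriction is then automatically injective, and only the image needs to be identified.

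The principal technical step is the following geometric fact, which uses only the hypothesis $V \subseteq T$: after extending $V$ by zero to a map $\cJ \otimes \bC^d \to \cJ$, one has $T(\nbker V) \subseteq \nbran V^\perp$. This follows from a first-order variational argument. For any $\mathbf{g} \in \nbran V^*$, the relation $V = T V^* V$ gives $T\mathbf{g} = V\mathbf{g}$ with $\|T\mathbf{g}\| = \|\mathbf{g}\|$, so the contraction inequality $\|T(\mathbf{g} + t\mathbf{h})\|^2 \leq \|\mathbf{g} + t\mathbf{h}\|^2$ for $\mathbf{h} \in \nbker V$ and $t \in \bC$, combined with $\mathbf{g} \perp \mathbf{h}$, expands to
\begin{equation*}
    2\re\bigl(\bar t\,\langle V\mathbf{g},\, T\mathbf{h}\rangle\bigr) \;\leq\; |t|^2\bigl(\|\mathbf{h}\|^2 - \|T\mathbf{h}\|^2\bigr).
\end{equation*}
Since this must hold for all complex $t$, the linear term forces $\langle V\mathbf{g},\, T\mathbf{h}\rangle = 0$; as $\mathbf{g}$ sweeps $\nbran V^*$, $V\mathbf{g}$ covers $\nbran V$, yielding $T\mathbf{h} \in \nbran V^\perp$.

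The remainder is algebraic. Since $V - T = -T(I - V^*V) = -T P_{\nbker V}$ on $\cJ \otimes \bC^d$, one has
\begin{equation*}
    (I - TZ^*)x \;=\; (I - VZ^*)x \;-\; T\,P_{\nbker V \otimes \bC^n}(Z^* x), \qquad x \in \cJ \otimes \bC^n,
\end{equation*}
and from $\cR(V-Z) = (I - ZV^*)(\nbran V \otimes \bC^n)$ together with $(ZV^*)^* = VZ^*$ one obtains the characterization $\cR(V-Z)^\perp = \{\, x : (I - VZ^*)x \in \nbran V^\perp \otimes \bC^n\,\}$. For $x \in \cR(V-Z)^\perp$, both summands on the right of the displayed equation lie in $\nbran V^\perp \otimes \bC^n$ (the second by the geometric fact), proving the inclusion. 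For surjectivity, given $w \in \nbran V^\perp \otimes \bC^n$, set $x := (I - TZ^*)^{-1}w$; then $x = w + TZ^* x$, and the projections $P_{\nbran V}(w) = 0$, $P_{\nbran V}(VZ^* x) = VZ^* x$, $P_{\nbran V}(T P_{\nbker V} Z^* x) = 0$ (the last by the geometric fact) combine to give $P_{\nbran V}(x) = VZ^* x$, which is precisely the condition $x \in \cR(V-Z)^\perp$.

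\textbf{Main obstacle.} The only substantive content is the geometric fact $T(\nbker V) \subseteq \nbran V^\perp$; everything else is a direct algebraic consequence. It is exactly here that the interaction between $V$ being isometric on $\nbran V^*$ and $T$ being an extending contraction is used.
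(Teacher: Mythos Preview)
Your proof is correct. The paper does not give its own proof of this lemma; it is quoted verbatim from \cite[Lemma~3.9]{MR19}, so there is nothing here to compare against directly. Your argument is clean and self-contained: the Neumann-series invertibility of $I - TZ^*$ on the ambient space, the characterization $\cR(V-Z)^\perp = \{x : (I - VZ^*)x \in \nbran V^\perp \otimes \bC^n\}$ via $(ZV^*)^* = VZ^*$, and the decomposition $(I-TZ^*)x = (I-VZ^*)x - (T\otimes I_n)P_{\nbker V \otimes \bC^n}(Z^*x)$ are all straightforward. The one genuinely nontrivial step---that $T(\nbker V) \subseteq \nbran V^\perp$---you handle correctly via the first-order variational argument, which is the natural way to extract orthogonality information from the equality case $\|T\mathbf{g}\| = \|\mathbf{g}\|$ for $\mathbf{g} \in \nbran V^*$ inside a contraction. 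Your remark that this is the only place where the interplay between ``$V$ isometric on its initial space'' and ``$T$ a contractive extension'' is used is exactly right.
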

In the above lemma statement,
$$ TZ^* = ( ZT^* ) ^* = \sum _{j=1} ^d T_j \otimes Z_j ^*. $$

For each $u \in \bC^n$ and $n \in \bN$, it will be convenient to define the \emph{tensor evaluation map} $I_\cH \otimes u^* : \cH \otimes \bC^n \to \cH$ given by
\begin{equation} \label{tensormapadj}
    [I_\cH \otimes u^*] (f \otimes v) = (u^*v) f \foral f \in \cH, \, v \in \bC^n.
\end{equation}
The Hilbert space adjoint of this linear map, $[I_\cH \otimes u] := [I_\cH \otimes u^* ] ^* : \cH \rightarrow \cH \otimes \C ^n$ acts as 
\be [I_\cH \otimes u]  h := h \otimes u. \label{tensormap} \ee

\begin{theorem}[Theorem 3.2, \cite{MR19}]\label{thm:char.cnc.row.partial.isom}
    If $V$ is a row partial isometry on $\cH \otimes \bC^d$, then $\cH'$ as in \eqref{eqn:def.H'} is the largest coinvariant subspace of $\cH$ for $V$ such that $V^* \vert_{\cH'}$ is a column isometry. Thus, $V$ is CNC if and only if $\cH = \cH'^\perp$. Moreover, 
    \begin{equation}\label{eqn:model.map.span.cond}
        \cH'^\perp = \bigvee_{\substack{u \in \bC^n,\, Z \in \bB^d_n, \\ n \in \bN}} [I_\cH \otimes u^*] [I - V Z^*]^{-1} (\nbran V^\perp \otimes \bC^n).
    \end{equation}
    
\end{theorem}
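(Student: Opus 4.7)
The plan is to establish a clean algebraic characterization of $\cH'$ from which all the stated claims follow. Fix $Z \in \B^d_n$ and $v \in \C^n$. Since $\|ZV^*\| \leq \|Z\|_{\mr{row}} < 1$, the Neumann expansion
\begin{equation*}
[I-ZV^*]^{-1}(h \otimes v) = \sum_{\omega \in \F} V^{*\omega} h \otimes Z^\omega v
\end{equation*}
converges in norm. Because $\cR(V-Z) = [I-ZV^*](\nbran V \otimes \C^n)$ and $[I-ZV^*]$ is invertible, $h \otimes v \in \cR(V-Z)$ is equivalent to $(I - VV^*) \otimes I_n$ annihilating this resolvent vector, i.e.\ to
\begin{equation*}
\sum_{\omega \in \F} (I-VV^*) V^{*\omega} h \otimes Z^\omega v = 0.
\end{equation*}
To extract each coefficient I would specialize $n = 1 + d + \cdots + d^N$, index a basis of $\C^n$ by the words of length $\leq N$, and let $Z_j = \varepsilon L_j^{(N)}$ act as scaled truncated ``free creation'' operators $e_\beta \mapsto e_{j\beta}$ (with $0 < \varepsilon < 1$, so that $\|Z\|_{\mr{row}} < 1$). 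Then $Z^\omega e_\emptyset = \varepsilon^{|\omega|} e_\omega$ for $|\omega| \leq N$, and letting $N \to \infty$ shows the displayed identity for all $Z, v, n$ forces $(I-VV^*) V^{*\omega} h = 0$ for every $\omega \in \F$. Hence
\begin{equation*}
h \in \cH' \ \iff \ V^{*\omega} h \in \nbran V \quad \text{for every } \omega \in \F.
\end{equation*}

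With this characterization in hand, the remaining claims follow directly. Taking $\omega = \emptyset$ gives $\cH' \subseteq \nbran V$, so $V^*|_{\cH'}$ is a column isometry. Coinvariance is immediate: if $h \in \cH'$, then for every $\omega$ and every $j$, $V^{*\omega}(V_j^* h) = V^{*(\omega j)} h \in \nbran V$, so $V_j^* h \in \cH'$. For maximality, I would let $\cK$ be any coinvariant subspace on which $V^*$ is a column isometry; then $\cK \subseteq \nbran V$, and iterating coinvariance yields $V^{*\omega} \cK \subseteq \cK \subseteq \nbran V$ for every $\omega$, whence $\cK \subseteq \cH'$. The CNC equivalence $\cH' = \{0\} \iff \cH'^\perp = \cH$ is then just the definition.

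Finally, for the spanning formula \eqref{eqn:model.map.span.cond} I would invoke Lemma \ref{lem:RanVperp.and.R(V-Z).are.iso} with $T = V$, giving the linear bijection $\cR(V-Z)^\perp = [I-VZ^*]^{-1}(\nbran V^\perp \otimes \C^n)$. Since $\cH_n'^\perp = \bigvee_{Z \in \B^d_n} \cR(V-Z)^\perp$ and the adjoint tensor--evaluation identity reads $\langle h \otimes v, g \rangle = \langle h, [I_\cH \otimes v^*] g\rangle$, membership $h \in \cH'^\perp$ is equivalent to orthogonality against all vectors $[I_\cH \otimes v^*][I-VZ^*]^{-1} f$ with $f \in \nbran V^\perp \otimes \C^n$, which is precisely the stated closed span. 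The main obstacle in the entire argument is the NC power--series identification at the start; everything else is algebraic bookkeeping.
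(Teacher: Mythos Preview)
The paper does not prove this theorem; it is quoted from \cite{MR19} and used as a black box. Your argument is essentially correct and self-contained. The key reduction
\[
h \in \cH' \quad \Longleftrightarrow \quad V^{*\omega} h \in \nbran V \ \text{for all } \omega \in \F
\]
is exactly what one needs, and your extraction of coefficients via truncated free shifts is clean (the $Z^\omega e_\emptyset$ are nonzero scalar multiples of distinct basis vectors for $|\omega|\le N$ and vanish for $|\omega|>N$, so the Neumann series collapses to a finite sum with linearly independent second tensor factors).

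There is one slip in the last paragraph: you wrote ``membership $h \in \cH'^\perp$ is equivalent to orthogonality against all vectors $[I_\cH \otimes v^*][I-VZ^*]^{-1}f$''. This should read $h \in \cH'$, not $h \in \cH'^\perp$. The logic is: $h \in \cH'$ iff $h\otimes v \in \cH_n'$ for all $v,n$, iff $h\otimes v \perp \cR(V-Z)^\perp$ for all $Z,v,n$, iff $h \perp [I_\cH \otimes v^*]\cR(V-Z)^\perp$ for all $Z,v,n$. Hence $\cH'$ is the orthogonal complement of the closed span on the right of \eqref{eqn:model.map.span.cond}, and taking one more complement gives $\cH'^\perp$ equal to that span. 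As written, your sentence asserts the wrong containment; swap $\cH'^\perp$ for $\cH'$ and add the final ``take perps'' step.
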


\begin{corollary}[Corollary 3.4, \cite{MR19}]\label{cor:char.CNC.row.contractions}
    If $T$ is a row contraction on $\cH \otimes \bC^d$, then $T$ is CNC if and only if
    \begin{equation*}
        \cH = \bigvee_{\substack{u \in \bC^n,\, Z \in \bB^d_n, \\ n \in \bN}} [I_\cH \otimes u^*] [I - T Z^*]^{-1} (\nbran D_{T^*} \otimes \bC^n).
    \end{equation*}
\end{corollary}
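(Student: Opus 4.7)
The plan is to reduce directly to Theorem~\ref{thm:char.cnc.row.partial.isom} via the isometric--pure decomposition from Lemma~\ref{isopure}. First I would write $T = V + C$ uniquely, where $V$ is a row partial isometry on $\cH \otimes \bC^d$ and $C$ is pure, and extract three facts from that lemma: (a) $T$ is CNC if and only if $V$ is CNC; (b) $\nbran V = \nbran D_{T^*}^\perp$, so that $\overline{\nbran D_{T^*}} = \nbran V^\perp$; and (c) $V^*V = P_{\nbker D_T}$, whence $T V^*V = V$, i.e.\ $V \subseteq T$ in the sense required by Lemma~\ref{lem:RanVperp.and.R(V-Z).are.iso}.

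The crux is to show that the two resolvents $(I - TZ^*)^{-1}$ and $(I - VZ^*)^{-1}$ map the same subspace of $\cH \otimes \bC^n$ onto the same restricted range space, despite $T \neq V$. Since $Z \in \bB^d_n$ is a strict row contraction, both $I - TZ^*$ and $I - VZ^*$ are boundedly invertible on $\cH \otimes \bC^n$. Applying Lemma~\ref{lem:RanVperp.and.R(V-Z).are.iso} first with the pair $(V, T)$ (using $V \subseteq T$) and then with $(V, V)$, I would obtain
\[
[I - TZ^*]^{-1}(\nbran V^\perp \otimes \bC^n) \;=\; \cR(V-Z)^\perp \;=\; [I - VZ^*]^{-1}(\nbran V^\perp \otimes \bC^n).
\]
Using (b) above together with the boundedness of $[I - TZ^*]^{-1}$ and of the tensor evaluation $[I_\cH \otimes u^*]$, the dense inclusion $\nbran D_{T^*} \subseteq \nbran V^\perp$ can be upgraded to equality of closed spans, giving
\[
\bigvee_{n,Z,u} [I_\cH \otimes u^*][I - TZ^*]^{-1}(\nbran D_{T^*} \otimes \bC^n) \;=\; \bigvee_{n,Z,u} [I_\cH \otimes u^*][I - VZ^*]^{-1}(\nbran V^\perp \otimes \bC^n).
\]

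Theorem~\ref{thm:char.cnc.row.partial.isom} then identifies the right-hand side with $\cH$ precisely when $V$ is CNC, which by (a) is equivalent to $T$ being CNC, closing both directions of the corollary. I do not anticipate any real obstacle here: the only delicate step is the density swap from $\nbran D_{T^*}$ to $\nbran V^\perp$ inside the closed span, but Lemma~\ref{isopure}(iii) together with the boundedness of every operator in sight makes this automatic.
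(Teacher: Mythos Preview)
Your argument is correct. The paper does not actually supply a proof of this corollary; it is quoted from \cite{MR19}, and your deduction is exactly the natural one from the three ingredients the paper also imports from that reference: Lemma~\ref{isopure} (for the isometric--pure decomposition $T=V+C$, the identities $\overline{\nbran D_{T^*}}=\nbran V^\perp$ and $TV^*V=V$, and the equivalence $T$ CNC $\Leftrightarrow$ $V$ CNC), Lemma~\ref{lem:RanVperp.and.R(V-Z).are.iso} (applied once with $T\supseteq V$ and once with $V\supseteq V$ to identify both resolvent images with $\cR(V-Z)^\perp$), and Theorem~\ref{thm:char.cnc.row.partial.isom}. The only point worth making explicit is that Lemma~\ref{lem:RanVperp.and.R(V-Z).are.iso} gives $[I-TZ^*]|_{\cR(V-Z)^\perp}$ bijective onto $\nbran V^\perp\otimes\bC^n$, and since $I-TZ^*$ is globally invertible on $\cH\otimes\bC^n$, its global inverse indeed carries $\nbran V^\perp\otimes\bC^n$ back onto $\cR(V-Z)^\perp$; you use this implicitly and it is fine.
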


One can further show that 
$$ \bigvee_{\substack{u \in \bC^n,\, Z \in \bB^d_n, \\ n \in \bN}} [I_\cH \otimes u^*] [I - T Z^*]^{-1} (\nbran D_{T^*} \otimes \bC^n) = \bigvee _{\om \in \mathbb{F} _d ^+} T^\om \,\nbran D_{T^*}, $$ so that $T$ is CNC if and only if 
\be \cH = \bigvee _{\om \in \mathbb{F} _d ^+} T^\om  \, \nbran D_{T^*}, \label{CNCformula} \ee see \cite[Corollary 3.4]{MR19}.

\subsection{Gleason solutions}\label{subsec:Gleason.solutions}

Concrete examples of CNC row contractions and CNC partial isometries are provided by the extremal \emph{Gleason solutions} on free de Branges--Rovnyak spaces. (And our model will show that all CNC row contractions are given by such Gleason solutions, up to unitary equivalence.) 

\begin{definition}\label{def:Gleason.soln}
    Let $\cH _{nc} (K)$ be an $\cH-$valued NC-RKHS on the NC unit row-ball $\rball$. A row $d-$tuple of operators $X = (X_1, \cdots, X_d ) \in \scr{B} (\cH _{nc} (K) ) ^{1\times d}$ is said to be a \emph{right Gleason solution} for $\cH _{nc} (K)$, if for every $f \in \cH _{nc} (K)$ and every $Z=(Z_1, \cdots, Z_d) \in \bB^d_n$, 
    \be\label{eqn:def.Gleason.soln} (X^*f)(Z) \cdot Z := \sum _{j=1} ^d (X_j ^* f) (Z) Z_j = f(Z) - f(0_n), \ee see \cite{JMfree,JM-freeCE}. Such a (right) Gleason solution is said to be \emph{contractive}, if 
    $$ XX^* \leq I - K_0 {K_0}^*, $$ and \emph{extremal} if equality holds: $XX^* = I -K_0 {K_0} ^*$.    
\end{definition}

Fix Hilbert spaces $\cH$, $\cJ$, and let $B \in \sS_d(\cH, \cJ) = [\bH^\infty_d \otimes \sB(\cH,\cJ]_1$ be a contractive left multiplier from $\bH^2_d \otimes \cH$ into $\bH^2_d \otimes \cJ$. We consider the left de Branges--Rovnyak space $\scr{H}(B) \subseteq \bH^2_d \otimes \cJ$, which is a contractive containment. Since $\sH(B)$ is co-invariant for the right free shifts \cite{BBF-ncSchur}, we define
\begin{equation}
    X_j^* := R_j^* \otimes I_{\cJ} \big\vert_{\sH(B)} \foral 1 \leq j \leq d. \label{ncGS}
\end{equation}
By the results of \cite{JMfree,JM-freeCE}, given any NC de Branges--Rovnyak space, $\scr{H} (B)$, with $B \in \sS_d(\cH, \cJ)$, $\scr{H} (B)$ has the unique contractive Gleason solution given by \eqref{ncGS}, $X^* := R^* \otimes I _{\cJ _0}$, the restriction of the backward right shifts to $\scr{H} (B)$. 

\begin{definition} \label{def:GSforB}
A linear map $\mbf{B} : \cH \rightarrow \scr{H} (B) \otimes \C ^d$ is said to be a \emph{Gleason solution for $B$} if 
\be \mbf{B} (Z) \cdot Z := \sum _{j=1} ^d \mbf{B} _j (Z) Z_j = B(Z) - B(0_n). \label{GSforB} \ee Such a Gleason solution, $\mbf{B}$, is \emph{contractive}, if 
$$ \mbf{B} (Z) ^* \mbf{B(Z)} \leq I_\cH - B(0) ^* B(0), $$ and \emph{extremal} if equality holds. 
\end{definition}

Again, by \cite{JMfree,JM-freeCE}, the unique contractive Gleason solution for any $B \in \sS_d (\cH, \cJ)$ is given by the backward right shifts of $B$, \emph{i.e.} $\mbf{B} = [R^* \otimes I_\cJ] B$. 
By \cite[Theorem 6.4]{JM-freeCE}, the following are equivalent:
\bi
    \item[(i)] $B \in \sS _d (\cH, \cJ)$ is column--extreme (CE). 
    \item[(ii)] $\mbf{B} := R^* \otimes I_\cJ B$ is extremal, and
    \item[(iii)] $X := [R^* \otimes I_\cJ | _{\scr{H} (B)} \big] ^*$ is extremal and 
    $$ \mr{supp} (B) =\bigvee _{\substack{Z \in \B ^d _n; \\ y,v \in \C ^n, \ n \in \N }} I_\cH \otimes y^* B(Z) ^* I_\cJ \otimes v = \cH. $$
\ei
One can verify the following relationship between $X$ and $\mbf{B}$ which will be useful in the sequel. (This is the ``left" version of \cite[Proposition 5.6]{JM-freeCE}.)
\begin{prop} \label{GSkernel}
Let $B \in \sS_d (\cH, \cJ)$ and let $X,\mbf{B}$ be the unique, contractive Gleason solutions for $\scr{H} (B)$ and $B$, respectively. Then, for any $W \in \bB^d_n$, $x, u \in \bC^n$, $g \in \cJ$ and $n \in \bN$,
\begin{equation}\label{eqn:X^*.b.intertwine.ker.action}
    X^* K^B \{ W, g \otimes x, u \} = K^B \{W, g \otimes x, \mr{col} (W) u \} - \mbf{B} [I_\cH \otimes u^*] B(W) ^* g \otimes x.
\end{equation}
\end{prop}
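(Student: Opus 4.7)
The plan is to compute $X^* K^B\{W, g\otimes x, u\}$ by lifting the kernel vector from $\scr{H}(B)$ to the ambient $\hardy \otimes \cJ$ and exploiting the commutation, and failure of commutation, between the backward right shifts and the left multiplication operator $B(L)$. Since $\scr{H}(B) = \scr{R}(D_{B(L)^*})$ is contractively contained in $\hardy \otimes \cJ$ via an inclusion whose adjoint is $I - B(L) B(L)^*$, the de Branges--Rovnyak kernel vector is realised in the ambient space as
$$ K^B\{W, g\otimes x, u\} \;=\; (I - B(L) B(L)^*)\, K\{W, g\otimes x, u\}, $$
where $K$ denotes the NC Szeg\"o kernel. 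Because this inclusion intertwines $X_j^* = R_j^* \otimes I_\cJ|_{\scr{H}(B)}$ with the ambient operator $R_j^* \otimes I_\cJ$ (co-invariance of $\scr{H}(B)$ for the backward right shifts), the task reduces to applying $R_j^* \otimes I_\cJ$ to the right-hand side.

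The crux is the following commutator identity. Although $R_j$ commutes with each $L_k$, the adjoint $R_j^*$ does not: a direct calculation on monomials shows $[R_j^*, L^\om] f = f(0) \cdot R_j^* \fz^\om$. Summing over $\om$ against the operator Taylor coefficients of $B = \sum_\om \fz^\om \otimes B_\om$, and recognising that $(R_j^* \otimes I_\cJ) B = \sum_\om \fz^\om \otimes B_{\om j}$ is exactly the $j$-th component $\mbf{B}_j$ of the Gleason solution $\mbf{B} = [R^* \otimes I_\cJ] B$, one obtains, for every $\phi \in \hardy \otimes \cH$,
$$ (R_j^* \otimes I_\cJ)\, B(L)\, \phi \;=\; B(L)\, (R_j^* \otimes I_\cH)\, \phi \;+\; \mbf{B}_j\, \phi(0). $$
In contrast, taking adjoints of the standard commutation $(R_j \otimes I_\cJ) B(L) = B(L) (R_j \otimes I_\cH)$ yields $(R_j^* \otimes I_\cH) B(L)^* = B(L)^* (R_j^* \otimes I_\cJ)$, so $R_j^*$ does pass through $B(L)^*$ unchanged, up to the tensor-factor swap.

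Setting $\phi := B(L)^* K\{W, g\otimes x, u\}$ and combining these identities gives
$$ (R_j^* \otimes I_\cJ)(I - B(L) B(L)^*) K\{W, g\otimes x, u\} \;=\; (I - B(L) B(L)^*)(R_j^* \otimes I_\cJ) K\{W, g\otimes x, u\} \;-\; \mbf{B}_j\, (B(L)^* K\{W, g\otimes x, u\})(0). $$
Three routine reproducing-kernel calculations then finish the proof: (i) $(R_j^* \otimes I_\cJ) K\{W, g\otimes x, u\} = K\{W, g\otimes x, W_j u\}$, since $(R_j f)(W) = f(W) W_j$; (ii) $B(L)^* K\{W, g\otimes x, u\}$ equals the NC kernel vector of $\hardy \otimes \cH$ at $(W, B(W)^* g\otimes x, u)$, by the multiplier property $(B \phi)(W) = B(W) \phi(W)$; and (iii) the value of this vector at the origin of the NC universe is $[I_\cH \otimes u^*] B(W)^* g\otimes x$. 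Substituting and assembling the $d$ components into a column, with $\col(W) u = (W_j u)_{j=1}^d$, produces \eqref{eqn:X^*.b.intertwine.ker.action}. The principal obstacle is the commutator identity in the middle paragraph: the failure of $R_j^*$ to commute with $B(L)$ is precisely what produces the extra Gleason correction $\mbf{B}\,[I_\cH \otimes u^*] B(W)^* g\otimes x$, which would be absent from a naive backward-shift computation.
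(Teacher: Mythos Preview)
Your argument is correct. Note, however, that the paper does not actually supply its own proof of this proposition: it simply states the identity and cites it as the ``left'' version of \cite[Proposition 5.6]{JM-freeCE}. So there is nothing to compare against in the paper itself.

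Your route---realising $K^B\{W,g\otimes x,u\}$ as $(I-B(L)B(L)^*)K\{W,g\otimes x,u\}$ via the adjoint of the contractive inclusion $e:\scr{H}(B)\hookrightarrow\hardy\otimes\cJ$ (indeed $ee^*=D_{B(L)^*}^2$), then pushing $R_j^*\otimes I_\cJ$ through using the commutator identity
\[
(R_j^*\otimes I_\cJ)\,B(L)\,\phi \;=\; B(L)\,(R_j^*\otimes I_\cH)\,\phi \;+\; \mbf{B}_j\,\phi(0),
\]
together with the exact commutation $(R_j^*\otimes I_\cH)B(L)^*=B(L)^*(R_j^*\otimes I_\cJ)$---is clean and self-contained. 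The three reproducing-kernel facts you invoke at the end are all routine and correct; in particular $K\{W,h,u\}(0)=[I_\cH\otimes u^*]h$ for the vector-valued NC Szeg\"o kernel, which gives the claimed evaluation of $(B(L)^*K\{W,g\otimes x,u\})(0)$. One small point worth making explicit: after obtaining the identity in the ambient space $\hardy\otimes\cJ$, you are implicitly using that the inclusion $e$ is injective to deduce the identity in $\scr{H}(B)$ itself; this is immediate but should be said.
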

In the above, 
$$ K^B \{W, g \otimes x, \mr{col} (W) u \} := \begin{bmatrix} K ^B \{W, g \otimes x, W_1 u \}  \\ \vdots \\ K ^B \{W, g \otimes x, W_d u \} \end{bmatrix} \in \scr{H} (B) \otimes \C ^d. $$

We now claim that $X$ is CNC whenever $B$ is column-extreme. We will later see in Section \ref{subsec:main.model} that every CNC row contraction can be modeled by the extremal Gleason solution for the (left) NC de Branges--Rovnyak space of some CE $B \in \sS _d (\cH, \cJ)$. Let us now assume that $B$ is column-extreme (CE) so that $X$ is the unique extremal Gleason solution, \emph{i.e.}
\begin{equation*}
    X X^* = I_{\sH(B)} - K_0^B {K_0^B}^*.
\end{equation*}
Here, $K_0^B : \cJ \to \sH(B)$ is defined as
\begin{equation*}
    K_0^Bg := K^B\{ 0, g \otimes 1, 1 \} \foral g \in \cJ.
\end{equation*}
Then,
\begin{equation}\label{eqn:K^b(0,0)}
    K^B(0,0) = {K_0^B}^* K_0^B = I_{\cJ} - B(0) B(0)^* = D_{B(0)^*}^2
\end{equation}
and setting
\begin{equation*}
    \sH_0 := \bigvee_{g \in \cJ} K_0^Bg \subseteq \sH(B),
\end{equation*}
we have
\begin{equation}
    P_{\sH_0} = K_0^B K^B(0,0)^{-1} {K_0^B}^* = K_0^B D_{B(0)^*}^{-2}{K_0^B}^*. \label{project}
\end{equation}
\begin{remark}
    By Lemma \ref{strict}, $B \in \sS_d (\cH, \cJ)$ is strictly contractive if and only if $\| B(0) \| <1$, in which case both defect operators $D_{B(0)^*}$ and $D_{B(0)}$ are invertible. If $B$ is only purely contractive, then the defect spaces $\scr{D} _{B(0)^*}$ and $\scr{D} _{B(0)}$ are non-closed, dense subspaces of $\cJ$ and $\cH \otimes \C ^d$, respectively, so that $D_{B(0)^*} ^{-1}$ and $D_{B(0)} ^{-1}$ are closed and densely--defined operators on these defect spaces. Since $K_0 ^{B *} K_0 ^B = D_{B(0) ^*} ^2$, it follows, from the polar decomposition that $\nbran K_0 ^{B *}  \subseteq \nbdom D_{B(0) ^*} ^{-1}$, and the projection operator in \eqref{project} is still well-defined.
\end{remark}

\begin{remark}\label{rem:X.partial.iso.iff.b(0)=0}
    It is worth pointing out that $X$ is a row partial isometry if and only if $B(0) = 0$. To see this, note that
    \begin{alignat*}{2}
        && X X^* X X^* &= X X^* \\
        \iff && \big(I_{\sH(B)} - K_0^b {K_0^B}^*\big)\big(I_{\sH(B)} - K_0^B {K_0^B}^*\big) &= \big(I_{\sH(B)} - K_0^B {K_0^B}^*\big) \\
        \iff && K_0^B \big[I_{\sH(B)} - D_{B(0)^*}^2\big] {K_0^B}^* &= 0 \\
        \implies && {K_0^B}^* K_0^B \big(B(0)B(0)^*\big) {K_0^B}^* K_0^B &= 0 \\
        \implies && D^2_{B(0)^*} B(0)B(0)^* D^2_{B(0)^*} &= 0.
    \end{alignat*}
    Thus, if $X$ is a row partial isometry, then the injectivity of $D_{B(0)^*}$ forces $B(0) = 0$. Conversely, if $B(0) = 0$ then $D_{B(0)^*} = I_{\sH(B)}$, and we can trace the reverse implications above to conclude that $X$ is a row partial isometry.
\end{remark}

Observe that
\begin{alignat}{2}
    &&D^2_{X^*} &= I_{\sH(B)} - X X^* = K_0^B {K_0^B}^*, \nonumber\\
    \implies&& \nbker D_{X^*} &= \nbker D_{X^*}^2 = \sH_0^\perp, \nonumber\\
    \implies&& P_{\nbker D_{X^*}} &= I_{\sH(B)} - \left( K_0^B K^B(0,0)^{-1} {K_0^B}^* \right), \nonumber\\
    \implies&& V^* := X^* P_{\nbker D_{X^*}} &= X^* \left[I_{\sH(B)} - K_0^B D_{B(0)^*}^{-2} {K_0^B}^*\right].\label{eqn:partial.iso.part,of.Gleason.soln}
\end{alignat}
By Lemma \ref{isopure}, $V$ is a row partial isometry. It is easy to then verify that
\begin{equation}\label{eqn:range.defect.T*}
    \nbran D_{X^*} = \nbran V^\perp = \sH_0 = \bigvee \nbran K_0 ^B
\end{equation}
and, consequently, if $B(0)$ is a pure contraction, $\Gamma(0) := K_0^B D_{B(0)^*}^{-1} : \scr{D} _{B(0)^*} \subseteq \cJ \twoheadrightarrow \nbran V^\perp$ extends by continuity to a surjective isometry from $\cJ$ onto $\nbran V^\perp$. (If $B(0)$ is a strict contraction, $D_{B(0)^*}$ is bijective and invertible.)

\begin{lemma}\label{lem:ker.action.model}
Let $B \in \sS _d (\cH, \cJ)$, as above.  For any $Z \in \bB^d_n$, $u, x \in \bC^n$, $g \in \cJ$ and $n \in \bN$, we have
    \begin{equation}\label{eqn:ker.action.model}
        [I_{\sH(B)} \otimes u^*] [I - X Z^*]^{-1} (K_0^B g \otimes x) = K^B\{Z, g \otimes x, u\}.
    \end{equation}
    In particular,
    \begin{equation}\label{eqn:dBR.sp.span.Gamma(0)}
        \sH(B) = \bigvee_{\substack{u \in \bC^n,\, Z \in \bB^d_n, \\ n \in \bN}} [I_{\sH(B)} \otimes u^*][I - X Z^*]^{-1} (\nbran \Gamma(0) \otimes \bC^n).
    \end{equation}
\end{lemma}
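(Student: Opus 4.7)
I would prove \eqref{eqn:ker.action.model} first, then deduce \eqref{eqn:dBR.sp.span.Gamma(0)} as an immediate corollary. For \eqref{eqn:ker.action.model}, fix $Z \in \bB^d_n$, $g \in \cJ$, $x \in \bC^n$, and define $\Phi(Z) \in \sH(B) \otimes \bC^n$ by $[I_{\sH(B)} \otimes u^*]\Phi(Z) := K^B\{Z, g \otimes x, u\}$ for all $u \in \bC^n$, which makes sense by the conjugate linearity of NC kernel vectors in their final slot. Since $\|XZ^*\| \leq \|X\|\,\|Z\|_{\mr{row}} < 1$ on $\bB^d_n$, the inverse $[I - XZ^*]^{-1}$ is a well-defined bounded operator on $\sH(B) \otimes \bC^n$, and \eqref{eqn:ker.action.model} reduces to the operator equation $(I - XZ^*)\Phi(Z) = K_0^B g \otimes x$.

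This equation I would verify by applying $[I_{\sH(B)} \otimes u^*]$ to both sides and pairing against an arbitrary $f \in \sH(B)$. On the left, the $\Phi(Z)$ contribution pairs as $\ipr{g \otimes x}{f(Z) u}$ by the reproducing property, while unwinding $XZ^* = \sum_j X_j \otimes Z_j^*$ produces $\sum_j \ipr{g \otimes x}{(X_j^* f)(Z)\, Z_j u} = \ipr{g \otimes x}{(f(Z) - f(0_n)) u}$, using the right Gleason identity \eqref{eqn:def.Gleason.soln} for $X$. Subtracting yields $\ipr{g \otimes x}{f(0_n)u}$. On the right, $[I_{\sH(B)} \otimes u^*](K_0^B g \otimes x) = (u^*x) K_0^B g$ pairs with $f$ as $(x^*u)\ipr{g}{f(0_1)}_\cJ = \ipr{g \otimes x}{f(0_n) u}$, using that the direct-sum-respecting evaluation convention of Remark \ref{matrixeval} makes $f(0_n) u$ a parallel copy of the constant $f(0_1)$. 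Hence both sides produce the same functional on $\sH(B)$, establishing the operator identity.

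For \eqref{eqn:dBR.sp.span.Gamma(0)}, the identity \eqref{eqn:ker.action.model} immediately shows that every NC kernel vector $K^B\{Z, g \otimes x, u\}$ of $\sH(B)$ belongs to the closed span on the right, because $K_0^B g \in \nbran K_0^B \subseteq \nbran \Gamma(0)$ by \eqref{eqn:range.defect.T*} and the discussion preceding the lemma. Since such NC kernel vectors span $\sH(B)$, this gives the non-trivial inclusion; the reverse is automatic as $[I - XZ^*]^{-1}$ preserves $\sH(B) \otimes \bC^n$.

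The main technical obstacle is not depth but bookkeeping with conventions: one must track the conjugate linearity of NC kernel vectors in the $u$-slot, the tensor-swap convention of Remark \ref{matrixeval} that gives meaning to $f(0_n) u$, and ensure intertwinings commute correctly through tensor products. With the conventions pinned down, the substantive mathematical input is the right Gleason property of $X$, equivalently Proposition \ref{GSkernel}.
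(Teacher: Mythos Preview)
Your proof is correct and takes a genuinely different route from the paper's. The paper expands the resolvent as a Neumann series: moving the adjoint across, it computes
\[
\bigl\langle K_0^B g \otimes x,\ [I - X^* Z]^{-1}(f \otimes u)\bigr\rangle
= \sum_{\alpha \in \F} x^* Z^\alpha u\ \bigl\langle K_0^B g,\ X^{\alpha^\mrt *} f\bigr\rangle,
\]
then identifies each summand with a Taylor coefficient via the backward right-shift action $R^{\alpha^\mrt *}\otimes I_\cJ$ on free power series, and resums to obtain $\langle g\otimes x, f(Z)u\rangle$. By contrast, you never expand the resolvent: you package the kernel vectors into a single $\Phi(Z)$ and reduce to the identity $(I-XZ^*)\Phi(Z)=K_0^B g\otimes x$, which follows in one stroke from the right Gleason equation \eqref{eqn:def.Gleason.soln}. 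Your approach is more conceptual and uses only the abstract defining property of $X$, while the paper's argument is more explicit and makes the Taylor-coefficient mechanism visible; both reach the same conclusion with comparable effort. The second claim \eqref{eqn:dBR.sp.span.Gamma(0)} is handled identically in both.
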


\begin{proof}
    Let $f \in \sH(B)$ be fixed but arbitrary. Then $f \in \hardy \otimes \cJ$ and hence is represented by a free FPS, 
$$ f = \sum _{\om \in \F} \hat{f} _\om \fz ^\om = \sum \hat{f} _{\om ^\mrt} \fz ^{\om ^\mrt}, $$ where $\fz = (\fz _1, \cdots, \fz _d)$ is a $d-$tuple of formal NC variables, and the coefficients, $\hat{f} _\om \in \cJ$ are square--summable, $\sum \| \hat{f} _\om \| _\cJ ^2 < +\infty$. For any $\alpha \in \F$, 
$$ R ^{\alpha ^\mrt *} \otimes I_\cJ f  =  \sum _{\om \in \F} \hat{f} _\om \underbrace{R ^{\alpha ^\mrt *} \fz ^\om}_{= R ^{\alpha ^\mrt *} R^{\om ^\mrt} 1}, $$ where $R ^{\alpha ^\mrt *} R^{\om ^\mrt} 1$ is $0$ unless $\om ^\mrt = \alpha ^\mrt \beta ^\mrt$ for some $\beta \in \F$, so that 
$$  R^{\alpha ^\mrt *} \otimes I_\cJ f = \sum _{\beta \in \F} \hat{f} _{\beta \alpha} R^{\beta ^\mrt} 1 = \sum _\beta \hat{f} _{\beta \alpha} \fz ^\beta. $$

Consequently, for any $g \in \cJ$ and any $\alpha \in \F$,
\be (K_0^B g)^* X^{\alpha ^\mrt *} f = g^* (X^{* \alpha} f  ) (0) = g^* \hat{f} _{\alpha}. \ee
We can therefore calculate
    \begin{align*}
        \bip{[I_{\sH(b)} \otimes u^*] [I - X & Z^*]^{-1} (K_0^B g \otimes x), f}_{\sH(B)} \\
        & = \bip{ K_0^B g \otimes x, [I - X^* Z]^{-1} (f \otimes u)}_{\sH(B) \otimes \bC^n} \\
        & = \sum_{\alpha \in \bF^+_d} \bip{K_0^B g \otimes x, ({X^*}^\alpha \otimes Z^\alpha) (f \otimes u)}_{\sH(B) \otimes \bC^n} \\
        & = \sum_{\alpha \in \bF^+_d} x^* Z^\alpha u \bip{K_0^B g, {X^{\alpha^\dagger}}^* f}_{\sH(B)} \\
        & = \sum_{\alpha \in \bF^+_d} x^* Z^\alpha u \, \ip{g, \hat{f_\alpha}}_{\cJ_0} \\
        & = \ip{g \otimes x, f(Z)u}_{\cJ \otimes \bC^n} \\
        & = \bip{K^B\{Z, g \otimes x, u\}, f}_{\sH(B)}.
    \end{align*}
    Since $f \in \scr{H} (B)$ was chosen arbitrarily, we obtain \eqref{eqn:ker.action.model}.
\end{proof}

Applying Corollary \ref{cor:char.CNC.row.contractions}, Lemma \ref{lem:ker.action.model} and \eqref{eqn:range.defect.T*} establishes that $X$ is CNC.

\begin{corollary}\label{cor:X.is.CNC}
    Let $B \in \sS_d(\cH, \cJ)$ be a CE and purely contractive (left) multiplier. Then, the extremal Gleason solution $X$ for $\sH(B)$ is CNC.
\end{corollary}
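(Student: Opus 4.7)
The plan is to apply the CNC characterization of Corollary \ref{cor:char.CNC.row.contractions} directly to the row contraction $X$ on $\sH(B)$. By that criterion, $X$ is CNC if and only if
\[
\sH(B) \;=\; \bigvee_{\substack{u \in \bC^n,\, Z \in \bB^d_n,\\ n \in \bN}} [I_{\sH(B)} \otimes u^*]\,[I - X Z^*]^{-1}\bigl(\nbran D_{X^*} \otimes \bC^n\bigr),
\]
so my entire task reduces to establishing this span identity for $\sH(B)$.

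Almost all of the needed content is already in place: Lemma \ref{lem:ker.action.model} delivers exactly this span formula, but with the seed subspace $\nbran \Gamma(0)$ replacing $\nbran D_{X^*}$. Because only the closed linear span of the seeds appears, it suffices to verify that $\overline{\nbran \Gamma(0)} = \nbran D_{X^*}$. By \eqref{eqn:range.defect.T*} we have $\nbran D_{X^*} = \nbran V^\perp = \sH_0$. On the other hand, $\Gamma(0) = K_0^B D_{B(0)^*}^{-1}$ and, since $B$ is purely contractive, $D_{B(0)^*}$ has trivial kernel and dense range; as recorded just before Lemma \ref{lem:ker.action.model}, $\Gamma(0)$ therefore extends by continuity to a surjective isometry from $\cJ$ onto $\nbran V^\perp$. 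The two seed spaces thus have the same closure, the span identity follows from Lemma \ref{lem:ker.action.model}, and Corollary \ref{cor:char.CNC.row.contractions} then concludes that $X$ is CNC.

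I do not anticipate any substantive obstacle. The heavy lifting has been done in Lemma \ref{lem:ker.action.model} (the reproducing-kernel identity that intertwines powers of the backward right shifts with NC evaluations) and in the operator-range computation \eqref{eqn:partial.iso.part,of.Gleason.soln}--\eqref{eqn:range.defect.T*}; the corollary is essentially a one-line assembly. The only mild subtlety arises when $B(0)$ is purely but not strictly contractive, in which case $\Gamma(0)$ is only densely defined before extension; this is harmless because closed linear spans are insensitive to norm closures of the seed set, and if $B(0)$ is a strict contraction the subtlety disappears entirely since $D_{B(0)^*}$ is boundedly invertible and $\Gamma(0)$ is already a bona fide surjective isometry onto $\nbran V^\perp$.
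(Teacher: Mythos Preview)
Your proposal is correct and follows essentially the same approach as the paper: the paper's proof is simply the remark ``Applying Corollary \ref{cor:char.CNC.row.contractions}, Lemma \ref{lem:ker.action.model} and \eqref{eqn:range.defect.T*} establishes that $X$ is CNC,'' which is precisely the assembly you carry out. Your handling of the pure-versus-strict subtlety for $B(0)$ is appropriate and mirrors the paper's treatment just before Lemma \ref{lem:ker.action.model}.
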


\section{Model maps for CNC row partial isometries}\label{sec:model.for.CNC.row.partial.iso}

In this section, given a CNC row partial isometry, $V : \cH \otimes \C ^d \rightarrow \cH$, on a complex, separable Hilbert space, $\cH$, we explicitly construct a ``functional" or function--theoretic model for $V$, as an operator acting on a Hilbert space of graded functions on the unit row-ball, $\B ^d _\N$. 

\subsection{Graded model}\label{subsec:nc.func.theor.model.for.cnc.row.partial.isom}

\begin{definition}\label{def:nc.model.map}
    Let $V : \cH \otimes \bC^d \to \cH$ be a row partial isometry. A \emph{model triple} $(\gamma, \cJ_\infty,\cJ_0)$ for $V$ consists of Hilbert spaces $\cJ_\infty$ and $\cJ_0$ with
    \begin{equation*}
        \dim\cJ_\infty = \dim\nbker V \qand \dim \cJ_0 = \dim\nbran V ^\perp,
    \end{equation*}
    and a \emph{model map},
    \begin{align*}
        \gamma :
        \begin{cases}
            \ \bB^d_n &\rightarrow \scr{B}(\cJ_0 \otimes \bC^n, \cH \otimes \bC^n) \\
            \{\infty\} &\twoheadrightarrow \scr{B}(\cJ_\infty, \cH \otimes \bC^d)
        \end{cases},
    \end{align*}
    such that $\gamma(\infty)$ and $\gamma(0)$ are isometries onto $\nbker V$ and $\nbran V^\perp$, and so that $\gamma(Z): \cJ _0 \otimes \C ^n \rightarrow \cH \otimes \C ^n$ is a linear isomorphism for each $Z \in \bB^d_\bN$ onto the $Z$-defect space of $V$,
    \begin{equation*}
        \nbran \gamma(Z) = \cR(V-Z)^\perp.
    \end{equation*}
\end{definition}

If a model map has the additional property that $\ga (Z) ^* : \cH \otimes \C ^n \rightarrow \cJ _0 \otimes \C ^n$ is a $\scr{B} (\cH, \cJ _0)-$valued NC function in $\B ^d _\N$, we will call it an \emph{NC model map}.

\begin{remark}\label{rem:canon.model.triple}
    Given a row partial isometry $V$ on $\cH$, Lemma \ref{lem:RanVperp.and.R(V-Z).are.iso} shows that if $T : \cH \otimes \C ^d \rightarrow \cH$ is any row contractive extension of $V$, $V \subseteq T$ (again, this means that $T V^* V = V$),
    then
    \begin{equation*}
        [I - T Z^*]^{-1} : \nbran V^\perp \otimes \bC^n \twoheadrightarrow \cR(V - Z)^\perp
    \end{equation*}
    is a linear isomorphism for each $Z \in \bB^d_n$ and $n \in \bN$. Therefore, for each such extension $T\supseteq V$, acting on $\cH$, we obtain the model map, 
    \begin{align}\label{eqn:def.canon.model.map}
        \Gamma _T :
        \begin{cases}
            \ Z &\rightarrow  \ [I - T Z^*]^{-1} \vert_{\nbran V^\perp \otimes \bC^n}, \, \forall Z \in \bB^d_n \\
            \ \infty &\rightarrow \  I_{\nbker V}.
        \end{cases},
    \end{align}
and the model triple, $(\Ga _T, \nbker V, \nbran V ^\perp)$.
It is easily verified that each such $\Ga _T$ is an NC model map, and we will refer to $\Ga _T$ as a \emph{standard NC model map}. Indeed, given any $g \in \nbran V ^\perp$, $h \in \cH$ and $Z \in \B ^d _n$,
\begin{equation}\label{eqn:Ga(Z)^*.is.NC}
    [h^* \otimes I_n] \Ga _T (Z) ^* [g \otimes I_n] = [h^* \otimes I_n] [I_\cH \otimes I_n - Z \otimes T^*]^{-1} [g \otimes I_n], 
\end{equation}
is the realization formula of a uniformly analytic NC function in $\B ^d _\N$, in the sense of \cite{AMS-opreal}. Most simply, choosing $T=V$, we obtain the \emph{canonical NC model map} and \emph{canonical NC model triple}, $(\Ga _V, \nbker V, \nbran V ^\perp)$.     
\end{remark}

Let $V : \cH \otimes \bC^d \to \cH$ be a row partial isometry with a model triple $(\gamma, \cJ_\infty, \cJ_0)$. For any $h \in \cH$, $W \in \bB^d_n$ and $n \in \bN$, we define a mapping $h^\gamma(W) : \bC^n \to \cJ_0 \otimes \bC^n$ as
\begin{equation}\label{eqn:def.h^gamma}
    h^\gamma(W)v := \gamma(W)^* (h \otimes v) \foral v \in \bC^n.
\end{equation}
We now consider the \emph{model space} corresponding to $(\gamma,\cJ_\infty, \cJ_0)$, given by
\begin{equation*}
    \cH^\gamma := \{ h^\gamma \ | \  h \in \cH \}.
\end{equation*}
Each $h^\gamma$ can be thought of as a function $h^\ga : \bB^d_\bN \rightarrow \scr{B}(\bC, \cJ_0)_\bN$. Here, given a Hilbert space, $\cJ$, we define $\cJ _n := \cJ \otimes \C ^{n\times n}$, $\cJ _\N := \bigsqcup _{n=1} ^\infty \cJ _n$, and 
\be \scr{B} (\C, \cJ) _\N := \bigsqcup_{n \in \bN} \scr{B}(\bC^n, \cJ \otimes \bC^n) \cong \bigsqcup_{n \in \bN} \scr{B}(\bC,\cJ) \otimes \bC^{n \times n} \cong {\cJ}_\bN. \label{opNCuni} \ee
In other words, $\cH^\gamma$ is a Hilbert space of graded functions between $\bB^d_\bN$ and ${\cJ_0}_\bN$.

\begin{prop}\label{prop:U_gamma.well.def}
    Let $V: \cH \otimes \C ^d \rightarrow \cH$ be a row partial isometry with model triple, $(\ga, \cJ_\infty, \cJ _0)$. The mapping $\cU^\gamma : \cH \to \cH^\gamma$ given by
    \begin{equation*}
        \cU^\gamma h := h^\gamma \foral h \in \cH
    \end{equation*}
    is a linear isomorphism if and only if $V$ is CNC. If $V$ is CNC then $\cH ^\ga$ is a Hilbert space when equipped with the push-forward inner product:
    \begin{equation*}
        \langle h^\gamma, g^\gamma \rangle_{\cH^\gamma} := \langle h, g \rangle_\cH \foral h^\gamma, g^\gamma \in \cH^\gamma, 
    \end{equation*}
    and $\cU ^\ga : \cH \twoheadrightarrow \cH ^\ga$ is an onto isometry. In this case, $\cH^\gamma$ is a Hilbert space of ${\cJ_0}_\bN$-valued graded functions on $\bB^d_\bN$.
\end{prop}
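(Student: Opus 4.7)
The mapping $\cU^\ga$ is linear by linearity of the tensor product and of $\ga(W)^*$, and it is surjective onto $\cH^\ga$ by the very definition of $\cH^\ga$. Hence the main task is to identify precisely when $\cU^\ga$ is injective, and to show that this coincides with the CNC condition on $V$. Once injectivity is settled, the ``isometry'' half of the statement is essentially tautological: if $\cU^\ga$ is a bijection, then
\[
    \ip{h^\ga, g^\ga}_{\cH^\ga} := \ip{h,g}_\cH
\]
is an unambiguous, well--defined inner product on $\cH^\ga$, with respect to which $\cU^\ga$ is by construction an onto isometry, and $\cH^\ga$ inherits completeness (hence is a Hilbert space) from $\cH$.

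The key step, then, is to unpack the condition $\cU^\ga h = 0$. By definition, $h^\ga = 0$ means $\ga(W)^* (h \otimes v) = 0$ for every $W \in \B^d_n$, $v \in \C^n$, $n \in \N$. Since each $\ga(W) : \cJ_0 \otimes \C^n \to \cH \otimes \C^n$ is a linear isomorphism onto $\cR(V-W)^\perp$, we have $\nbker \ga(W)^* = \cR(V-W)$. Therefore
\[
    h^\ga = 0 \quad \iff \quad h \otimes v \in \cR(V-W) \FORAL W \in \B^d_n,\ v \in \C^n,\ n \in \N.
\]
Comparing this with the definitions \eqref{eqn:def.H_n'} and \eqref{eqn:def.H'}, the right--hand side says exactly that $h \in \cH'$. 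Thus $\nbker \cU^\ga = \cH'$.

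Combining this identification with Theorem \ref{thm:char.cnc.row.partial.isom}, which asserts that $V$ is CNC if and only if $\cH' = \{0\}$, we conclude that $\cU^\ga$ is injective if and only if $V$ is CNC. Together with the observations in the first paragraph, this gives both directions of the biconditional: if $V$ is CNC then $\cU^\ga$ is an onto isometry from $\cH$ to the Hilbert space $\cH^\ga$ of ${\cJ_0}_\N$-valued graded functions on $\B^d_\N$; conversely, if $V$ is not CNC then $\cH' \neq \{0\}$, producing a nonzero $h \in \cH$ with $h^\ga = 0$, so $\cU^\ga$ fails to be injective.

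The only genuine subtlety is that the definition of $h^\ga$ in \eqref{eqn:def.h^gamma} uses only the finite--level portion of the model map, i.e.\ the values $\ga(W)$ for $W \in \B^d_\N$, while the model triple also carries the datum $\ga(\infty)$. This is not a problem, since the characterization of $\cH'$ in Theorem \ref{thm:char.cnc.row.partial.isom} is already phrased in terms of the spaces $\cR(V-Z)$ for $Z \in \B^d_\N$ alone, matching precisely what $\cH^\ga$ sees. The role of $\ga(\infty)$ is reserved for later constructions (handling the $\nbker V$ direction), and does not intervene in the bijectivity statement here.
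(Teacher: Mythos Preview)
Your proof is correct and follows essentially the same approach as the paper's: both identify $\nbker \cU^\ga$ with $\cH'$ and invoke Theorem~\ref{thm:char.cnc.row.partial.isom}. The only cosmetic difference is that you reach $\nbker \cU^\ga = \cH'$ directly via $\nbker \ga(W)^* = (\nbran \ga(W))^\perp = \cR(V-W)$ and the definitions \eqref{eqn:def.H_n'}--\eqref{eqn:def.H'}, whereas the paper rewrites the same condition in terms of inner products and then appeals to the span characterization \eqref{eqn:model.map.span.cond} of $\cH'^\perp$.
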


\begin{proof}
    It suffices to determine the injectivity of $\cU^\gamma$, since it is onto by definition. To this end, fix $h \in \cH$ and note that for each $W \in \bB^d_n$ and $n \in \bN$ we have
    \begin{align*}
        h \in \nbker \cU^\gamma & \Leftrightarrow h^\gamma(W) \equiv 0 \\
        & \Leftrightarrow \ip{g \otimes x, h^\gamma(W)u}_{\cJ_0 \otimes \bC^n} = 0, && \forall x, u \in \bC^n \AND g \in \cJ_0, \\
        & \Leftrightarrow \ip{g \otimes x, \gamma(W)^*(h \otimes u)}_{\cJ_0 \otimes \bC^n} = 0, && \forall x,u \in \bC^n \AND g \in \cJ_0, \\
        & \Leftrightarrow \bip{[I_\cH \otimes u^*] \gamma(W) (g \otimes x),h}_\cH = 0, && \forall x,u \in \bC^n \AND g \in \cJ_0.
    \end{align*}
    Since $W \in \bB^d_n$ and $n \in \bN$ were chosen arbitrarily, this implies
    \begin{equation*}
        h \in \nbker \cU^\ga \Leftrightarrow h \perp \bigvee_{\substack{u \in \bC^n, \, Z \in \bB^d_n, \\ n \in \bN}} [I_\cH \otimes u^*] \nbran \gamma(Z).
    \end{equation*}
    In particular, \eqref{eqn:model.map.span.cond} implies that $\nbker \cU^\gamma = \cH'^\perp$. It then follows from Theorem \ref{thm:char.cnc.row.partial.isom} that $\cU^\gamma$ is injective if and only if $V$ is CNC. The remaining assertions follow easily from this observation.
\end{proof}

\begin{remark} \label{gradedRKHS}
A model map, $\ga$, for an arbitrary row partial isometry, $V$, as defined above in Definition \ref{def:nc.model.map}, need not respect direct sums or joint similarities. Hence, the corresponding \emph{model space}, $\cH ^\ga$, need not be a Hilbert space of NC functions. Nevertheless, we will see that for any $Z \in \B ^d _n$, $y,v \in \C ^n$ and $g \in \cJ _0$, the linear functional, $\ell _{Z,g \otimes y, v} : \cH ^\ga \rightarrow \C$, defined by 
$$ \ell _{Z,g \otimes y, v} (h^\ga) := \ipr{g \otimes y}{h^\ga (Z) v}_{\cJ_0 \otimes \C ^n}, $$ is bounded. Hence, $\cH ^\ga$, does consist of $\cJ_0-$valued functions on the NC unit row-ball, which are \emph{graded} in the sense that $h ^\ga (Z) \in \cJ_0 \otimes \C ^{n\times n}$, for any $Z \in \B ^d _n$, and it is a reproducing kernel Hilbert space in the above sense. We will call $\cH ^\ga$ a \emph{graded RKHS} on $\B ^d _\N$. Namely, for each $Z, g \otimes y$ and $v$ as above, there exists, by the Riesz Lemma, a unique $K ^\ga \{ Z, g \otimes y, v \} \in \cH ^\ga$ so that 
$$ \ell _{Z,g \otimes y,v} (h^\ga) = \ipr{K ^\ga \{ Z, g \otimes y, v \} }{h^\ga } _{\cH ^\ga}. $$ We will also define the \emph{graded reproducing kernel} for $\cH ^\ga$ as in the case of an NC-RKHS: For each $Z \in \B ^d _m$ $y,v \in \C^m$, $W \in \B ^d _n$, $x,u \in \C^n$ and $f,g \in \cJ _0$, $K^\ga (Z,W) [ \cdot ] : \C ^{m \times n} \rightarrow \scr{B} (\cJ _0) \otimes \C ^{m\times n}$ is the linear map defined by
$$ f^* \otimes y^* K^\ga (Z,W) [vu^*] g \otimes x := \ipr{K^\ga \{Z,f \otimes y,v\}}{K^\ga \{W, g \otimes x, u\}}_{\cH ^\ga}. $$
If $\ga$ is an NC model, such as one of the standard NC models, $\Ga _T$, for $T \supseteq V$, or $\ga = \Ga _V$, the canonical NC model, then it easily checked that any $h^\ga \in \cH ^\ga$ is a $\cJ _0-$valued NC function and $\cH ^\ga = \cH _{nc} (K^\ga)$ will be an NC-RKHS of $\cJ _0-$valued NC functions. 
\end{remark}

Let us now explore function-theoretic properties of $\cH^\gamma$ by realizing it as a RKHS of ${\cJ_0}_\bN$-valued graded functions.

\begin{prop}\label{prop:H^gamma.as.RKHS}
    For any given $W \in \bB^d_n$, $x,u \in \bC^n$, $n \in \bN$ and $g \in \cJ_0$, let
    \begin{equation}\label{eqn:def.K^gamma}
        K^\gamma \{W,g \otimes x,u\} := [\cU^\gamma \otimes u^*] \gamma(W) (g \otimes x) \in \cH^\gamma.
    \end{equation}
    Then, for all $h^\gamma \in \cH^\gamma$, we have
    \begin{equation}\label{eqn:kernel.formula.general}
        \langle K^\gamma \{W,g \otimes x,u\},h^\gamma \rangle_{\cH^\gamma} = \langle g \otimes x, h^\gamma(W)u \rangle_{\cJ_0 \otimes \bC^n}.
    \end{equation}

    In particular, the evaluation map $\Lambda_W : \cH^\gamma \to \scr{B}(\bC^n,\cJ_0 \otimes \bC^n)$ given by $\Lambda_W h^\gamma := h^\gamma(W)$ is bounded for each $W \in \bB^d_n$ and $n \in \bN$.
\end{prop}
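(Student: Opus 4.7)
The proof is essentially a direct calculation that unwinds the definitions, with the one genuine input being that $\cU^\gamma$ is an isometric isomorphism (which requires $V$ to be CNC, by Proposition \ref{prop:U_gamma.well.def}). My plan is to verify the reproducing formula \eqref{eqn:kernel.formula.general} first, and then read off boundedness of $\Lambda_W$ as an immediate consequence.

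To verify \eqref{eqn:kernel.formula.general}, I would fix an arbitrary $h^\gamma \in \cH^\gamma$ with $h = (\cU^\gamma)^{-1} h^\gamma$ and compute
\begin{align*}
\bip{K^\gamma\{W, g \otimes x, u\}, h^\gamma}_{\cH^\gamma}
&= \bip{[\cU^\gamma \otimes u^*]\,\gamma(W)(g \otimes x),\; h^\gamma}_{\cH^\gamma} \\
&= \bip{\gamma(W)(g \otimes x),\; [(\cU^\gamma)^* \otimes u]\, h^\gamma}_{\cH \otimes \bC^n} \\
&= \bip{\gamma(W)(g \otimes x),\; h \otimes u}_{\cH \otimes \bC^n} \\
&= \bip{g \otimes x,\; \gamma(W)^* (h \otimes u)}_{\cJ_0 \otimes \bC^n} \\
&= \bip{g \otimes x,\; h^\gamma(W) u}_{\cJ_0 \otimes \bC^n}.
\end{align*}
The second line uses the standard identity $(A \otimes B)^* = A^* \otimes B^*$ applied to $\cU^\gamma \otimes u^*$; the third line uses that $\cU^\gamma$ is an onto isometry, so $(\cU^\gamma)^* \cU^\gamma = I_\cH$ and hence $(\cU^\gamma)^* h^\gamma = h$; the last line is just the definition \eqref{eqn:def.h^gamma} of $h^\gamma(W)$.

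For the boundedness of $\Lambda_W$, note that for any $v \in \bC^n$,
$$\|h^\gamma(W) v\|_{\cJ_0 \otimes \bC^n} = \|\gamma(W)^*(h \otimes v)\| \leq \|\gamma(W)\| \, \|h\|_\cH \, \|v\| = \|\gamma(W)\|\, \|h^\gamma\|_{\cH^\gamma}\, \|v\|,$$
so $\|\Lambda_W\| \leq \|\gamma(W)\|$, which is finite since $\gamma(W)$ is a bounded linear isomorphism onto $\cR(V - W)^\perp$. Alternatively, one can observe that the bounded linear functional $h^\gamma \mapsto \langle g \otimes x, h^\gamma(W) u\rangle$, once we know it is represented by the explicit vector $K^\gamma\{W, g \otimes x, u\}$ constructed in \eqref{eqn:def.K^gamma}, is automatically bounded by Cauchy--Schwarz.

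There is no real obstacle here: the only nontrivial ingredient is the CNC hypothesis built into the isometry property of $\cU^\gamma$. If one wanted to emphasize it, the most conceptual step is recognizing that $K^\gamma\{W, g \otimes x, u\}$ is, by construction, the image under $\cU^\gamma \otimes u^*$ of a vector in the $W$-defect space, i.e., a natural element of $\cH^\gamma$, and that the bracket with $h^\gamma$ transports through $(\cU^\gamma)^*$ exactly to the $\cH$-inner product against $h \otimes u$ paired against $\gamma(W)(g \otimes x)$, which by definition is the point evaluation.
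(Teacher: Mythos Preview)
Your proof is correct and follows essentially the same route as the paper: unwind the definition of $K^\gamma\{W,g\otimes x,u\}$, use that $\cU^\gamma$ is an onto isometry to transport the inner product back to $\cH\otimes\bC^n$, and then apply the definition \eqref{eqn:def.h^gamma} of $h^\gamma(W)$. The only cosmetic difference is that the paper treats $[\cU^\gamma\otimes u^*]$ as the composition $\cU^\gamma[I_\cH\otimes u^*]$ rather than a literal tensor product, so your appeal to ``$(A\otimes B)^*=A^*\otimes B^*$'' should really be read as taking the adjoint of that composition; the outcome is the same, and your explicit bound $\|\Lambda_W\|\leq\|\gamma(W)\|$ is a small bonus the paper leaves implicit.
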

In the above proposition statement, the operator $[U^\ga \otimes u^*]$ is defined as the composition, $U^\ga [I_\cH \otimes u^*]$, where $I_\cH \otimes u^* : \cH \otimes \C^n \rightarrow \cH$ is the linear tensor map defined in \eqref{tensormap}.
\begin{proof}
    Let $K^\gamma\{W,g \otimes x,u\}$ be as in the hypothesis. Then, using \eqref{tensormap}, for each $h^\gamma \in \cH^\gamma$,
    \begin{align*}
        \bip{K^\gamma\{W,g \otimes x,u\}, h^\gamma}_{\cH^\gamma} &= \bip{[U^\gamma \otimes u^*]\gamma(W)(g \otimes x), U^\gamma h}_{\cH^\gamma} \\
        & = \ipr{\ga (W) g \otimes x}{[I_\cH \otimes u] h}_{\cH} \\
        &= \ip{g \otimes x, \gamma(W)^* (h \otimes u)}_{\cJ_0 \otimes \bC^n} \\
        &= \ip{g \otimes x, h^\gamma(W)u}_{\cJ_0 \otimes \bC^n}.
    \end{align*}
    Thus, \eqref{eqn:kernel.formula.general} holds. The final assertion is immediate from the above.
\end{proof}

The maps $K^\gamma \{ W, g \otimes x, u\}$ will be referred to as the \emph{kernel functions} of $\cH^\gamma$.

\begin{theorem}\label{thm:reproducing.kernel.of.H^gamma}
    If $V$ is CNC, then $\cH^\gamma$ is a graded reproducing kernel Hilbert space of ${\cJ_0}_\bN$-valued graded functions on $\bB^d_\bN$ with the graded kernel $K^\gamma : \bB^d_\bN \times \bB^d_\bN \to \scr{B}(\bC,\scr{B}(\cJ_0))_{\bN \times \bN}$ given by
    \begin{equation}\label{eqn:kernel.of.H^gamma}
        K^\gamma(Z, W)[A] := \gamma(Z)^* [I_\cH \otimes A] \gamma(W),
    \end{equation}
    for each $Z \in \bB^d_n$, $W \in \bB^d_m$, $A \in \bC^{n \times m}$ and $n, m \in \bN$.
\end{theorem}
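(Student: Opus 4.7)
The plan is to observe that the bulk of the work is already done by the preceding results, so that the theorem reduces to a single bookkeeping calculation. Under the CNC hypothesis, Proposition~\ref{prop:U_gamma.well.def} guarantees that $\cU^\gamma : \cH \twoheadrightarrow \cH^\gamma$ is a surjective isometry, which makes $\cH^\gamma$ a Hilbert space. Combined with the boundedness of the evaluation functionals $\Lambda_W$ established in Proposition~\ref{prop:H^gamma.as.RKHS}, this already exhibits $\cH^\gamma$ as a graded RKHS in the sense of Remark~\ref{gradedRKHS}, with kernel elements $K^\gamma\{W, g\otimes x, u\}$ defined by \eqref{eqn:def.K^gamma}. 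The remaining content is to verify the compact formula \eqref{eqn:kernel.of.H^gamma} for the associated graded reproducing kernel.

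The main computation is to evaluate $K^\gamma(Z,W)[vu^*]$ on a rank-one matrix $vu^*$ by unpacking the defining pairing. Fix $Z \in \bB^d_n$, $W \in \bB^d_m$, $v \in \bC^n$, $u \in \bC^m$, $y \in \bC^n$, $x \in \bC^m$, and $f, g \in \cJ_0$. Substituting the definition $K^\gamma\{Z, f\otimes y, v\} = \cU^\gamma [I_\cH \otimes v^*]\gamma(Z)(f \otimes y)$ (and the analogue with $W$, $g \otimes x$, $u$) and then using that $\cU^\gamma$ is an isometry transfers the pairing to $\cH$:
\begin{equation*}
\bip{K^\gamma\{Z, f\otimes y, v\}, K^\gamma\{W, g\otimes x, u\}}_{\cH^\gamma} = \bip{[I_\cH \otimes v^*]\gamma(Z)(f\otimes y), [I_\cH \otimes u^*]\gamma(W)(g\otimes x)}_\cH.
\end{equation*}
Next, using that $[I_\cH \otimes v^*]^* = [I_\cH \otimes v]$ (from the definition \eqref{tensormap}) together with the factorization $[I_\cH \otimes v][I_\cH \otimes u^*] = [I_\cH \otimes vu^*]$, this rewrites as
\begin{equation*}
\bip{f\otimes y, \gamma(Z)^*[I_\cH \otimes vu^*]\gamma(W)(g\otimes x)}_{\cJ_0 \otimes \bC^n} = f^* \otimes y^* \, \gamma(Z)^*[I_\cH \otimes vu^*]\gamma(W) \, g \otimes x.
\end{equation*}
Since this holds for arbitrary $f, g, y, x$, the defining property of the graded reproducing kernel from Remark~\ref{gradedRKHS} forces $K^\gamma(Z,W)[vu^*] = \gamma(Z)^*[I_\cH \otimes vu^*]\gamma(W)$.

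To conclude, observe that both the map $A \mapsto K^\gamma(Z,W)[A]$ (by sesquilinearity of the inner product in the rank-one parameters) and the map $A \mapsto \gamma(Z)^*[I_\cH \otimes A]\gamma(W)$ are linear in $A \in \bC^{n \times m}$. Since the rank-one matrices $\{vu^* : v \in \bC^n,\, u \in \bC^m\}$ span $\bC^{n \times m}$, the identity on rank-ones extends by linearity to all of $\bC^{n \times m}$, yielding \eqref{eqn:kernel.of.H^gamma}.

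Given the scaffolding already in place, there is no serious obstacle; the ``hard part'' is purely notational, namely tracking which tensor factor plays the role of the column index versus the row index, and verifying that the Hilbert space adjoint of $[I_\cH \otimes v^*]$ is indeed $[I_\cH \otimes v]$ with respect to the relevant inner products. Once these are pinned down, the theorem follows by direct unwinding of the definitions.
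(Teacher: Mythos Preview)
Your proposal is correct and follows essentially the same approach as the paper: both reduce to computing the inner product of two kernel functions via the isometry $\cU^\gamma$, arriving at exactly the identity $\bip{f\otimes y, \gamma(Z)^*[I_\cH \otimes vu^*]\gamma(W)(g\otimes x)}_{\cJ_0 \otimes \bC^n}$, and then extending by linearity from rank-one matrices. The only cosmetic difference is that the paper explicitly verifies that the kernel functions $K^\gamma\{W, g\otimes x, u\}$ span $\cH^\gamma$ (by noting that orthogonality to all of them forces $h^\gamma(W)\equiv 0$), whereas you absorb this into the general RKHS framework of Remark~\ref{gradedRKHS}; since $\cU^\gamma$ is injective under the CNC assumption, this density is indeed automatic and your treatment is sound.
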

The graded kernel of a RKHS of graded functions on $\B ^d _\N$ was defined in Remark \ref{gradedRKHS}.
\begin{proof}
    For each pair of kernel functions $K^\gamma\{ Z, f \otimes y, v \}$ and $K^\gamma\{ W, g \otimes x, u \}$ with $Z \in \bB^d_n$, $W \in \bB^d_m$, $m, n \in \bN$ and $f,g \in \cJ_0$, we use the tensor map \eqref{tensormap} to obtain
    \begin{align}
        \ipr{K^\gamma\{ Z, f \otimes y, v \}}{ K^\gamma\{ W, g \otimes x, u \}}_{\cH^\gamma} 
        &= \bip{[\cU^\gamma \otimes v^*] \gamma(Z) (f \otimes y), [\cU^\gamma \otimes u^*]\gamma(W) (g \otimes x)}_{\cH^\gamma} \nonumber\\
        &= \bip{f \otimes y, \gamma(Z)^* [I_\cH \otimes v u^*] \gamma(W) (g \otimes x)}_{\cJ_0 \otimes \bC^n}.\label{eqn:ker.at.Z.W}
    \end{align}
    This implies \eqref{eqn:kernel.of.H^gamma}, since the action of $K^\gamma(Z,W)$ on all $A \in \bC^{n \times m}$ is determined by its action on rank-one matrices $v u^*$, for $v \in \C ^n, u \in \C ^m$.

    Lastly, we need to show that
    \begin{equation*}
        \cH^\gamma = \widehat{\cH}(K) := \bigvee_{n \in \bN} \{ K^\gamma\{Z,f \otimes y,v\} | \ Z \in \bB^d_n, \, y,v \in \bC^n, \, f \in \cJ_0 \}.
    \end{equation*}
    However, this is clear from \eqref{eqn:kernel.formula.general} and Proposition \ref{prop:U_gamma.well.def} since $h^\gamma \in \widehat{\cH}(K)^\perp$ implies that $h^\gamma(W) = 0$ for all $W \in \bB^d_\bN$. This completes the proof.
\end{proof}

\subsection{Right shift action}\label{subsec:right.shifts}

We end this section by recording how the CNC row partial isometry, $V$, transforms under $\cU^\gamma$. First, we define for each $\mbf{h} ^\ga  \in \cH^\gamma \otimes \bC^d$ and $Z \in \bB^d_\bN$ the \emph{right shift} action
\begin{equation*}
    M_Z^R \mbf{h}^\ga (Z) := \sum_{j = 1}^d h_j^\gamma(Z) Z_j.
\end{equation*}
It is unclear from the definition when the right hand side of the above equation lies in $\cH^\gamma$. We address this in the next proposition.

\begin{prop}\label{prop:V.transf.under.U^gamma}
    If $\widehat{V} := \cU^\gamma V [{\cU^\gamma}^* \otimes I_d] : \cH^\gamma \otimes \bC^d \to \cH^\gamma$ then,
    \begin{equation}\label{eqn:V^hat.on.init.sp}
        \widehat{V} \big\vert_{\nbker \widehat{V}^\perp} = M_Z^R.
    \end{equation}
    
    Moreover, $\widehat{V} \big\vert_{\nbker \widehat{V}^\perp}$ is an isometry and
    \begin{equation}\label{eqn:Ran.of.V^hat}
        \nbran \widehat{V} = \left\{ h^\gamma \in \cH^\gamma \ | \  h^\gamma(0) = 0 \right\}.
    \end{equation}
\end{prop}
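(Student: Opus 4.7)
The plan is to leverage three facts: (a) by Proposition \ref{prop:U_gamma.well.def}, $\cU^\gamma$ is a surjective isometry because $V$ is CNC, so $\widehat V$ is unitarily equivalent to $V$; (b) the defining property $\nbran \gamma(Z) = \cR(V-Z)^\perp$ of a model map; and (c) the concrete description \eqref{eqn:rest.ran.sp.1} of $\cR(V-Z)$ as the range of $(V\otimes I_n - I_\cH\otimes Z)(V^*V\otimes I_n)$.

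First, since $\cU^\gamma$ is unitary, $\widehat V = \cU^\gamma V \,[\cU^{\gamma*}\otimes I_d]$ is a row partial isometry unitarily equivalent to $V$, with initial space $[\cU^\gamma\otimes I_d]\bigl((\nbker V)^\perp\bigr)$. This immediately yields the isometry claim in (\ref{eqn:V^hat.on.init.sp}). Next, to prove (\ref{eqn:V^hat.on.init.sp}), I pick an arbitrary element $\mbf{h}^\gamma = [\cU^\gamma \otimes I_d]\mbf{h}$ of the initial space, where $\mbf{h} = (h_1,\dots,h_d) \in (\nbker V)^\perp$, and compute both sides of the claimed equality pointwise. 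On one hand, $\widehat V \mbf{h}^\gamma = \cU^\gamma(V\mbf{h}) = (V\mbf{h})^\gamma$, which by \eqref{eqn:def.h^gamma} evaluates at $Z\in\bB^d_n$ and $v\in\bC^n$ to $\gamma(Z)^*(V\mbf{h}\otimes v)$. On the other hand, directly from the definition,
\[
(M_Z^R \mbf{h}^\gamma)(Z)v \;=\; \sum_{j=1}^d h_j^\gamma(Z) Z_j v \;=\; \gamma(Z)^*\!\Bigl(\sum_{j=1}^d h_j \otimes Z_j v\Bigr).
\]
So the equality reduces to showing $\gamma(Z)^*\bigl(V\mbf{h}\otimes v - \sum_j h_j\otimes Z_j v\bigr) = 0$.

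The key observation is that the vector in parentheses lies in $\cR(V-Z)$. Indeed, since $\mbf{h}\in (\nbker V)^\perp = \nbran(V^*V)$, we have $(V^*V\otimes I_n)(\mbf{h}\otimes v) = \mbf{h}\otimes v$; applying $(V\otimes I_n - I_\cH\otimes Z)$ and identifying $(I_\cH\otimes Z)(\mbf{h}\otimes v) = \sum_j h_j\otimes Z_j v$ shows
\[
V\mbf{h}\otimes v - \sum_{j=1}^d h_j \otimes Z_j v \;=\; (V\otimes I_n - I_\cH\otimes Z)(V^*V\otimes I_n)(\mbf{h}\otimes v)\;\in\;\cR(V-Z).
\]
Since $\nbran\gamma(Z)=\cR(V-Z)^\perp$, we have $\gamma(Z)^*\cR(V-Z)=\{0\}$, yielding (\ref{eqn:V^hat.on.init.sp}). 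As a byproduct, this calculation confirms that the right hand side $M_Z^R \mbf{h}^\gamma$ does lie in $\cH^\gamma$ (namely, it equals $(V\mbf{h})^\gamma$).

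For (\ref{eqn:Ran.of.V^hat}), I note $\nbran \widehat V = \cU^\gamma(\nbran V)$ by unitary equivalence. Since $\gamma(0)$ is an isometry onto $\nbran V^\perp$ (taking $Z = 0 \in \bB^d_1$ so that $\cR(V-0)^\perp = \nbran V^\perp$), $h \in \nbran V$ is equivalent to $\gamma(0)^* h = 0$, which by definition is precisely $h^\gamma(0) = 0$. Thus $\cU^\gamma(\nbran V) = \{h^\gamma \in \cH^\gamma : h^\gamma(0)=0\}$, proving (\ref{eqn:Ran.of.V^hat}).

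The only real subtlety is the tensor-factor bookkeeping in the identification $(I_\cH \otimes Z)(\mbf{h}\otimes v) = \sum_j h_j \otimes Z_j v$ and the use of \eqref{eqn:rest.ran.sp.1} versus \eqref{eqn:rest.ran.sp.2}; once that is fixed, the proof is a short direct verification.
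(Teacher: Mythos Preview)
Your proof is correct and follows essentially the same approach as the paper: both arguments reduce the identity to showing that $(V\otimes I_n - I_\cH\otimes Z)(\mbf{h}\otimes v)\in\cR(V-Z)$ when $\mbf{h}\in(\nbker V)^\perp$, and then use $\nbran\gamma(Z)=\cR(V-Z)^\perp$ to annihilate this term. The only cosmetic difference is that the paper adds and subtracts $\gamma(Z)^*[I_\cH\otimes Z](\mbf{h}\otimes v)$ in one line, whereas you compute both sides separately and compare; the range identification \eqref{eqn:Ran.of.V^hat} is handled identically.
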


\begin{proof}
    Let us fix $\mbf{h}^\gamma \in \nbker \widehat{V}^\perp$, $Z \in \bB^d_n$ and $v \in \bC^n$ for some $n \in \bN$, and write $\mbf{h} := ({\cU^\gamma}^* \otimes I_d) \mbf{h}^\gamma$. Using \eqref{eqn:def.h^gamma}, note that
    \begin{align*}
        \widehat{V}\mbf{h}^\gamma(Z)v &= [\cU^\gamma V \mbf{h}](Z)v \\
        &= \gamma(Z)^*(V \mbf{h} \otimes v) \\
        &= \gamma(Z)^*[V \otimes I_n](\mbf{h} \otimes v) \\
        &= \gamma(Z)^*[V \otimes I_n - I_\cH \otimes Z] (\mbf{h} \otimes v) + \gamma(Z)^* [I_\cH \otimes Z](\mbf{h} \otimes v).
    \end{align*}
    Since $\mbf{h} \in \nbker V^\perp = \nbran V^*V$, it follows from \eqref{eqn:rest.ran.sp.1} that
    \begin{equation*}
        [V \otimes I_n - I_\cH \otimes Z](\mbf{h} \otimes v) \in \cR(V-Z)
    \end{equation*}
    and thus, the first summand above is $0$. Here, $[I_\cH \otimes Z] : \cH \otimes \bC^d \otimes \bC^n \to \cH \otimes \bC^n$ is the map defined via
    \begin{equation*}
        [I_\cH \otimes Z] \bsm
            h_1 \otimes v_1 \\
            \vdots \\
            h_d \otimes v_d
        \esm = \sum_{j = 1}^d h_j \otimes Z_j v_j, \foral h_j \in \cH, \, v_j \in \bC^n, \, 1 \leq j \leq d.
    \end{equation*}
    We then continue the above calculation and obtain \eqref{eqn:V^hat.on.init.sp} as follows:
    \begin{align*}
        \widehat{V}\vec{h^\gamma}(Z)v = \gamma(Z)^* [I_\cH \otimes Z](\mbf{h} \otimes v) = \gamma(Z)^* \sum_{j = 1}^d h_j \otimes Z_j v = \sum_{j = 1}^d h_j^\gamma(Z)Z_jv = M_Z^R \mbf{h}^\gamma(Z)v.
    \end{align*}

    That $\widehat{V}$ is an isometry on $\nbker \widehat{V}^\perp$ follows from the fact that $V$ is an isometry on $\nbker V^\perp$ and $\cU^\gamma$ is  unitary. Lastly, in order to show \eqref{eqn:Ran.of.V^hat}, note that any $h^\gamma \in \cH^\gamma$ such that $h^\gamma(0) = 0$ corresponds to an $h \in \nbran V$ under $\cU^\gamma$, via \eqref{eqn:def.h^gamma}. Equation \eqref{eqn:Ran.of.V^hat} follows easily from this correspondence.
\end{proof}

\begin{remark}
While $\widehat{V}$ acts as right multiplication by $Z$ on its initial space, by the above proposition, it cannot act as right multiplication by $Z$ on the entire space, $\cH ^\ga$. This is because, as we shall eventually observe in Section \ref{subsec:char.map}, $\widehat{V}$ is unitarily equivalent to $X$, the extremal Gleason solution of a column--extreme (CE) contractive and operator--valued multiplier, $B \in \sS_d(\cJ_\infty, \cJ_0)$ between vector-valued free Hardy spaces, via a unitary left multiplication operator $M^L _f : \cH ^\ga \twoheadrightarrow \scr{H} (B)$. The contractive Gleason solution, $X$, of any contractive (and operator--valued) left multiplier, $B$, is simply given by the restriction of the backward right shifts, $X ^* := [R^* \otimes I_{\cJ_0}] | _{\scr{H} (B)}$, and since $B$ is CE, there is no $g \in \cJ _\infty$ so that $B 1 \otimes g \in \scr{H} (B)$ by \cite[Theorem 6.4]{JM-freeCE}, and hence there is no $h \in \scr{H} (B)$ so that $R_j \otimes I _{\cJ _0} h \in \scr{H} (B)$ for any $1 \leq j \leq d$, by \cite[Corollary 6.14]{JM-freeCE}.
Hence $\hat{V}$ cannot be equal to right multiplication by $Z$ as then $X$ would also act as right multiplication by $Z$, contradicting that $B$ is CE.
\end{remark}

\begin{lemma}\label{lemma:action.M^R_Z^*.on.ker.func}
    For any kernel function $K^\gamma\{W, g \otimes x, u\}$, 
    \begin{equation*}
        \widehat{V} ^* K^\gamma\{W, g \otimes x, u\} = (I_{\cH^\ga \otimes \bC^d} - P_\infty) \begin{bmatrix}
            K^\gamma\{W, g \otimes x, W_1 u\} \\
            \vdots \\
            K^\gamma\{W, g \otimes x, W_d u\}
        \end{bmatrix} = K^\ga \{ W, g\otimes x, \mr{col} (W)  u \}.
    \end{equation*}
\end{lemma}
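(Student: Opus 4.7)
My plan is to exploit the reproducing property of $\cH^\gamma$ combined with the structural description of $\widehat{V}$ from Proposition \ref{prop:V.transf.under.U^gamma}. Specifically, I would pair $\widehat{V}^* K^\gamma\{W, g\otimes x, u\}$ against an arbitrary $\mbf{k}^\gamma \in \cH^\gamma \otimes \bC^d$, decomposed orthogonally as $\mbf{k}^\gamma = \mbf{h}^\gamma + \mbf{f}^\gamma$ with $\mbf{h}^\gamma \in \nbker \widehat{V}^\perp$ and $\mbf{f}^\gamma \in \nbker \widehat{V}$. Moving $\widehat{V}$ across the inner product kills the $\mbf{f}^\gamma$ contribution (since $\widehat{V} \mbf{f}^\gamma = 0$), so everything reduces to computing $\ipr{K^\gamma\{W, g\otimes x, u\}}{\widehat{V} \mbf{h}^\gamma}_{\cH^\gamma}$.

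The shift formula \eqref{eqn:V^hat.on.init.sp} then identifies $(\widehat{V} \mbf{h}^\gamma)(W) u$ with $\sum_{j=1}^d h_j^\gamma(W) W_j u$, and applying the reproducing identity \eqref{eqn:kernel.formula.general} term by term yields
\begin{equation*}
    \ipr{K^\gamma\{W, g\otimes x, u\}}{\widehat{V} \mbf{h}^\gamma}_{\cH^\gamma} = \sum_{j=1}^d \ipr{K^\gamma\{W, g\otimes x, W_j u\}}{h_j^\gamma}_{\cH^\gamma},
\end{equation*}
which is precisely the pairing of $\mbf{h}^\gamma$ with the column of kernel vectors displayed in the lemma statement. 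Since this identity is insensitive to the $\nbker \widehat{V}$ component of $\mbf{k}^\gamma$, I recover $\widehat{V}^* K^\gamma\{W, g\otimes x, u\}$ as the orthogonal projection of that column onto $\nbker \widehat{V}^\perp$.

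To arrive at the form asserted, I identify $P_\infty$ as the projection onto $\nbker \widehat{V}$; under $\cU^\gamma \otimes I_d$ this subspace corresponds to $\nbker V$, which is precisely the transported range of the ``infinity piece'' $\gamma(\infty)$ of the model triple. The first equality with $(I_{\cH^\gamma \otimes \bC^d} - P_\infty)$ applied to the column is then immediate, and the final expression $K^\gamma\{W, g\otimes x, \mathrm{col}(W) u\}$ is just the notational shorthand from Proposition \ref{GSkernel} adapted to this setting. The only mild obstacle is to remember that the shift identity \eqref{eqn:V^hat.on.init.sp} is valid only on the initial space $\nbker \widehat{V}^\perp$, which is exactly what forces the projection $(I - P_\infty)$ to appear in the final answer.
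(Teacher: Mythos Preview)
Your proposal is correct and follows essentially the same argument as the paper's proof: both pair $\widehat{V}^* K^\gamma\{W, g\otimes x, u\}$ against an arbitrary element of $\cH^\gamma \otimes \bC^d$, use that $\widehat{V}$ annihilates the $\nbker \widehat{V}$ component, apply the right-shift formula \eqref{eqn:V^hat.on.init.sp} on the initial space, and invoke the reproducing identity \eqref{eqn:kernel.formula.general} componentwise. The only cosmetic difference is notational --- the paper writes the arbitrary test vector as $\mbf{f}^\gamma$ and sets $\mbf{h}^\gamma := (I - P_\infty)\mbf{f}^\gamma$, whereas you decompose $\mbf{k}^\gamma = \mbf{h}^\gamma + \mbf{f}^\gamma$ orthogonally --- but the logic is identical.
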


\begin{proof}
    Let $\mbf{f}^\ga \in \cH^\ga \otimes \bC^d$ be fixed but arbitrary, and let
$$ \mbf{h}^{\ga} := (I_{\cH^\ga \otimes \bC^d} - P_\infty) \mbf{f}^\ga \in \nbker \widehat{V}^\perp. $$
    Then, note that since $\widehat{V}$ acts as right multiplication by $Z$ on its initial space,
\ba
\ipr{\widehat{V} ^* K^\gamma\{W, g \otimes x, u\}}{\mbf{h}^\gamma}_{\cH^\gamma \otimes \bC^d}
        &= & \ipr{K^\gamma\{W, g \otimes x, u\}}{\widehat{V}\mbf{h}^{\gamma}}_{\cH^\gamma}\\
        &=& \sum_{j = 1}^d \ipr{g \otimes x}{\mbf{h}^\gamma_j(W)W_j u}_{\cJ_0 \otimes \bC^n}\\
        &=& \sum_{j = 1}^d \ipr{K^\gamma\{W, g \otimes x, W_j u\}}{\mbf{h}^{\gamma}_j}_{\cH^\gamma}\\
        &=& \ipr{\begin{bmatrix}
            K^\gamma\{W, g \otimes x, W_1 u\}\\
            \vdots\\
            K^\gamma\{W, g \otimes x, W_d u\}
        \end{bmatrix}}{ \mbf{h}^{\gamma}}_{\cH^\gamma \otimes \bC^d} \\
        &=& \ipr{(I_{\cH^\ga \otimes \bC^d} - P_\infty) K ^\ga \{ W, g \otimes x, \mr{col} (W) u \}}{ \mbf{f}^\ga}_{\cH^\ga \otimes \bC^d}.
    \ea
    This completes the proof.
\end{proof}

\subsection{The characteristic function}\label{subsec:char.map}

In this subsection, we analyze the graded reproducing kernel, $K^\gamma$ introduced in Theorem \ref{thm:reproducing.kernel.of.H^gamma} and show that there is a unitary left multiplier from our model subspace, $\cH ^\ga$, onto an NC de Branges--Rovnyak space. We begin with the construction of an NC Schur class \emph{characteristic function} of any model triple for a CNC row partial isometry. Recall, as in \eqref{opNCuni}, we define 
$$ \scr{B} (\cJ, \cK ) _\N = \bigsqcup _{n=1} ^\infty \scr{B} (\cJ , \cK ) \otimes \C ^{n \times n}. $$

\begin{definition}\label{def:char.map}
Given a CNC row partial isometry $V : \cH \otimes \C ^d \rightarrow \cH$ with model triple $(\ga, \cJ _\infty, \cJ _0)$,  define the graded maps $D^\gamma : \bB^d_\bN \to \scr{B}(\cJ _0)_\bN$ and $N^\gamma : \bB^d_\bN \to \scr{B}(\cJ_\infty, \cJ_0)_\bN$ as
    \begin{align*}
        D^\gamma(Z) &:= \gamma(Z)^* [\gamma(0)\otimes I_n], \\
        N^\gamma(Z) &:= \gamma(Z)^*[I_\cH \otimes Z][\gamma(\infty) \otimes I_n].
    \end{align*}
    Then, the characteristic map $B^\gamma : \bB^d_\bN \to \scr{B}(\cJ_\infty, \cJ_0)_\bN$ is defined as
    \begin{equation}\label{eqn:def.char.map}
        B^\gamma(Z) := D^\gamma(Z)^{-1} N^\gamma(Z),
    \end{equation}
    and clearly satisfies $B^\ga(0) = 0$.
\end{definition}

Invertibility of $D^\gamma(Z)$ is not immediate and needs to be established in \eqref{eqn:def.char.map}. 

\begin{lemma}\label{lem:invertibility.of.denom}
    The graded function, $D^\gamma(Z)$, is invertible for each $Z \in \bB^d_n$ and $n \in \bN$.
\end{lemma}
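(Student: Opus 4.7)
The plan is to establish the (generally non-orthogonal) algebraic direct sum decomposition
$$ (\star) \qquad \cH \otimes \bC^n \;=\; \big(\nbran V^\perp \otimes \bC^n\big) \dotplus \cR(V-Z), $$
from which the invertibility of $D^\gamma(Z)$ will follow with almost no further work.

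First I would observe that the operator $I - ZV^* = I - \sum_{j} V_j^* \otimes Z_j$ is invertible on $\cH \otimes \bC^n$ for any $Z \in \bB^d_n$ (the standard row-ball estimate $\|ZV^*\| \leq \|Z\|_{\mathrm{row}} < 1$, already used implicitly in Remark \ref{rem:canon.model.triple}), and, crucially, acts as the identity on $\nbran V^\perp \otimes \bC^n$. Indeed, any $g \in \nbran V^\perp$ satisfies $V V^* g = 0$ (since $VV^*$ is the projection onto $\nbran V$), whence $\|V^* g\|^2 = \langle V V^* g, g\rangle = 0$, so $V_j^* g = 0$ for every $j$. Starting from the orthogonal decomposition $\cH \otimes \bC^n = (\nbran V^\perp \otimes \bC^n) \oplus (\nbran V \otimes \bC^n)$ and applying the bijection $I - ZV^*$ — which fixes the first summand pointwise and, by \eqref{eqn:rest.ran.sp.3}, sends the second onto $\cR(V-Z)$ — produces $(\star)$; directness of the sum (trivial intersection of the two summands) follows from the injectivity of $I - ZV^*$ together with the orthogonality of the preimage decomposition.

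Next I would read off the two facts needed to invert $D^\gamma(Z)$. The operator $\gamma(0) \otimes I_n$ is an isometric isomorphism of $\cJ_0 \otimes \bC^n$ onto $\nbran V^\perp \otimes \bC^n$, while $\gamma(Z)^*$ annihilates $\cR(V-Z) = \nbker \gamma(Z)^*$ and surjects onto $\cJ_0 \otimes \bC^n$ (since $\gamma(Z)$ is injective with range $\cR(V-Z)^\perp$). Together with $(\star)$ these imply that $\gamma(Z)^*\vert_{\nbran V^\perp \otimes \bC^n}$ is itself a linear bijection onto $\cJ_0 \otimes \bC^n$: surjectivity holds because $\gamma(Z)^* h = \gamma(Z)^* h_1$ whenever $h = h_1 + h_2$ is decomposed along $(\star)$, and injectivity holds because $(\nbran V^\perp \otimes \bC^n) \cap \cR(V-Z) = \{0\}$ by directness. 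Consequently $D^\gamma(Z) = \gamma(Z)^* \circ [\gamma(0) \otimes I_n]$ is the composition of two bounded linear bijections of $\cJ_0 \otimes \bC^n$ onto itself, and the bounded inverse theorem supplies its invertibility.

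The main technical ingredient is the partial-isometry identity $V^* g = 0$ for $g \in \nbran V^\perp$; without it, $I - ZV^*$ would not fix $\nbran V^\perp \otimes \bC^n$ pointwise and the clean decomposition $(\star)$ would collapse. This is precisely where the hypothesis that $V$ is a row \emph{partial isometry}, rather than a generic row contraction, is essential.
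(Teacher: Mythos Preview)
Your proof is correct and takes a genuinely different, more conceptual route than the paper's argument. The paper proceeds in three separate steps: it shows $\nbker D^\gamma(Z)^* = \{0\}$ by a direct computation (exploiting that $[VZ^*]^k$ lands in $\nbran V \otimes \bC^n$ for $k>0$), then proves $D^\gamma(Z)$ is bounded below via an estimate using $\|Z\|_{\mr{row}} < 1$ to force closed range, and finally checks $\nbker D^\gamma(Z) = \{0\}$ with another positivity argument. Your approach collapses all three steps into the single structural observation that $I - ZV^*$ is an automorphism of $\cH \otimes \bC^n$ fixing $\nbran V^\perp \otimes \bC^n$ pointwise (this is where the partial-isometry hypothesis is cleanly isolated), which immediately converts the orthogonal splitting along $\nbran V$ into the algebraic direct sum $(\star)$. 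From there invertibility of $D^\gamma(Z)$ is essentially formal.

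What your approach buys is brevity and transparency: the decomposition $(\star)$ is a natural companion to Lemma \ref{lem:RanVperp.and.R(V-Z).are.iso} (which gives the ``other'' direct sum $\cH \otimes \bC^n = (\nbran V \otimes \bC^n) \dotplus \cR(V-Z)^\perp$ via $I - VZ^*$), and it makes the closedness of $\cR(V-Z)$ automatic. The paper's hands-on computations, by contrast, extract an explicit quantitative lower bound $\| D^\gamma(Z) g\| \gtrsim (1 - \|Z\|_{\mr{row}}^2)$, which your soft argument does not directly yield; that bound is not used later, so nothing is lost. One small point worth making explicit in your write-up: the surjectivity of $\gamma(Z)^*$ relies on $\nbran \gamma(Z) = \cR(V-Z)^\perp$ being closed, so that the bounded inverse theorem applied to $\gamma(Z)$ forces $\gamma(Z)^*\gamma(Z)$ to be invertible. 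You invoke this implicitly, and it holds because orthogonal complements are closed, but stating it avoids any appearance of a gap in infinite dimensions.
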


\begin{proof}
    Let $n \in \bN$ and $Z \in \bB^d_n$ be fixed. It suffices to show that $D^\ga (Z)$ is bijective for each such $Z$. 
First, suppose there exists a $0 \neq \widehat{g} \in \nbker D^\gamma(Z)^*$. Since
    \begin{equation*}
    \gamma(Z) \widehat{g} \in \nbran \gamma(Z) = \cR(V-Z)^\perp    
    \end{equation*}
    and $\gamma(0)$ is an isometry onto $\nbran V^\perp$, we use Lemma \ref{lem:RanVperp.and.R(V-Z).are.iso} to conclude that there exists $0 \neq \widehat{f} \in \cH \otimes \bC^n$ such that $[\gamma(0) \otimes I_n] \widehat{f} \in \nbran V^\perp \otimes \bC^n$ and
    \begin{equation*}
        \gamma(Z) \widehat{g} = [I - VZ^*]^{-1}[\gamma(0) \otimes I_n] \widehat{f}.
    \end{equation*}
    Then, note that
    \begin{align}\label{eqn:inv.of.denom.calc.1}
        \begin{split}
            0 &= \bip{\widehat{f}, D^\gamma(Z)^*\widehat{g}}_{\cJ_0 \otimes \bC^n} \\
        &= \bip{ [\gamma(0) \otimes I_n] \widehat{f}, \gamma(Z)\widehat{g}}_{\cH \otimes \bC^n} \\
        &= \bip{[\gamma(0) \otimes I_n] \widehat{f}, [I- V Z^*]^{-1}[\gamma(0) \otimes I_n] \widehat{f}}_{\cH \otimes \bC^n} \\
        &= \sum_{k \in \bN} \bip{[\gamma(0) \otimes I_n] \widehat{f}, [V Z^*]^k [\gamma(0) \otimes I_n] \widehat{f}}_{\cH \otimes \bC^n}.
        \end{split}
    \end{align}
    Recall that $\widehat{f}$ is chosen so that $[\gamma(0) \otimes I_n] \widehat{f} \in \nbran V^\perp \otimes \bC^n$. However, for $k > 0$ we have
    \begin{equation*}
        [V Z^*]^k [\gamma(0) \otimes I_n] \widehat{f} \in \nbran V \otimes \bC^n.
    \end{equation*}
    Continuing from \eqref{eqn:inv.of.denom.calc.1}, we thus obtain
    \begin{equation*}
        0 = \bip{[\gamma(0) \otimes I_n] \widehat{f}, [\gamma(0) \otimes I_n] \widehat{f}}_{\cH \otimes \bC^n},
    \end{equation*}
    which is a contradiction since $\widehat{f} \neq 0$ and $\gamma(0)$ is an isometry. We therefore conclude that $\nbker D^\ga (Z) ^* = \{ 0 \}$, so that $D^\ga (Z)$ has dense range. (If $\nbdim \cJ _0 < +\infty$, this is sufficient to prove that $D^\ga (Z)$ is invertible.)
    
    We further claim that $D^\ga (Z)$ is bounded below so that it has closed range and is, thence, surjective. If not, then there is a sequence of unit vectors, $(g_n ) \subseteq \cJ _0 \otimes \C ^n$, $\| g _n \| _{\cJ _0 \otimes \C ^n} =1$, so that $\| D ^\ga (Z) g_n \| \rightarrow  0$. Setting $$G_n := [\ga (0) \otimes I_n] g_n,$$ we have that $\| G_n \| =1$ since $\ga (0)$ is an isometry. Using that $\ga (Z) : \cJ _0 \otimes \C ^n \twoheadrightarrow \scr{R} (V-Z) ^\perp$ and $[I- VZ^*] : \scr{R} (V-Z) ^\perp \rightarrow \nbran V ^\perp \otimes \C ^n$ are linear isomorphisms, if $H_n \in \scr{R} (V-Z) ^\perp$ and $h_n \in \cJ _0 \otimes \C ^n$ are chosen so that
    \begin{equation*}
        [I-VZ^*]H_n =G_n \qand \ga (Z) h_n = H_n,
    \end{equation*}
    then the norms of the sequences $(H_n)$, $(h_n)$ are uniformly bounded above and below by strictly positive constants. In particular, there exist $C,c>0$ so that $$\| h_n \| \leq C\qand \| H_n \| \geq c.$$ Then,
\ba C \| D ^\ga (Z) g_n \| & \geq & \ipr{H_n}{D^\ga (Z) g_n} =  \ipr{\ga (Z) h_n}{\ga (0) \otimes I_n g_n} =\ipr{H_n}{G_n} \\
& = & \ipr{H_n}{[I-VZ^*]H_n} = \ipr{H_n}{[I-ZV^*VZ^* - (V-Z)V^*VZ^*] H_n} \\
& = & \ipr{H_n}{[I-ZV^*VZ^*] H_n} \qquad \qquad \qquad \qquad (\because H_n \perp \scr{R} (V-Z).) \\
& \geq & \big(1 - \| Z \| _{\mr{row}} ^2 \big) \| H_n \| ^2 \geq \big(1- \| Z \| _{\mr{row}} ^2\big) c. \ea  
This is a contradiction, since the left hand side of this equation vanishes in the limit, and we conclude that $D^\ga (Z)$ is surjective. 

It remains to prove injectivity. Suppose now that there exists $0 \neq \widehat{f} \in \nbker D^\gamma(Z)$. Since $\gamma(0)$ is an isometry onto $\nbran V^\perp \otimes \bC^n$, it follows that
    \begin{equation*}
        0 \neq [\gamma(0) \otimes I_n] \widehat{f} \in (\nbran V^\perp \otimes \bC^n) \cap \nbker \gamma(Z)^* = (\nbran V^\perp \otimes \bC^n) \cap \nbran \gamma(Z)^\perp.
    \end{equation*}
    Now, we use Lemma \ref{lem:RanVperp.and.R(V-Z).are.iso} once again to obtain $0 \neq \widehat{g} \in \cR(V-Z)^\perp$ such that
    \begin{equation*}
        [\gamma(0) \otimes I_n] \widehat{f} = [I - VZ^*]\widehat{g}.
    \end{equation*}
    We therefore note for each $\widehat{h} \in \cJ_0 \otimes \bC^n$ that
    \begin{align*}
        0 &= \bip{\gamma(Z) \widehat{h}, [\gamma(0) \otimes I_n] \widehat{f}}_{\cH \otimes \bC^n} \\
        &= \bip{\gamma(Z) \widehat{h}, [I - VZ^*] \widehat{g}}_{\cH \otimes \bC^n} \\
        &= \bip{\gamma(Z) \widehat{h}, [I - (V Z^*)^* V Z^*]\widehat{g}}_{\cH \otimes \bC^n} - \underbrace{\bip{\gamma(Z) \widehat{h}, \overbrace{[(V-Z) V^* V] (Z^* \widehat{g})}^{\in \, \cR(V-Z)}}_{\cH \otimes \bC^n}}_{= \; 0, \text{ as } \nbran \gamma(Z) \, = \, \cR(V-Z)^\perp} \\
        &= \bip{\gamma(Z) \widehat{h}, [I - (V Z^*)^* VZ^*] \widehat{g}}_{\cH \otimes \bC^n}.
    \end{align*}
    Since $\nbran \gamma(Z) = \cR(V-Z)^\perp$, we can choose $\widehat{h}$ above so that $\gamma(Z) \widehat{h} = \widehat{g}$. The last equality above therefore leads to a contradiction as
    \begin{equation*}
        \bip{\widehat{g}, [I - (V Z^*)^* V Z^*]\widehat{g}}_{\cH \otimes \bC^n} \geq \big(1 - \|Z\|_{\text{row}}^2 \big) \|\widehat{g}\|^2 > 0.
    \end{equation*}
    Thus, $\nbker D^\ga (Z) = \{ 0 \}$ and the proof is complete.
\end{proof}

We are now ready to prove the main result of this subsection.

\begin{theorem}\label{thm:kernel.and.char.map}
    The graded reproducing kernel $K^\gamma$ of the graded RKHS $\cH ^\ga$ can be realized as
    \begin{align}
    \begin{aligned}\label{eqn:ker.in.terms.of.char.map}
        K^\gamma(Z,W)[A] &= D^\gamma(Z)\big[I_{\cJ_0} \otimes K(Z,W)[A]\big]D^\gamma(W)^* - N^\gamma(Z) \big[I_{\cJ_\infty} \otimes K(Z,W)[A]\big] N^\gamma(W)^* \\
        &= D^\gamma(Z)\Big\{\big[I_{\cJ_0} \otimes K(Z,W)[A]\big] - B^\gamma(Z)\big[I_{\cJ_\infty} \otimes K(Z,W)[A]\big] B^\gamma(W)^*\Big\}D^\gamma(W)^*,
    \end{aligned}
    \end{align}
    where $K$ denotes the NC Szeg\"o kernel of $\hardy$, and $Z \in \bB^d_n$, $W \in \bB^d_m$, $A \in \bC^{n \times m}$ and $n \in \bN$ are arbitrary. Consequently, $B^\ga$ is a contractive $\sB(\cJ_\infty, \cJ_0)$-valued graded map on $\bB^d_\bN$.
\end{theorem}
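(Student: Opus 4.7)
The plan is to derive a one-step recursion
\[
K^\gamma(Z,W)[A] - K^\gamma(Z,W)[\mathrm{Ad}_{Z,W^*}(A)] = D^\gamma(Z)[I_{\cJ_0} \otimes A]D^\gamma(W)^* - N^\gamma(Z)[I_{\cJ_\infty} \otimes A]N^\gamma(W)^*,
\]
iterate it, and sum using the NC Szeg\"o kernel's geometric representation $K(Z,W)[A] = \sum_{\ell \geq 0}\mathrm{Ad}^{\circ \ell}_{Z,W^*}(A)$ to obtain the first equality of \eqref{eqn:ker.in.terms.of.char.map}. The second equality will then follow algebraically by substituting $N^\gamma = D^\gamma B^\gamma$ and factoring out $D^\gamma(Z), D^\gamma(W)^*$. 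Contractivity of $B^\gamma$ will come from transferring the positivity of the reproducing kernel $K^\gamma$ through the invertible factor $D^\gamma$ of Lemma~\ref{lem:invertibility.of.denom}.

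\paragraph{Deriving the recursion.} First I will use the hypothesis $\nbran \gamma(Z) = \cR(V-Z)^\perp$ together with the partial-isometry identity $VV^*V = V$ to obtain the intertwining $\gamma(Z)^*[V \otimes I_n] = \gamma(Z)^*[I_\cH \otimes Z](V^*V \otimes I_n)$. Substituting $V^*V = I_{\cH \otimes \bC^d} - \gamma(\infty)\gamma(\infty)^*$ and recognizing the definition of $N^\gamma(Z)$ turns this into the Gleason-type identity
\[
\gamma(Z)^*[V \otimes I_n] = \gamma(Z)^*[I_\cH \otimes Z] - N^\gamma(Z)[\gamma(\infty)^* \otimes I_n],
\]
whose adjoint (on the $W$-side), namely $[V^* \otimes I_m]\gamma(W) = [I_\cH \otimes W^*]\gamma(W) - [\gamma(\infty) \otimes I_m]N^\gamma(W)^*$, will also be needed. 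I then split $I_\cH = VV^* + \gamma(0)\gamma(0)^*$ inside $\gamma(Z)^*[I_\cH \otimes A]\gamma(W)$: the $\gamma(0)\gamma(0)^*$-summand is exactly $D^\gamma(Z)[I_{\cJ_0} \otimes A]D^\gamma(W)^*$, and the $VV^*$-summand, factored as $[V \otimes I_n][I_{\cH \otimes \bC^d} \otimes A][V^* \otimes I_m]$, will be processed by applying the two Gleason identities in sequence. One cross-term, $[\gamma(\infty)^* \otimes I_n][I \otimes A][V^* \otimes I_m]$, will vanish identically thanks to $\gamma(\infty)^* V^* = 0$ (since $\nbran V^* = \nbker V^\perp$ is orthogonal to $\nbran \gamma(\infty) = \nbker V$ for a row partial isometry); the surviving terms will collapse, respectively, to $K^\gamma(Z,W)[\mathrm{Ad}_{Z,W^*}(A)]$ (the propagation piece) and $-N^\gamma(Z)[I_{\cJ_\infty} \otimes A]N^\gamma(W)^*$ (via the reassembly $\gamma(Z)^*[I_\cH \otimes Z][\gamma(\infty) \otimes I_n] = N^\gamma(Z)$), yielding the recursion displayed above.

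\paragraph{Iteration, second equality, and contractivity.} Iterating the recursion $N$ times, the tail $K^\gamma(Z,W)[\mathrm{Ad}^{\circ N}_{Z,W^*}(A)]$ will vanish in the limit because $\|\mathrm{Ad}^{\circ N}_{Z,W^*}(A)\| \leq (\|Z\|_{\mathrm{row}} \|W\|_{\mathrm{row}})^N \|A\|$ decays exponentially for strict row contractions, and the telescoping sum will collapse to the first equality of \eqref{eqn:ker.in.terms.of.char.map}; the second equality is then immediate. For contractivity I will rewrite \eqref{eqn:ker.in.terms.of.char.map} as $K^\gamma = D^\gamma K^{B^\gamma} D^{\gamma *}$, where $K^{B^\gamma}(Z,W)[A] := I_{\cJ_0} \otimes K(Z,W)[A] - B^\gamma(Z)[I_{\cJ_\infty} \otimes K(Z,W)[A]]B^\gamma(W)^*$; positivity of $K^\gamma$ (as a reproducing kernel) together with invertibility of $D^\gamma$ will force $K^{B^\gamma} \succeq 0$, and specializing to $Z = W$ with $A = I_n - ZZ^*$ (which is PSD and satisfies $K(Z,Z)[I_n - ZZ^*] = I_n$, the defining inverse property of the Szeg\"o kernel) will extract the pointwise contractivity $I_{\cJ_0 \otimes \bC^n} - B^\gamma(Z) B^\gamma(Z)^* \geq 0$. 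The main obstacle throughout will be the tensor bookkeeping in the recursion step: the two Gleason identities must be applied in the correct order across several tensor slots, and the two key cancellations ($\gamma(\infty)^* V^* = 0$, and the collapse of $[I_\cH \otimes Z][\gamma(\infty) \otimes I_n][I_{\cJ_\infty} \otimes A]$ into $N^\gamma$-data) must be recognized; everything else is a geometric-series argument or a routine positivity extraction.
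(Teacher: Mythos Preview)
Your proposal is correct. Both your argument and the paper's derive the same one-step identity
\[
K^\gamma(Z,W)[A] - K^\gamma(Z,W)[\mathrm{Ad}_{Z,W^*}(A)] \;=\; D^\gamma(Z)[I_{\cJ_0}\otimes A]D^\gamma(W)^* - N^\gamma(Z)[I_{\cJ_\infty}\otimes A]N^\gamma(W)^*,
\]
but they reach it and then exploit it differently. The paper transports $V$ to the model space via $\cU^\gamma$, computes $\langle K^\gamma\{Z,\cdot\},\widehat V\widehat V^{*}K^\gamma\{W,\cdot\}\rangle$ two ways (as $I-P_0$ and via the action of $\widehat V^{*}$ on kernel vectors from Lemma~\ref{lemma:action.M^R_Z^*.on.ker.func}), obtaining the identity in weak form; it then writes it as an identity in the variable $K^{-1}(Z,W)[vu^*]$ and uses invertibility of the NC Szeg\H{o} kernel to conclude in one shot. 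You stay on $\cH$ and work at the operator level: splitting $I_\cH = VV^* + \gamma(0)\gamma(0)^*$ and using the orthogonality $\gamma(Z)^*(V-Z)V^*V=0$ (equivalent to $\nbran\gamma(Z)=\cR(V-Z)^\perp$) to get the recursion, and then iterate and sum the geometric series with the tail bound $\|\mathrm{Ad}^{\circ N}_{Z,W^*}(A)\|\to 0$. Your route is more self-contained (no need for $\cU^\gamma$, $\widehat V$, or Lemma~\ref{lemma:action.M^R_Z^*.on.ker.func}), while the paper's packaging makes the role of the model partial isometry and the projections $P_0,P_\infty$ more transparent; the invertibility trick versus your iteration are two equivalent ways to solve the same recursion. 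Your contractivity argument via $K(Z,Z)[I_n-ZZ^*]=I_n$ is exactly the ``standard RKHS argument'' the paper alludes to.
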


\begin{proof}
    We define the projections
    \begin{align}
        P_0 &:= P_{\nbran \widehat{V}^\perp} = \cU^\gamma \gamma(0)\gamma(0)^* {\cU^\gamma}^*,\label{eqn:def.proj.P_0}\\
        P_\infty &:= P_{\nbker\widehat{V}} = [\cU^\gamma \otimes I_d]\gamma(\infty)\gamma(\infty)^*[{\cU^\gamma}^* \otimes I_d].\label{eqn:def.proj.P_infty}
    \end{align}
    Since $\widehat{V}$ is a partial isometry, it follows that $\widehat{V} \widehat{V}^* = P_{\nbran \widehat{V}}$, and hence
    \begin{equation}\label{eqn:proof.ker.from.char.map.partial.iso.1}
        \widehat{V} \widehat{V}^* = I_{\cH^\gamma} - P_0.
    \end{equation}
    We intend to compute the following inner product of any two kernel functions in two ways:
    \begin{equation*}
        Q = \ip{K^\gamma\{Z,f \otimes y,v\}, \widehat{V}\widehat{V}^* K^\gamma\{W,g \otimes x,u\}}_{\cH^\gamma}
    \end{equation*}
    
    First, we use \eqref{eqn:proof.ker.from.char.map.partial.iso.1} and note that
\ba Q &= & \ipr{K^\gamma\{Z,f \otimes y,v\}}{ (I_{\cH^\gamma} - P_0) K^\gamma\{W,g \otimes x,u\}}_{\cH^\gamma}\\
    &= & \underbrace{\ipr{K^\gamma\{Z,f \otimes y,v\}}{K^\gamma\{W,g \otimes x,u\}}_{\cH^\gamma}}_{Q_1}   - \underbrace{\ipr{K^\gamma\{Z,f \otimes y,v\}}{P_0 K^\gamma\{W,g \otimes x,u\}}_{\cH^\gamma}}_{Q_2}. \ea
    Next, we apply Lemma \ref{lemma:action.M^R_Z^*.on.ker.func} to compute
    \ba
        Q &=& \ipr{\widehat{V}^* K^\ga \{Z, f \otimes y, v\}}{\widehat{V}^* K^\ga \{W, g \otimes x, u\}}_{\cH^\ga \otimes \bC^d} \\
        &=& \ipr{ K^\gamma \{Z, f \otimes y, \col(Z) v\}}{ (I_{\cH^\gamma \otimes \bC^d} - P_\infty) K^\gamma \{W, g \otimes x, \col(W) u\}}_{\cH^\gamma \otimes \bC^d}\\
        &=&\underbrace{\ipr{K^\gamma \{Z, f \otimes y, \col(Z) v\}}{K^\gamma \{W, g \otimes x, \col(W) u\}}_{\cH^\gamma \otimes \bC^d}}_{Q_3} \\
        & & - \underbrace{\ipr{ K^\gamma \{Z, f \otimes y, \col(Z) v\}}{ P_\infty K^\gamma \{W, g \otimes x, \col(W) u\} }_{\cH^\gamma \otimes \bC^d}}_{Q_4}.       
    \ea
    
    Let us simplify each of the $Q_j$'s. Applying \eqref{eqn:ker.at.Z.W} and the invertibility of $K(Z,W)$ yields
    \ba
        Q_1 - Q_3 &= & \bip{f \otimes y, K^\gamma(Z,W)[v  u^*](g \otimes x)}_{\cJ_0 \otimes \bC^n}
         - \sum_{j=1}^d\bip{f \otimes v, K^\gamma(Z,W)[Z_j(v u^*)W_j^*](g \otimes x)}\\
        &= & \bip{f \otimes y, K^\gamma(Z,W)\left[K^{-1}(Z,W)[v  u^*]\right](g \otimes x)}_{\cJ_0 \otimes \bC^n}.\ea
    Next, we use \eqref{eqn:def.K^gamma} and \eqref{eqn:def.proj.P_0} to get
    \begin{align*}
        Q_2 &= \bip{[\cU^\gamma \otimes v^*]\gamma(Z)(f \otimes y), \cU^\gamma \gamma(0)\gamma(0)^*{\cU^\gamma}^*[\cU^\gamma \otimes u^*]\gamma(W)(g \otimes x)}_{\cH^\gamma}\\
        &= \bip{f \otimes y, \gamma(Z)^* [I_\cH \otimes v^*]^*\gamma(0)\gamma(0)^*[I_\cH \otimes u^*]\gamma(W)(g \otimes x)}_{\cJ_0 \otimes \bC^n}\\
        &= \bip{f \otimes y, \gamma(Z)^* [\gamma(0)\otimes I_n][I_{\cJ_0} \otimes (v \otimes u^*)] [\gamma(0) \otimes I_n]^*\gamma(W)(g \otimes x)}_{\cJ_0 \otimes \bC^n}\\
        &= \bip{f \otimes y, D^\gamma(Z)[I_{\cJ_0} \otimes (v \otimes u^*)]D^\gamma(W)^*(g \otimes x)}_{\cJ_0 \otimes \bC^n}.
    \end{align*}
    Lastly, using \eqref{eqn:def.K^gamma} and \eqref{eqn:def.proj.P_infty}, we can compute

\begin{align*}
 Q_4 &= \ipr{\bsm
            [\cU^\gamma \otimes v^* Z_1^*]\gamma(Z)(f \otimes y) \vsm \\
            \vdots \vs \\
            [\cU^\gamma \otimes v^* Z_d^*]\gamma(Z)(f \otimes y) \esm}{
         \cU^\gamma \otimes I_d \, \gamma(\infty)\gamma(\infty)^* \, {\cU^\gamma}^* \otimes I_d \bsm
            [\cU^\gamma \otimes u^* W_1^*]\gamma(W)(g \otimes x) \vsm \\
            \vdots \vs \\
            [\cU^\gamma \otimes u^* W_d^*]\gamma(W)(g \otimes x)
        \esm}_{\cH ^\ga \otimes \C ^d} \\
        &= \ipr{\bsm
            [I_\cH \otimes v^* Z_1^*]\gamma(Z)(f \otimes y) \vsm \\
            \vdots \vs \\
            [I_\cH \otimes v^* Z_d^*]\gamma(Z)(f \otimes y) \esm}{\gamma(\infty)\gamma(\infty)^* \bsm
            [I_\cH \otimes u^* W_1^*]\gamma(W)(g \otimes x) \vsm \\
            \vdots \vs \\
            [I_\cH \otimes u^* W_d^*]\gamma(W)(g \otimes x)\esm}_{\cH \otimes \bC^d}\\
        &=\ipr{[I_{\cH \otimes \C^d} \otimes v^*] [I_\cH \otimes Z^*] \gamma(Z)(f \otimes y)}{\gamma(\infty) \gamma(\infty)^* [I_{\cH \otimes \bC^d} \otimes u^*] [I_\cH \otimes W^*] \gamma(W)(g \otimes x)}_{\cJ_\infty} \\
        &= \ipr{\gamma(\infty)^* [I_{\cH \otimes \bC^d} \otimes v^*][I_\cH \otimes Z]^*\gamma(Z)(f \otimes y)}{  \gamma(\infty)^* [I_{\cH \otimes \bC^d} \otimes u^*][I_\cH \otimes W]^*\gamma(W)(g \otimes x)}_{\cJ_{\infty}} \\
        &=\ipr{[I_{\cJ_{\infty}} \otimes v^*] [\gamma(\infty) ^* \otimes I_n] [I_\cH \otimes Z ^*] \gamma(Z)(f \otimes y)}{[I_{\cJ_\infty} \otimes u^*] [\gamma(\infty)^* \otimes I_n] [I_\cH \otimes W^*] \gamma(W)(g \otimes x) }_{\cJ_{\infty}} \\
        &=  \ipr{[I_{\cJ_\infty} \otimes v^*]N^\gamma(Z)^*(f \otimes y)}{[I_{\cJ_\infty} \otimes u^*]N^\gamma(W)^*(g \otimes x)}_{\cJ_\infty}\\
        &=  \ipr{f \otimes y}{N^\gamma(Z) [I_{\cJ_\infty} \otimes (v \otimes u^*)] {N^\gamma(W)}^*(g \otimes x)}_{\cJ_0 \otimes \bC^n}.
\end{align*}
    
    The above simplifications, together with the equation $Q_1 - Q_3 = Q_2 - Q_4$ yields \eqref{eqn:ker.in.terms.of.char.map} for all $A = K^{-1}(Z,W)[v \otimes u^*]$. However, since the NC Szeg\"o kernel, $K(Z,W)[\cdot]$ is an invertible linear map for all $Z \in \bB^d_n$ and $W \in \bB^d_m$ and since matrices of the form $v \otimes u^*$ span $\bC^{n \times m}$, we obtain \eqref{eqn:ker.in.terms.of.char.map} at once. That $B^\ga$ is contractive then follows easily by a standard argument in the theory of RKHS (see, for instance, \cite[Theorem 2.1]{Ball2001-lift}).
\end{proof}

We are now in a position to classify all graded models of a given CNC row partial isometry, $V$. We achieve this by establishing uniqueness of the corresponding characteristic functions. 

\begin{definition}\label{def:coincidence.class.of.char.map}
    Two maps $B_j : \bB^d_\bN \to \scr{B}(\cJ_\infty^{(j)}, \cJ_0^{(j)})_\bN$ are said to \emph{coincide unitarily} if there are fixed unitaries $U_0 \in \scr{B}\big(\cJ_0^{(1)}, \cJ_0^{(2)}\big)$ and $U_\infty \in \scr{B}\big(\cJ_\infty^{(1)}, \cJ_\infty^{(2)}\big)$ such that
    \begin{equation}\label{eqn:def.coincidence.class}
        [U_0 \otimes I_n] B_1(Z) = B_2(Z) [U_\infty \otimes I_n] \foral n \in \bN \AND Z \in \bB^d_\bN.
    \end{equation}
\end{definition}

This can be readily checked to be an equivalence relation and therefore we obtain a corresponding equivalence class for each $B : \bB^d_\bN \to \scr{B}(\cJ_\infty,\cJ_0)$, which shall be referred to as the \emph{unitary coincidence class} of $B$.

\begin{prop}\label{prop:uniqueness.of.graded.model}
    Let $\big(\gamma_j, \cJ_\infty^{(j)},\cJ_0^{(j)}\big)$ be two model triples for $V$. Then, the corresponding characteristic maps $B^{\gamma_j}$ must lie in the same coincidence class.
\end{prop}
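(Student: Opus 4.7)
The plan is to build explicit intertwining unitaries directly from the two model triples and to show that the ``transition'' between the model maps at a general point $Z$ cancels out in the quotient $D^{-1}N$ defining the characteristic function. First, I would define
$$ U_0 := \gamma_2(0)^* \gamma_1(0) : \cJ_0^{(1)} \to \cJ_0^{(2)} \qand U_\infty := \gamma_2(\infty)^* \gamma_1(\infty) : \cJ_\infty^{(1)} \to \cJ_\infty^{(2)}, $$
and check that each is unitary. This is immediate from the fact that the $\gamma_j(0)$ and $\gamma_j(\infty)$ are isometries onto the common subspaces $\nbran V^\perp$ and $\nbker V$, respectively, so $\gamma_2(0)\gamma_2(0)^* = P_{\nbran V^\perp}$ and $\gamma_2(\infty)\gamma_2(\infty)^* = P_{\nbker V}$. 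The same projections then immediately give the intertwining identities $\gamma_1(0) = \gamma_2(0) U_0$ and $\gamma_1(\infty) = \gamma_2(\infty) U_\infty$.

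For general $Z \in \bB^d_n$, the two maps $\gamma_j(Z)$ are, by definition of a model triple, linear isomorphisms onto the same subspace $\cR(V-Z)^\perp \subseteq \cH \otimes \bC^n$. I would therefore define the transition map
$$ S(Z) := \gamma_2(Z)^{-1} \gamma_1(Z) : \cJ_0^{(1)} \otimes \bC^n \to \cJ_0^{(2)} \otimes \bC^n, $$
where $\gamma_2(Z)^{-1}$ denotes the inverse of the bijection $\gamma_2(Z)$ onto $\cR(V-Z)^\perp$. By construction $S(Z)$ is a linear isomorphism with $\gamma_1(Z) = \gamma_2(Z) S(Z)$; note that $S(Z)$ is generally $Z$-dependent and need not be unitary. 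Substituting the three identities $\gamma_1(Z) = \gamma_2(Z) S(Z)$, $\gamma_1(0) = \gamma_2(0) U_0$, and $\gamma_1(\infty) = \gamma_2(\infty) U_\infty$ into Definition~\ref{def:char.map} yields, after a short direct computation, the two relations
$$ D^{\gamma_1}(Z) = S(Z)^* D^{\gamma_2}(Z) [U_0 \otimes I_n] \qand N^{\gamma_1}(Z) = S(Z)^* N^{\gamma_2}(Z) [U_\infty \otimes I_n]. $$

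The conclusion then follows at once because Lemma~\ref{lem:invertibility.of.denom} guarantees that $D^{\gamma_j}(Z)$ is invertible, so forming the quotient $B^{\gamma_j}(Z) = D^{\gamma_j}(Z)^{-1} N^{\gamma_j}(Z)$ cancels the common left factor $S(Z)^*$ and produces
$$ B^{\gamma_1}(Z) = [U_0^* \otimes I_n] B^{\gamma_2}(Z) [U_\infty \otimes I_n], $$
i.e.\ $[U_0 \otimes I_n] B^{\gamma_1}(Z) = B^{\gamma_2}(Z) [U_\infty \otimes I_n]$, which is precisely the definition of unitary coincidence in Definition~\ref{def:coincidence.class.of.char.map}. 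The main conceptual point of the argument, rather than a technical obstacle, is exactly this cancellation: although the model maps at a general $Z$ are only related by a $Z$-dependent linear isomorphism $S(Z)$, the $D^{-1}N$ structure of $B^{\gamma}$ absorbs that variability into a left factor, so only the constant unitaries $U_0$ and $U_\infty$ coming from the boundary values $Z=0$ and $Z=\infty$ survive in the final intertwining relation.
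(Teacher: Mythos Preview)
Your proof is correct and follows essentially the same approach as the paper's: both arguments define the same unitaries $U_0,U_\infty$, introduce a $Z$-dependent transition operator relating $\gamma_1(Z)$ and $\gamma_2(Z)$, and observe that this factor cancels in the quotient $D^{-1}N$. Your transition map $S(Z)=\gamma_2(Z)^{-1}\gamma_1(Z)$ is in fact the inverse-adjoint of the paper's $C(Z)=\gamma_2(Z)^*(\gamma_1(Z)\gamma_1(Z)^*)^{-1}\gamma_1(Z)$, and the relations you derive for $D^{\gamma_1}$ and $N^{\gamma_1}$ are exactly the adjoint form of the paper's equations~\eqref{eqn:uniq.proof.step.2}--\eqref{eqn:uniq.proof.step.3}.
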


\begin{proof}
    Define $U_0 = \gamma_2(0)^*\gamma_1(0)$ and $U_\infty = \gamma_2(\infty)^*\gamma_1(\infty)$, and note that they are unitaries since $\gamma_j(0)$ and $\gamma_j(\infty)$ are isometries onto $\nbran V^\perp$ and $\nbker V$ respectively for $j = 1,2$. We also introduce for notational convenience the linear isomorphism $C(Z) \in \scr{B}\big(\cJ_0^{(2)}, \cJ_0^{(1)}\big)$ for each $n \in \bN$ and $Z \in \bB^d_n$, given by
    \begin{equation*}
        C(Z) := \gamma_2(Z)^* (\gamma_1(Z) \gamma_1(Z)^*)^{-1} \gamma_1(Z).
    \end{equation*}
    Now, we compute
    \begin{align}
    \begin{aligned}\label{eqn:uniq.proof.step.1}
        [U_0 \otimes I_n] B^{\gamma_1}(Z) &= [U_0 \otimes I_n] D^{\gamma_1}(Z)^{-1}N^{\gamma_1}(Z) \\
        &= [U_0 \otimes I_n] D^{\gamma_1}(Z)^{-1} C(Z)^{-1} C(Z) N^{\gamma_1}(Z) \\
        &= \big(C(Z)D^{\gamma_1}(Z) [U_0^* \otimes I_n]\big)^{-1} C(Z) N^{\gamma_1}(Z).
    \end{aligned}
    \end{align}
    We then note that
    \begin{align}
    \begin{aligned}\label{eqn:uniq.proof.step.2}
        C(Z)D^{\gamma_1}(Z) &= \gamma_2(Z)^* (\gamma_1(Z) \gamma_1(Z)^*)^{-1} \gamma_1(Z) \gamma_1(Z)^*[\gamma_1(0) \otimes I_n] \\
        &= \gamma_2(Z)^* [\gamma_1(0) \otimes I_n] \\
        &= \gamma_2(Z)^*[\gamma_2(0) \otimes I_n] [(\gamma_2(0)^*\gamma_1(0)) \otimes I_n] \\
        &= D^{\gamma_2}(Z) [R \otimes I_n]
    \end{aligned}
    \end{align}
    and, similarly,
    \begin{align}
    \begin{aligned}\label{eqn:uniq.proof.step.3}
        C(Z)N^{\gamma_1}(Z) &= \gamma_2(Z)^* (\gamma_1(Z) \gamma_1(Z)^*)^{-1} \gamma_1(Z) \gamma_1(Z)^* [I_\cH \otimes Z][\gamma_1(\infty) \otimes I_n]\\
        &= \gamma_2(Z)^*[I_\cH \otimes Z][\gamma_2(\infty) \otimes I_n] [U_\infty \otimes I_n] \\
        &= N^{\gamma_2}(Z)[U_\infty \otimes I_n].
    \end{aligned}
    \end{align}
    Combining \eqref{eqn:uniq.proof.step.1}, \eqref{eqn:uniq.proof.step.2} and \eqref{eqn:uniq.proof.step.3} yields \eqref{eqn:def.coincidence.class}, which completes the proof.
\end{proof}

Recall from Remark \ref{rem:canon.model.triple} that every row partial isometry $V$ is equipped with the canonical NC model triple $(\Gamma, \nbker V, \nbran V^\perp)$, where
\begin{equation*}
    \Gamma(Z) = [I - VZ^*]^{-1}\vert_{\nbran V^\perp \otimes \bC^n} \foral Z \in \bB^d_n,\, n \in \bN.
\end{equation*}
It is then immediate from \eqref{eqn:Ga(Z)^*.is.NC} that $D^\Ga$, $N^\Ga$ and $B^\Ga$ are all operator-valued NC maps on $\bB^d_\bN$ (in the sense of \eqref{eqn:def.op-val.NC.map}). We can combine this knowledge of the standard NC model with Proposition \ref{prop:uniqueness.of.graded.model} to obtain the following remarkable fact:

\begin{theorem}\label{thm:char.map.is.NC.analytic.D.is.a.unitary.mult}
    If $(\ga, \cJ_\infty. \cJ_0)$ is any model triple for a given CNC row partial isometry $V$, then the characteristic function, $B^\ga$, is a strictly contractive and CE element of the NC Schur class $\sS_d(\cJ_\infty, \cJ_0)$ such that $B^\ga(0) = 0$.

    Moreover, $M^L_{D^\ga}$, i.e., left-multiplication by $D^\ga$, is a unitary map from $\sH(B^\ga)$ onto $\cH^\ga$, $X^\ga := {M^L_{D^\ga}}^* \cU^\ga V \big[ {\cU^\ga}^* M^L_{D^\ga} \otimes I_d \big]$ is the unique extremal Gleason solution for $\sH(B^\ga)$ and $\mbf{B}^\ga := \big[{M^L_{D^\ga}}^* \cU^\ga \otimes I_d\big] \ga(\infty)$ is the unique extremal Gleason solution for $B^\ga$.
\end{theorem}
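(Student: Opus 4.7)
The plan is to establish the four sub-claims (strict contractivity with $B^\gamma(0)=0$ and membership in $\sS_d(\cJ_\infty,\cJ_0)$, unitarity of $M^L_{D^\gamma}$, identification of $X^\gamma$ as the extremal Gleason solution for $\sH(B^\gamma)$, identification of $\mbf{B}^\gamma$ as the extremal Gleason solution for $B^\gamma$, and CE-ness of $B^\gamma$) in that order, with the CE property deferred until $X^\gamma$ is pinned down. First, $B^\gamma(0)=0$ is immediate from Definition \ref{def:char.map}, since $N^\gamma(0)=0$; combined with the contractivity from Theorem \ref{thm:kernel.and.char.map}, Lemma \ref{strict} upgrades this to strict contractivity. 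For the NC property of $B^\gamma$, I would invoke Proposition \ref{prop:uniqueness.of.graded.model} to establish unitary coincidence with the canonical characteristic function $B^{\Gamma_V}$ attached to the standard NC model $\Gamma_V(Z)=[I-VZ^*]^{-1}|_{\nbran V^\perp \otimes \bC^n}$ of Remark \ref{rem:canon.model.triple}. The realization formula \eqref{eqn:Ga(Z)^*.is.NC} makes $\Gamma_V(Z)^*$ a genuine NC function, hence so are $D^{\Gamma_V}$, $N^{\Gamma_V}$, and $B^{\Gamma_V}$, and this NC property is preserved under constant-unitary conjugation.

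The technical heart is the unitarity of $M^L_{D^\gamma}:\sH(B^\gamma)\to\cH^\gamma$. I would base this on the kernel factorization
\[ K^\gamma(Z,W)[A] \;=\; D^\gamma(Z)\, K^{B^\gamma}(Z,W)[A]\, D^\gamma(W)^* \]
provided by \eqref{eqn:ker.in.terms.of.char.map}, together with the pointwise invertibility of $D^\gamma$ from Lemma \ref{lem:invertibility.of.denom}. Defining the candidate unitary on kernel functions by
\[ K^{B^\gamma}\{W,\, g\otimes x,\, u\} \;\longmapsto\; K^\gamma\{W,\, D^\gamma(W)(g\otimes x),\, u\}, \]
one reads off inner-product preservation directly from the displayed factorization and extends by linearity and density; pointwise invertibility of $D^\gamma(W)$ then delivers surjectivity. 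The nontrivial issue is to identify this abstractly defined unitary with honest left-multiplication by $D^\gamma$, i.e.\ to reconcile the graded-RKHS structure of $\cH^\gamma$ (the model $\gamma$ need not respect direct sums or joint similarities) with that of $\sH(B^\gamma)\subseteq \bH^2_d \otimes \cJ_0$.

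Given this unitary, the rest is transport. Since $\cU^\gamma$ (Proposition \ref{prop:U_gamma.well.def}) and $M^L_{D^\gamma}$ are unitaries and $V$ is a CNC row partial isometry, $X^\gamma$ is a CNC row partial isometry on $\sH(B^\gamma)$. By \cite{JMfree,JM-freeCE}, the unique contractive Gleason solution on any NC de Branges--Rovnyak space is the restriction of the backward right shifts, so it suffices to verify that $X^\gamma$ obeys \eqref{eqn:def.Gleason.soln}; this is the payoff from Proposition \ref{prop:V.transf.under.U^gamma}, which identifies $\widehat{V}$ with right multiplication by $Z$ on its initial space and which transports through $M^L_{D^\gamma}$ to the required difference-quotient identity. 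Extremality of $X^\gamma$ is then automatic: $X^\gamma$ is a partial isometry and $B^\gamma(0)=0$, so by Remark \ref{rem:X.partial.iso.iff.b(0)=0} together with \eqref{eqn:range.defect.T*} we get $X^\gamma(X^\gamma)^* = I - K_0^{B^\gamma}(K_0^{B^\gamma})^*$. The equivalence in \cite[Theorem 6.4]{JM-freeCE} upgrades extremality to the CE property of $B^\gamma$, closing the first claim. For $\mbf{B}^\gamma$: using $N^\gamma(Z)=\gamma(Z)^*[I_\cH\otimes Z][\gamma(\infty)\otimes I_n]$ and the unitarity of $M^L_{D^\gamma}$, a short computation yields $\mbf{B}^\gamma(Z)\cdot Z = B^\gamma(Z) - B^\gamma(0) = B^\gamma(Z)$, so $\mbf{B}^\gamma$ is a Gleason solution for $B^\gamma$, and the now-established CE property of $B^\gamma$ identifies it as the unique extremal one.

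The main obstacle is unequivocally the second step. Although the kernel identity itself is crisp, upgrading it to a bona fide unitary left-multiplier demands reconciling the graded-RKHS structure of $\cH^\gamma$ (built from a possibly non-NC model map) with the genuine NC-function-space structure of $\sH(B^\gamma)$. Once this is secured, the remaining structural conclusions—Gleason-equation verification, extremality, and the CE property—follow cleanly from Proposition \ref{prop:V.transf.under.U^gamma} and the uniqueness/extremality theory for Gleason solutions developed in \cite{JMfree,JM-freeCE}.
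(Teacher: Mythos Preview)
Your proposal is correct and follows essentially the same route as the paper: NC-ness via unitary coincidence with the canonical model $B^{\Gamma_V}$, unitarity of $M^L_{D^\gamma}$ from the kernel factorization \eqref{eqn:ker.in.terms.of.char.map} (the paper dispatches your ``main obstacle'' in one line by citing a modification of \cite[Theorem~2.1]{Ball2001-lift}; the graded-versus-NC worry dissolves because the kernel identity together with pointwise invertibility of $D^\gamma$ is all that is needed), and the Gleason-solution identifications via transport through Proposition~\ref{prop:V.transf.under.U^gamma}.

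One ordering issue is worth flagging: extremality of $X^\gamma$ alone does not yield column-extremeness of $B^\gamma$ via \cite[Theorem~6.4]{JM-freeCE}, since condition (iii) there also requires $\mr{supp}(B^\gamma)=\cJ_\infty$. The paper avoids this by reversing your last two steps: it first checks that $\mbf{B}^\gamma$ is a Gleason solution for $B^\gamma$ and that it is extremal directly from $(\mbf{B}^\gamma)^*\mbf{B}^\gamma=\gamma(\infty)^*\gamma(\infty)=I_{\cJ_\infty}$, and then invokes condition (ii) of \cite[Theorem~6.4]{JM-freeCE} to conclude CE. You should reorder accordingly, or else verify the support condition separately.
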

\begin{remark} That is, even though we do not assume that our graded model, $(\ga, \cJ _\infty , \cJ _0)$ respects direct sums or joint similarities, this theorem proves that the characteristic function, $B^\ga$, coincides unitarily with an element of the NC Schur class, and hence, $B^\ga$, is also in the NC Schur class and is an NC operator--valued function in the unit row-ball.
\end{remark}
\begin{proof}
    As $B^\Ga$, the characteristic map corresponding to the canonical NC model triple, is an NC map, it follows from \eqref{eqn:def.coincidence.class} that $B^\ga$ is also an NC operator--valued function in $\rball$. It was already established in Theorem \ref{thm:kernel.and.char.map} that $B^\ga$ is a contractive map, and therefore, $B^\ga \in \sS_d(\cJ_\infty, \cJ_0)$ follows at once. That $B^\ga(0) = 0$ is also immediate since $N^\ga(0) = 0$ by definition. Lastly, $D^\ga$ being a unitary left multiplier from $\sH(B^\ga)$ onto $\cH^\ga$ follows from \eqref{eqn:ker.in.terms.of.char.map} and a simple modification of \cite[Theorem 2.1]{Ball2001-lift} from graded kernels.

    Next, we check that $X^\ga$, as in the hypothesis, is the unique extremal Gleason solution for $\sH(B^\ga)$. Let us suspend the superscript $\ga$ temporarily to simplify the notation below. Recall from Proposition \ref{prop:V.transf.under.U^gamma} that $\widehat{V} := \cU V [{\cU}^* \otimes I_d]$ acts as $M^R_Z$ on its initial space and
    \begin{equation*}
        \nbran \widehat{V} = \{ h \in \cH \ | \ h(0) = 0 \}.
    \end{equation*}
    Since $D$ is a unitary left multiplier, it follows that $X$ also acts as $M^R_Z$ on its initial space and
    \begin{equation*}
        \nbran X = \{ f \in \sH(B) \ | \ f(0) = 0 \}.
    \end{equation*}
    Consequently, we have
    \begin{equation*}
        X X^* = I_{\sH(B)} - K_0^B {K_0^B}^*
    \end{equation*}
    and, for each $f \in \sH(B)$, $Z \in \bB^d_n$ and $n \in \bN$, we can compute
    \begin{align*}
        M^R_Z (X^* f)(Z) = (X X^* f)(Z) = f(Z) - \big(K_0^B {K_0^B}^*f \big)(Z) = f(Z) - K^B \{0, f(0) \otimes 1, 1\}(Z).
    \end{align*}
    As $B(0) = 0$, it is easy to check that $K^B\{0, f(0) \otimes 1, 1\}(Z) = f(0_n)$. Thus, $X$ is the unique extremal Gleason solution for $\sH(B)$ as claimed.

    It remains to verify that $\mbf B = \bsm B_1 \\ \vdots \\ B_d \esm$, as defined in the hypothesis, is the unique and extremal Gleason solution for $B$. First, note that
    \begin{equation*}
        \mbf B^* \mbf B = \ga(\infty)^* \ga(\infty) = I_{\cJ_\infty}.
    \end{equation*}
    As $B(0) =0$, if $\mbf{B}$ is a Gleason solution for $B$, then the preceding equation implies that it is contractive and extremal. For each $g \in \cJ_0$, $h \in \cJ_\infty$, $Z \in \bB^d_n$, $x, u \in \bC^n$ and $n \in \bN$, we have
    \begin{align*}
        \lel g \otimes x, \sum_{j = 1}^d [B_j h (Z)](Z_j u) \rir_{\cJ_0 \otimes \bC^n} &= \sum_{j = 1}^d \bip{K^B\{Z, g \otimes x, Z_j u\}, B_j h}_{\sH(B)} \\
        &= \bip{K^B \{Z, g \otimes x, \col(Z) u\}, \mbf B h}_{\sH(B) \otimes \bC^d} \\
        &= \lel \bsm \cU^* M^L_D K^B \{Z, g \otimes x, Z_1 u\} \\ \vdots \\ \cU^* M^L_D K^B \{Z, g \otimes x, Z_d u\} \esm, \ga(\infty) h \rir_{\cH \otimes \bC^d}.
    \end{align*}
    It is then straightforward to check that 
    \begin{align} \label{eqn:left.mult.action.on.kernel.maps}
    M^L_D K^B\{ Z, g \otimes x, Z_j u \} & = (M^L _{D^{-1}} ) ^* K^B \{ Z, g \otimes x, Z_j u \} \nonumber\\
    & = K^\ga \{ Z, D (Z) ^{-*} g \otimes x, Z_j u\}.
    \end{align}

 As a result, we continue the above calculation and obtain
    \begin{align*}
        \lel g \otimes x, \sum_{j = 1}^d [B_j h (Z)](Z_j u) \rir_{\cJ_0 \otimes \bC^n} &= \lel \bsm \cU^* M^L_D K^B \{Z, g \otimes x, Z_1 u\} \\ \vdots \\ \cU^* M^L_D K^B \{Z, g \otimes x, Z_d u\} \esm, \ga(\infty) h \rir_{\cH \otimes \bC^d} \\
        &= \lel \bsm \cU^* K^\ga \{Z, D(Z)^{-*}\, g \otimes x, Z_1 u\} \\ \vdots \\ \cU^* K^B \{Z, (D(Z)^{-1})^*(g \otimes x), Z_d u\} \esm, \ga(\infty) h \rir_{\cH \otimes \bC^d} \\
        &= \lel \bsm [I_\cH \otimes u^* Z_1^*] \ga(Z)^* D(Z)^{-*}\, g \otimes x \\ \vdots \\ [I_\cH \otimes u^* Z_d^*] \ga(Z)^* D(Z)^{-*} \, g \otimes x \esm, \ga(\infty) h \rir_{\cH \otimes \bC^d} \\
        &= \ip{[I_\cH \otimes u^*][I_\cH \otimes Z]^* \ga(Z)^* D(Z)^{-*} \, g \otimes x, \ga(\infty) h}_{\cH \otimes \bC^d} \\
        &= \bip{g \otimes x, D(Z)^{-1} \ga(Z) [I_\cH \otimes Z] [\ga(\infty) \otimes I_n](h \otimes u)}_{\cJ_0 \otimes \bC^n} \\
        &= \bip{g \otimes x, B(Z)(h \otimes u)}_{\cJ_0 \otimes \bC^n}.
    \end{align*}
    Since $B(0) = 0$, it follows that $\mbf B$ is the (necessarily) unique Gleason solution for $B$. Moreover, since $\mbf{B}$ is extremal, it follows that $B$ is column--extreme, by \cite[Theorem 6.4]{JM-freeCE}.
\end{proof}

Computing the characteristic function with respect to the canonical NC model for a given row partial isometry and combining Proposition \ref{prop:uniqueness.of.graded.model} with Theorem \ref{thm:char.map.is.NC.analytic.D.is.a.unitary.mult} yields our main model for CNC row partial isometries.

\begin{theorem}[\bf NC de Branges--Rovnyak model for CNC row partial isometries]\label{thm:main.model.row.partial.iso}
    A row partial isometry $V : \cH \otimes \bC^d \to \cH$ is CNC if and only if it is unitarily equivalent to the unique extremal Gleason solution for an NC de Branges--Rovnyak space $\sH(B_V)$, where $B_V \in \sS_d(\nbker V, \nbran V^\perp)$ is CE and satisfies $B(0) = 0$. This NC Schur class function, $B_V$, is unique up to unitary coincidence, and is given by
    \begin{eqnarray} \label{eqn:NC.char.map.V}
      B_V(Z) & = &  D_V(Z)^{-1} N_V(Z); \quad \quad Z \in \B ^d _n, \ n \in \N, \\
  \mbox{where,} \quad      D_V (Z) &= & [I - VV^*] [I - ZV^*]^{-1} [I - VV^*] \otimes I_n \quad \mbox{and} \nonumber \\
        N_V(Z) &= & [I - VV^*] [I - ZV^*]^{-1} [I_\cH \otimes Z] \, [I - V^*V]  \otimes I_n. \nonumber \end{eqnarray}
\end{theorem}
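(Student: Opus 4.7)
The theorem is essentially a specialization of Theorem \ref{thm:char.map.is.NC.analytic.D.is.a.unitary.mult}, Proposition \ref{prop:uniqueness.of.graded.model}, and Corollary \ref{cor:X.is.CNC} to the canonical NC model triple $(\Gamma_V, \nbker V, \nbran V^\perp)$ from Remark \ref{rem:canon.model.triple}, together with a direct computation of the characteristic function in this particular case. The strategy is to apply the preceding machinery to the canonical model and then read off the explicit formula.

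For the forward direction, assuming $V$ is CNC, Proposition \ref{prop:U_gamma.well.def} yields a surjective isometry $\cU^{\Gamma_V}: \cH \to \cH^{\Gamma_V}$. Theorem \ref{thm:char.map.is.NC.analytic.D.is.a.unitary.mult} then produces the characteristic function $B_V := B^{\Gamma_V} \in \sS_d(\nbker V, \nbran V^\perp)$, which is column--extreme and satisfies $B_V(0) = 0$; supplies a unitary left multiplier $M^L_{D^{\Gamma_V}}: \sH(B_V) \to \cH^{\Gamma_V}$; and identifies $X^{\Gamma_V}$ as the unique extremal Gleason solution on $\sH(B_V)$. The composition $(M^L_{D^{\Gamma_V}})^* \cU^{\Gamma_V}: \cH \to \sH(B_V)$ is then a unitary intertwining $V$ with this Gleason solution. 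For the converse, suppose $V$ is unitarily equivalent to the extremal Gleason solution $X$ on some $\sH(B)$ with $B$ CE and $B(0) = 0$. Then Remark \ref{rem:X.partial.iso.iff.b(0)=0} confirms $X$ is a row partial isometry, while $B(0) = 0$ forces $B$ to be purely contractive, so Corollary \ref{cor:X.is.CNC} gives that $X$, and hence $V$, is CNC. Uniqueness of $B_V$ up to unitary coincidence is precisely Proposition \ref{prop:uniqueness.of.graded.model}.

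To extract the explicit formulas, I would specialize Definition \ref{def:char.map} to $\ga = \Gamma_V$. Here $\Gamma_V(0): \nbran V^\perp \hookrightarrow \cH$ and $\Gamma_V(\infty): \nbker V \hookrightarrow \cH \otimes \bC^d$ are simply the natural inclusions, implemented on the ambient spaces by the projections $I - VV^*$ and $I - V^*V$, respectively. A brief adjoint computation using the identity $(VZ^*)^* = ZV^*$ yields
$$\Gamma_V(Z)^* = \big[(I - VV^*) \otimes I_n\big][I - ZV^*]^{-1}$$
as a map on $\cH \otimes \bC^n$, and substituting this together with the above identifications of $\Gamma_V(0), \Gamma_V(\infty)$ into the defining formulas for $D^{\Gamma_V}$ and $N^{\Gamma_V}$ produces exactly the stated expressions for $D_V$ and $N_V$. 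The main obstacle is purely bookkeeping: one must carefully track tensor factor orderings (especially for $\Gamma_V(\infty)$, which lands in $\cH \otimes \bC^d$ rather than in $\cH$) and verify the adjoint identity above. All the analytic substance---membership in the NC Schur class, column--extremality, unitarity of $M^L_{D^{\Gamma_V}}$, CNC-ness of the extremal Gleason solution, and uniqueness---has already been packaged into the preceding results.
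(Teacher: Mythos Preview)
Your proposal is correct and matches the paper's approach exactly: the paper states that the theorem follows by ``computing the characteristic function with respect to the canonical NC model for a given row partial isometry and combining Proposition \ref{prop:uniqueness.of.graded.model} with Theorem \ref{thm:char.map.is.NC.analytic.D.is.a.unitary.mult},'' which is precisely what you do, and your additional invocation of Corollary \ref{cor:X.is.CNC} and Remark \ref{rem:X.partial.iso.iff.b(0)=0} correctly handles the converse implication that the paper leaves implicit. Your explicit computation of $D_V$ and $N_V$ from the canonical model triple is also correct.
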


\subsection{Model for the partial isometric part of Gleason solutions}\label{subsec:model.constr.Gleason.soln}

Recall that the unique contractive Gleason solution, $X := R^* \otimes I_{\cK}| _{\scr{H} (B)}$, for a column--extreme NC Schur class function, $B \in \sS _d (\cJ, \cK)$ is extremal and CNC. Hence, $X$ has the isometric--pure decomposition, $X = V+C$, where $V$ is a CNC row partial isometry. In this subsection, we calculate the characteristic function, $B_V$, of $V$, and determine the relationship between $B_V$ and the original $B$. This will motivate the extension of our model from CNC row partial isometries to arbitrary CNC row contractions.  

Further recall from Section \ref{subsec:Gleason.solutions}, that if $B  \in \sS_d(\cJ, \cK)$ is CE, then $X^* := R^* \otimes I_{\cK} \vert_{\scr{H}(B)}$ is the unique and extremal CNC Gleason solution $X$ for $\scr{H}(B)$. It was noted that $\Gamma(0) := K_0^B D^{-1}_{B(0)^*} : \cJ_0 \twoheadrightarrow \nbran V^\perp$ extends by continuity to an onto isometry, where $V := X P_{\nbker D_X}$ is the row partial isometric part of $X$ (as in Lemma \ref{isopure}). If we define, for each $Z \in \bB^d_n$ and $n \in \bN$, the map
$$ \Gamma(Z) := [I - XZ^*]^{-1} [\Gamma(0) \otimes I_n] : \cK \otimes \bC^n \twoheadrightarrow \cR(V - Z)^\perp, $$
then $\Gamma$ is a standard NC model map. To complete the model triple, we need to identify a map $\Gamma(\infty)$ that is an isometry onto $\nbker V$. We will construct this map using the unique, contractive and extremal Gleason solution, $\mbf{B}$, for $B$. The following relationship will be useful in our calculations,
\begin{equation}\label{eqn:b^*.acting.on.M^R_W^*}
    \mbf{B}^* K^B\{W, g \otimes x, \col(W) u\} = [I_{\cJ } \otimes u^*]\big(B(W)^* - B(0_n)^*\big) (g \otimes x).
\end{equation}
This holds because, for each $h \in \cJ $, we can use \eqref{GSforB} to calculate
\begin{align*}
    \bip{\mbf{B}^* K^B \{W, g \otimes x, \col(W) u\}, h}_{\cJ} &= \bip{K^B\{W, g \otimes x, \col(W) u\}, \mbf{B} h}_{\sH(B) \otimes \bC^d} \\
    &= \sum_{j = 1}^d \bip{K^B\{W, g \otimes x, W_j u\}, B_j h}_{\sH(B)} \\
    &= \sum_{j = 1}^d \bip{g \otimes x, B_j h (W) W_j u}_{\cJ \otimes \bC^n} \\
    &= \lel g \otimes x, \big(B(W) - B(0_n)\big) (h \otimes u) \rir_{\cJ \otimes \bC^n} \\
    &= \bip{[I_{\cJ} \otimes u^*] \big(B(W)^* - B(0_n)^*\big)(g \otimes x), h}_{\cJ}.
\end{align*}

\begin{lemma}\label{lemma:gamma_infty.isometry}
    The map $\check\Gamma(\infty) := \mbf{B} D_{B(0)}^{-1} : \cJ_\infty \to \sH(B) \otimes \bC^d$ extends by continuity to an isometry into $\nbker V$.
\end{lemma}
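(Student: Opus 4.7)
The statement naturally splits into two parts: (i) $\check\Gamma(\infty) = \mbf{B} D_{B(0)}^{-1}$ is isometric on its dense domain $\nbran D_{B(0)} \subseteq \cJ$, and (ii) $\nbran \mbf{B} \subseteq \nbker V$. Since $\nbker V$ is closed in $\sH(B) \otimes \bC^d$, (i) and (ii) together yield the claimed continuous extension to an isometry $\cJ \to \nbker V$.

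For (i), since $B$ is column--extreme, the extremal Gleason solution satisfies $\mbf{B}^* \mbf{B} = I_\cJ - B(0)^* B(0) = D_{B(0)}^2$. For any $g \in \nbran D_{B(0)}$, this yields
\begin{equation*}
    \bigl\|\mbf{B} D_{B(0)}^{-1} g\bigr\|^2 = \bip{D_{B(0)}^{-1} g, D_{B(0)}^2 D_{B(0)}^{-1} g}_\cJ = \bip{D_{B(0)}^{-1} g, D_{B(0)} g}_\cJ = \|g\|^2.
\end{equation*}
For (ii), the standard contraction intertwining $X D_X^2 = D_{X^*}^2 X$ gives $V = X P_{\nbker D_X} = P_{\nbker D_{X^*}} X$, and $\nbker D_{X^*} = \sH_0^\perp$ by \eqref{eqn:partial.iso.part,of.Gleason.soln}, so $V = P_{\sH_0^\perp} X$. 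Hence $V \mbf{B} h = 0$ reduces to $X \mbf{B} h \in \sH_0 = \ov{\nbran K_0^B}$, which is forced by the operator identity
\begin{equation*}
    X \mbf{B} = -K_0^B B(0) \qquad \text{as maps } \cJ \to \sH(B),
\end{equation*}
since then $X\mbf{B} h \in \nbran K_0^B \subseteq \sH_0$ for every $h \in \cJ$.

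To prove the displayed identity the cleanest route is to verify its adjoint $\mbf{B}^* X^* = -B(0)^* (K_0^B)^*$ on the dense span of kernel functions $K^B\{W, g\otimes x, u\}$. Substituting \eqref{eqn:X^*.b.intertwine.ker.action}, applying $\mbf{B}^*$, then using \eqref{eqn:b^*.acting.on.M^R_W^*} along with $\mbf{B}^* \mbf{B} = D_{B(0)}^2$ yields, after cancellation,
\begin{equation*}
    \mbf{B}^* X^* K^B\{W, g\otimes x, u\} = -(u^*x) B(0)^* g + B(0)^* B(0) [I_\cJ \otimes u^*] B(W)^* (g\otimes x).
\end{equation*}
On the other hand, $(K_0^B)^* K^B\{W, g\otimes x, u\} = K^B\{W, g\otimes x, u\}(0)$, and the reproducing--kernel formula $K^B(0, W)[u^*] = K(0,W)[u^*] \otimes I_\cK - B(0)(K(0,W)[u^*] \otimes I_\cJ) B(W)^*$ combined with $K(0, W)[u^*] = u^*$ identifies this with $(u^*x) g - B(0)[I_\cJ \otimes u^*] B(W)^* (g\otimes x)$. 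Composing with $-B(0)^*$ recovers precisely the right--hand side of the previous display, and taking adjoints of the resulting equality delivers $X\mbf{B} = -K_0^B B(0)$, completing (ii). The main technical obstacle is the careful bookkeeping of tensor orderings in the kernel--function computation; conceptually, the identity $X\mbf{B} = -K_0^B B(0)$ is the operator form of the Gleason condition $\mbf{B}(Z)\cdot Z = B(Z) - B(0)$ paired with the fact that $X^*$ is the restricted backward right shift on $\sH(B)$.
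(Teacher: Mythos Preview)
Your proof is correct and follows essentially the same approach as the paper. Part (i) is identical, and part (ii) rests on the same key computation: applying $\mbf{B}^*$ to \eqref{eqn:X^*.b.intertwine.ker.action} and using \eqref{eqn:b^*.acting.on.M^R_W^*} together with extremality of $\mbf{B}$ to obtain $\mbf{B}^* X^* = -B(0)^* (K_0^B)^*$ on kernel vectors, which is precisely the paper's equation \eqref{eqn:gamma_infty.lemma.proof.2}. The only difference is in how this identity is deployed. You first package it as the operator identity $X\mbf{B} = -K_0^B B(0)$ and combine it with $V = P_{\sH_0^\perp} X$ (obtained from the intertwining $X D_X = D_{X^*} X$), so that $V\mbf{B}$ vanishes in one step. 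The paper instead works with the adjoint formula $V^* = X^*\bigl[I - K_0^B D_{B(0)^*}^{-2}{K_0^B}^*\bigr]$ from \eqref{eqn:partial.iso.part,of.Gleason.soln}, which produces two terms, and then evaluates both via \eqref{eqn:gamma_infty.lemma.proof.2} (the second at $W=0$) to see they coincide and cancel. Your route avoids this second evaluation and is slightly more economical, but the underlying mechanism is the same.
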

In the above statement, if $B(0)$ is a strict contraction, $D_{B(0)}$ is invertible and $\check\Ga (\infty)$ is defined on all of $\cJ $. If, however, $B(0)$ is only a pure contraction, $\check\Ga (\infty)$ is defined, intially, only on the dense linear subspace $\scr{D} _{B(0)} = \nbran D _{B(0)}$. However, as the proof below shows, $\check\Ga (\infty)$ is isometric, and hence extends by continuity to an isometry on $\cJ _\infty$ regardless of whether or not $B(0)$ is a strict or pure contraction.  
\begin{proof}
    It is straightforward to check using that $\mbf{B}$ is extremal, $\emph{i.e.}$ $\mbf{B}^* \mbf{B} = I _\cJ - B(0) ^* B(0)$, that $\check \Gamma(\infty)$ is an isometry:
    \begin{equation*}
        \check \Gamma(\infty)^* \check \Gamma(\infty) = D_{B(0)}^{-1} \mbf{B}^* \mbf{B} D_{B(0)}^{-1} = I_{\cJ}.
    \end{equation*}
    Thus, it suffices to show that $\nbran \check \Gamma(\infty) \subseteq \nbker V = \nbran {V^*}^\perp$. In particular, we will show that for any kernel function $K^B\{W, g \otimes x, u\}$, we have
    \begin{equation}\label{eqn:gamma_infty.lemma.to.show}
        \check \Gamma(\infty)^* V^* K^B\{W, g \otimes x, u\} = 0.
    \end{equation}

    To this end, we use \eqref{eqn:partial.iso.part,of.Gleason.soln} and observe that
    \begin{eqnarray} \label{eqn:gamma_infty.lemma.proof.1}
        &  & D_{B(0)} \check \Gamma(\infty)^* V^* K^B\{W, g \otimes x, u\} = \mbf{B}^* V^* K^B\{W, g \otimes x, u\} \nonumber\\
        &= & \mbf{B}^* X^* \left[I_{\sH(B)} - K_0^B D_{B(0)^*}^{-2} {K_0^B}^*\right] K^B\{W, g \otimes x, u\} \nonumber\\
        &= & \mbf{B}^* X^* K^B\{W, g \otimes x, u\} 
         - \mbf{B}^* X^* K_0^B \underbrace{K^B(0,0)^{-1} {K_0^B}^* K^B\{W, g \otimes x, u\}}_{=:g_0 \in \cJ_0} \nonumber\\
        &= & \mbf{B}^* X^* K^B\{W, g \otimes x, u\} - \mbf{B}^* X^* K^B\{0, g_0 \otimes 1, 1\}.
    \end{eqnarray}
    Let us calculate both terms separately using Proposition \ref{GSkernel} and \eqref{eqn:b^*.acting.on.M^R_W^*}. First, we have
    \begin{align}\label{eqn:gamma_infty.lemma.proof.2}
        \mbf{B}^* X^* K^B\{W, g \otimes x, u\} &= \Big[ [I_{\cJ} \otimes u^*] \big(B(W)^* - B(0_n)^*\big)  - (I_{\cJ} - B(0)^* B(0))[I_{\cJ} \otimes u^*] B(W)^* \Big] (g \otimes x) \nonumber\\
        &= - B(0)^*\big([I_{\cK} \otimes u^*] - B(0)[I_{\cJ} \otimes u^*] B(W)^*\big)(g \otimes x) \nonumber\\
        &= - B(0)^* K^B(0,W)[u^*] (g \otimes x).
    \end{align}
    Similarly, substituting the value of $g_0$ we get
    \begin{align}\label{eqn:gamma_infty.lemma.proof.3}
        \mbf{B}^* X^* K^B\{0, g_0 \otimes 1, 1\} &= - B(0)^* K^B(0,0) g_0 \nonumber\\
        &= - B(0)^* {K_0^B}^* K^B\{ W, g \otimes x, u\} \nonumber\\
        &= - B(0)^* K^B(0, W)[u^*](g \otimes x).
    \end{align}
    The last equality holds because, for each $g' \in \cK$, we can calculate
    \begin{align}
        \bip{{K^B_0}^* K^B\{W, g \otimes x, u\}, g'}_{\cK} &= \bip{K^B\{W, g \otimes x, u\}, K^B\{0, g' \otimes 1, 1\}}_{\sH(B)} \nonumber\\
        &= \bip{g \otimes x, K^B(W,0)[u] g'}_{\cK \otimes \bC^n} \nonumber\\
        &= \bip{K^B(0, W)[u^*](g \otimes x), g'}_{\cK}. \label{eqn:Kernel.at.0.and.W}
    \end{align}

    Plugging the calculations from \eqref{eqn:gamma_infty.lemma.proof.2} and \eqref{eqn:gamma_infty.lemma.proof.3} into \eqref{eqn:gamma_infty.lemma.proof.1}, and noting that $D_{B(0)}$ is injective with dense range immediately yields \eqref{eqn:gamma_infty.lemma.to.show}. This completes the proof.
\end{proof}

Note that $\nbran \check \Gamma(\infty)$ may not be equal to $\nbker V$ as we require, but this can be easily fixed by appending an arbitrary isometry $\Gamma'(\infty) : \cJ' \to \nbker V \ominus \nbran \check \Gamma(\infty)$ as a direct summand,
\begin{equation}\label{eqn:gamma.prime.infty}
    \Ga(\infty) := \check\Ga(\infty) \oplus \Ga'(\infty) : \cJ \oplus \cJ' \twoheadrightarrow \nbker V.
\end{equation}
We therefore combine the observations from Section \ref{subsec:Gleason.solutions} and Lemma \ref{lemma:gamma_infty.isometry} to conclude the following.

\begin{corollary}\label{cor:model.triple.for.Gleason.soln}
    Let $B \in \sS_d(\cJ, \cK)$ be a purely contractive and CE (left) multiplier. Then, $(\Ga, \cJ \oplus \cJ', \cK)$ is a model triple for the row partial isometric part $V$ of the unique extremal Gleason solution $X$ of $\sH(B)$, where
    \begin{equation*}
        \Ga :=
        \begin{cases}
            \ 0 &\mapsto \quad \Gamma(0) := K_0^B D_{B(0)^*}^{-1}; \\
            \ Z &\mapsto \quad [I - XZ^*]^{-1} [\Ga(0) \otimes I_n], \, \forall Z \in \bB^d_\bN; \\
            \ \infty &\mapsto \quad \check \Ga (\infty) \oplus \Ga'(\infty),
        \end{cases}
    \end{equation*}
$\check \Ga(\infty) = \mbf{B} D_{B(0)}^{-1}$ and $\Ga'(\infty)$ is as in \eqref{eqn:gamma.prime.infty}.
\end{corollary}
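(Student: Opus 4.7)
The plan is to verify each of the three defining properties of a model triple (Definition \ref{def:nc.model.map}) for the map $\Ga$ as defined, essentially by assembling the facts already proved in Section \ref{subsec:Gleason.solutions} and Lemma \ref{lemma:gamma_infty.isometry}. There are three conditions to check: (a) $\Ga(0)$ is an isometry onto $\nbran V^\perp$; (b) $\Ga(\infty)$ is an isometry onto $\nbker V$; and (c) for every $Z \in \bB^d_n$, $\Ga(Z) : \cK \otimes \bC^n \to \cH \otimes \bC^n$ is a linear isomorphism onto $\cR(V-Z)^\perp$. The dimension matching conditions $\nbdim (\cJ \oplus \cJ') = \nbdim \nbker V$ and $\nbdim \cK = \nbdim \nbran V^\perp$ will follow automatically from (a), (b) and the construction of $\cJ'$.

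Property (a) was essentially established in the paragraph following \eqref{eqn:range.defect.T*}: since $B$ is purely contractive and CE, ${K_0^B}^* K_0^B = D_{B(0)^*}^2$, so $\Ga(0) := K_0^B D_{B(0)^*}^{-1}$ is isometric on $\scr{D}_{B(0)^*}$ and extends by continuity to $\cJ$. Its range equals $\nbran K_0^B = \sH_0 = \nbran V^\perp$, the latter equality by \eqref{eqn:range.defect.T*}. Property (b) is exactly Lemma \ref{lemma:gamma_infty.isometry} together with the choice of $\Ga'(\infty)$ in \eqref{eqn:gamma.prime.infty}: $\check\Ga(\infty) = \mbf B D_{B(0)}^{-1}$ is isometric with range inside $\nbker V$, and $\Ga'(\infty)$ is chosen to be an isometry onto the orthogonal complement of $\nbran \check\Ga(\infty)$ in $\nbker V$, so the direct sum $\Ga(\infty) = \check\Ga(\infty) \oplus \Ga'(\infty)$ is an isometry from $\cJ \oplus \cJ'$ onto $\nbker V$.

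For property (c), the key observation is that $V \subseteq X$ in the sense of Lemma \ref{lem:RanVperp.and.R(V-Z).are.iso}. Indeed, by Lemma \ref{isopure}, $V = X P_{\nbker D_X} = X V^*V$ since $V^*V = I_{\nbker D_X^\perp^\perp} = P_{\nbker D_X^\perp^\perp} = P_{\nbker V^\perp}$ and part (iii) of Lemma \ref{isopure} gives $\nbker V = \nbker D_X^\perp$. Applying Lemma \ref{lem:RanVperp.and.R(V-Z).are.iso} to the pair $V \subseteq X$ therefore shows that
\begin{equation*}
[I - XZ^*]^{-1} : \nbran V^\perp \otimes \bC^n \twoheadrightarrow \cR(V-Z)^\perp
\end{equation*}
is a linear isomorphism for every $Z \in \bB^d_n$. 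Composing with $\Ga(0) \otimes I_n : \cK \otimes \bC^n \twoheadrightarrow \nbran V^\perp \otimes \bC^n$ (a linear isomorphism by (a)) gives that $\Ga(Z) = [I - XZ^*]^{-1}[\Ga(0) \otimes I_n]$ is a linear isomorphism from $\cK \otimes \bC^n$ onto $\cR(V-Z)^\perp$, as required.

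The proof is really just an assembly of previously established facts, so there is no single ``hard step''; the only place where one must pay careful attention is the identification $V \subseteq X$, because that justifies applying Lemma \ref{lem:RanVperp.and.R(V-Z).are.iso} with $T = X$. Once that is in place, the three defining properties of a model triple follow line by line from Lemma \ref{lemma:gamma_infty.isometry}, equation \eqref{eqn:range.defect.T*}, and Lemma \ref{lem:RanVperp.and.R(V-Z).are.iso}.
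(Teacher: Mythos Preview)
Your proof is correct and follows essentially the same approach as the paper: the corollary is stated as a direct combination of the observations in Section \ref{subsec:Gleason.solutions} (which give (a) and (c), the latter via Lemma \ref{lem:RanVperp.and.R(V-Z).are.iso} applied to the extension $V \subseteq X$) and Lemma \ref{lemma:gamma_infty.isometry} together with \eqref{eqn:gamma.prime.infty} (which give (b)). Your explicit verification of $V \subseteq X$ via Lemma \ref{isopure} is the one point the paper leaves implicit; note that the cleaner way to write it is $\nbker V^\perp = (\nbker D_X^\perp)^\perp = \nbker D_X$, hence $V^*V = P_{\nbker D_X}$ and $XV^*V = XP_{\nbker D_X} = V$.
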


As defined in Section \ref{subsec:char.map}, we now compute the characteristic map $B^\Ga$ for this model triple. Recall that $B^\Ga(Z) = D^\Ga(Z)^{-1} N^\Ga(Z)$, where, for each $Z \in \bB^d_n$ and $n \in \bN$,
\begin{align*}
    D^\Ga(Z) &:= \Ga(Z)^* [\Ga(0) \otimes I_n]; \\
    N^\Ga(Z) &:= \Ga(Z)^* [I_{\sH(B)} \otimes Z][\Ga(\infty) \otimes I_n].
\end{align*}

\begin{theorem}\label{thm:char.map.gleason.soln}
    Let $B \in \cS_d(\cJ, \cK)$ and $(\Ga, \cJ \oplus \cJ', \cK)$ be as in Corollary \ref{cor:model.triple.for.Gleason.soln}. Then, the characteristic map $B^\Ga$ corresponding to this model triple is given by
    \begin{equation}\label{eqn:char.map.gleason.sol}
        B^\Ga(Z) = D_{B(0_n)^*} \big[I_{\cK \otimes \bC^n} - B(Z) B(0_n)^*\big]^{-1} \big(B(Z) - B(0_n)\big) D_{B(0_n)}^{-1}  \oplus [\mathbf{0}_{\cJ', \cK} \otimes I_n].
    \end{equation} 
\end{theorem}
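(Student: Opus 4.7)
The plan is to compute $D^\Ga(Z)$ and $N^\Ga(Z)$ separately from the definitions and then form their quotient. The organising observation is the factorisation $\Ga(Z) = \kappa_Z\bigl[D_{B(0)^*}^{-1} \otimes I_n\bigr]$ where $\kappa_Z := [I - XZ^*]^{-1}[K_0^B \otimes I_n]$. Lemma \ref{lem:ker.action.model} shows that the adjoint $\kappa_Z^*:\sH(B) \otimes \bC^n \to \cK \otimes \bC^n$ acts as the NC point-evaluation at $Z$, namely $\kappa_Z^*(h \otimes v) = h(Z)v$ for $h \in \sH(B)$ and $v \in \bC^n$. This reduces most of what follows to evaluating explicit elements of $\sH(B)$ at $Z$.

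For $D^\Ga(Z) = \bigl[D_{B(0_n)^*}^{-1}\bigr]\, \kappa_Z^*[K_0^B \otimes I_n]\, \bigl[D_{B(0_n)^*}^{-1}\bigr]$, I would compute $\kappa_Z^*[K_0^B \otimes I_n](\phi \otimes x) = (K_0^B\phi)(Z)\,x$ using the defining formula for the $\sB(\cK)$-valued de Branges--Rovnyak kernel $K^B(Z, 0)[x]$ together with the identity $K(Z,0)[x] = x$ for the scalar NC Szegő kernel. Using also $B(0_n)^* = B(0)^* \otimes I_n$, this yields the clean factorisation
\[
\kappa_Z^*[K_0^B \otimes I_n] \;=\; I_{\cK \otimes \bC^n} - B(Z)\,B(0_n)^*,
\]
and hence $D^\Ga(Z) = D_{B(0_n)^*}^{-1}\bigl[I - B(Z) B(0_n)^*\bigr] D_{B(0_n)^*}^{-1}$, which is manifestly invertible. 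For the $\cJ$-summand of $N^\Ga(Z)$, substituting $\check\Ga(\infty) = \mbf{B}\,D_{B(0)}^{-1}$ and applying the defining Gleason identity \eqref{GSforB} for $\mbf{B}$ gives $\kappa_Z^*[I_{\sH(B)} \otimes Z][\mbf{B} \otimes I_n] = B(Z) - B(0_n)$, so that $N^\Ga(Z)|_{\cJ} = D_{B(0_n)^*}^{-1}(B(Z) - B(0_n)) D_{B(0_n)}^{-1}$.

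The main obstacle is showing that $N^\Ga(Z)$ vanishes on the $\cJ'$-summand. Fix $\phi' \in \cJ'$ and set $\mbf{h} := \Ga'(\infty)\phi' \in \nbker V \cap \nbran \check\Ga(\infty)^\perp$; it suffices to show $\kappa_Z^*[I_{\sH(B)} \otimes Z](\mbf{h} \otimes v) = 0$ for every $Z$ and $v$. Pairing this against a test vector $g \otimes x \in \cK \otimes \bC^n$ rewrites the quantity as $\langle K^B\{Z, g \otimes x, \col(Z)v\},\, \mbf{h}\rangle_{\sH(B) \otimes \bC^d}$, and Proposition \ref{GSkernel} decomposes the kernel vector as
\[
K^B\{Z, g \otimes x, \col(Z)v\} = X^*K^B\{Z, g \otimes x, v\} + \mbf{B}\,[I \otimes v^*]B(Z)^*(g \otimes x).
\]
The second summand contributes zero because $\mbf{h} \perp \overline{\nbran \mbf{B}} = \nbran \check\Ga(\infty)$ forces $\mbf{B}^*\mbf{h} = 0$. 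For the first summand, specialising Proposition \ref{GSkernel} to $W = 0$ yields the identity $X^*K_0^B = -\mbf{B}\,B(0)^*$, and combining this with $\mbf{B}^*\mbf{h} = 0$ gives $K_0^{B*}X\mbf{h} = -B(0)\mbf{B}^*\mbf{h} = 0$; together with $V\mbf{h} = 0$ and \eqref{eqn:partial.iso.part,of.Gleason.soln} this forces $X\mbf{h} = K_0^B D_{B(0)^*}^{-2} K_0^{B*}X\mbf{h} = 0$, so the first summand also vanishes. Finally, assembling the pieces and cancelling the paired $D_{B(0_n)^*}$ factors in the product $D^\Ga(Z)^{-1} N^\Ga(Z)$ produces formula \eqref{eqn:char.map.gleason.sol}.
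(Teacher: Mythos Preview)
Your proof is correct and follows essentially the same route as the paper: both compute $D^\Ga(Z)$ and $N^\Ga(Z)$ separately via Lemma \ref{lem:ker.action.model}, use the Gleason identity \eqref{GSforB} for the $\cJ$-piece of $N^\Ga$, and dispatch the $\cJ'$-piece by combining Proposition \ref{GSkernel}, the identity $X^*K_0^B = -\mbf{B}B(0)^*$, and \eqref{eqn:partial.iso.part,of.Gleason.soln} to conclude $X\mbf{h} = 0$. Your packaging of $\kappa_Z^*$ as the NC point-evaluation map is a tidy organising device, but the underlying computations and the key lemmas invoked are identical to those in the paper's proof.
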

In the above, $\mathbf{0}_{\cJ', \cK}$ represents the zero-operator between the respective spaces.
\begin{proof}
    Let $Z \in \bB^d_n$ and $n \in \bN$ be fixed but arbitrary. It follows from the definition that
    \ba
        D^\Ga(Z) &= & \Ga(Z)^* [\Ga(0) \otimes I_n]  \\
        &= & \big[D_{B(0)^*}^{-1} {K^B_0}^* \otimes I_n\big] [I - Z X^*]^{-1} \big[K^B_0 D_{B(0)^*}^{-1} \otimes I_n\big] \\
        &= & D_{B(0_n)^*}^{-1} \big[{K^B_0}^* \otimes I_n\big] [I - Z X^*]^{-1} \big[K^B_0 \otimes I_n\big] D_{B(0_n)^*}^{-1}.
    \ea
    
    For each $g, h \in \cK$ and $x, u \in \bC^n$ we can use \eqref{eqn:ker.action.model} to obtain
    \ba
        \Bip{g \otimes x, {K^B_{0_n}}^* [I - Z X^*]^{-1} K^B _{0_n} (h \otimes u)}_{\cK \otimes \bC^n} 
        &=&  \bip{[I - X Z^*]^{-1} \big[K^B_0 g \otimes x\big], K^B_0 h \otimes u}_{\sH(B) \otimes \bC^n} \\
        & = &  \bip{[I_{\sH(B)} \otimes u^*] [I - X Z^*]^{-1} \big[K^B_0 g \otimes x\big], K^B_0 h}_{\sH(B)} \\
        & = & \bip{K^B\{Z, g \otimes x, u\}, K^B\{0, h \otimes 1, 1\}}_{\sH(B)} \\
        & = &  \bip{g \otimes x, \big(K^B(Z,0)[I_{\sH(B)} \otimes u]\big) (h \otimes 1)}_{\cK \otimes \bC^n} \\
        & = & \bip{g \otimes x, h \otimes u - B(Z) [I_{\sH(B)} \otimes u] B(0)^* h}_{\cK \otimes \bC^n} \\
        & = & \bip{g \otimes x, \big[I_{\cK \otimes \bC_n} - B(Z)B(0_n)^*\big](h \otimes u)}_{\cK \otimes \bC^n}.
    \ea
    In conclusion,
    \begin{equation}\label{eqn:char.map.gleason.sol.proof.1}
        D^\Ga(Z) = D_{B(0_n )^*}^{-1}  \big[I_{\cK \otimes \bC^n} - B(Z)B(0_n)^*\big] D_{B(0_n )^*}^{-1}.
    \end{equation}
    
    Next, we compute $N^\Ga(Z)$ by splitting it into two pieces:
    \begin{equation*}
        N^\Ga(Z) = \underbrace{\Ga(Z)^* [I_{\sH(B)} \otimes Z] [\check \Ga(\infty) \otimes I_n]}_{N^{\check \Ga}(Z)} \oplus \underbrace{\Ga(Z)^* [I_{\sH(B)} \otimes Z] [\Ga'(\infty) \otimes I_n]}_{N^{\Ga'}(Z)}
    \end{equation*}
    Let us first show that $N^{\Ga'}(Z) = [\mathbf{0}_{\cJ', \cK} \otimes I_n]$, i.e., $\nbran \Ga'(\infty) \otimes \bC^n \subset \nbker \Ga(Z)^* [I_{\sH(B)} \otimes Z]$. To this end, let $\vec{f} \in \nbran \Ga'(\infty)$ and $u \in \bC^n$ be arbitrary and note that
    \begin{equation*}
        \vec{f} \in \nbker V \cap \nbran \check \Ga(\infty)^\perp = \nbker V \cap \nbker \mbf{B}^*.
    \end{equation*}
    Next, note that
    \ba
        \Ga(Z)^* [I_{\sH(B)} \otimes Z] (\vec{f} \otimes u) 
        & = & [\Ga(0)^* \otimes I_n][I - Z X^*]^{-1} [I_{\sH(B)} \otimes Z] (\vec{f} \otimes u) \\
        &  = & D_{B(0_n)^*}^{-1} \big[{K^B_0}^* \otimes I_n\big] [I - Z X^*]^{-1} \left(\sum_{j = 1}^d f_j \otimes Z_j u \right).
    \ea
    We then fix $g \in \cK$ and $x \in \bC^n$ arbitrary and use \eqref{eqn:ker.action.model} and \eqref{eqn:X^*.b.intertwine.ker.action} to obtain
    \begin{align*}
        &\lel g \otimes x, \big[{K^B_0}^* \otimes I_n\big] [I - X^* Z]^{-1} \left( \sum_{j = 1}^d f_j \otimes Z_j u \right) \rir_{\cK \otimes \bC^n} \\
        &\qquad\qquad\qquad\qquad = \sum_{j = 1}^d \bip{K^B\{Z, g \otimes x, Z_j u\}, f_j}_{\sH(B)} \\
        &\qquad\qquad\qquad\qquad = \bip{K^B\{Z, g \otimes x, \col(Z) u\}, \vec{f}}_{\cH(B) \otimes \bC^d} \\
        &\qquad\qquad\qquad\qquad = \bip{X^* K^B\{Z, g \otimes x, u\}, \vec{f}}_{\sH(B) \otimes \bC^d} + \underbrace{\bip{\mbf B [I_{\sH(B)} \otimes u^*] B(Z)^*(g \otimes x), \vec{f}}_{\sH(B) \otimes \bC^d}}_{= \, 0, \text{ since } \vec{f} \, \in \, \nbker \mbf{B}^*} \\
        &\qquad\qquad\qquad\qquad = \bip{K^B\{Z, g \otimes x, u\}, X \vec{f}}_{\sH(B)}.
    \end{align*}
    Recall from \eqref{eqn:partial.iso.part,of.Gleason.soln} that $V = X - K^B_0 D_{B(0)^*}^{-2} {K^B_0}^* X$. Thus, since $\vec{f} \in \nbker V$ by choice, we get
    \begin{equation*}
        X \vec{f} = K^B_0 D_{B(0)^*}^{-2} {K^B_0}^* X \vec{f}.
    \end{equation*}
    By taking $W = 0$ and $x = u = 1$ in \eqref{eqn:X^*.b.intertwine.ker.action}, it is straightforward to verify
    \begin{equation}\label{eqn:kernel.at.0.and.X*}
        X^* K^B_0 = - \mbf{B} B(0)^*,
    \end{equation}
    which further implies that
    \begin{equation*}
        {K^B_0}^* X \vec{f} = - B(0) \mbf{B}^* \vec{f} = 0,
    \end{equation*}
    since $\vec{f} \in \nbker \mbf{B}^*$ by choice. Therefore, we get $N^{\Ga'}(Z) = [\mathbf{0}_{\cJ', \cK} \otimes I_n]$ as claimed.

    Lastly, note that
    \begin{align*}
        N^{\check \Ga}(Z) &= [\Ga(0)^* \otimes I_n][I - Z X^*]^{-1} [I_{\sH(B)} \otimes Z] [\check \Ga(\infty) \otimes I_n] \\
        &= D_{B(0_n)^*}^{-1} \big[{K^B_0}^* \otimes I_n\big][I - Z X^*]^{-1}[I_{\sH(B)} \otimes Z][\mbf{B} \otimes I_n]D_{B(0_n)}^{-1}.
    \end{align*}
    Then, for each $h \in \cJ_\infty$ and $u \in \bC^n$, we have
    \begin{align*}
        [I_{\sH(B)} \otimes Z][\mbf B \otimes I_n](h \otimes u) &= [I_{\sH(B)} \otimes Z]
        \begin{bmatrix}
            B_1 h \otimes u \\
            \vdots \\
            B_d h \otimes u
        \end{bmatrix}\\
        &= \sum_{j = 1}^d B_jh \otimes Z_j u.
    \end{align*}
    We can therefore compute for each $g \in \cK$ and $x \in \bC^n$ using \eqref{eqn:ker.action.model} and \eqref{ncGS} that
    \begin{align}\label{eqn:char.map.gleason.sol.proof.3}
        &\bip{g \otimes x, \big[{K^B_0}^* \otimes I_n\big] [I - Z X^*]^{-1} [I_{\sH(B)} \otimes Z][\mbf{B} \otimes I_n](h \otimes u)}_{\cK \otimes \bC^n} \nonumber\\
        &\qquad\qquad\qquad\qquad = \sum_{j = 1}^d \bip{[I - X Z^*]^{-1} \big[K^B_0 \otimes I_n\big](g \otimes x), B_j h \otimes Z_j u}_{\sH(B) \otimes \bC^n} \nonumber\\
        &\qquad\qquad\qquad\qquad = \sum_{j = 1}^d \bip{K^B\{Z, g \otimes x, Z_j u\}, B_j h}_{\sH(B)} \nonumber\\
        &\qquad\qquad\qquad\qquad = \sum_{j = 1}^d \bip{g \otimes x, B_j h(Z)Z_j u}_{\cK \otimes \bC^n} \nonumber\\
        &\qquad\qquad\qquad\qquad = \bip{g \otimes x, \big(B(Z) - B(0_n)\big)(f \otimes u)}_{\cK \otimes \bC^n}.
    \end{align}
    Putting the calculation of $N^{\check \Ga}$ and $N^{\Ga'}$ together, we obtain
    \begin{equation}\label{eqn:char.map.gleason.sol.proof.2}
        N^\Ga(Z) = D_{B(0_n)^*}^{-1} \big(B(Z) - B(0_n)\big) D_{B(0_n)}^{-1} \oplus [\mathbf{0}_{\cJ', \cK} \otimes I_n].
    \end{equation}
    Combining \eqref{eqn:char.map.gleason.sol.proof.1} and \eqref{eqn:char.map.gleason.sol.proof.2} immediately yields \eqref{eqn:char.map.gleason.sol}, as required.
\end{proof}

Recall from Remark \ref{rem:X.partial.iso.iff.b(0)=0} that $X$ is a row partial isometry if and only if $B(0) = 0$. Adding this assumption to the above theorem and using \eqref{eqn:ker.in.terms.of.char.map} immediately reveals the following.

\begin{corollary}\label{cor:char.map.gleason.sol.B(0)=0}
    If, in addition to the hypothesis of Theorem \ref{thm:char.map.gleason.soln}, we assume $B(0) = 0$, then
    \begin{equation*}
        B^\Ga(Z) = B(Z) \oplus [\mbf{0}_{\cJ', \cK} \otimes I_n]. \label{suppdecomp}
    \end{equation*}
    Consequently, $K^\Ga = K^B$ and thus $\cH^\Ga = \sH(B)$.
\end{corollary}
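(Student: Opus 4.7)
The plan is to obtain the corollary by direct substitution of $B(0)=0$ into the formulas already established in Theorem \ref{thm:char.map.gleason.soln} and Theorem \ref{thm:kernel.and.char.map}. Since essentially no new machinery is needed, this reduces to a careful bookkeeping argument.

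First, I would observe that if $B(0)=0$, then because $B$ is NC we also have $B(0_n)=0$ for every $n$, so the defect operators $D_{B(0_n)^*}$ and $D_{B(0_n)}$ are both the identity. Substituting this into formula \eqref{eqn:char.map.gleason.sol} of Theorem \ref{thm:char.map.gleason.soln}, the factors $D_{B(0_n)^*}$ and $D_{B(0_n)}^{-1}$ collapse to $I$, and $\big[I_{\cK\otimes\bC^n} - B(Z)B(0_n)^*\big]^{-1} = I_{\cK\otimes\bC^n}$, leaving $B^\Ga(Z) = B(Z) \oplus [\mbf{0}_{\cJ',\cK}\otimes I_n]$, as claimed.

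Next, to identify $K^\Ga = K^B$, I would use the factored realization of the graded kernel in \eqref{eqn:ker.in.terms.of.char.map}:
\begin{equation*}
    K^\Ga(Z,W)[A] = D^\Ga(Z)\Big\{[I_{\cK}\otimes K(Z,W)[A]] - B^\Ga(Z)\big[I_{\cJ\oplus\cJ'}\otimes K(Z,W)[A]\big] B^\Ga(W)^*\Big\}D^\Ga(W)^*.
\end{equation*}
From the proof of Theorem \ref{thm:char.map.gleason.soln}, in particular \eqref{eqn:char.map.gleason.sol.proof.1}, the assumption $B(0)=0$ gives $D^\Ga(Z) = I_{\cK\otimes\bC^n}$. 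Moreover, since $B^\Ga = B \oplus \mbf 0_{\cJ',\cK}$, the $\cJ'$ summand drops out of the middle term:
\begin{equation*}
    B^\Ga(Z)\big[I_{\cJ\oplus\cJ'}\otimes K(Z,W)[A]\big] B^\Ga(W)^* = B(Z)\big[I_{\cJ}\otimes K(Z,W)[A]\big] B(W)^*.
\end{equation*}
The right-hand side of the kernel formula is then exactly the de Branges--Rovnyak kernel $K^B(Z,W)[A]$.

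Finally, the equality $\cH^\Ga = \sH(B)$ follows from the usual fact that a (graded) reproducing kernel Hilbert space is uniquely determined by its reproducing kernel, as recorded in \cite[Theorem 3.1]{BMV-NCrkhs}. There is no real obstacle here; the only point requiring a tiny bit of care is verifying that the $\cJ'$ summand genuinely contributes nothing to the kernel, but this is transparent since $B^\Ga$ annihilates $\cJ'$ by construction.
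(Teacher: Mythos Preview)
Your proposal is correct and follows essentially the same approach as the paper: substitute $B(0)=0$ into \eqref{eqn:char.map.gleason.sol} to obtain the decomposition of $B^\Ga$, then use the kernel factorization \eqref{eqn:ker.in.terms.of.char.map} together with $D^\Ga(Z)=I$ from \eqref{eqn:char.map.gleason.sol.proof.1} to identify $K^\Ga$ with $K^B$. The paper's own justification is terser (it simply says the result follows immediately from the preceding theorem and \eqref{eqn:ker.in.terms.of.char.map}), but your expanded bookkeeping is exactly what is being invoked.
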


The above theorem shows that $B^\Ga$ is similar to a \emph{M\"obius transformation} of $B$ so that $B^\Ga(0) = 0$. In the sequel, we use this knowledge and construct for a given CNC row contraction a corresponding characteristic map by applying the inverse of such a M\"obius transformation of the characteristic map to its row partial isometric part.

\subsection{Weak coincidence}\label{subsec:weak.coincidence}
We introduced the notion of the unitary coincidence class for an NC Schur function $B$ in the introduction. In what follows, it will be convenient to work with a weaker notion of isomorphism. First, recall that for a given $B \in \sS_d(\cJ_\infty, \cJ_0)$, its support is defined as
\begin{equation*}
    \supp(B) = \bigvee_{\substack{y, v \in \bC^n \\ Z \in \bB^d_n, \, n \in \bN}} \nbran [I_{\cJ_\infty} \otimes y^*] B(Z)^* [I_{\cJ_{0}} \otimes v] \subseteq \cJ_\infty.
\end{equation*}
By construction, $\cJ _\infty = \mr{supp} (B) \oplus \mr{supp} (B) ^\perp$, and $B(Z) g =0$ for any $g \in \supp(B) ^\perp$. That is, as in \eqref{suppdecomp} above, $B$ has the direct sum decomposition,
$$ B = B | _{\supp (B)} \oplus \mbf{0} _{\supp (B) ^\perp, \supp (B^*) ^\perp}, $$
where $\supp(B^*)$ is as defined in the introduction.

\begin{definition}\label{def:weak.coincidence}
    Two operator--valued NC Schur class functions, $B\in \sS _d (\cJ, \cK)$ and $B' \in \sS _d (\cJ', \cK')$ are said to \emph{coincide weakly} if $B \vert_{\supp(B)}$ and $B' \vert_{\supp(B')}$ coincide (as in \eqref{eqn:def.coincidence.class}).
\end{definition}

The following lemma provides an equivalent formulation of weak coincidence and also shows that it is an equivalence relation on NC Schur functions.

\begin{lemma}\label{lem:weak.coincidence.equiv}
Two operator--valued NC Schur class functions, $B\in \sS _d (\cJ, \cK)$ and $B' \in \sS _d (\cJ', \cK')$ coincide weakly if and only if there is a constant unitary $U: \cK \twoheadrightarrow \cK '$ that defines a unitary left multiplier of $\scr{H} (B)$ onto $\scr{H} (B')$. 
\end{lemma}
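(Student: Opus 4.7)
Both directions hinge on the orthogonal decomposition
\[
\scr{H}(B) \;=\; \scr{H}(B_0) \;\oplus\; (\hardy \otimes \cK_0),
\]
where $B = B_0 \oplus \mbf{0}_{\cH_0,\cK_0}$ relative to $\cJ = \supp(B) \oplus \cH_0$ and $\cK = \supp(B^*) \oplus \cK_0$, and analogously for $B'$. This decomposition follows directly from the de Branges--Rovnyak kernel formula: vectors $g \in \cK_0$ are annihilated by every $B(W)^*$, so for such $g$ the kernel $K^B\{Z, g \otimes y, v\}$ reduces to the free NC Szeg\"o kernel function, making $\hardy \otimes \cK_0$ isometrically contained in $\scr{H}(B)$ as an orthogonal summand.

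For the direction $(\Leftarrow)$, if $B \sim B'$, weak coincidence supplies unitaries $U_0 : \supp(B^*) \twoheadrightarrow \supp(B'^*)$ and $U_\infty : \supp(B) \twoheadrightarrow \supp(B')$ with $[U_0 \otimes I_n]\,B_0(Z) = B_0'(Z)\,[U_\infty \otimes I_n]$. I would extend $U_0$ to a constant unitary $U := U_0 \oplus V : \cK \twoheadrightarrow \cK'$ for any unitary $V : \cK_0 \twoheadrightarrow \cK_0'$, and verify that $M^L_U = I_\hardy \otimes U$ restricts to a surjective isometry $\scr{H}(B) \twoheadrightarrow \scr{H}(B')$ by splitting along the decomposition: on $\hardy \otimes \cK_0$ it is the obvious unitary onto $\hardy \otimes \cK_0'$, while on $\scr{H}(B_0) = \scr{R}(D_{B_0(L)^*})$ the identity $B_0'(L) = (I \otimes U_0)\,B_0(L)\,(I \otimes U_\infty^*)$ gives $D_{B_0'(L)^*}^2 = (I \otimes U_0)\,D_{B_0(L)^*}^2\,(I \otimes U_0^*)$, whence $M^L_{U_0}$ is a surjective isometry in the operator--range norms.

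For the direction $(\Rightarrow)$, assume $M^L_U$ is unitary. I would first isolate $\hardy \otimes \cK_0$ intrinsically as the largest closed subspace of $\scr{H}(B)$ of the form $\hardy \otimes \cM$ (for some $\cM \subseteq \cK$) on which the $\scr{H}(B)$ inner product agrees with that of $\hardy \otimes \cK$. Since $M^L_U$ is a constant scalar multiplier that is simultaneously isometric on $\hardy \otimes \cK$ and on $\scr{H}(B)$, it must carry this subspace to its analogue in $\scr{H}(B')$, forcing $U(\cK_0) = \cK_0'$ and hence $U_0 := U\vert_{\supp(B^*)} : \supp(B^*) \twoheadrightarrow \supp(B'^*)$ to be a unitary with $M^L_{U_0} : \scr{H}(B_0) \twoheadrightarrow \scr{H}(B_0')$ unitary. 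Conjugating $I - B(L) B(L)^*$ by $M^L_U$ and passing to the support summand then yields
\[
\bigl(M^L_{U_0} B_0(L)\bigr)\bigl(M^L_{U_0} B_0(L)\bigr)^* \;=\; B_0'(L)\,B_0'(L)^*,
\]
so Douglas factorization produces a partial isometry $W$ with $M^L_{U_0} B_0(L) = B_0'(L)\,W$; since both factors intertwine the right free shifts, so does $W$, whence $W = M^L_{W_0}$ for some $W_0 \in \bH^\infty_d \otimes \scr{B}(\supp(B), \supp(B'))$ and $U_0 B_0 = B_0' W_0$.

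The principal obstacle is to upgrade $W_0$ to a constant unitary. Applying the same argument to $M^L_{U^{-1}} : \scr{H}(B') \twoheadrightarrow \scr{H}(B)$ produces a partner multiplier $W_0'$ with $U_0^{-1} B_0' = B_0 W_0'$; substituting one into the other gives $B_0 = B_0 W_0' W_0$ and $B_0' = B_0' W_0 W_0'$. Using that $\supp(B) = \supp(B_0)$ and $\supp(B') = \supp(B_0')$ are the minimal subspaces supporting $B$ and $B'$, these relations force $W_0' W_0 = I_{\supp(B)}$ and $W_0 W_0' = I_{\supp(B')}$, so $W_0$ is a surjective isometric (hence unitary) left multiplier from $\hardy \otimes \supp(B)$ onto $\hardy \otimes \supp(B')$. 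By a standard NC Beurling--Lax argument, an inner left multiplier whose range is the entire vector--valued free Hardy codomain must be a constant unitary, so $W_0 = U_\infty$ for a constant unitary $U_\infty : \supp(B) \twoheadrightarrow \supp(B')$. This realizes $U_0 B_0 = B_0' U_\infty$, establishing $B \sim B'$.
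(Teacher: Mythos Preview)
Your $(\Leftarrow)$ direction is essentially the paper's argument: both conjugate $I - B(L)B(L)^*$ by $I_{\hardy}\otimes U$ to obtain $I - B'(L)B'(L)^*$, so the operator--range spaces $\scr{H}(B)$ and $\scr{H}(B')$ are carried onto one another isometrically by $I_{\hardy}\otimes U$.

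For the converse the paper writes only ``readily verified.'' A direct argument runs as follows: the hypothesis forces $(I\otimes U)\,D_{B(L)^*}^2\,(I\otimes U)^* = D_{B'(L)^*}^2$, which at the kernel level reads $(I_n\otimes U)\,B(W)(Q\otimes I_\cJ)B(Z)^*(I_m\otimes U)^* = B'(W)(Q\otimes I_{\cJ'})B'(Z)^*$ for all $Z,W,Q$; one then \emph{defines} $\widetilde U:\supp(B)\to\supp(B')$ by sending $(y^*\otimes I)\,B(Z)^*(v\otimes k)$ to $(y^*\otimes I)\,B'(Z)^*(v\otimes Uk)$, and the displayed identity makes this a well-defined surjective isometry intertwining $B_0$ and $B'_0$. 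Your Douglas/Beurling--Lax route is more circuitous and has two genuine gaps. First, the Douglas factor $W$ in $B'_0(L)\,W = (I\otimes U_0)B_0(L)$ need not intertwine the right free shifts: from $B'_0(L)(WR_j - R_jW)=0$ you only learn that the commutator maps into $\ker B'_0(L)$, and the Douglas uniqueness clause $\nbran W\subseteq \overline{\nbran B'_0(L)^*}$ is $R_j^*$--invariant rather than $R_j$--invariant, so you cannot invoke uniqueness for $R_jW$. Second, even granting that $W_0,W_0'$ are left multipliers, $B_0 = B_0\,W_0'W_0$ does not force $W_0'W_0 = I$: full support does not make $B_0(L)$ injective. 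For $d=1$ and $B_0(z)=\begin{pmatrix}1&z\end{pmatrix}$ one has $\supp(B_0)=\C^2$ yet $\ker B_0(S)=\{(-zf,f):f\in H^2\}\neq\{0\}$, so there exist nonzero left multipliers $F$ with $B_0F=0$.
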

\begin{proof}
If $B = B_0 \oplus \mbf{0}$ and $B' = B' _0 \oplus \mbf{0}$ are the support decompositions of $B$ and $B'$ on $\cJ = \cJ_0 \oplus \cJ _0 ^\perp$ and $\cJ' = \cJ ' _0 \oplus \cJ _0 ^{' \perp}$ then by assumption there exist unitaries $U: \cK \twoheadrightarrow \cK'$ and $T: \cJ _0 \twoheadrightarrow \cJ' _0$ so that $U B_0 = B' _0 T$. Hence, 
$$ I_{\hardy} \otimes I_{\cK'} - B' (L) B' (L) ^*  =  \big[I_{\hardy} \otimes U\big] \big[I _{\hardy} \otimes I_\cK - B(L) B(L) ^* \big] \big[I_{\hardy} \otimes U^*\big], $$ so that 
\ba \scr{H} (B') & = & \nbran \sqrt{ I_{\hardy} \otimes I_{\cK'} - B' (L) B'(L) ^*} \\
& = & \big[I _{\hardy} \otimes U\big] \,  \nbran \sqrt{I_{\hardy} \otimes I_{\cK} - B (L) B(L) ^*} \\
& = & \big[I_{\hardy} \otimes U\big] \, \scr{H} (B). \ea  The converse is readily verified.
\end{proof}

\begin{remark}\label{rem:weak.coincidence.vs.coincidence}
    It is clear that if $B$ and $B'$ coincide then they also coincide weakly. However, the converse is not true. To see this, let $B(Z) \in \sS_d(\cJ_\infty, \cJ_0)$ be arbitrary and note that it does not coincide with $B'(Z) := B(Z) \oplus [\mathbf{0}_{\cJ', \cJ_0} \otimes I_n] \in \sS_d(\cJ_\infty \oplus \cJ', \cJ_0)$ if $\cJ'$ is a non-trivial Hilbert space. However, $\supp(B') \cong \supp(B)$ and therefore they coincide weakly.
\end{remark}

Combining Theorem \ref{thm:main.model.row.partial.iso} and Lemma \ref{lem:weak.coincidence.equiv} immediately gives us the following result.

\begin{cor}\label{cor:char.map.partial.iso.uni.equiv}
The NC characteristic function, $B_V \in \sS _d (\nbker V, \nbran V ^\perp)$ (as in \eqref{eqn:NC.char.map.V}) of a CNC row partial isometry $V$ is a complete unitary invariant: Two CNC row partial isometries, $V, V'$ on $\cH$ and $\cH'$, respectively, are unitarily equivalent if and only if their NC characteristic functions $B_V$, $B_{V'}$ coincide weakly.
\end{cor}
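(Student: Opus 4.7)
The plan is to derive this corollary directly from Theorem \ref{thm:main.model.row.partial.iso} together with Lemma \ref{lem:weak.coincidence.equiv}; essentially all the heavy lifting has already been carried out in the construction of the characteristic function and in the analysis of unitary left multipliers between NC de Branges--Rovnyak spaces.

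For the sufficiency direction, I would assume $B_V \sim B_{V'}$ weakly. By Lemma \ref{lem:weak.coincidence.equiv}, this yields a constant unitary $U: \nbran V^\perp \twoheadrightarrow \nbran V'^\perp$ such that left multiplication by $I_{\hardy} \otimes U$ is a surjective isometry from $\scr{H}(B_V)$ onto $\scr{H}(B_{V'})$. Since constant left multiplication commutes with every right free shift $R_j \otimes I$, this surjective isometry intertwines the restrictions of the backward right shifts to the two NC de Branges--Rovnyak spaces, and hence intertwines the extremal Gleason solutions $X^{B_V}$ and $X^{B_{V'}}$. Because $B_V(0) = 0 = B_{V'}(0)$ by Theorem \ref{thm:main.model.row.partial.iso}, Remark \ref{rem:X.partial.iso.iff.b(0)=0} guarantees that both Gleason solutions are themselves row partial isometries. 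Composing with the unitary equivalences $V \cong X^{B_V}$ and $V' \cong X^{B_{V'}}$ provided by Theorem \ref{thm:main.model.row.partial.iso} then yields $V \cong V'$.

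For the necessity direction, I would invoke the uniqueness clause in Theorem \ref{thm:main.model.row.partial.iso}. If $U: \cH \twoheadrightarrow \cH'$ is a surjective isometry implementing $UV = V'[U \otimes I_d]$, then $U$ restricts to unitaries $U_\infty: \nbker V \twoheadrightarrow \nbker V'$ and $U_0: \nbran V^\perp \twoheadrightarrow \nbran V'^\perp$, and the canonical standard NC model maps transform as $\Ga_{V'}(Z) = [U \otimes I_n]\,\Ga_V(Z)\,[U_0^* \otimes I_n]$ for every $Z \in \bB^d_n$ (this is immediate from the realization $\Ga_V(Z) = [I-VZ^*]^{-1}\vert_{\nbran V^\perp \otimes \bC^n}$ and the intertwining $U V Z^* = V'[U\otimes I_n] Z^*$). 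Feeding this identity into the formulas \eqref{eqn:NC.char.map.V} for $D_V$ and $N_V$ produces the intertwining relation $[U_0 \otimes I_n]\, B_V(Z) = B_{V'}(Z)\,[U_\infty \otimes I_n]$, which is unitary coincidence in the sense of Definition \ref{def:coincidence.class.of.char.map} and hence weak coincidence. Alternatively, conjugating the entire functional model of $V$ by $U$ produces a functional model for $V'$ whose characteristic function is $B_V$, so the uniqueness part of Theorem \ref{thm:main.model.row.partial.iso} forces $B_V$ and $B_{V'}$ to coincide unitarily, hence weakly.

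No serious obstacle is anticipated. The main subtlety to keep straight is the asymmetry between the two notions of coincidence: necessity naturally produces the stronger unitary coincidence of Definition \ref{def:coincidence.class.of.char.map}, whereas sufficiency needs only the weaker notion of Definition \ref{def:weak.coincidence} via Lemma \ref{lem:weak.coincidence.equiv}. Both directions are compatible because unitary coincidence trivially implies weak coincidence.
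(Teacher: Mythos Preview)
Your proposal is correct and follows exactly the approach the paper takes: the paper's proof consists of the single sentence ``Combining Theorem \ref{thm:main.model.row.partial.iso} and Lemma \ref{lem:weak.coincidence.equiv} immediately gives us the following result,'' and you have simply unpacked that combination in both directions. The only remark is that your detailed verification of the necessity direction (tracking how $U$ transforms the canonical model maps $\Ga_V$) is more explicit than the paper requires, since the uniqueness clause in Theorem \ref{thm:main.model.row.partial.iso} already absorbs that computation.
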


Given the above uniqueness result, we identify $B_V$ with its weak coincidence class, and refer to $B_V$ as \emph{the NC characteristic function} of a given CNC row partial isometry, $V$.

\section{NC de Branges--Rovnyak model for CNC row contractions}\label{sec:sanity.check.Gleason.solns.dbr.sp}

\subsection{NC Frostman shifts}\label{subsec:NC.Frostman.shifts}

Let $\cJ, \cK$ be any two Hilbert spaces and consider $(\scr{B} (\cJ, \cK) )_1$ and $[\scr{B} (\cJ, \cK) ]_1$, the open and closed unit balls of $\scr{B} (\cJ, \cK)$. For any pure contraction, $\alpha \in [\sB(\cJ, \cK)]_1$, an \emph{$\alpha-$M\"obius transformation} $\varphi_\alpha : (\scr{B} (\cJ, \cK) ) _1 \twoheadrightarrow (\scr{B} (\cJ, \cK))_1$ was introduced in \cite[Section 5.1]{MR19} as the map
\begin{equation*}
    \Phi_\alpha(\zeta) := D_{\alpha^*} (I_\cK - \zeta \alpha^*)^{-1}(\zeta - \alpha) D_\alpha^{-1}; \quad \quad \zeta \in (\sB(\cJ, \cK))_1
\end{equation*}
with compositional inverse,
\be \Phi _\alpha ^{-1} (\zeta ) = \Phi _{-\alpha ^*} (\zeta ^* ) ^* = D_{\alpha ^*} ^{-1} (\zeta + \alpha)( I_\cK - \zeta ^* \alpha) ^{-1} D_{\alpha}. \ee

\begin{lemma}{\cite[Lemma 5.2]{MR19}}\label{opMobius}
Let $\alpha \in [\scr{B} (\cJ, \cK)]_1$ be a contraction.
\bi
    \item[(i)] If $\alpha$ is a pure contraction, then $\Phi _\alpha : (\scr{B} (\cJ, \cK)) _1 \twoheadrightarrow (\scr{B} (\cJ, \cK) ) _1$ is a bijection with compositional inverse $\Phi _\alpha ^{-1}$.
    
   \item[(ii)] If $\alpha$ is a strict contraction, then $\Phi _\alpha : [\scr{B} (\cJ, \cK)] _1 \twoheadrightarrow [\scr{B} (\cJ, \cK) ] _1$ is a bijection with compositional inverse $\Phi _\alpha ^{-1}$. Moreover, $\Phi _\alpha$ is a bijection of the set of pure contractions in $[\scr{B} (\cJ, \cK)]_1$ onto itself.
   
   \item[(iii)] Let $\alpha$ be a pure contraction, and let $\beta \in (\sB(\cJ \otimes \bC^m, \cK \otimes \bC^m))_1$, $\ga \in (\sB(\cJ \otimes \bC^n, \cK \otimes \bC^n))_1$ be strict contractions. Then, for each $A \in \bC^{m \times n}$, we have
   \ei
    \begin{eqnarray}\label{eqn:technical.prop.NC.Frostman.transf}
        & & I_{\cK} \otimes A - \Phi_\alpha(\beta)[I_\cJ \otimes A] \Phi_\alpha(\ga)^* = \\
        & &  \big[ D_{\alpha^*} \otimes I_m \big]\big[ I_{\cK \otimes \bC^m} - \beta [\alpha^* \otimes I_m] \big]^{-1} \big[ I_\cK \otimes A - \beta [I_\cJ \otimes A]\ga^* \big] \nonumber \big[ I_{\cK \otimes \bC^n} - [\alpha \otimes I_n] \ga^* \big]^{-1} \big[D_{\alpha^*} \otimes I_n\big]. \nonumber
    \end{eqnarray}
    
\end{lemma}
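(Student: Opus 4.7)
The strategy is to establish (iii) first as a direct algebraic identity, and then extract (i) and (ii) as consequences of the resulting factorization, reserving the subtleties of the unbounded inverses $D_\alpha^{-1}, D_{\alpha^*}^{-1}$ for the end.

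\emph{Step 1 (the identity (iii)).} Substituting the definitions and factoring $[D_{\alpha^*}\otimes I_m]M^{-1}$ on the left and $N^{-1}[D_{\alpha^*}\otimes I_n]$ on the right, and writing
\[
I_\cK \otimes A = [D_{\alpha^*}\otimes I_m]\,M^{-1}\bigl(M\,[D_{\alpha^*}^{-2}\otimes A]\,N\bigr)\,N^{-1}\,[D_{\alpha^*}\otimes I_n],
\]
reduces \eqref{eqn:technical.prop.NC.Frostman.transf} to the inner identity
\[
M\,[D_{\alpha^*}^{-2}\otimes A]\,N \; - \; (\beta - [\alpha\otimes I_m])\,[D_\alpha^{-2}\otimes A]\,(\gamma^* - [\alpha^*\otimes I_n]) \; = \; I_\cK\otimes A - \beta\,[I_\cJ\otimes A]\,\gamma^*.
\]
I would expand both sides and use the defect intertwining $\alpha D_\alpha^{-2} = D_{\alpha^*}^{-2}\alpha$ together with the Neumann-type identities
\[
D_\alpha^{-2} = I_\cJ + \alpha^* D_{\alpha^*}^{-2}\alpha, \qquad D_{\alpha^*}^{-2} = I_\cK + \alpha D_\alpha^{-2}\alpha^*.
\]
A routine term-by-term comparison shows that all the ``mixed'' terms involving $\beta$ or $\gamma^*$ alone, or $\beta D_\alpha^{-2}\gamma^*$, cancel pairwise, leaving precisely the right-hand side.

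\emph{Step 2 (contractivity, purity, and strictness).} Specializing (iii) to $m=n$, $\beta=\gamma$, $A=I_n$ yields $N=M^*$ and hence
\[
I_{\cK\otimes\bC^n} - \Phi_\alpha(\beta)\Phi_\alpha(\beta)^* = [D_{\alpha^*}\otimes I_n]\,M^{-1}\,(I - \beta\beta^*)\,(M^{-1})^*\,[D_{\alpha^*}\otimes I_n],
\]
which is manifestly positive; this shows $\Phi_\alpha$ sends contractions to contractions. When $\alpha$ is strict, $D_{\alpha^*}$ and $D_\alpha$ are bounded and boundedly invertible, $M$ is invertible, and the factor $[D_{\alpha^*}\otimes I_n]M^{-1}$ is a bounded linear isomorphism; this immediately shows $\Phi_\alpha(\beta)$ is strict iff $\beta$ is, giving the ``pure/strict'' statements in (ii). An analogous argument using $\|\beta\alpha^*\| \leq \|\beta\| < 1$ handles the case when $\alpha$ is merely pure but $\beta$ is strict, as needed for (i).

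\emph{Step 3 (bijectivity via the inverse).} One verifies by a direct computation that $\Phi_\alpha \circ \Phi_\alpha^{-1} = \operatorname{id} = \Phi_\alpha^{-1}\circ \Phi_\alpha$, with the same algebraic manipulations that prove (iii). Since $\Phi_\alpha^{-1}$ has the form of a M\"obius transformation associated to $-\alpha^*$ (itself pure when $\alpha$ is), Step~2 applied to $-\alpha^*$ shows that $\Phi_\alpha^{-1}$ also maps the appropriate unit ball to itself, completing the bijection statements in (i) and (ii).

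\emph{Main obstacle.} The delicate point is ensuring that everything is well-defined when $\alpha$ is only a pure (not strict) contraction, so that $D_\alpha^{-1}$ and $D_{\alpha^*}^{-1}$ are unbounded closed operators defined on the dense subspaces $\sD_\alpha$ and $\sD_{\alpha^*}$. One must check that for every $\zeta$ in the \emph{open} unit ball $(\sB(\cJ,\cK))_1$, the operator $(\zeta-\alpha)$ has range inside $\sD_\alpha$ and that $(\zeta-\alpha)D_\alpha^{-1}$ extends to a bounded operator on all of $\cJ$; likewise on the $\cK$-side. This is handled by rewriting $(\zeta - \alpha)D_\alpha^{-1}$ through the defect intertwining and absorbing the unbounded factor into a bounded one, using the assumption $\|\zeta\|<1$ to guarantee invertibility of $I-\zeta\alpha^*$ and $I-\alpha^*\zeta$. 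Once this bookkeeping is done, Steps~1--3 go through verbatim on the appropriate dense domains and then extend by continuity.
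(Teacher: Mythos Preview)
Your Step~1 proof of (iii) is correct and is essentially the paper's argument: the paper factors $\Phi_\alpha(\beta)=\Xi_\alpha(\beta)\Theta_\alpha(\beta)$ where $\Xi_\alpha(\beta)=[D_{\alpha^*}\otimes I_m]\bigl(I-\beta[\alpha^*\otimes I_m]\bigr)^{-1}$ is your $[D_{\alpha^*}\otimes I_m]M^{-1}$, reduces \eqref{eqn:technical.prop.NC.Frostman.transf} to your displayed inner identity (written there in adjunction form as $\mr{Ad}_{\Xi_\alpha(\beta)^{-1},\,\Xi_\alpha(\gamma)^{-*}}-\mr{Ad}_{\Theta_\alpha(\beta),\,\Theta_\alpha(\gamma)^*}=I-\mr{Ad}_{\beta,\gamma^*}$), and verifies it term-by-term using exactly the four defect identities you invoke.

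The paper does not prove (i)--(ii); it cites \cite[Lemma~5.2]{MR19} and appends a short remark. Your Steps~2--3 therefore go beyond the paper. The derivation of (ii) from the specialization of (iii) is fine, since for strict $\alpha$ the factor $[D_{\alpha^*}\otimes I_n]M^{-1}$ is a bounded isomorphism and the congruence transfers strictness and purity. Your Step~2 claim for (i), however, is too quick: when $\alpha$ is merely pure, $D_{\alpha^*}$ is injective with dense range but not bounded below, so the factorization
\[
I-\Phi_\alpha(\beta)\Phi_\alpha(\beta)^*=[D_{\alpha^*}\otimes I_n]\,M^{-1}(I-\beta\beta^*)M^{-*}\,[D_{\alpha^*}\otimes I_n]
\]
yields positivity (hence $\|\Phi_\alpha(\beta)\|\le 1$) but \emph{not} a uniform lower bound $\ge\varepsilon I$. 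Indeed $\Phi_\alpha(0)=-D_{\alpha^*}\alpha D_\alpha^{-1}=-\alpha$, which has norm $1$ whenever $\alpha$ is pure but not strict, so the open-ball-to-open-ball assertion in (i) is more delicate than your sketch suggests. What your argument \emph{does} give, via the $\cJ$-side analogue together with injectivity of $D_\alpha$, is that $\Phi_\alpha(\beta)$ is a \emph{pure} contraction whenever $\beta$ is strict --- and this, plus well-definedness of $\Phi_\alpha^{-1}$ on strict inputs, is what the paper actually uses downstream.
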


\begin{remark}
The statements (i--ii) of the above lemma are from \cite[Lemma 5.2 (i--ii)]{MR19}. Statement (iii) is a technical modification of \cite[Lemma 5.2 (iii)]{MR19} that we will need.
Item (i) in the above Lemma is a slightly stronger statement than \cite[Lemma 5.2 (i)]{MR19}. Nevertheless, it follows from the proof of that result. Indeed, if $\alpha$ is any  pure contraction and $\beta$ is a strict contraction, it was shown in the proof of \cite[Lemma 5.2]{MR19} that for any $h \in \nbran D_\alpha$, 
$$ \| h \| ^2 - \| \Phi _\alpha (\beta ) h \| ^2 = \ipr{h}{D_{\alpha} (I- \beta^* \alpha  ) ^{-1} (I-\beta ^* \beta ) (I - \alpha ^* \beta ) ^{-1} D_\alpha h}. $$
Hence, if $\| \beta \| <1$, then it follows that $\Phi _\alpha (\beta)$ extends by continuity to a strict contraction. Similarly this same equation shows that if $\alpha$ is a strict contraction and $\beta$ is a pure contraction, then $\Phi _\alpha (\beta)$ is a pure contraction. 
\end{remark}

To simplify equations, it will be convenient to introduce the following notation. For any $\ell \in \bN$, $\beta \in (\sB(\cJ \otimes \bC^\ell, \cK \otimes \bC^\ell))_1$ and any pure contraction, $\alpha$, let
\begin{align}
    \Xi_\alpha(\beta) &:= \big[ D_{\alpha^*} \otimes I_\ell \big] \big[ I_{\cK \otimes \bC^\ell} - \beta [\alpha^* \otimes I_\ell]\big]^{-1}, \label{eqn:def.Xi}\\
    \Theta_\alpha(\beta) &:= \big( \beta - [\alpha \otimes I_\ell] \big) \big[ D_\alpha^{-1} \otimes I_\ell \big]. \label{eqn:def.Theta}
\end{align}
Then, \eqref{eqn:technical.prop.NC.Frostman.transf} can be interpreted in terms of adjunction maps (defined on appropriate domains) as
\begin{equation}\label{eqn:technical.prop.NC.Frostman.transf.adj}
    I_{\cK \otimes \bC^{n \times m}} - \mr{Ad}_{\Phi_\alpha(\zeta), \Phi_\alpha(\omega)^*} = \mr{Ad}_{\Xi_\alpha(\zeta), \Xi_\alpha(\omega)^*} \circ \big[ I_{\cK \otimes \bC^{n \times m}} - \mr{Ad}_{\zeta, \omega^*} \big].
\end{equation}

\begin{proof} We prove only (iii), proof of (i--ii) can be found in \cite[Lemma 5.2]{MR19}. Note that $\Phi_\alpha(\beta) = \Xi_\alpha(\beta) \Theta_\alpha(\beta)$ for each $\beta \in [\sB(\cJ, \cK)_\bN]_1$. Therefore, we have
    \begin{equation}\label{eqn:proof.technical.prop.NC.Frostman.shift.1}
        I_{\cK \otimes \bC^{n \times m}} - \mr{Ad}_{\Phi_\alpha(\beta), \Phi_{\alpha}(\ga)^*} = I_{\cK \otimes \bC^{n \times m}} - \big[ \mr{Ad}_{\Xi_\alpha(\beta), \Xi_\alpha(\ga)^*} \circ \mr{Ad}_{\Theta_\alpha(\beta), \Theta_\alpha(\ga)^*} \big]
    \end{equation}
    Looking at \eqref{eqn:technical.prop.NC.Frostman.transf.adj} and \eqref{eqn:proof.technical.prop.NC.Frostman.shift.1}, it then suffices to show that
    \begin{equation}
        I_{\cK \otimes \bC^{n \times m}} - \mr{Ad}_{\beta, \ga^*} = \mr{Ad}_{{\Xi_\alpha(\beta)}^{-1}, {\Xi_\alpha(\ga)^*}^{-1}} \ - \ \mr{Ad}_{\Theta_\alpha(\beta), \Theta_\alpha(\ga)^*}.
    \end{equation}

    First, we compute
    \begin{align}
        \mr{Ad}_{{\Xi_\alpha(\beta)}^{-1}, {{\Xi_\alpha(\ga)}^*}^{-1}}[I_\cK \otimes A] &= \big[ I_{\cK \otimes \bC^n} - \beta [\alpha^* \otimes I_n] \big] \big[ D_{\alpha^*}^{-2} \otimes A \big] \big[ I_{\cK \otimes \bC^m} - [\alpha \otimes I_m] \ga^* \big] \nonumber\\
        &= \big[ D_{\alpha^*}^{-2} \otimes A \big] - \beta \big[ \alpha^* D_{\alpha^*}^{-2} \otimes A \big] - \big[ D_{\alpha^*}^{-2} \alpha \otimes A \big] \ga^* + \beta \big[ \alpha^* D_{\alpha^*}^{-2} \alpha \otimes A \big] \ga^*. \label{eqn:proof.technical.prop.NC.Frostman.shift.2}
    \end{align}
    Similarly, we have
    \begin{align}
        \mr{Ad}_{\Theta_\alpha(\beta), \Theta_\alpha(\ga)^*}[I_{\cK} \otimes A] &= \big( \beta - [\alpha \otimes I_n] \big) \big[ D_\alpha^{-2} \otimes A \big] \big( \ga^* - [\alpha^* \otimes I_m] \big) \nonumber\\
        &= \big[ \alpha D_\alpha^{-2} \alpha^* \otimes A \big] - \beta \big[ D_\alpha^{-2} \alpha^* \otimes A \big] - \big[ \alpha D_\alpha^{-2} \otimes A \big] \ga^* + \beta \big[ D_\alpha^{-2} \otimes A \big] \ga^*. \label{eqn:proof.technical.prop.NC.Frostman.shift.3}
    \end{align}
    Using the properties of the defect operators, it is straightforward to check that
    \begin{equation*}
        D_{\alpha^*}^{-2} - \alpha D_\alpha^{-2} \alpha^* = I_\cK; \ \ \alpha^* D_{\alpha^*}^{-2} = D_\alpha^{-2} \alpha^*; \ \ D_{\alpha^*}^{-2} \alpha = \alpha D_\alpha^{-2}; \ \ \alpha^* D_{\alpha^*}^{-2} \alpha - D_\alpha^{-2} = - I_\cJ.
    \end{equation*}
    Thus, subtracting \eqref{eqn:proof.technical.prop.NC.Frostman.shift.3} from \eqref{eqn:proof.technical.prop.NC.Frostman.shift.2} immediately yields \eqref{eqn:proof.technical.prop.NC.Frostman.shift.1}, as required.
\end{proof}

Let $B \in \sS_d(\cJ, \cK)$. If $B$ is strictly contractive (so that $B(0)$ is a strict contraction), then $\BMob{0} := \Phi _{B(0) \otimes I_{\hardy}} \circ B \in \sS _d (\cJ _\infty, \cJ _0)$ is strictly contractive by item (ii) of Lemma \ref{opMobius}. Indeed, $B (L) \in \scr{B} (\cJ \otimes \hardy, \cK \otimes \hardy)$, so that $\Phi _{B(0)} \circ B \in \sS _d (\cJ , \cK)$. Moreover, for any $Z \in \B ^d _n$, $B(Z)$ is a strict contraction, so that $\BMob{0} (Z)$ is a strict contraction by Lemma \ref{opMobius} (ii). If $B \in \sS _d (\cJ, \cK)$ is only purely contractive, then by Theorem \ref{thm:char.map.gleason.soln}, we can still define $\BMob{0} \in \sS _d (\cJ, \cK)$ as the NC Schur class function,
$$ \BMob{0} (Z) := D_{B(0_n)}^* \big(I_{\cK \otimes \C^n} - B(Z) B(0_n)^* \big) ^{-1} (B(Z) - B(0_n)) D_{B(0_n)} ^{-1}; \quad Z \in \B ^d _n. $$ Since $\BMob{0} (0) = 0$, by construction, it follows that $\BMob{0}$ is a strictly contractive Schur class function for any $B \in \sS _d (\cJ, \cK)$. Hence, given any pure contraction $\alpha \in \scr{B} (\cJ, \cK)$ and any purely contractive $B \in \sS _d (\cJ, \cK)$, we can define $\BMob{\alpha} := \Phi _\alpha ^{-1} \circ \BMob{0} \in \sS _d (\cJ, \cK)$ point-wise by 
$$ \BMob{\alpha} (Z) = \Phi _{\alpha \otimes I_n} ^{-1} \circ \BMob{0} (Z), \quad \quad Z \in \B ^d _n, $$ by Lemma \ref{opMobius} (i), since $\BMob{0} (Z)$ is a strict contraction for any $Z \in \rball$.

\begin{definition}
    Let $B \in \sS _d (\cJ, \cK)$ be purely contractive and let $\alpha \in [\scr{B} (\cJ, \cK)]_1$ be a pure contraction. Then the \emph{$\alpha-$Frostman shift of $B$} is the NC operator--valued Schur class function, $\BMob{\alpha}=\Phi _\alpha ^{-1} \circ \BMob{0} \in \sS _d (\cJ, \cK)$.
\end{definition}

\begin{lemma}\label{lem:B(0).Mob.property}
For any purely contractive and column--extreme $B \in \sS _d (\cJ, \cK)$, we have $B = \BMob{B(0)} = (\BMob{0})^{\langle B(0) \rangle}$.
\end{lemma}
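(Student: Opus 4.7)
The identity is essentially the statement that the operator--M\"obius maps $\Phi_\alpha$ and $\Phi_\alpha^{-1}$ are mutually inverse, applied with $\alpha = B(0)$. Once this is spotted the argument is purely formal.

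My first step will be to unpack the explicit formula for $\BMob{0}$ displayed just above the lemma statement and observe that it is literally $\Phi_{B(0_n)}$ evaluated at $B(Z)$: both expressions unfold, term by term, to
\[
    D_{B(0_n)^*}\,\big[I_{\cK\otimes\bC^n} - B(Z) B(0_n)^*\big]^{-1}\,\big(B(Z) - B(0_n)\big)\, D_{B(0_n)}^{-1},
\]
so that $\BMob{0}(Z) = \Phi_{B(0_n)}(B(Z))$ for every $Z \in \B^d_n$ and every $n \in \bN$.

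Second, from the definition $\BMob{B(0)} := \Phi_{B(0)}^{-1}\circ \BMob{0}$, combined with the previous step, I obtain
\[
    \BMob{B(0)}(Z) \;=\; \Phi_{B(0_n)}^{-1}\big(\Phi_{B(0_n)}(B(Z))\big).
\]
Because $B$ is purely contractive, $B(0)$ is a pure contraction, so Lemma \ref{opMobius}(i) guarantees that $\Phi_{B(0)}^{-1}$ is the two--sided compositional inverse of $\Phi_{B(0)}$. The right--hand side therefore collapses to $B(Z)$, yielding $B = \BMob{B(0)}$. The second displayed equality $\BMob{B(0)} = (\BMob{0})^{\langle B(0)\rangle}$ is merely the notational unpacking of the superscript convention.

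The only mildly delicate point I anticipate is that Lemma \ref{opMobius}(i) states the inversion property $\Phi_\alpha^{-1}\circ\Phi_\alpha = \mathrm{id}$ on the open unit ball of strict contractions, whereas $B(Z)$ may sit on the boundary, since $B$ is only purely, not strictly, contractive. I would handle this by noting that $\BMob{0}(Z)$ is always a \emph{strict} contraction, by Lemma \ref{strict} applied to $\BMob{0}$ together with the vanishing $\BMob{0}(0)=0$; consequently $\Phi_{B(0_n)}^{-1}(\BMob{0}(Z))$ is unambiguously defined, and a direct algebraic check confirms that $B(Z)$ is the (unique) preimage of $\BMob{0}(Z)$ under $\Phi_{B(0_n)}$, with the unbounded factors $D_{B(0_n)}^{-1}$ and $D_{B(0_n)^*}^{-1}$ acting coherently on the dense domains $\nbran D_{B(0_n)}$ and $\nbran D_{B(0_n)^*}$. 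Incidentally, the column--extreme hypothesis plays no role in the argument; it appears in the statement because in the sequel this lemma is applied precisely to column--extreme $B$.
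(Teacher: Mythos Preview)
Your approach is essentially correct and is genuinely different from the paper's. You identify that the lemma is a formal consequence of the inversion property $\Phi_\alpha^{-1}\circ\Phi_\alpha = \mathrm{id}$ with $\alpha = B(0)$, and this is exactly right: the paper itself opens by remarking that when $B$ is \emph{strictly} contractive the claim is ``an immediate consequence of Lemma \ref{opMobius}.'' Your contribution is to argue that the same formal cancellation survives in the purely contractive case.

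Where the two arguments diverge is in how they handle the unbounded factors $D_{B(0)}^{-1}$, $D_{B(0)^*}^{-1}$ when $B(0)$ is merely a pure (not strict) contraction. You note correctly that $\BMob{0}(Z)$ is strict (since $\BMob{0}(0)=0$), so $\Phi_{B(0_n)}^{-1}(\BMob{0}(Z))$ is well defined, and you propose a ``direct algebraic check'' that this equals $B(Z)$. That check does go through---for instance, multiplying the defining identity $\BMob{0}(Z)\,D_{B(0_n)} = D_{B(0_n)^*}(I-B(Z)B(0_n)^*)^{-1}(B(Z)-B(0_n))$ on the right by $D_{B(0_n)}$ and manipulating only bounded operators yields $(\BMob{0}(Z)+B(0_n)) = D_{B(0_n)^*}(I-B(Z)B(0_n)^*)^{-1}B(Z)\,D_{B(0_n)}$ and $(I+B(0_n)^*\BMob{0}(Z)) = D_{B(0_n)}(I-B(0_n)^*B(Z))^{-1}D_{B(0_n)}$, from which the cancellation is rigorous---but you should actually write this out rather than assert it, because the cancellation pattern of the unbounded inverses is precisely the delicate point.

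The paper, by contrast, avoids this formal inversion entirely. It invokes the standard model triple $\Ga$ from Corollary \ref{cor:model.triple.for.Gleason.soln} and Theorem \ref{thm:char.map.gleason.soln}, proves via explicit kernel computations the intermediate identity $D(Z) + N(Z)B(0_n)^* = I$, and then carries out a longer but entirely bounded-operator algebraic computation to reach $D_{B(0_n)^*}\BMob{B(0)}(Z) = D_{B(0_n)^*}B(Z)$. Your route is shorter and conceptually cleaner; the paper's route stays closer to the model-theoretic machinery it has built up and never needs to reason about compositions of unbounded maps. You are also right that the column-extreme hypothesis is not used in either argument.
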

\begin{proof}
If $B \in \sS _d (\cJ, \cK)$ is strictly contractive, then this is an immediate consequence of Lemma \ref{opMobius}.
If $B$ is purely contractive, \emph{i.e.} $B(0)$ is a pure contraction, then $\BMob{0} \in \sS _d (\cJ, \cK)$ is defined by Theorem \ref{thm:char.map.gleason.soln}. Moreover, $\BMob{0} (0) =0$, by construction, so that $\BMob{0}$ is strictly contractive by Lemma \ref{strict}. To prove that $\BMob{B(0)} := \Phi _{B(0)} ^{-1} \circ \BMob{0} = B$, consider the standard NC model triple used in the proof of Theorem \ref{thm:char.map.gleason.soln}. Namely, recall that if we define $\Ga (Z) := (I-XZ^*) ^{-1} K^B _{0} D_{B(0_n)^*}^{-1}$, and $\check{\Ga} (\infty ) := \mbf{B} D_{B(0)} ^{-1}$, where $X$ is the unique extremal Gleason solution for $\scr{H} (B)$, then we recover $\BMob{0}$ (up to weak coincidence) via the formula $\BMob{0}(Z) \sim D(Z) ^{-1} N(Z)$, where 
$$ D(Z) = \Ga (Z) ^* \Ga (0_n), \quad \mbox{and} \quad N (Z) = \Ga (Z) ^* [I_{\scr{H}(B)} \otimes Z][\check{\Ga} (\infty) \otimes I_n]. $$ 

We claim that $D(Z) + N(Z) B(0_n) ^* = I _{\scr{D} _{B(0_n)^*}}$. First, consider 
$$ D_{B(0_n) ^*} D(Z) D_{B(0_n)^*}  =  [K_0 ^{B*} \otimes I_n] [I-XZ^*] ^{-*} [K_0 ^B \otimes I_n].$$
Hence, given any $g,g' \in \cK$ and $x,u \in \C^n$,
\ba  & & \ipr{g\otimes x}{[K_0 ^{B*} \otimes I_n] [I- XZ^*] ^{-*} (K_0 ^B g' \otimes u)}_{\cK \otimes \C ^n} \\
& = & \ipr{[I\otimes u^*] [I - X Z^*] ^{-1} (K_0 ^B g \otimes x)}{K_0 ^B g'}_{\cK} \\
& = & \ipr{K^B \{ Z, g\otimes x,u \}}{K_0 ^B g'}_{\scr{H} (B)} \qquad \qquad \mbox{(By Proposition \ref{GSkernel}.)} \\
& = & (g\otimes x) ^* \big(I_\cK \otimes I_n - B(0_n) B(Z) ^*\big) g' \otimes u. \ea Similarly,
$$ D_{B(0_n) ^*}  N(Z) B(0) ^* D_{B(0_n) ^*} = [K_0 ^{B*} \otimes I_n] [I-XZ^*] ^{-*} [I_{\scr{H} (B)} \otimes Z] \mbf{B} B(0_n) ^*, $$ and we calculate as before: For any $g,g' \in \cK$ and $x,u \in \C^n$
\ba
& & \ipr{g\otimes x}{[K_0 ^{B*} \otimes I_n] [I- XZ^*] ^{-*} [I_{\scr{H} (B)} \otimes Z]\mbf{B} B(0_n) ^* (g' \otimes u)}_{\cK \otimes \C ^n} \\
& = & \sum _{j=1} ^d \ipr{K^B \{Z, g\otimes x, Z_j u \}}{\mbf{B} B(0)^* g'}_{\scr{H} (B)} \\
& = & \sum _{j=1} ^d \ipr{g\otimes x}{\mbf{B} _j (Z) Z_j B(0_n) ^* (g' \otimes u)}_{\cK \otimes \C ^n} \\
& = & (g\otimes x) ^* (B(Z) - B(0_n)) B(0_n) ^* (g' \otimes u). \ea
Hence,
\ba D_{B(0) ^*} ( D(Z) + N(Z) B(0_n) ^*) D_{B(0) ^*} & = & I_\cK \otimes I_n - B(0_n) B(Z) ^* + B(0_n) B(Z) ^* - B(0_n) B(0_n) ^*  \\
& = & D_{B(0_n) ^* } ^2, \ea and we conclude that 
$$ D(Z) + N(Z) B(0_n) ^* = I _{\scr{D} _{B(0) ^*}} \otimes I_n = I _{\scr{D} _{B(0_n)^*}}. $$ Next we calculate the denominator of the expression,
\ba \BMob{B(0)} (Z) & = & \big(\Phi _{B(0)} ^{-1} \circ \BMob{0}\big) (Z) \\
& = & D_{B(0_n)^*} ^{-1} \big( \BMob{0} (Z) +B(0_n) \big)\big(I_{\cK} \otimes I_n + B(0_n) ^* \BMob{0} (Z)\big) ^{-1} D_{B(0_n)}. \ea
Namely,
\ba I_\cK \otimes I_n + B(0_n) ^* \BMob{0} (Z) & = & I + B(0_n) ^* D(Z) ^{-1} N(Z) \\
& = & I +  B(0_n ) ^* (I - N(Z) B(0_n)^*) ^{-1} N(Z)  \\
& = & I + (I-B(0_n)^* N(Z)) ^{-1} B(0_n) ^* N(Z) \\
& = & I + (I - B(0_n) ^* N(Z)) ^{-1} - I = (I-  B(0_n) ^* N(Z)) ^{-1}. \ea 
In conclusion, using \eqref{eqn:char.map.gleason.sol.proof.2}, we obtain
\ba D_{B(0_n) ^*}\BMob{B(0)} (Z) & = & \left( D(Z) ^{-1}N(Z) + B(0_n) \right) \left(I - B(0_n) ^* N(Z) \right)  D _{B(0_n)} \\ 
& = & ( D(Z) ^{-1} N(Z) + B(0_n) -D(Z) ^{-1} N(Z) B(0_n) ^* N(Z) -B(0_n) B(0_n) ^* N(Z)) D_{B(0_n)} \\
& = & ( D(Z) ^{-1} N(Z) + B(0_n) -D(Z) ^{-1} (I-D(Z)) N(Z) -B(0_n) B(0_n) ^* N(Z) ) D_{B(0_n)} \\
& = & ( D(Z) ^{-1} N(Z) + B(0_n) - D(Z) ^{-1} N(Z) +N(Z) - B(0_n) B(0_n) ^* N(Z)) D_{B(0_n)} \\
& = & B(0_n) D_{B(0_n)} + D_{B(0_n)^*}^2 N(Z) D_{B(0_n)} \\
& = & D_{B (0_n ) ^*} ( B(0_n) + B(Z) - B(0_n) ) = D_{B (0_n ) ^*} B(Z). \ea 
That is, $\BMob{B(0)} (Z) = B(Z)$ and the proof is complete.
\end{proof}

Our first main result in this subsection introduces $M^{\langle \alpha \rangle}$ -- an NC operator-valued analogue of a \emph{Crofoot multiplier/transform}. Recall the notation $\Xi_\alpha$ from \eqref{eqn:def.Xi}.

\begin{theorem}\label{thm:NC.Crofoot.transf}
    Let $B \in \sS_d(\cJ_\infty, \cJ_0)$ be purely contractive, and let $\alpha \in [\sB(\cJ_\infty, \cJ_0)]_1$ be a pure  contraction. Then, $B^{\langle \alpha \rangle} \in \sS_d(\cJ_\infty, \cJ_0)$ is purely contractive.

    Moreover, there is a unitary multiplier $M^{\langle \alpha \rangle} : \sH(\BMob{0}) \twoheadrightarrow \sH(B^{\langle \alpha \rangle})$ given by
    \begin{equation*}
        M^{\langle \alpha \rangle}(Z) := \Xi_{-\alpha}\big( \BMob{0}(Z) \big) = \big[ \Xi_\alpha \big( B^{\langle \alpha \rangle}(Z) \big) \big]^{-1} \foral Z \in \bB^d_\bN.
    \end{equation*}
\end{theorem}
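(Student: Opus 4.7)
The plan is in three steps: (i) pointwise pure contractivity of $B^{\langle\alpha\rangle}$, (ii) the algebraic equality of the two presentations of $M^{\langle\alpha\rangle}(Z)$, and (iii) a bilateral kernel identity that promotes the pointwise formula into a unitary left multiplier via a standard lurking--isometry argument.

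For step (i), since $\BMob{0}(0) = 0$, Lemma \ref{strict} gives that $\BMob{0}$ is strictly contractive, so each $\BMob{0}(Z)$ is a strict contraction for $Z \in \rball$. Writing $B^{\langle\alpha\rangle}(Z) = \Phi_\alpha^{-1}(\BMob{0}(Z)) = \Phi_{-\alpha^*}(\BMob{0}(Z)^*)^*$ and applying the norm identity from the proof of Lemma \ref{opMobius} to the pure contraction $-\alpha^*$ and the strict contraction $\BMob{0}(Z)^*$, one concludes that $B^{\langle\alpha\rangle}(Z)$ is a pure contraction for every $Z \in \rball$. Membership of $B^{\langle\alpha\rangle}$ in the NC Schur class $\sS_d(\cJ_\infty,\cJ_0)$ then follows from the uniform pointwise bound together with the NC--analytic form of the Frostman shift.

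For step (ii), set $\beta := B^{\langle\alpha\rangle}(Z)$ so that $\BMob{0}(Z) = \Phi_\alpha(\beta)$. Expanding the definitions of $\Xi_{\pm\alpha}$, the claimed equality $\Xi_{-\alpha}(\Phi_\alpha(\beta)) = [\Xi_\alpha(\beta)]^{-1}$ reduces to verifying
\[
(I - \beta\alpha^*)\bigl(I + \Phi_\alpha(\beta)\alpha^*\bigr) \;=\; D_{\alpha^*}^{2},
\]
which is a direct computation using the intertwiners $\alpha D_\alpha = D_{\alpha^*}\alpha$ and $D_{\alpha^*}^{2} = I - \alpha\alpha^*$. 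Although $D_\alpha^{-1}$ and $D_{\alpha^*}^{-1}$ are only densely defined when $\alpha$ is merely pure, the operator $\Xi_{-\alpha}(\BMob{0}(Z)) = D_{\alpha^*}(I + \BMob{0}(Z)\alpha^*)^{-1}$ is everywhere bounded since $\|\BMob{0}(Z)\alpha^*\| < 1$; I will take this bounded form as the working definition of $M^{\langle\alpha\rangle}(Z)$, and invoke the formally equivalent $[\Xi_\alpha(B^{\langle\alpha\rangle}(Z))]^{-1}$ only through the algebraic identity of step (ii).

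For step (iii), apply identity \eqref{eqn:technical.prop.NC.Frostman.transf.adj} with $\beta = B^{\langle\alpha\rangle}(Z)$ and $\gamma = B^{\langle\alpha\rangle}(W)$, and evaluate both sides on $I_{\cJ_0} \otimes K(Z,W)[P]$, where $K$ is the NC Szeg\"o kernel. Since $\Phi_\alpha(B^{\langle\alpha\rangle}(\cdot)) = \BMob{0}(\cdot)$, this yields $K^{\BMob{0}}(Z,W)[P]$ on the left and $\Xi_\alpha(B^{\langle\alpha\rangle}(Z))\,K^{B^{\langle\alpha\rangle}}(Z,W)[P]\,\Xi_\alpha(B^{\langle\alpha\rangle}(W))^*$ on the right; inverting the $\Xi_\alpha$ factors by step (ii) produces the bilateral kernel equality
\[
K^{B^{\langle\alpha\rangle}}(Z,W)[P] \;=\; M^{\langle\alpha\rangle}(Z)\,K^{\BMob{0}}(Z,W)[P]\,M^{\langle\alpha\rangle}(W)^{*}.
\]
A standard modification of the lurking--isometry argument of \cite[Theorem 2.1]{Ball2001-lift} to the NC graded setting (as already employed in the proof of Theorem \ref{thm:char.map.is.NC.analytic.D.is.a.unitary.mult}) then shows that the densely defined correspondence $K^{B^{\langle\alpha\rangle}}\{Z, g \otimes x, v\} \mapsto K^{\BMob{0}}\bigl\{Z, M^{\langle\alpha\rangle}(Z)^{*} g \otimes x, v\bigr\}$ extends uniquely to a surjective isometry $\sH(B^{\langle\alpha\rangle}) \twoheadrightarrow \sH(\BMob{0})$, whose adjoint is realized precisely as left multiplication by $M^{\langle\alpha\rangle}$. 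The principal obstacle is purely technical bookkeeping around domains when $\alpha$ is merely pure, which is resolved by working throughout with the bounded presentation $\Xi_{-\alpha}(\BMob{0}(Z))$.
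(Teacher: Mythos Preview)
Your proposal is correct and follows essentially the same route as the paper: both arguments pivot on the adjunction identity of Lemma~\ref{opMobius}(iii) to produce a bilateral factorization of the de Branges--Rovnyak kernels, and then invoke the NC version of \cite[Theorem~2.1]{Ball2001-lift} to upgrade this to a unitary left multiplier. The only structural difference is that the paper applies Lemma~\ref{opMobius}(iii) twice---once to $\Phi_\alpha$ and once (via $\Phi_\alpha^{-1}(\beta)=[\Phi_{-\alpha^*}(\beta^*)]^*$) to $\Phi_\alpha^{-1}$---obtaining both kernel factorizations directly, whereas you apply it once and then invert using your step~(ii); your explicit verification of pure contractivity in step~(i) and of the algebraic identity $\Xi_{-\alpha}(\Phi_\alpha(\beta))=[\Xi_\alpha(\beta)]^{-1}$ in step~(ii) are details the paper leaves implicit.
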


\begin{proof}
    Using Lemma \ref{opMobius} (iii) and the notation from \eqref{eqn:def.Xi}, we get
    \begin{align*}
        I_{\cJ_0 \otimes \bC^{n \times m}} - \mr{Ad}_{\BMob{0}(Z), \BMob{0}(W)^*} &= I_{\cJ_0 \otimes \bC^{n \times m}} - \mr{Ad}_{\Phi_\alpha (B^{\langle \alpha \rangle}(Z)), \Phi_\alpha (B^{\langle \alpha \rangle}(W))^*} \\
        &= \Xi_\alpha \big( B^{\langle \alpha \rangle}(Z) \big) \Big[ I_{\cJ_0 \otimes \bC^{n \times m}} - \mr{Ad}_{B^{\langle \alpha \rangle}(Z), B^{\langle \alpha \rangle}(W)^*} \Big] \Xi_\alpha \big( B^{\langle \alpha \rangle}(W) \big)^*.
    \end{align*}

    Similarly, since $\Phi_\alpha^{-1}(\beta) = \big[ \Phi_{- \alpha^*}(\beta^*) \big]^*$, we can modify Lemma \ref{opMobius} $(iii)$ to obtain
    \begin{align*}
        I_{\cJ_0 \otimes \bC^{n \times m}} - \mr{Ad}_{B^{\langle \alpha \rangle}(Z), B^{\langle \alpha \rangle}(W)^*} &= I_{\cJ_0 \otimes \bC^{n \times m}} - \mr{Ad}_{\Phi_\alpha^{-1} (\BMob{0}(Z)), \Phi_\alpha^{-1} (\BMob{0}(W))^*} \\
        &= \Xi_{-\alpha} \big( \BMob{0}(Z) \big) \Big[ I_{\cJ_0 \otimes \bC^{n \times m}} - \mr{Ad}_{\BMob{0}(Z), \BMob{0}(W)^*} \Big] \Xi_{-\alpha} \big( \BMob{0}(W) \big)^*.
    \end{align*}

    Combining both these equations and using the fact that $\BMob{0} \in \sS_d(\cJ_\infty, \cJ_0)$, it follows at once from a simple NC modification of \cite[Theorem 2.1]{Ball2001-lift} that $B^{\langle \alpha \rangle} \in \sS_d(\cJ_\infty, \cJ_0)$ and that $M^{\langle \alpha \rangle}$ is a unitary left multiplier.
\end{proof}

The next result shows that $M^{\langle \alpha \rangle}$ preserves contractive and extremal Gleason solutions in the respective spaces as well.

\begin{theorem}\label{thm:M.alpha.preserves.Gleason.soln}
    Let $B \in \sS_d(\cJ_\infty, \cJ_0)$  be purely contractive, let $\alpha \in [\sB(\cJ_\infty, \cJ_0)]_1$ be a pure contraction, and let $\mbf B^{\langle \alpha \rangle}$ and $\mbf B^{\langle 0 \rangle}$ be the unique contractive Gleason solutions for $B^{\langle \alpha \rangle}$ and $\BMob{0}$ respectively. Then,
    \begin{equation*}
        \big[ M^{\langle \alpha \rangle} \otimes I_d \big]^{-1} \mathbf{B}^{\langle \alpha \rangle} D_\alpha^{-1} = \mbf B^{\langle 0 \rangle}.
    \end{equation*}

    Moreover, $\mbf B^{\langle \alpha \rangle}$ is extremal if and only if $\mbf B^{\langle 0 \rangle}$ is extremal. Hence $B$ is column--extreme if and only if $\BMob{\alpha}$ is column--extreme.
\end{theorem}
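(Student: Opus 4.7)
The plan is to prove the stated identity by producing a contractive Gleason solution for $\BMob{0}$ and appealing to uniqueness, then to read off the extremality equivalence directly from the same identity, and finally to deduce the column--extremality statement from Theorem~6.4 of \cite{JM-freeCE}.

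The key is a pointwise computation. First, $B^{\langle \alpha \rangle}(0) = \Phi_\alpha^{-1}(\BMob{0}(0)) = \Phi_\alpha^{-1}(0) = \alpha$ (using $\alpha D_\alpha = D_{\alpha^*}\alpha$), and so $B^{\langle \alpha \rangle}(0_n) = \alpha \otimes I_n$. Combining the factorization $\Phi_\alpha(\zeta) = \Xi_\alpha(\zeta)\Theta_\alpha(\zeta)$ with the identity $\Xi_\alpha(B^{\langle \alpha \rangle}(Z)) = M^{\langle \alpha \rangle}(Z)^{-1}$ from Theorem~\ref{thm:NC.Crofoot.transf} gives, for every $Z \in \bB^d_n$,
\begin{equation*}
    \BMob{0}(Z) \ = \ \Phi_\alpha(B^{\langle \alpha \rangle}(Z)) \ = \ M^{\langle \alpha \rangle}(Z)^{-1}\,\big(B^{\langle \alpha \rangle}(Z) - B^{\langle \alpha \rangle}(0_n)\big)\,[D_\alpha^{-1} \otimes I_n].
\end{equation*}
Since $\mbf{B}^{\langle \alpha \rangle}$ is a Gleason solution for $B^{\langle \alpha \rangle}$, the factor $B^{\langle \alpha \rangle}(Z) - B^{\langle \alpha \rangle}(0_n)$ equals $\mbf{B}^{\langle \alpha \rangle}(Z) \cdot Z$. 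Because $M^{\langle \alpha \rangle}(Z)^{-1}$ acts on the $\cJ_0$ factor and $D_\alpha^{-1} \otimes I_n$ on the $\cJ_\infty$ factor of the respective tensor products, both commute with right multiplication by each $Z_j$; combined with the fact that $M^{\langle \alpha \rangle}$ is a unitary left multiplier (so $((M^{\langle \alpha \rangle})^{-1}f)(Z) = M^{\langle \alpha \rangle}(Z)^{-1}f(Z)$ for any $f \in \sH(B^{\langle \alpha \rangle})$), I conclude that the densely--defined map $\mbf{G} := [M^{\langle \alpha \rangle} \otimes I_d]^{-1}\,\mbf{B}^{\langle \alpha \rangle}\,D_\alpha^{-1}$ satisfies $\mbf{G}(Z)\cdot Z = \BMob{0}(Z) - \BMob{0}(0_n)$ on $\nbran D_\alpha$.

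Next I would check contractivity. Since $\mbf{B}^{\langle \alpha \rangle}$ is the unique contractive Gleason solution for $B^{\langle \alpha \rangle}$, $\mbf{B}^{\langle \alpha \rangle *}\mbf{B}^{\langle \alpha \rangle} \leq I_{\cJ_\infty} - \alpha^*\alpha = D_\alpha^2$, so $D_\alpha^{-1}\mbf{B}^{\langle \alpha \rangle *}\mbf{B}^{\langle \alpha \rangle}D_\alpha^{-1} \leq I_{\cJ_\infty}$ on $\nbran D_\alpha$; since $[M^{\langle \alpha \rangle} \otimes I_d]^{-1}$ is unitary, $\mbf{G}^*\mbf{G} \leq I_{\cJ_\infty} = I_{\cJ_\infty} - \BMob{0}(0)^*\BMob{0}(0)$. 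Hence $\mbf{G}$ extends by continuity to a contractive Gleason solution for $\BMob{0}$ on $\cJ_\infty$, and by uniqueness of the contractive Gleason solution \cite{JMfree,JM-freeCE}, $\mbf{G} = \mbf{B}^{\langle 0 \rangle}$. The same computation yields $\mbf{B}^{\langle 0 \rangle *}\mbf{B}^{\langle 0 \rangle} = D_\alpha^{-1}\mbf{B}^{\langle \alpha \rangle *}\mbf{B}^{\langle \alpha \rangle}D_\alpha^{-1}$ on $\nbran D_\alpha$, so $\mbf{B}^{\langle 0 \rangle *}\mbf{B}^{\langle 0 \rangle} = I_{\cJ_\infty}$ iff $\mbf{B}^{\langle \alpha \rangle *}\mbf{B}^{\langle \alpha \rangle} = D_\alpha^2$, giving the extremality equivalence.

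For the final assertion, I would apply Lemma~\ref{lem:B(0).Mob.property} to write $B = \BMob{B(0)}$, so $\mbf{B} = \mbf{B}^{\langle B(0) \rangle}$; applying the extremality equivalence twice (once with $\alpha$, once with $B(0)$) yields $\mbf{B}$ extremal iff $\mbf{B}^{\langle 0 \rangle}$ extremal iff $\mbf{B}^{\langle \alpha \rangle}$ extremal, and \cite[Theorem~6.4]{JM-freeCE} translates this into $B$ being column--extreme iff $\BMob{\alpha}$ is. The main technical subtlety is that when $\alpha$ is only a pure (not strict) contraction, $D_\alpha^{-1}$ is an unbounded, densely--defined operator on $\cJ_\infty$, so each identity must first be verified on $\nbran D_\alpha$ and then extended by continuity, with the contractivity bound being precisely what guarantees that the extension of $\mbf{G}$ is bounded into $\sH(\BMob{0}) \otimes \C^d$.
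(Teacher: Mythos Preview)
Your proof is correct and follows essentially the same route as the paper: you define the candidate $\mbf{G} = [M^{\langle \alpha \rangle} \otimes I_d]^{-1}\mbf{B}^{\langle \alpha \rangle}D_\alpha^{-1}$, verify the Gleason identity for $\BMob{0}$ via the pointwise factorization $\BMob{0}(Z) = M^{\langle \alpha \rangle}(Z)^{-1}(B^{\langle \alpha \rangle}(Z) - B^{\langle \alpha \rangle}(0_n))[D_\alpha^{-1}\otimes I_n]$, check contractivity, and invoke uniqueness; your handling of the pure (non-strict) case by working on $\nbran D_\alpha$ and extending by continuity is exactly the subtlety the paper also flags. Your final step, invoking Lemma~\ref{lem:B(0).Mob.property} to identify $B$ with $\BMob{B(0)}$ and thereby connect the CE property of $B$ to that of $\BMob{0}$, makes explicit a link the paper leaves implicit.
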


\begin{proof}
    Let $\Mob{\wt{\mbf{B}}}{\alpha}:=\big[ M^{\langle \alpha \rangle} \otimes I_d \big]^{-1} \mathbf{B}^{\langle \alpha \rangle} D_\alpha^{-1}$. Since $\BMob{0} \in \sS_d(\cJ_\infty, \cJ_0)$ and $\BMob{\alpha}(0) = \alpha$, we can use \eqref{ncGS} to compute
    \begin{align*}
        \big[ I_{\sH(\BMob{0})} \otimes Z \big] \big[ \Mob{\wt{\mbf{B}}}{\alpha} (Z)\otimes I_n] &= \big[ I_{\sH(\BMob{0})} \otimes Z \big] \big[ {{M^{\langle \alpha \rangle}}(Z)}^{-1} \otimes I_d \big] \big[ \mathbf{B}^{\langle \alpha \rangle} D_\alpha^{-1}(Z) \otimes I_n \big] \\
        &= {M^{\langle \alpha \rangle}(Z)}^{-1} \big[ I_{\sH(\BMob{0})} \otimes Z \big] \big[ \mathbf{B}^{\langle \alpha \rangle} D_\alpha^{-1}(Z) \otimes I_n \big] \\
        &= {{M^{\langle \alpha \rangle}}(Z)}^{-1} \big( B^{\langle \alpha \rangle}(Z) - B^{\langle \alpha \rangle}(0_n) \big) \big[ D_\alpha^{-1} \otimes I_n \big] \\
        &= \BMob{0}(Z) \\
        &= \BMob{0}(Z) - \BMob{0}(0_n).
    \end{align*}
    If $\alpha$ is not a strict contraction, then $D_{\alpha} ^{-1}$ is a closed, unbounded linear operator with dense domain. The above calculation remains valid in this case. One can, for example, multiply both sides of the above equation by $D_\alpha$ to reach the same conclusion. We now note that
    \begin{equation*}
        (\Mob{\wt{\mbf{B}}}{\alpha})^* \Mob{\wt{\mbf{B}}}{\alpha} = D_\alpha^{-1} {\mathbf{B}^{\langle \alpha \rangle}}^* \mathbf{B}^{\langle \alpha \rangle} D_\alpha^{-1} \leq D_\alpha^{-1} (I_{\cJ_\infty} - \alpha^* \alpha) D_\alpha^{-1} = I_{\cJ_0}.
    \end{equation*}
    Thus, $\Mob{\wt{\mbf{B}}}{\alpha} = \Mob{\mbf{B}}{0}$ and, since the inequality above is an equality if and only if $\mathbf{B}^{\langle \alpha \rangle}$ is extremal, we get that $\Mob{\mbf{B}}{\alpha}$ is extremal if and only if $\Mob{\mbf{B}}{0}$ is extremal. Recall, by \cite[Theorem 6.4]{JM-freeCE}, that $B \in \sS _d (\cJ, \cK)$ is column--extreme if and only if the unique, contractive Gleason solution for $B$, $\mbf{B} = R^* \otimes I_\cK B \in \scr{H} (B) \otimes \C ^d$ is extremal. Hence $\Mob{\mbf{B}}{\alpha}$ is extremal if and only if $\BMob{\alpha}$ is CE and the proof is complete.
\end{proof}

We record one final observation in this subsection that connects the row partial isometric part of any extremal Gleason solution $X$ for some $\sH(B)$ with the NC operator-valued Crofoot transformation $M^{\langle \alpha \rangle}$ introduced above.

\begin{prop}\label{prop:partial.iso.from.Crofoot.transf}
    Let $B \in \sS_d(\cJ_\infty, \cJ_0)$ be a purely contractive and CE (left) multiplier. Furthermore, let $X$, $X^{\langle 0 \rangle}$ be the unique extremal Gleason solutions for $\sH(B)$, $\sH(\BMob{0})$ respectively and let $\mbf B$, $\mbf B^{\langle 0 \rangle}$ be the corresponding Gleason solutions for $B$, $\BMob{0}$ respectively. If $X = V + C$ is the isometric--pure decomposition of $X$, then
    \begin{equation*}
        V = M^{\langle B(0) \rangle} X^{\langle 0 \rangle} \big[M^{\langle B(0) \rangle} \otimes I_d\big]^*.
    \end{equation*}
\end{prop}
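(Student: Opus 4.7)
The plan is to define $\widetilde V := \Mob{M}{B(0)} X^{\langle 0 \rangle} \big[\Mob{M}{B(0)} \otimes I_d\big]^*$ and to prove that $\widetilde V = V$. By Lemma \ref{lem:B(0).Mob.property}, $\BMob{B(0)} = B$, so Theorem \ref{thm:NC.Crofoot.transf} applied with $\alpha = B(0)$ produces a unitary left multiplier $M := \Mob{M}{B(0)} : \sH(\BMob{0}) \twoheadrightarrow \sH(B)$ given by $M(Z) = \big[ I - B(Z) B(0)^* \big] D_{B(0)^*}^{-1}$. Since $\BMob{0}(0) = 0$, Remark \ref{rem:X.partial.iso.iff.b(0)=0} yields that $X^{\langle 0 \rangle}$ is itself a row partial isometry, so $\widetilde V$ is a row partial isometry on $\sH(B) \otimes \bC^d$. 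By uniqueness of the isometric--pure decomposition of $X$ (Lemma \ref{isopure}), it suffices to verify $\widetilde V^* = V^*$ on the spanning family of kernel vectors $K^B\{W, g \otimes x, u\}$, where $W \in \bB^d_n$, $g \in \cJ_0$, $x,u \in \bC^n$ and $n \in \bN$.

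To compute $\widetilde V^* K^B\{W, g \otimes x, u\}$, first observe that a unitary left multiplier acts on kernel vectors by $M^* K^B\{W, g \otimes x, u\} = K^{\BMob{0}}\{W, M(W)^* g \otimes x, u\}$, as one verifies directly from the defining reproducing property. Apply $X^{\langle 0 \rangle *}$ using Proposition \ref{GSkernel} on $\sH(\BMob{0})$, then push forward by $[M \otimes I_d]$: unitarity of $M$ collapses the first summand to $K^B\{W, g \otimes x, \col(W) u\}$, and Theorem \ref{thm:M.alpha.preserves.Gleason.soln} rewrites the second using $[M \otimes I_d] \mbf B^{\langle 0 \rangle} = \mbf B D_{B(0)}^{-1}$. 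The algebraic identity
\begin{equation*}
M(W)\, \BMob{0}(W) = \big(B(W) - B(0_n)\big) D_{B(0_n)}^{-1},
\end{equation*}
immediate from the definitions of $M$ and $\BMob{0}$, then gives $D_{B(0)}^{-1} [I_{\cJ_\infty} \otimes u^*] \BMob{0}(W)^* M(W)^* = D_{B(0)}^{-2} [I_{\cJ_\infty} \otimes u^*] (B(W)^* - B(0_n)^*)$, so that
\begin{equation*}
\widetilde V^* K^B\{W, g \otimes x, u\} = K^B\{W, g \otimes x, \col(W) u\} - \mbf B \, D_{B(0)}^{-2} \big[I_{\cJ_\infty} \otimes u^*\big] \big(B(W)^* - B(0_n)^*\big)(g \otimes x).
\end{equation*}

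For $V^*$, combine \eqref{eqn:partial.iso.part,of.Gleason.soln} with \eqref{eqn:kernel.at.0.and.X*} (and the functional-calculus identity $B(0)^* D_{B(0)^*}^{-2} = D_{B(0)}^{-2} B(0)^*$) to obtain $V^* = X^* + \mbf B D_{B(0)}^{-2} B(0)^* (K_0^B)^*$. A direct computation from the de Branges--Rovnyak kernel at the origin gives $(K_0^B)^* K^B\{W, g \otimes x, u\} = [I_{\cJ_0} \otimes u^*](g \otimes x) - B(0)[I_{\cJ_\infty} \otimes u^*] B(W)^*(g \otimes x)$. Substituting, applying Proposition \ref{GSkernel} for $X^*$, and simplifying with $D_{B(0)}^{-2} B(0)^* B(0) = D_{B(0)}^{-2} - I_{\cJ_\infty}$ produces exactly the same final expression as that derived above for $\widetilde V^*$. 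Since kernel vectors span $\sH(B)$, this forces $\widetilde V^* = V^*$, and hence $V = \widetilde V$.

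The main obstacle is the precise bookkeeping of tensor-product and adjoint conventions through the chain $M^* \rightsquigarrow X^{\langle 0 \rangle *} \rightsquigarrow [M \otimes I_d]$; everything rests on the reciprocal relation $M(W)\, \BMob{0}(W) = (B(W) - B(0_n)) D_{B(0_n)}^{-1}$ between the NC Crofoot multiplier and the Frostman shift, which algebraically encodes why conjugation by $M$ transports the already partial-isometric $X^{\langle 0 \rangle}$ onto the row partial-isometric part of $X$.
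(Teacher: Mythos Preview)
Your proof is correct and follows essentially the same approach as the paper: both compute $V^*$ and $\big[M^{\langle B(0)\rangle}\otimes I_d\big]\,X^{\langle 0\rangle *}\,{M^{\langle B(0)\rangle}}^*$ on the spanning family of de Branges--Rovnyak kernel vectors, using Proposition \ref{GSkernel}, the multiplier action on kernels, Theorem \ref{thm:M.alpha.preserves.Gleason.soln}, and the identity $M(W)\,\BMob{0}(W)=(B(W)-B(0_n))D_{B(0_n)}^{-1}$, and arrive at the identical expression $K^B\{W,g\otimes x,\col(W)u\}-\mbf B\,D_{B(0)}^{-2}[I_{\cJ_\infty}\otimes u^*](B(W)^*-B(0_n)^*)(g\otimes x)$. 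The only cosmetic difference is that you first rewrite $V^*=X^*+\mbf B\,D_{B(0)}^{-2}B(0)^*(K_0^B)^*$ before applying it to kernels, whereas the paper keeps the projection form of \eqref{eqn:partial.iso.part,of.Gleason.soln} and simplifies afterward; the invocation of uniqueness of the isometric--pure decomposition is unnecessary, since equality on a spanning set already gives $\widetilde V=V$ directly.
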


\begin{proof}
We present the proof under the added assumption that $B(0)$ is a strict contraction, the argument extends readily to the case where $B(0)$ is an arbitrary pure contraction.

    It suffices to verify the action of $V^*$ on each kernel function $K^B\{W, g \otimes x, u\}$ with $W \in \bB^d_n$ for some $n \in \bN$. Recall from \eqref{eqn:partial.iso.part,of.Gleason.soln} that $V^* = X^* \big[ I_{\sH(B)} - K^B_0 D_{B(0)^*}^{-2} {K^B_0}^* \big]$. Combining this with \eqref{eqn:X^*.b.intertwine.ker.action}, \eqref{eqn:Kernel.at.0.and.W} and \eqref{eqn:kernel.at.0.and.X*}, we get
    \begin{align*}
        V^* K^B\{W, g \otimes x, u\} &= X^* K^B\{W, g \otimes x, u\} - X^* K^B_0 D_{B(0)^*}^{-2} {K^B_0}^* K^B\{W, g \otimes x, u\} \\
        &= K^B\{W, g \otimes x, \col(W) u\} - \mathbf{B} [I_{\cJ_\infty} \otimes u^*] B(W)^*(g \otimes x) \\
        &\qquad\qquad\qquad\qquad\qquad\qquad + \mathbf{B} B(0)^* D_{B(0)^*}^{-2} K^B(0,W)[u^*](g \otimes x) \\
        &= K^B\{W, g \otimes x, \col(W) u \} - \mathbf{B} D_{B(0)}^{-2} \Big[ \big(I_{\cJ_\infty} - B(0)^* B(0) \big) [I_{\cJ_\infty} \otimes u^*] B(W)^* (g \otimes x) \\
        &\qquad\qquad\qquad\qquad\qquad\qquad - B(0)^* \big( [I_{\cJ_0} \otimes u^*] - B(0) [I_{\cJ_\infty} \otimes u^*] B(W)^* \big) (g \otimes x) \Big] \\
        &= K^B\{W, g \otimes x, \col(W) u \} - \mathbf{B} D_{B(0)}^{-2} \Big[ [I_{\cJ_\infty} \otimes u^*] B(W)^* - B(0)^* [I_{\cJ_0} \otimes u^*] \Big] (g \otimes x) \\
        &= K^B\{W, g \otimes x, \col(W) u \} - \mathbf{B} D_{B(0)}^{-2} [I_{\cJ_\infty} \otimes u^*] \big( B(W)^* - B(0_n)^* \big) (g \otimes x).
    \end{align*}

    It is easy to check from the definition that
    \begin{align*}
        {M^{\langle B(0) \rangle}}^* K^B\{W, g \otimes x, u\} &= K^{\BMob{0}}\{W, M^{\langle B(0) \rangle}(W)^*(g \otimes x), u\}; \\
        \big[ M^{\langle B(0) \rangle} \otimes I_d \big] K^{\BMob{0}}\{ W, g \otimes x, \col(W) u \} &= K^B \{ W, M^{\langle B(0) \rangle}(g \otimes x), \col(W) u \}.
    \end{align*}
    Since $M^{\langle B(0) \rangle}$ is a unitary, we can then use Theorem \ref{thm:M.alpha.preserves.Gleason.soln} to easily compute
    \begin{align*}
        &\big[ M^{\langle B(0) \rangle} \otimes I_d \big] {X^{\langle 0 \rangle}}^* {M^{\langle B(0) \rangle}}^* K^B\{W, g \otimes x, u\} \\
        &\qquad\qquad = \big[ M^{\langle B(0) \rangle} \otimes I_d \big] {X^{\langle 0 \rangle}}^* K^{\BMob{0}}\{W, M^{\langle B(0) \rangle}(W)^*(g \otimes x), u\} \\
        &\qquad\qquad = K^B\{W, g \otimes x, \col(W) u\} - \big[ M^{\langle B(0) \rangle} \otimes I_d \big] \mathbf{B}^{\langle 0 \rangle}[I_{\cJ_\infty} \otimes u^*] \BMob{0}(W)^* M^{\langle B(0) \rangle}(W)^* (g \otimes x) \\
        &\qquad\qquad = K^B\{W, g \otimes x, \col(W) u\} - \mathbf{B} D_{B(0)}^{-1} [I_{\cJ_\infty} \otimes u^*] \BMob{0}(W)^* M^{\langle B(0) \rangle}(W)^*(g \otimes x) \\
        &\qquad\qquad = K^B\{W, g \otimes x, \col(W) u\} - \mathbf{B} D_{B(0)}^{-2} [I_{\cJ_\infty} \otimes u^*] (B(W)^* - B(0_n)^*) (g \otimes x).
    \end{align*}

    Thus, $V = M^{\langle B(0) \rangle} X^{\langle 0 \rangle} \big[ M^{\langle B(0) \rangle} \otimes I_d \big]^*$ as desired, and the proof is complete.
\end{proof}

\begin{corollary}\label{cor:char.map.of.partial.iso.weakly.coincide.char.map}
    Let $B \in \sS_d(\cJ_\infty, \cJ_0)$ be purely contractive and CE. If $(\Ga, \cJ_\infty \oplus \cJ', \cJ_0)$ is the model triple corresponding to the row partial isometric part of $X$ as in Corollary \ref{cor:model.triple.for.Gleason.soln}, then its characteristic map $B^\Ga$ coincides weakly with $\BMob{0} \in \sS_d(\cJ_\infty, \cJ_0)$. In particular, if $B(0) = 0$, then $B^\Ga$ coincides weakly with $B$.
\end{corollary}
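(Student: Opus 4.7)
The plan is to simply match two explicit formulas and then invoke the definition of weak coincidence. By Theorem \ref{thm:char.map.gleason.soln}, the characteristic map of the model triple $(\Ga, \cJ_\infty \oplus \cJ', \cJ_0)$ is
\[
    B^\Ga(Z) \;=\; D_{B(0_n)^*}\bigl[I_{\cJ_0 \otimes \bC^n} - B(Z) B(0_n)^*\bigr]^{-1}\bigl(B(Z) - B(0_n)\bigr) D_{B(0_n)}^{-1} \ \oplus\ [\mbf{0}_{\cJ', \cJ_0} \otimes I_n],
\]
for every $Z \in \B^d_n$ and $n \in \N$. The first summand is, by the very definition given in Subsection \ref{subsec:NC.Frostman.shifts} (just before Lemma \ref{lem:B(0).Mob.property}), nothing other than $\BMob{0}(Z)$. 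Hence $B^\Ga = \BMob{0} \oplus \mbf{0}_{\cJ',\cJ_0}$ as a direct sum of NC Schur class functions on $\cJ_\infty \oplus \cJ'$ and $\cJ_0$.

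Next, I would invoke the support decomposition discussed around Definition \ref{def:weak.coincidence}: for any $B' \in \sS_d(\cH, \cK)$ one has $B' = B'|_{\supp(B')} \oplus \mbf{0}$. Applied to $B^\Ga$, the support decomposition peels off the trivial $\cJ'$-summand and leaves $B^\Ga|_{\supp(B^\Ga)} = \BMob{0}|_{\supp(\BMob{0})}$. By Definition \ref{def:weak.coincidence}, this is precisely the statement that $B^\Ga$ and $\BMob{0}$ coincide weakly; this matches the observation already recorded in Remark \ref{rem:weak.coincidence.vs.coincidence}, which shows that appending a zero direct summand does not change the weak coincidence class.

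For the special case $B(0) = 0$, the defect operators $D_{B(0)}$ and $D_{B(0)^*}$ are identity operators, so the formula for $\BMob{0}$ collapses immediately to $\BMob{0}(Z) = B(Z)$, and therefore $B^\Ga \sim B$.

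There is no real obstacle here: all the work has been done in Theorem \ref{thm:char.map.gleason.soln}, which identifies $B^\Ga$ explicitly, and in the preceding discussion of NC Frostman shifts and weak coincidence. The corollary is essentially a repackaging of those results, with the only point deserving attention being the bookkeeping that the extra $\cJ'$-summand (present because $\nbran \check\Ga(\infty)$ need not exhaust $\nbker V$) contributes only a zero direct summand and so is washed out by the passage to weak coincidence.
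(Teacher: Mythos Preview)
Your proposal is correct and matches the paper's approach: the corollary is stated without proof in the paper, as it is immediate from Theorem \ref{thm:char.map.gleason.soln} (which identifies $B^\Ga$ as $\BMob{0} \oplus \mbf{0}$) together with Remark \ref{rem:weak.coincidence.vs.coincidence} (which records that a zero direct summand does not affect the weak coincidence class). Your write-up makes the bookkeeping explicit, but there is nothing to add or correct.
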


Recall from Section \ref{subsec:model.constr.Gleason.soln} that if $B \in \sS_d(\cJ_\infty, \cJ_0)$ is purely contractive and CE, and if $X$ is the unique extremal Gleason solution of $\sH(B)$ with isometric--pure decomposition $X = V + C$, then $(\Ga, \cJ_\infty \oplus \cJ', \cJ_0)$ is a model triple for $V$, where
\begin{equation*}
    \Ga :=
    \begin{cases}
        \ 0 &\mapsto \quad \Gamma(0) := K_0^B D_{B(0)^*}^{-1}; \\
        \ Z &\mapsto \quad [I - XZ^*]^{-1} [\Ga(0) \otimes I_n], \, \forall Z \in \bB^d_\bN; \\
        \ \infty &\mapsto \quad \check \Ga (\infty) \oplus \Ga'(\infty),
    \end{cases}
\end{equation*}
and $\check \Ga(\infty) := \mbf{B} D_{B(0)}^{-1}$, $\Ga'(\infty)$ are as in \eqref{eqn:gamma.prime.infty}.

\begin{prop}\label{prop:char.map.extremal.Gleason.soln.general}
    Let $B \in \sS_d(\cJ_\infty, \cJ_0)$, $X = V + C$, and $(\Ga, \cJ_\infty \oplus \cJ', \cJ_0)$ be as above. Then,
    \begin{equation*}
        \delta^\Ga_X := - \Ga(0)^* C \Ga(\infty) = B(0) \oplus \mbf{0}_{\cJ', \cJ_\infty}.
    \end{equation*}
    
    Consequently, $B_X := {B^\Ga}^{\langle \delta^\Ga_X \rangle}$ coincides weakly with $B$.
\end{prop}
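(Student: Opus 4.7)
The plan is to establish the explicit identity $\delta^\Ga_X = B(0) \oplus \mbf{0}$ by a direct computation leveraging \eqref{eqn:kernel.at.0.and.X*}, and then to deduce the weak coincidence of $B_X$ with $B$ by combining this formula with the block form of $B^\Ga$ from Theorem \ref{thm:char.map.gleason.soln} and the M\"obius identity of Lemma \ref{lem:B(0).Mob.property}.

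First I would exploit that $\nbran \Ga(\infty) \subseteq \nbker V = \nbker D_X^\perp$ (Lemma \ref{isopure}(iii)), so that $C \Ga(\infty) = X P_{\nbker D_X^\perp} \Ga(\infty) = X \Ga(\infty)$. Splitting $\Ga(\infty) = \check \Ga(\infty) \oplus \Ga'(\infty)$, I would handle the two summands separately. On the $\cJ_\infty$ factor $\check \Ga(\infty) = \mbf B D_{B(0)}^{-1}$, and combining \eqref{eqn:kernel.at.0.and.X*} (i.e., ${K_0^B}^* X = -B(0) \mbf B^*$), extremality of $\mbf B$ ($\mbf B^* \mbf B = D_{B(0)}^2$), and the defect intertwiner $B(0) D_{B(0)} = D_{B(0)^*} B(0)$ yields
\begin{equation*}
    -\Ga(0)^* C \check \Ga(\infty) = D_{B(0)^*}^{-1} B(0) D_{B(0)}^{2} D_{B(0)}^{-1} = D_{B(0)^*}^{-1} D_{B(0)^*} B(0) = B(0).
\end{equation*}
On the $\cJ'$ factor, the construction of $\Ga'(\infty)$ in \eqref{eqn:gamma.prime.infty} forces $\nbran \Ga'(\infty) \perp \nbran \check \Ga(\infty) \supseteq \nbran \mbf B$, so $\mbf B^* \Ga'(\infty) = 0$ and the analogous calculation yields the zero operator. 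This gives the first identity in the statement.

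For the weak-coincidence assertion, I would invoke Theorem \ref{thm:char.map.gleason.soln} to write $B^\Ga = \BMob{0} \oplus \mbf{0}_{\cJ', \cJ_0}$, so in particular $B^\Ga(0) = 0$ and hence $(B^\Ga)^{\langle 0 \rangle} = B^\Ga$. With $\delta^\Ga_X = B(0) \oplus \mbf{0}$, a short block-matrix calculation shows $D_{\delta^\Ga_X} = D_{B(0)} \oplus I_{\cJ'}$ and $D_{(\delta^\Ga_X)^*} = D_{B(0)^*}$. Substituting these into the formula for $\Phi_{\delta^\Ga_X}^{-1}$ then shows that this M\"obius transform acts block-diagonally on the direct sum $\BMob{0} \oplus \mbf{0}$: the first block produces $\Phi_{B(0)}^{-1}(\BMob{0}) = B$ by Lemma \ref{lem:B(0).Mob.property}, while the second stays identically zero. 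Hence $B_X = B \oplus \mbf{0}_{\cJ', \cJ_0}$, and then $\supp(B_X) = \cJ_\infty = \supp(B)$ (as $B$ is CE) with $B_X|_{\supp(B_X)} = B|_{\supp(B)}$, yielding weak coincidence via Lemma \ref{lem:weak.coincidence.equiv}.

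The main obstacle is that $\delta^\Ga_X$ is not a pure contraction on $\cJ_\infty \oplus \cJ'$ when $\cJ'$ is non-trivial, so the $\BMob{\alpha}$ formalism of Section \ref{subsec:NC.Frostman.shifts} does not apply literally at $\alpha = \delta^\Ga_X$. This is handled by verifying from the explicit block decompositions of $D_{\delta^\Ga_X}$ and $D_{(\delta^\Ga_X)^*}$ that the inverses and resolvents appearing in $\Phi_{\delta^\Ga_X}^{-1}$ remain well-defined on the support side, so that the computation reduces entirely to the purely contractive case on the $\cJ_\infty$ block, while the $\cJ'$ component is absorbed into the zero block of $B_X$ in a manner fully consistent with the notion of weak coincidence.
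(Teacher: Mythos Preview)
Your proof is correct and follows essentially the same route as the paper. For the first identity, the paper begins from the explicit formula $C = K_0^B D_{B(0)^*}^{-2} {K_0^B}^* X$ (from \eqref{eqn:partial.iso.part,of.Gleason.soln}) rather than your observation that $C\Ga(\infty) = X\Ga(\infty)$ via $\nbran \Ga(\infty) \subseteq \nbker V = \nbker D_X^\perp$, but both routes converge immediately on the same computation via \eqref{eqn:kernel.at.0.and.X*} and extremality of $\mbf B$. For the second part, the paper simply cites Corollary \ref{cor:char.map.of.partial.iso.weakly.coincide.char.map} together with Lemma \ref{lem:B(0).Mob.property}; your explicit block-diagonal computation of $(B^\Ga)^{\langle \delta^\Ga_X \rangle} = B \oplus \mbf{0}_{\cJ',\cJ_0}$ is exactly what unpacks that citation.

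One unnecessary complication: your ``main obstacle'' paragraph is based on a misapprehension. The direct sum of the pure contraction $B(0)$ with the zero operator on $\cJ'$ is again a pure contraction: if $(h_1,h_2) \in \cJ_\infty \oplus \cJ'$ is nonzero, then either $h_1 \neq 0$ and $\|B(0)h_1\| < \|h_1\| \leq \|(h_1,h_2)\|$, or $h_1 = 0$ and $\|\delta^\Ga_X(h_1,h_2)\| = 0 < \|h_2\|$. Thus $\delta^\Ga_X$ is a pure contraction on $\cJ_\infty \oplus \cJ'$, and the $\BMob{\alpha}$ formalism of Section \ref{subsec:NC.Frostman.shifts} applies directly at $\alpha = \delta^\Ga_X$ with no special handling of the $\cJ'$ block required.
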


\begin{proof}
    From \eqref{eqn:partial.iso.part,of.Gleason.soln}, it is clear that $C = K^B_0 D_{B(0)^*}^{-2} {K^B_0}^* X$. Thus, as $\mbf B$ is extremal, we can calculate $\delta^\Ga_X$ using \eqref{eqn:K^b(0,0)}, \eqref{eqn:kernel.at.0.and.X*}, and the fact that $\nbran \Ga'(\infty) \subseteq \nbran \mbf{B}^\perp = \nbker \mbf{B}^*$ as follows:
    \begin{align*}
        \delta^\Ga_X &= - D_{B(0)^*}^{-1} \underbrace{{K^B_0}^* K_0^B D_{B(0)^*}^{-2}}_{= I_{\cJ_0}} {K^B_0}^* X \big[ \mbf{B} D_{B(0)}^{-1} \oplus \Ga'(\infty) \big] \\
        &= - D_{B(0)^*}^{-1} {K^B_0}^* X \big[\mbf{B} D_{B(0)}^{-1} \oplus \Ga'(\infty)\big] \\
        &= D_{B(0)^*}^{-1} B(0) \mbf{B}^* \big[\mbf{B} D_{B(0)}^{-1} \oplus \Ga'(\infty)\big] \\
        &= D_{B(0)^*}^{-1} B(0) D_{B(0)} \oplus \mathbf{0}_{\cJ', \cJ_0} \\
        &= B(0) \oplus \mathbf{0}_{\cJ', \cJ_0}.
    \end{align*}

    The second part follows immediately from Lemma \ref{lem:B(0).Mob.property} since we know from Corollary \ref{cor:char.map.of.partial.iso.weakly.coincide.char.map} that $B^\Ga$ coincides weakly with $\BMob{0}$.
\end{proof}

\subsection{The main model}\label{subsec:main.model}

For the remaining discussion, let $T : \cH \otimes \bC^d \to \cH$ be a CNC row contraction with isometric--pure decomposition $T = V + C$ (so that $C$ is a pure row contraction). Also, let $(\ga, \cJ_\infty, \cJ_0)$ be any model triple corresponding to $V$.

\begin{definition}\label{def:char.map.CNC.row.contraction}
    The \emph{defect point} of $T$ (with respect to $\ga$) is defined as
    \begin{equation*}
        \delta_T^\ga := - \ga(0)^* T \ga(\infty) = - \ga(0)^* C \ga(\infty).
    \end{equation*}
    The \emph{characteristic function $B_T$ of $T$} is defined as the weak coincidence class of $B^\ga_T := {B^\ga}^{\langle \delta^\ga_T \rangle}$.
\end{definition}

The following result shows that $B_T$ is independent of the choice of the model for $V$.

\begin{lemma}\label{lem:coincidence.of.char.map.of.T.invariant.under.model}
    If $(\gamma_j, \cJ_\infty^{(j)}, \cJ_0^{(j)})$ are any two model triples corresponding to $V$, then $B^{\ga_1}_T$ and $B^{\ga_2}_T$ coincide (as in \eqref{eqn:def.coincidence.class}).
\end{lemma}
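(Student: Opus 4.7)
My plan is to use the uniqueness result for characteristic functions of $V$ already established in Proposition \ref{prop:uniqueness.of.graded.model}, which produces constant unitaries $U_0 := \ga_2(0)^* \ga_1(0) \in \sB(\cJ_0^{(1)}, \cJ_0^{(2)})$ and $U_\infty := \ga_2(\infty)^* \ga_1(\infty) \in \sB(\cJ_\infty^{(1)}, \cJ_\infty^{(2)})$ implementing the coincidence $[U_0 \otimes I_n] B^{\ga_1}(Z) = B^{\ga_2}(Z) [U_\infty \otimes I_n]$ for all $Z \in \bB^d_n$, $n \in \bN$. The goal is to show that the same $U_0$, $U_\infty$ also implement $[U_0 \otimes I_n] B^{\ga_1}_T(Z) = B^{\ga_2}_T(Z) [U_\infty \otimes I_n]$.

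The first step is to observe that, since $\ga_j(0)$ is isometric onto $\nbran V^\perp$ and $\ga_j(\infty)$ is isometric onto $\nbker V$, we have the identities $\ga_1(0) = \ga_2(0) U_0$ and $\ga_1(\infty) = \ga_2(\infty) U_\infty$. Substituting these into the definition of the defect point $\delta_T^{\ga_j} := -\ga_j(0)^* C \ga_j(\infty)$ (where $C$ is the pure part of $T = V + C$) then immediately gives the intertwining $\delta_T^{\ga_2} = U_0 \, \delta_T^{\ga_1} \, U_\infty^*$.

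The key technical ingredient is a covariance identity for the inverse M\"obius transformation under unitary conjugation: for any pure contraction $\alpha \in [\sB(\cJ_\infty^{(1)}, \cJ_0^{(1)})]_1$ and any strict contraction $\zeta \in [\sB(\cJ_\infty^{(1)}, \cJ_0^{(1)})]_1$,
\begin{equation*}
    \Phi_{U_0 \alpha U_\infty^*}^{-1}(U_0 \zeta U_\infty^*) \; = \; U_0 \, \Phi_\alpha^{-1}(\zeta) \, U_\infty^*.
\end{equation*}
This follows by direct inspection from the explicit formula $\Phi_\alpha^{-1}(\zeta) = D_{\alpha^*}^{-1}(\zeta + \alpha)(I - \zeta^* \alpha)^{-1} D_\alpha$ once one notes, via the unitary invariance of the continuous functional calculus, that $U_0 D_{\alpha^*} U_0^* = D_{(U_0 \alpha U_\infty^*)^*}$ and $U_\infty D_\alpha U_\infty^* = D_{U_0 \alpha U_\infty^*}$.

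Finally, since $B^{\ga_j}(0) = 0$ by Theorem \ref{thm:char.map.is.NC.analytic.D.is.a.unitary.mult}, the Frostman shift collapses to $B^{\ga_j}_T(Z) = \Phi_{\delta_T^{\ga_j} \otimes I_n}^{-1}\big(B^{\ga_j}(Z)\big)$ for $Z \in \bB^d_n$ (well-posed because $\delta_T^{\ga_j}$ is a pure contraction, since $C$ is pure and $\ga_j(\infty)$ is isometric, while each $B^{\ga_j}(Z)$ is a strict contraction). Applying the covariance identity at level $n$ with $\alpha = \delta_T^{\ga_1} \otimes I_n$ and $\zeta = B^{\ga_1}(Z)$, and inserting both the coincidence $[U_0 \otimes I_n] B^{\ga_1}(Z) [U_\infty^* \otimes I_n] = B^{\ga_2}(Z)$ and the relation $\delta_T^{\ga_2} = U_0 \delta_T^{\ga_1} U_\infty^*$, will yield the claim. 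I do not anticipate any real obstacle: the only point demanding attention is bookkeeping the fact that $U_0$ and $U_\infty$ act between different pairs of Hilbert spaces, so the defect operators appearing on either side of the covariance identity live in different spaces, but the verification is otherwise routine.
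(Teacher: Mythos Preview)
Your proposal is correct and follows essentially the same approach as the paper: both use the unitaries $U_0=\ga_2(0)^*\ga_1(0)$ and $U_\infty=\ga_2(\infty)^*\ga_1(\infty)$ from Proposition~\ref{prop:uniqueness.of.graded.model}, derive $\delta_T^{\ga_2}=U_0\,\delta_T^{\ga_1}\,U_\infty^*$, and then push these intertwiners through the explicit inverse M\"obius formula. The only difference is packaging: you isolate the unitary covariance of $\Phi_\alpha^{-1}$ as a standalone identity, whereas the paper performs the same substitution inline.
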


\begin{proof}
    As in the proof of Proposition \ref{prop:uniqueness.of.graded.model}, we have unitaries $U_0 \in \sB(\cJ_0^{(1)}, \cJ_0^{(2)})$ and $U_\infty \in \sB(\cJ_\infty^{(1)}, \cJ_\infty^{(2)})$ given by $\ga_1(0) = \ga_2(0) U_0$, $\ga_1(\infty) = \ga_2(\infty) U_\infty$ such that
    \begin{equation*}
         [U_0 \otimes I_n] B^{\ga_1}(Z) = B^{\ga_2}(Z) [U_\infty \otimes I_n] \foral Z \in \bB^d_n \AND n \in \bN.
    \end{equation*}
    Thus, note that
    \begin{equation*}
        \delta^{\ga_1}_T = -\ga_1(0)^* T \ga_1(\infty) = -U_0^* \ga_2(0)^* T \ga_2(\infty) U_\infty = U_0^* \delta^{\ga_2}_T U_\infty.
    \end{equation*}
    Also, recall from Definition \ref{def:char.map} that $B^{\ga_j}(0) = 0$ and thus, ${B^{\ga_j}}^{\langle 0 \rangle} = B^{\ga_j}$. We can therefore calculate for each $Z \in \bB^d_n$ and $n \in \bN$ that
    \begin{align*}
        &B^{\ga_1}_T(Z) \\
        &\ = {B^{\ga_1}}^{\langle \delta^{\ga_1}_T \rangle}(Z) \\
        &\ = \Big[ D_{{\delta^{\ga_1}_T}^*}^{-1} \otimes I_n \Big] \Big( {B^{\ga_1}}(Z) + [\delta^{\ga_1}_T \otimes I_n] \Big) \Big[ I_{\cJ_0^{(1)} \otimes \bC^n} + [{\delta^{\ga_1}_T}^* \otimes I_n] {B^{\ga_1}}(Z) \Big]^{-1} \Big[ D_{\delta^{\ga_1}_T} \otimes I_n \Big] \\
        &\ = \Big[ U_0^* D_{{\delta^{\ga_2}_T}^*}^{-1} U_0 \otimes I_n \Big] [U_0^* \otimes I_n] \Big( {B^{\ga_2}}(Z) + [\delta^{\ga_2}_T \otimes I_n] \Big) [U_\infty \otimes I_n] \\
        &\ \qquad\qquad\qquad\qquad [U_\infty^* \otimes I_n] \Big[ I_{\cJ_0^{(2)} \otimes \bC^n} + {B^{\ga_2}}(Z) [\delta^{\ga_2}_T \otimes I_n] \Big]^{-1} [U_\infty \otimes I_n] \Big[ U_\infty^* D_{\delta^{\ga_2}_T} U_\infty \otimes I_n \Big] \\
        &\ = [U_0^* \otimes I_n] \Big[ D_{{\delta^{\ga_2}_T}^*}^{-1} \otimes I_n \Big] \Big( {B^{\ga_2}}(Z) + [\delta^{\ga_2}_T \otimes I_n] \Big) \Big[ I_{\cJ_0^{(2)} \otimes \bC^n} + [{\delta^{\ga_2}_T}^* \otimes I_n] {B^{\ga_2}}(Z) \Big]^{-1} \Big[ D_{\delta^{\ga_2}_T} \otimes I_n \Big] [U_\infty \otimes I_n] \\
        &\ = [U_0^* \otimes I_n] B^{\ga_2}_T(Z) [U_\infty \otimes I_n].
    \end{align*}
    Thus, $B^{\ga_1}_T$ coincides with $B^{\ga_2}_T$, as required.
\end{proof}

Combining Theorem \ref{thm:main.model.row.partial.iso}, Proposition \ref{prop:char.map.extremal.Gleason.soln.general} and Lemma \ref{lem:coincidence.of.char.map.of.T.invariant.under.model} yields the following result.

\begin{theorem}\label{thm:description.char.map.of.CNC.row.contraction}
    Let $T : \cH \otimes \bC^d \to \cH$ be a CNC row contraction and let $T = V + C$ be its isometric--pure decomposition. The NC characteristic function, $B_T \in \sS_d(\sD_T, \sD_{T^*})$, of $T$, is the weak coincidence class of the operator-M\"obius transformation of the NC characteristic function $B_V \in \sS_d(\sD_T, \sD_{T^*})$ of $V$ (as in \eqref{eqn:NC.char.map.V}) by a pure contraction, $\delta_T \in \sB(\sD_T, \sD_{T^*})$, that depends only on $C$:
    \begin{equation}\label{eqn:NC.char.map.T}
        B_T(Z) = \big[ D_{\delta_T^*} \otimes I_n \big] \Big( B_V(Z) + \big[ \delta_T \otimes I_n \big] \Big) \Big( I_{\sD_T \otimes \bC^n} + \big[ {\delta_T}^* \otimes I_n \big] B_V(Z) \Big)^{-1} \big[ D_{\delta_T} \otimes I_n \big].
    \end{equation}
    
    In particular, $T$ is a CNC row partial isometry if and only if
    \begin{equation*}
        C = 0 \iff B_T = B_V \iff \delta_T = 0 \iff B_T(0) = 0.
    \end{equation*}
\end{theorem}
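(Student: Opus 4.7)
The plan is to combine the explicit model for the row partial isometric part $V$ (Theorem \ref{thm:main.model.row.partial.iso}) with the operator-M\"obius machinery of Section \ref{subsec:NC.Frostman.shifts}, specializing the general Definition \ref{def:char.map.CNC.row.contraction} via the canonical NC model triple in order to identify $\delta_T$ concretely and read off \eqref{eqn:NC.char.map.T}. The work is essentially already done in Lemma \ref{lem:coincidence.of.char.map.of.T.invariant.under.model} and Proposition \ref{prop:char.map.extremal.Gleason.soln.general}; the present theorem is the packaging of those results in intrinsic terms.

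First, I would fix the canonical NC model triple $(\Gamma_V, \nbker V, \nbran V^\perp)$ from Remark \ref{rem:canon.model.triple}, for which Theorem \ref{thm:main.model.row.partial.iso} gives $B^{\Gamma_V}=B_V$ (with $B_V(0)=0$), described by \eqref{eqn:NC.char.map.V}. The natural identifications $\nbker V=\nbker D_T^\perp=\sD_T$ and $\nbran V^\perp=\overline{\nbran D_{T^*}}=\sD_{T^*}$ from Lemma \ref{isopure}(iii) allow $B_V$ to be regarded as an element of $\sS_d(\sD_T,\sD_{T^*})$. Next, I would compute the defect point:
\[
\delta_T := \delta_T^{\Gamma_V} = -\Gamma_V(0)^* T\, \Gamma_V(\infty) = -P_{\nbran V^\perp}\, T\big|_{\nbker V} = -\, C\big|_{\nbker V},
\]
using $V|_{\nbker V}=0$ together with $\nbran C\subseteq \nbran V^\perp$ (again Lemma \ref{isopure}(iii)) so that $P_{\nbran V^\perp}$ acts as the identity on $\nbran C$. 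This shows that $\delta_T\in\sB(\sD_T,\sD_{T^*})$ depends only on $C$, and since $C$ is pure, $\delta_T$ inherits this as a pure contraction.

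With $\delta_T$ identified, the formula \eqref{eqn:NC.char.map.T} is the pointwise evaluation of $B_T := (B_V)^{\langle \delta_T \rangle}$ as defined in Definition \ref{def:char.map.CNC.row.contraction}, noting $(B_V)^{\langle 0 \rangle}=B_V$ because $B_V(0)=0$. Well-definedness up to weak coincidence (independent of the model triple) is provided directly by Lemma \ref{lem:coincidence.of.char.map.of.T.invariant.under.model}, and $B_T\in\sS_d(\sD_T,\sD_{T^*})$ follows from Theorem \ref{thm:NC.Crofoot.transf} since $B_V$ is purely contractive and $\delta_T$ is a pure contraction. For the chain of equivalences, $T$ is a CNC row partial isometry iff $C=0$ by uniqueness of the isometric--pure decomposition (Lemma \ref{isopure}); $C=0\iff\delta_T=0$ is immediate from $\delta_T=-C|_{\nbker V}$; $\delta_T=0\iff B_T=B_V$ since $\Phi_0^{-1}=\id$; and $B_T=B_V\Rightarrow B_T(0)=0$ since $B_V(0)=0$. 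The reverse implication $B_T(0)=0\Rightarrow\delta_T=0$ follows by noting $\Phi_{\delta_T}(\delta_T)=0$ (direct check: the factor $(\delta_T-\delta_T)$ vanishes), so $\Phi_{\delta_T}^{-1}(0)=\delta_T$, whence $B_T(0)=\Phi_{\delta_T}^{-1}(B_V(0))=\Phi_{\delta_T}^{-1}(0)=\delta_T$.

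The main obstacle will be the subcase where $\delta_T$ is merely purely contractive rather than strictly contractive, so that $D_{\delta_T}^{-1}$ and $D_{\delta_T^*}^{-1}$ are unbounded closed operators densely defined on the defect spaces. Here one must invoke the purely contractive version of Lemma \ref{opMobius}(i) and the discussion preceding Lemma \ref{lem:B(0).Mob.property} to justify that the right-hand side of \eqref{eqn:NC.char.map.T} extends by continuity to a bona fide element of $\sS_d(\sD_T,\sD_{T^*})$ on all of $\rball$.
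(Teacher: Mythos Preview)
Your proposal is correct and follows essentially the same route as the paper, which simply combines Theorem \ref{thm:main.model.row.partial.iso}, Proposition \ref{prop:char.map.extremal.Gleason.soln.general}, and Lemma \ref{lem:coincidence.of.char.map.of.T.invariant.under.model}. Your write-up adds value by making the identification $\delta_T=-C|_{\nbker V}$ explicit (via the canonical triple and Lemma \ref{isopure}(iii)) and by spelling out the chain of equivalences, including the observation $B_T(0)=\Phi_{\delta_T}^{-1}(0)=\delta_T$.
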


We need one final ingredient to complete our construction of our NC de Branges--Rovnyak model for arbitrary CNC row contractions. The following is easy to verify using the fact that $\ga(0)$ and $\ga(\infty)$ are onto isometries.

\begin{lemma}\label{lem:defect.point.bijection}
    Given a row partial isometry $V$ and a model triple $(\ga, \cJ_\infty, \cJ_0)$, the map
    \begin{equation*}
        \delta \mapsto T_\delta := V + \ga(0) \delta \ga(\infty)^*
    \end{equation*}
    defines a bijection from pure contractions in $[\sB(\cJ_\infty, \cJ_0)]_1$ onto row contractions with partial isometric part $V$ and purely contractive part $\ga(0) \delta \ga(\infty)^*$. Moreover, $\delta$ is a strict contraction if and only if the purely contractive part of $T_\delta$ is a strict row contraction. Furthermore, $T_\delta$ is CNC if and only if $V$ is CNC.
\end{lemma}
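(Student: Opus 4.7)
The plan is to exploit two structural orthogonalities built into any model triple: $\ga(\infty)$ is an isometry onto $\nbker V$ (so $\ga(\infty)^*$ annihilates $\nbker V^\perp$), and $\ga(0)$ is an isometry onto $\nbran V^\perp$ (so $V^*\ga(0) = 0$). Consequently the perturbation $\ga(0)\delta\ga(\infty)^*$ vanishes on $\nbker V^\perp$ and takes values in $\nbran V^\perp$, while $V$ acts nontrivially only on $\nbker V^\perp$ and takes values in $\nbran V$. Thus $V$ and $\ga(0)\delta\ga(\infty)^*$ live on complementary initial and final subspaces, which reduces nearly every claim to a direct block computation.

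First I would verify that $T_\delta$ is a row contraction and identify its isometric--pure decomposition. A direct expansion using $V^*\ga(0) = 0$ and $\ga(0)^*\ga(0) = I_{\cJ_0}$ gives $T_\delta^* T_\delta = V^*V + \ga(\infty)\delta^*\delta\ga(\infty)^*$, and combined with $I - V^*V = \ga(\infty)\ga(\infty)^*$ this yields $D_{T_\delta}^2 = \ga(\infty) D_\delta^2 \ga(\infty)^*$. Since $\delta$ is a \emph{pure} contraction, $D_\delta$ is injective on $\cJ_\infty$, so $\nbker D_{T_\delta} = \nbker V^\perp$. Then $T_\delta P_{\nbker D_{T_\delta}} = V$ (using $\ga(\infty)^* P_{\nbker V^\perp} = 0$) and $T_\delta P_{\nbker D_{T_\delta}^\perp} = \ga(0)\delta\ga(\infty)^*$, so by the uniqueness clause of Lemma~\ref{isopure} this \emph{is} the isometric--pure decomposition of $T_\delta$. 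Orthogonality of the two summands in $\cH$ also delivers the sharp identity $\|T_\delta \mbf h\|^2 = \|P_{\nbker V^\perp}\mbf h\|^2 + \|\delta\ga(\infty)^*\mbf h\|^2$, confirming that $T_\delta$ is a row contraction.

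Injectivity of $\delta \mapsto T_\delta$ is then immediate since $\ga(0)$ is left-invertible and $\ga(\infty)^*$ is right-invertible. For surjectivity, given any row contraction $T$ with isometric--pure decomposition $T = V + C$, I would set $\delta := \ga(0)^* C \ga(\infty)$; Lemma~\ref{isopure}(iii) implies $C = P_{\nbran V^\perp} C P_{\nbker V} = \ga(0)\ga(0)^* C \ga(\infty)\ga(\infty)^* = \ga(0)\delta\ga(\infty)^*$, and a short computation using that $\ga(0)$ and $\ga(\infty)$ are isometries converts the pure contractivity of $C$ into the pure contractivity of $\delta$. The strict--contraction equivalence follows from the norm identity $\|\ga(0)\delta\ga(\infty)^*\| = \|\delta\|$ (since $\ga(0)$ is an isometry and $\ga(\infty)^*$ is a coisometry onto $\cJ_\infty$), and the CNC equivalence is the final assertion of Lemma~\ref{isopure} applied to $T_\delta$, whose partial isometric part has just been identified as $V$. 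The only mildly delicate step is the identification of $\nbker D_{T_\delta}$, which crucially uses that $\delta$ is \emph{pure} rather than merely contractive; everything else is bookkeeping with the orthogonality structure of $(\ga(0), \ga(\infty))$.
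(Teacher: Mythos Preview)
Your proposal is correct and follows exactly the approach the paper indicates: the paper simply states that the lemma ``is easy to verify using the fact that $\ga(0)$ and $\ga(\infty)$ are onto isometries'' and omits the details, so your direct block computation exploiting the orthogonalities $\nbran\ga(0)=\nbran V^\perp$, $\nbran\ga(\infty)=\nbker V$, together with Lemma~\ref{isopure}, is precisely what the authors had in mind. The only point worth flagging is that you have supplied the careful identification $\nbker D_{T_\delta}=\nbker V^\perp$ (which genuinely requires purity of $\delta$) and the surjectivity argument via Lemma~\ref{isopure}(iii), both of which the paper leaves implicit.
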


\begin{theorem}[\bf NC de Branges--Rovnyak model for CNC row contractions]\label{thm:main.model}
    A row contraction $T : \cH \otimes \bC^d \to \cH$ is CNC if and only if it is unitarily equivalent to the (unique) extremal Gleason solution for an NC de Branges--Rovnyak space $\sH(B_T)$, where $B_T$ is purely contractive and CE (left) multiplier. This $B_T$ is unique up to weak unitary coincidence and can be chosen so that $B_T \in \sS_d(\sD_T, \sD_{T^*})$ is given by \eqref{eqn:NC.char.map.T}.
\end{theorem}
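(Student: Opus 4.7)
The plan is to bootstrap from the partial-isometry case (Theorem \ref{thm:main.model.row.partial.iso}) to the general case via the NC Crofoot transformation of Section \ref{subsec:NC.Frostman.shifts}. The reverse direction is immediate: if $T$ is unitarily equivalent to the extremal Gleason solution for some $\sH(B)$ with $B$ purely contractive and CE, then Corollary \ref{cor:X.is.CNC} yields that $T$ is CNC.

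For the forward direction, I begin with the isometric--pure decomposition $T = V + C$ from Lemma \ref{isopure}. Then $V$ is a CNC row partial isometry with $\nbker V = \sD_T$ and $\nbran V^\perp = \sD_{T^*}$, and Theorem \ref{thm:main.model.row.partial.iso} supplies a unitary $U : \cH \twoheadrightarrow \sH(B_V)$ intertwining $V$ with the extremal Gleason solution $X_V$ for $\sH(B_V)$, where $B_V \in \sS_d(\sD_T, \sD_{T^*})$ is CE and $B_V(0) = 0$. Using the canonical NC model triple $\Gamma_V$ for $V$, I define the defect point $\delta_T := -\Gamma_V(0)^* C \, \Gamma_V(\infty)$, which is a pure contraction in $[\sB(\sD_T, \sD_{T^*})]_1$ because $C$ is pure. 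Setting $B_T := (B_V)^{\langle \delta_T \rangle}$, Theorem \ref{thm:description.char.map.of.CNC.row.contraction} gives the explicit form \eqref{eqn:NC.char.map.T}, and Theorems \ref{thm:NC.Crofoot.transf}--\ref{thm:M.alpha.preserves.Gleason.soln} guarantee that $B_T \in \sS_d(\sD_T, \sD_{T^*})$ is purely contractive and CE, and that the NC Crofoot multiplier $M := M^{\langle \delta_T \rangle}$ is a unitary left multiplier from $\sH(B_V)$ onto $\sH(B_T)$.

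The decisive step is to verify that $W := M \circ U : \cH \twoheadrightarrow \sH(B_T)$ intertwines $T$ with the extremal Gleason solution $X_T$ for $\sH(B_T)$. I will split this along the isometric--pure decomposition $X_T = V_T + C_T$. For the partial isometric parts, Proposition \ref{prop:partial.iso.from.Crofoot.transf} applied to $B_T$ (invoking Lemma \ref{lem:B(0).Mob.property} to identify $(B_T)^{\langle 0 \rangle}$ with $B_V$ up to weak coincidence) gives $V_T = M X_V [M^* \otimes I_d]$, which transports $UV = X_V [U \otimes I_d]$ to $WV = V_T [W \otimes I_d]$. For the pure contractive parts, Lemma \ref{lem:defect.point.bijection} reduces the required identity $W C [W^* \otimes I_d] = C_T$ to an equality of defect points: by Proposition \ref{prop:char.map.extremal.Gleason.soln.general}, the defect point of $X_T$ relative to the Gleason model triple of Corollary \ref{cor:model.triple.for.Gleason.soln} equals $B_T(0) \oplus \mathbf{0}_{\cJ', \sD_{T^*}}$, while the defect point of $WTW^*$ transported via $W$ from $\Gamma_V$ is $\delta_T$ by construction, and Lemma \ref{lem:coincidence.of.char.map.of.T.invariant.under.model} reconciles the two. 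Uniqueness up to weak coincidence then follows by combining Corollary \ref{cor:char.map.partial.iso.uni.equiv} with Lemma \ref{lem:defect.point.bijection}, first matching partial isometric parts and then defect points.

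The main obstacle is the final reconciliation above: the Gleason model triple of Corollary \ref{cor:model.triple.for.Gleason.soln} for $V_T$ on $\sH(B_T)$ is not literally the transport of $\Gamma_V$ along $W$, and in fact comes with an auxiliary summand $\cJ'$ appended to $\nbker V_T$. Showing that both defect points induce the same pure contractive part $C_T$ requires carefully identifying the weak unitary coincidence between the corresponding characteristic maps using Lemma \ref{lem:coincidence.of.char.map.of.T.invariant.under.model}, then concluding that the projections of the two defect points onto $\sB(\sD_T, \sD_{T^*})$ agree and therefore produce the same row contractive extension of $V_T$ via Lemma \ref{lem:defect.point.bijection}.
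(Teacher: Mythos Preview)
Your overall strategy matches the paper's: reduce to the partial-isometry model, define $B_T$ as the $\delta_T$-Frostman shift of $B_V$, and use the Crofoot multiplier $M^{\langle \delta_T \rangle}$ to transport. The difference lies in where you carry out the defect-point comparison, and this is exactly where your proposal becomes vague.

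You choose to work on $\sH(B_T)$, comparing $WTW^*$ with $X_T$ via the Gleason model triple of Corollary \ref{cor:model.triple.for.Gleason.soln} for $V_T$. As you correctly flag in your ``main obstacle'' paragraph, that triple carries an auxiliary summand $\cJ'$, and the defect point of $WTW^*$ in that triple is \emph{not} literally $\delta_T$ but rather $-\Gamma(0)^* WCW^* \Gamma(\infty)$ with $\Gamma(0) = K_0^{B_T} D_{B_T(0)^*}^{-1}$ and $\Gamma(\infty) = \mbf{B}_T D_{B_T(0)}^{-1} \oplus \Gamma'(\infty)$. Your invocation of Lemma \ref{lem:coincidence.of.char.map.of.T.invariant.under.model} does not close this gap: that lemma says characteristic functions from different triples coincide unitarily, but Lemma \ref{lem:defect.point.bijection} requires the two defect points to agree with respect to the \emph{same} triple, and the talk of ``projections onto $\sB(\sD_T,\sD_{T^*})$'' is not a substitute for that computation.

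The paper sidesteps this by pulling everything back to $\sH(B_V)$ instead. It conjugates both $T$ and $X_T$ by $\widehat{\cU}$ and $M^{\langle \delta_T \rangle}$ respectively to obtain $\widehat{T}$ and $\widehat{X}$ on $\sH(B_V)$, both with partial isometric part $X_V$, and compares their defect points in the single model triple $\widehat{\Gamma} := \widehat{\cU}\,\Gamma_V$ (no $\cJ'$ appears, since this is a model triple for $X_V$, not $V_T$). That $\delta^{\widehat{\Gamma}}_{\widehat{T}} = \delta^\Gamma_T$ is immediate; the substantive step is computing $\delta^{\widehat{\Gamma}}_{\widehat{X}} = \delta^\Gamma_T$, which the paper does via an explicit kernel calculation using $M^{\langle \delta_T \rangle} \widehat{\cU}\,\Gamma_V(0) = K_0^{B_T} D_{\delta_T^*}^{-1}$, the identity ${X_T}^* K_0^{B_T} = -\mbf{B}_T \delta_T^*$ from \eqref{eqn:X^*.b.intertwine.ker.action}, and extremality of $\mbf{B}_T$. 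Your route can be completed, but it requires essentially the same computation in a messier coordinate system; moving to $\sH(B_V)$ as the paper does is the cleaner execution.
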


\begin{proof}
    If $T$ is unitarily equivalent to the extremal Gleason solution of an NC Schur function $B$ as in the hypothesis, then it is clear from Corollary \ref{cor:X.is.CNC} that $T$ is CNC.

    Conversely, suppose $T$ is CNC and let $(\Ga := \Ga_V, \ \cJ_\infty := \nbker V, \ \cJ_0 := \nbran V^\perp)$ be the canonical NC model triple for $V$ (as in Remark \ref{rem:canon.model.triple}). We know from Theorem \ref{thm:char.map.is.NC.analytic.D.is.a.unitary.mult} that $V$ is unitarily equivalent to the extremal Gleason solution $X^\Ga = {M^L_{D^\Ga}}^* \cU^\Ga V [{\cU^\Ga}^* M^L_{D^\Ga} \otimes I_d]$ for $\sH(B^\Ga)$, corresponding to the extremal Gleason solution $\mbf B^\Ga = [{M^L_{D^\Ga}}^* \cU^\Ga \otimes I_d] \Ga(\infty)$ for $B^\Ga$. Moreover, if we let $\widehat{\cU} :=  {M^L_{D^\Ga}}^* \cU^\Ga$, then it follows that $(\widehat{\Ga} := \widehat{\cU} \Ga, \cJ_\infty, \cJ_0)$ is a model triple corresponding to $X^\Ga$.
    
    By definition we have ${B^\Ga_T}^{\langle 0 \rangle} = B^\Ga$ and $B^\Ga_T (0) = \delta^\Ga_T$, and hence Theorem \ref{thm:M.alpha.preserves.Gleason.soln} tells us that $\mbf B_T := {G^{\langle \delta^\Ga_T \rangle}}^{-1} \mbf B^\Ga$ is the unique extremal Gleason solution for $B^\Ga_T$. If $X_T$ is the corresponding extremal Gleason solution for $\sH(B^\Ga_T)$, then we know from Proposition \ref{prop:partial.iso.from.Crofoot.transf} that $V_T$, the row partial isometric part of $X_T$, satisfies
    \begin{equation*}
        V_T = M^{\langle \delta^\Ga_T \rangle} X [M^{\langle \delta^\Ga_T \rangle} \otimes I_d]^*.
    \end{equation*}
    To show that $T$ is unitarily equivalent to $X_T$, we will show that
    \begin{equation*}
        \widehat{T} := \widehat{\cU} T [\widehat{\cU} \otimes I_d]^* = {M^{\langle \delta^\Ga_T \rangle}}^* X_T [M^{\langle \delta^\Ga_T \rangle} \otimes I_d] =: \widehat{X}.
    \end{equation*}
    Since $\widehat{X}$ and $\widehat{T}$ are both row contractions with row partial isometric part $X$, Lemma \ref{lem:defect.point.bijection} tells us that it suffices to show $\delta^{\widehat{\Ga}}_{\widehat{T}} = \delta^{\widehat{\Ga}}_{\widehat{X}}$ holds. We therefore calculate
    \begin{equation*}
        \delta^{\widehat{\Ga}}_{\widehat{T}} = - \Ga(0)^* \widehat{\cU}^* \widehat{T} [\widehat{\cU} \otimes I_d] \Ga(\infty) = - \Ga(0) T \Ga(\infty) = \delta^\Ga_T.
    \end{equation*}
    Similarly, we have
    \begin{align}\label{eqn:main.model.proof.1}
        \delta^{\widehat{\Ga}}_{\widehat{X}} &= -\big(\widehat{\cU} \Ga(0)\big)^* \widehat{X} \widehat{\cU} \Ga(\infty) \nonumber\\
        &= - \big(\widehat{\cU} \Ga(0)\big)^* {M^{\langle \delta^\Ga_T \rangle}}^* X_T [M^{\langle \delta^\Ga_T \rangle} \otimes I_d] \mbf B^\Ga \nonumber\\
        &= - \big( {X_T}^* M^{\langle \delta^\Ga_T \rangle} \widehat{\cU} \Ga(0) \big)^* \mbf B_T D_{\delta^\Ga_T}^{-1}.
    \end{align}
    Now, define the map $K^\Ga_0 : \cJ_0 \to \cH^\Ga$ as
    \begin{equation*}
        K^\Ga_0 g := K^\Ga\{0, g \otimes 1, 1\} \foral g \in \cJ_0.
    \end{equation*}
    Then, it is easy to check that $\cU^\Ga \Ga(0) = K^\Ga_0$. Then, since $M^L_{D^\Ga} = {M^L_{{D^\Ga}^{-1}}}^*$ is the adjoint of a multiplier, a calculation similar to \eqref{eqn:left.mult.action.on.kernel.maps} yields
    \begin{equation*}
        \widehat{\cU}^\Ga \Ga(0) = K^{B_V}_0 D(0)^{-*} = K_0^{B_V}.
    \end{equation*}
    Similarly, $M^{\langle \delta^\Ga_T \rangle} = \big({M^{\langle \delta^\Ga_T \rangle}}^{-1}\big)^*$ is also a multiplier so we use \eqref{eqn:left.mult.action.on.kernel.maps} once again to conclude
    \begin{equation*}
        M^{\langle \delta^\Ga_T \rangle} \widehat{\cU}^\Ga \Ga(0) = K^{B_T}_0 {M^{\langle \delta^\Ga_T \rangle}}^{-*}(0) = K^{B_T}_0 D_{{\delta^\Ga_T}^*}^{-1}.
    \end{equation*}
    Next, as $X_T$ is the extremal Gleason solution for $\sH(B_T)$, we can use \eqref{eqn:X^*.b.intertwine.ker.action} for $W = 0$ to obtain
    \begin{equation*}
        {X_T}^* M^{\langle \delta^\Ga_T \rangle} \widehat{\cU}^\Ga \Ga(0) = - \mbf B_T {\delta^\Ga_T}^* D_{{\delta^\Ga_T}^*}^{-1}.
    \end{equation*}
    Plugging this back into \eqref{eqn:main.model.proof.1} and using the fact that $\mbf B_T$ is extremal gives us
    \begin{equation*}
        \delta^{\widehat{\Ga}}_{\widehat{T}} = D_{{\delta^\Ga_T}^*}^{-1} \delta^\Ga_T {\mbf B^T}^* \mbf B^T D_{\delta^\Ga_T}^{-1} = \delta^\Ga_T D_{\delta^\Ga_T}^{-1} D_{\delta^\Ga_T}^2 D_{\delta^\Ga_T}^{-1} = \delta^\Ga_T,
    \end{equation*}
    and hence $\delta^{\widehat{\Ga}}_{\widehat{T}} = \delta^{\widehat{\Ga}}_{\widehat{X}}$ as required. This completes the proof.
\end{proof}

\begin{corollary}\label{cor:char.map.is.uni.inv}
    The NC characteristic function, $B_T$, of a CNC row contraction $T$ is a complete unitary invariant: Two CNC row contractions, $T$, $T'$ on $\cH$, $\cH'$, respectively, are unitarily equivalent if and only if their NC characteristic functions $B_T$, $B_{T'}$ coincide weakly. 
\end{corollary}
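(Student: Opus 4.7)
The plan is to prove both directions of the equivalence, exploiting Theorem \ref{thm:main.model}, which realizes every CNC row contraction as the extremal Gleason solution on some NC de Branges--Rovnyak space $\sH(B_T)$, together with the multiplier reformulation of weak coincidence in Lemma \ref{lem:weak.coincidence.equiv}.

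For the forward implication, I would suppose $T$ and $T'$ are unitarily equivalent via a unitary $U: \cH \to \cH'$ with $UT = T'[U \otimes I_d]$. The strategy is to transport a model triple $(\gamma, \cJ_\infty, \cJ_0)$ for $V_T$ to a model triple $(\gamma', \cJ_\infty, \cJ_0)$ for $V_{T'}$ by setting $\gamma'(0) := U\gamma(0)$, $\gamma'(\infty) := [U \otimes I_d]\gamma(\infty)$, and $\gamma'(Z) := [U \otimes I_n]\gamma(Z)$ for $Z \in \bB^d_n$. Since $U$ intertwines the isometric--pure decompositions (Lemma \ref{isopure}), a short computation using $\cR(V_{T'} - Z) = [U \otimes I_n]\cR(V_T - Z)$ confirms that $\gamma'$ is indeed a model triple for $V_{T'}$. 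The $U$ and $U^*$ factors then cancel in the formulas for $D^\gamma$, $N^\gamma$, and $\delta^\gamma_T$, yielding $D^{\gamma'} = D^\gamma$, $N^{\gamma'} = N^\gamma$, and $\delta^{\gamma'}_{T'} = -\gamma(0)^* U^* T' [U \otimes I_d] \gamma(\infty) = \delta^\gamma_T$. Hence $B_{T'}$ and $B_T$ coincide unitarily, and in particular weakly.

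For the substantive backward implication, I would begin by applying Lemma \ref{lem:weak.coincidence.equiv} to the hypothesis $B_T \sim B_{T'}$ to extract a constant unitary $\widehat U: \sD_{T^*} \to \sD_{T'^*}$ whose induced left multiplier $\widehat M := I_{\hardy} \otimes \widehat U$ restricts to a unitary from $\sH(B_T)$ onto $\sH(B_{T'})$. The crucial point is that because $\widehat M$ is a \emph{constant} left multiplier, it commutes with each right shift $R_j \otimes I_{\sD_{T^*}}$ (the multiplier acts only on the coefficient factor while $R_j$ acts only on $\hardy$), and hence with each $R_j^* \otimes I_{\sD_{T^*}}$. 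Since the extremal Gleason solutions for these NC de Branges--Rovnyak spaces are given by restrictions of the backward right shifts, as in \eqref{ncGS}, the unitary $\widehat M |_{\sH(B_T)}$ intertwines the extremal Gleason solution $X$ for $\sH(B_T)$ with the extremal Gleason solution $X'$ for $\sH(B_{T'})$. Theorem \ref{thm:main.model} supplies unitary equivalences $T \cong X$ and $T' \cong X'$, and composing these with $\widehat M |_{\sH(B_T)}$ produces the required unitary equivalence $T \cong T'$.

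The only potential obstacle is the bookkeeping in the forward direction, where one must verify that every ingredient in the construction of $B_T$--the model map $\gamma$, the graded maps $D^\gamma$ and $N^\gamma$, the defect point $\delta^\gamma_T$, and the final operator--M\"obius transformation--behaves equivariantly under the conjugation by $U$. This is tedious but essentially automatic since $U$ acts on the coefficient factor and commutes with $I_{\bC^d}$ and $I_{\bC^n}$. The backward direction, by contrast, is essentially cost-free given the two results it invokes.
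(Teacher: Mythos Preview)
Your proposal is correct and follows essentially the same approach as the paper: for the forward direction you transport a model triple for $V_T$ through the intertwining unitary $U$ to obtain a model triple for $V_{T'}$ with identical $D$, $N$, and defect point, exactly as the paper does; for the backward direction you invoke Lemma \ref{lem:weak.coincidence.equiv} to obtain a constant unitary multiplier between $\sH(B_T)$ and $\sH(B_{T'})$ and then argue that it intertwines the extremal Gleason solutions, after which Theorem \ref{thm:main.model} finishes the job. The only cosmetic difference is that the paper phrases the intertwining of Gleason solutions via their uniqueness and the defining Gleason equation \eqref{eqn:def.Gleason.soln}, whereas you phrase it via commutation of constant left multipliers with the right shifts---these are the same observation.
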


\begin{proof}
    Suppose $T$, $T'$ as in the hypothesis are unitarily equivalent CNC row contractions, i.e., there exists a unitary $U : \cH \twoheadrightarrow \cH'$ such that $T' = U T [U^* \otimes I_d]$, and let $T = V + C$ and $T' = V' + C'$ be the respective isometric--pure decompositions. It then follows from Lemma \ref{isopure} that
    \begin{equation*}
        V' = U V [U^* \otimes I_d] \qand C' = U C [U^* \otimes I_d].
    \end{equation*}
    
    If $(\ga, \cJ_\infty, \cJ_0)$ is a model triple corresponding to $V$, then it is immediate that $(U \ga, \cJ_\infty, \cJ_0)$ is a model triple corresponding to $V'$. Moreover, it is straightforward to check from the definition that $B^\ga = B^{U \ga}$ and that $\delta^\ga_T = \delta^{U\ga}_{T'}$. Consequently, we get that ${B^\ga}^{\langle \delta^\ga_T \rangle} = {B^{U \ga}}^{\langle \delta^{U \ga}_{T'} \rangle}$ and Lemma \ref{lem:coincidence.of.char.map.of.T.invariant.under.model} shows that $B_T = B_{T'}$.

    Conversely, suppose $B_T \in \sS_d(\cJ, \cK)$ and $B_{T'} \in \sS_d(\cJ', \cK')$ coincide weakly. By Lemma \ref{lem:weak.coincidence.equiv}, this means that there is a unitary $U : \cK \twoheadrightarrow \cK'$ such that $\big[I_{\bH^2_d} \otimes U\big] \sH(B) = \sH(B')$. Using the uniqueness of Gleason solutions and applying \eqref{eqn:def.Gleason.soln}, we immediately get that the Gleason solutions $X$, $X'$ for $\sH(B)$, $\sH(B')$ satisfy
    \begin{equation*}
        X' = \big[I_{\bH^2_d} \otimes U\big] X \big[I_{\bH^2_d} \otimes U \otimes I_d\big].
    \end{equation*}
    Then, Theorem \ref{thm:main.model} shows at once that $T$ and $T'$ are unitarily equivalent, as required.
\end{proof}

\section{Relationship between the commutative and NC models} \label{sec:NCvscomm}

In \cite{MR19}, a row contraction, $T : \cH \otimes \C ^d \rightarrow \cH$ is said to obey the \emph{commutative CNC condition}, written $T$ is CCNC, if 
$$ \bigvee _{z \in \mathbb{B} ^d} (I-Tz^*)^{-1} \nbran D_{T^*} = \cH, $$ where $\mathbb{B} ^d$ denotes the unit ball of $\C^d$. Note that any CCNC row contraction is automatically CNC by Equation (\ref{CNCformula}). One of the main results of \cite{MR19}, Theorem 5.14, is that a row contraction, $T$, is CCNC if and only if it is unitarily equivalent to a contractive, extremal Gleason solution in a multi-variable de Branges--Rovnyak space, $\scr{H} (b)$, for a contractive multiplier, $b \in [ H^\infty _d \otimes \scr{B} (\cJ, \cK)]_1$, where $H^\infty _d$ denotes the multiplier algebra of $H^2 _d$, the Drury--Arveson space. By Popescu's commutant lifting theorem for row contractions, any $b \in [ H^\infty _d \otimes \scr{B} (\cJ, \cK)]_1$ has a contractive (left) ``free lift" to a contractive left multiplier $B \in \sS _d (\cJ, \cK)$, although this lift need not be unique. This lift will be unique if and only if $b$ is column--extreme (CE), by \cite[Corollary 7.2]{JMfree}. Letting $S=(S_1, \cdots, S_d) : H^2 _d \otimes \C ^d \rightarrow H^2 _d$ denote the \emph{Arveson $d-$shift}, $S_i := M_{z_i}$, on the Drury--Arveson space, $H^2 _d$, we write $F(S) := M_F$ for the linear operator of multiplication by $F \in H^\infty _d$. By \cite[Lemma 7.5]{JMfree}, there is a co-isometry, $C_{H^2}$ of $\scr{H} (B)$ onto the multivariable de Branges--Rovnyak space, $$\scr{H} (b) := \scr{R} (\sqrt{I- b(S) b(S)^*}),$$ contractively contained in $H^2 _d \otimes \cK$, with initial space $$\big(I_{\hardy} - B(L)B(L) ^* \big) H^2 _d \otimes \cJ$$ defined by 
$$ C_{H^2} \big(I_{\mathbb{H} ^2} -B(L) B(L)^* \big) h := \big(I_{H^2} -b(S) b(S)^*\big) h; \quad \quad h \in H^2 _d \otimes \cJ. $$ 
Gleason solutions for (commutative) multivariate de Branges--Rovnyak spaces, $\scr{H} (b)$, $b \in [H^\infty _d \otimes \scr{B} (\cJ, \cK)]_1$, and for contractive (Schur-class) multipliers, are defined analogously to the free case (see Subsection \ref{subsec:Gleason.solutions}). (In contrast to the free case, Gleason solutions for a contractive multiplier between vector--valued Drury--Arveson spaces, and for its de Branges--Rovnyak space need not be unique. A sufficient condition for $b \in [H^\infty _d \otimes \scr{B} (\cJ, \cK)]_1$ to have a unique, contractive and extremal Gleason solution, $\mbf{b}$, and for $\scr{H} (b)$ to have a unique extremal Gleason solution, $X$, is that $b$ be \emph{column--extreme (CE)}, as in Subsection \ref{ss:NCSchur}.)  If $X$ is a contractive and extremal Gleason solution for $\scr{H} (b)$, then the unique contractive Gleason solution for $\scr{H}  (B)$ is necessarily extremal and these are related by 
$$ X = C_{H^2}  X^B \big[C_{H^2} ^* \otimes I_d\big], $$ by \cite[Theorem 7.13]{JMfree}. 

Hence, (assuming that $\mr{supp} (B) =\cJ$), we have that if $T$ is a CCNC row contraction, then its commutative model has characteristic function $b$. However, since $T$ is necessarily CNC, it has an NC model with NC characteristic function $B_T$, and $T$ is unitarily equivalent to the extremal Gleason solution, $X^B$, acting on $\scr{H} (B_T)$. Since $X^B$ is extreme, $B$ is necessarily CE. In conclusion, we obtain the following result:

\begin{cor}
A purely contractive $b \in [H^\infty \otimes \scr{B} (\cJ, \cK)]_1$ has a CE free lift $B \in \sS _d (\cJ, \cK) = [\mult \otimes \scr{B} (\cJ, \cK)]_1$ if and only if $\scr{H} (b)$ has a contractive and extremal Gleason solution. 
\end{cor}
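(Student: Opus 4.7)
The corollary is a dictionary between the NC de Branges--Rovnyak model constructed in this paper and the commutative model of \cite{MR19}, packaged as a condition on $b$. The plan is to run each direction through the coisometry $C_{H^2} : \sH(B) \twoheadrightarrow \sH(b)$ from \cite[Lemma 7.5]{JMfree}, using the equivalence (i)$\Leftrightarrow$(iii) of Subsection \ref{subsec:Gleason.solutions}: the unique contractive Gleason solution $X^B$ of $\sH(B)$ is extremal if and only if $B$ is column--extreme.

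For the $(\Leftarrow)$ direction, assume $\sH(b)$ admits a contractive extremal Gleason solution $X$. By Popescu's commutant lifting theorem, $b$ has at least one free lift $B \in \sS_d(\cJ, \cK)$, so that the coisometry $C_{H^2}$ is available. The quoted result \cite[Theorem 7.13]{JMfree} then forces the unique contractive Gleason solution $X^B$ of $\sH(B)$ to be extremal and satisfy $X = C_{H^2}\, X^B\, [C_{H^2}^* \otimes I_d]$. Extremality of $X^B$ is exactly the CE condition on $B$, which produces the desired CE free lift.

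For the $(\Rightarrow)$ direction, let $B \in \sS_d(\cJ, \cK)$ be a CE free lift of $b$, so that $X^B$ is extremal, \emph{i.e.}\ $X^B (X^B)^* = I_{\sH(B)} - K^B_0 (K^B_0)^*$. Define
\[
X := C_{H^2}\, X^B\, [C_{H^2}^* \otimes I_d] : \sH(b) \otimes \bC^d \longrightarrow \sH(b).
\]
Using that $C_{H^2}$ is a coisometry with initial space $(I - B(L)B(L)^*)\,\bH^2_d \otimes \cJ \subseteq \sH(B)$, and that it intertwines the right free shifts on $\sH(B)$ with the commuting row shifts on $\sH(b)$, the NC Gleason identity for $X^B$ descends to the commutative Gleason identity for $X$ on $\sH(b)$. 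Combined with $C_{H^2} K^B_0 = K^b_0$ and $C_{H^2} C_{H^2}^* = I_{\sH(b)}$, one gets
\[
X X^* = C_{H^2} \big( I_{\sH(B)} - K^B_0 (K^B_0)^* \big) C_{H^2}^* = I_{\sH(b)} - K^b_0 (K^b_0)^*,
\]
so $X$ is a contractive extremal Gleason solution for $\sH(b)$.

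The main technical point is verifying the identity $[C_{H^2}^* \otimes I_d][C_{H^2} \otimes I_d] (X^B)^* C_{H^2}^* = (X^B)^* C_{H^2}^*$ in the computation of $XX^*$, \emph{i.e.}\ that $(X^B)^* C_{H^2}^*$ lands in the initial space of $C_{H^2}$ tensored with $\bC^d$. This is not a formal consequence of $C_{H^2}$ being a coisometry, but of the specific intertwining of $C_{H^2}$ with the backward right shifts recorded in \cite[Lemma 7.5, Theorem 7.13]{JMfree}. Granted this, both directions of the corollary reduce to the CE/extremality equivalence from Subsection \ref{subsec:Gleason.solutions}.
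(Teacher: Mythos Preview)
Your proposal is correct and follows essentially the same approach as the paper: both directions run through the coisometry $C_{H^2}$ and \cite[Theorem 7.13]{JMfree}, together with the CE/extremality equivalence from \cite[Theorem 6.4]{JM-freeCE}. The only cosmetic difference is that for $(\Rightarrow)$ the paper simply cites \cite[Theorem 7.13]{JMfree} to conclude that $X := C_{H^2} X^B [C_{H^2}^* \otimes I_d]$ is an extremal Gleason solution for $\scr{H}(b)$, whereas you unpack the computation of $XX^*$ and correctly identify the nontrivial step (that $(X^B)^* C_{H^2}^*$ lands in the initial space of $C_{H^2}$ tensored with $\bC^d$) as precisely the content supplied by that theorem.
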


\begin{proof}
If $b$ has a column--extreme free lift, $B \in \sS_d (\cJ, \cK)$, then the unique contractive Gleason solution, $X^B$ for $\scr{H} (B)$ is extremal. By \cite[Theorem 7.13]{JMfree}, if we define 
$X := C_{H^2}  X^B \big[C_{H^2} ^* \otimes I_d\big]$, as above, then $X$ is an extremal Gleason solution for $\scr{H}(b)$, and $X$ is then a CCNC row contraction with characteristic function $b$ by \cite{MR19}. 

Conversely if $\scr{H} (b)$ has an extremal Gleason solution, $X$, then $X$ is a CCNC row contraction by \cite{MR19}, and hence $X$ is also CNC. By \cite[Theorem 7.13]{JMfree}, there exists a free lift, $B \in \sS _d (\cJ, \cK)$, of $b$, so that if $\mbf{B}$ and $X^B$ are the unique contractive Gleason solutions for $B$ and $\scr{H} (B)$, respectively, then $\mbf{B}$ and $X^B$ are both extremal so that $B$ is column--extreme by \cite[Theorem 6.4]{JM-freeCE}. 
\end{proof}

\appendix

\section{Comparison with the Sz.-Nagy--Foias--Popescu model}\label{appx:compare.with.NFP.model}

Popescu \cite{Popchar} extended the classical Sz.-Nagy--Foias characteristic function of a CNC contraction to the case of infinite tuples of non-commuting row contractions. In our setting of a (finite) $d$-tuple of row contraction, say $T : \cH \otimes \bC^d \to \cH$, the Sz.-Nagy--Foias--Popescu characteristic function $\Theta_T \in \sS_d(\sD_T, \sD_{T^*})$ is given by
\begin{equation*}
    \Theta_T(Z) := \Big( -[T \otimes I_n] + [D_{T^*} \otimes I_n] [I - ZT^*]^{-1} [I_\cH \otimes Z] [D_T \otimes I_n] \Big) \Big\vert_{\sD_T \otimes \bC^n}.
\end{equation*}

\begin{theorem}\label{thm:NFP.char.coincide.weakly.with.B_T}
    Let $T : \cH \otimes \bC^d \to \cH$ be a CNC row contraction. Then, the characteristic function $B_T$ coincides weakly with the Sz.--Nagy--Foias--Popescu characteristic function $\Theta_T$.
\end{theorem}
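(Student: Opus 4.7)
The plan is to invoke Corollary \ref{cor:char.map.is.uni.inv}, which identifies $B_T$ as a complete unitary invariant for CNC row contractions up to weak coincidence. It therefore suffices to verify that Popescu's $\Theta_T$ shares the three characterizing properties of $B_T$ guaranteed by Theorem \ref{thm:main.model}: that $\Theta_T$ is a purely contractive, column--extreme element of $\sS_d(\sD_T,\sD_{T^*})$, and that the extremal Gleason solution on $\sH(\Theta_T)$ is unitarily equivalent to $T$. Once these properties are in place, the equality of weak coincidence classes $B_T \sim \Theta_T$ follows at once.

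I would first handle the case of a CNC row partial isometry $T = V$ directly. The central observation is that $\nbran V^\perp = \nbker V^*$, so for every $\xi \in \sD_{V^*} = \nbran V^\perp$ and every $1 \leq j \leq d$ one has $V_j^* \xi = 0$. Consequently, $ZV^*$ annihilates $\sD_{V^*} \otimes \bC^n$ and the Neumann series for $(I - ZV^*)^{-1}$ collapses to the identity on this subspace. The denominator in formula \eqref{eqn:NC.char.map.V} simplifies to $D_V(Z) \equiv I_{\sD_{V^*} \otimes \bC^n}$, and one obtains $B_V(Z) = N_V(Z)$. A direct comparison with the definition of $\Theta_V$, together with the vanishing $V|_{\sD_V} = 0$, yields $B_V = \Theta_V$ pointwise, so that weak coincidence is trivial in the partial isometry case.

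For an arbitrary CNC row contraction $T$, the required properties of $\Theta_T$ follow from Popescu's foundational work \cite{Popchar,Popmodel}. Pure contractivity rests on the identity $\Theta_T(0) = -T|_{\sD_T}$ combined with the observation that $T|_{\sD_T}$ is a pure contraction: any $h \in \sD_T$ with $\|Th\| = \|h\|$ would force $D_T h = 0$, contradicting $h \in \overline{\nbran D_T}$. Column--extremality and the identification of $T$ with the extremal Gleason solution on $\sH(\Theta_T)$ both stem from Popescu's model theorem, which realizes every CNC row contraction as the compression of the backward right shift to $\sH(\Theta_T)$ --- precisely the unique contractive Gleason solution $X^* = R^* \otimes I_{\sD_{T^*}}|_{\sH(\Theta_T)}$ from \eqref{ncGS}. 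Applying Corollary \ref{cor:char.map.is.uni.inv} then delivers the desired weak coincidence.

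The main obstacle is carefully reconciling Popescu's functional model --- originally formulated on the enlarged space $\bH^2_d \otimes \sD_{T^*} \oplus \overline{\Delta_{T^*}(\bH^2_d \otimes \sD_T)}$ with an auxiliary defect multiplier $\Delta_{T^*}$ --- with the NC de Branges--Rovnyak and Gleason solution framework used here. Under the CNC hypothesis on $T$, the $\Delta_{T^*}$-summand of Popescu's ambient space decouples, leaving only $\sH(\Theta_T)$; however the explicit intertwining of Popescu's model operator with $R^* \otimes I_{\sD_{T^*}}|_{\sH(\Theta_T)}$ must still be unpacked. An alternative route would be to extend the above $B_V = \Theta_V$ identification to a direct pointwise equality $B_T = \Theta_T$ via the operator--M\"obius transformation \eqref{eqn:NC.char.map.T}, but this requires intricate Schur-complement manipulations exploiting the isometric--pure decomposition $T = V + C$ together with the identities $TD_T = D_{T^*}T$, $V^*C = 0$, and $\nbran C \subseteq \sD_{T^*}$.
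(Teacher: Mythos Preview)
Your approach is correct but follows a different route from the paper's. The paper reduces via Theorem \ref{thm:main.model} to the case where $T = X$ is already the extremal Gleason solution on some $\sH(B)$ with $B$ purely contractive and CE, and then computes $\Theta_X$ explicitly using the model triple $(\Ga, \cJ \oplus \cJ', \cK)$ of Corollary \ref{cor:model.triple.for.Gleason.soln}. A direct calculation---using $D_{X^*} = K_0^B D_{B(0)^*}^{-1} {K_0^B}^*$, the identity ${K_0^B}^* X = -B(0)\mbf{B}^*$, and \eqref{eqn:char.map.gleason.sol.proof.3}---yields $\widetilde{\Theta}_X(Z) = B(Z) \oplus [\mbf{0}_{\cJ',\cJ} \otimes I_n]$, which coincides weakly with $B$. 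This is entirely self-contained within the paper's machinery.

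Your strategy instead verifies that $\Theta_T$ satisfies the hypotheses of Theorem \ref{thm:main.model} (purely contractive, CE, with Gleason solution on $\sH(\Theta_T)$ unitarily equivalent to $T$) and then appeals to uniqueness. This is conceptually clean, and your treatment of the partial isometry case $B_V = \Theta_V$ is correct and pleasant. The obstacle you flag---translating Popescu's model, formulated on the enlarged space with the $\Delta_{T^*}$ summand, into the Gleason-solution language---is real, and column-extremality of $\Theta_T$ is not stated in Popescu's papers. A cleaner way to get both facts is the realization argument sketched in Appendix \ref{appx:Model.via.TFR}: the Julia matrix $U_T$ is a \emph{unitary} (not merely coisometric) observable colligation with transfer function $\Theta_T$, hence unitarily equivalent to the canonical de Branges--Rovnyak colligation; unitarity of the latter immediately gives $XX^* + K_0^{\Theta_T}{K_0^{\Theta_T}}^* = I$ and $\mbf{\Theta_T}^*\mbf{\Theta_T} + \Theta_T(0)^*\Theta_T(0) = I$, so both $X$ and $\mbf{\Theta_T}$ are extremal, whence $\Theta_T$ is CE by \cite[Theorem 6.4]{JM-freeCE}. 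With this in hand your argument closes. The trade-off: the paper's direct computation stays internal and also exhibits the explicit identification $\Theta_X = B \oplus \mbf{0}$, whereas your route is shorter once the external input (Popescu or BBF) is granted.
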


\begin{proof}
    Using the main model and Theorem \ref{thm:main.model}, it suffices to show that if $B \in \sS_d(\cJ, \cK)$ is purely contractive and CE, and if $X$ is the unique extremal Gleason solution for $\sH(B)$, then $B$ coincides weakly with $\Theta_X$.
    
    To this end, let $X = V + C$ be the isometric--pure decomposition of $X$, and let $(\Ga, \cJ \oplus \cJ', \cK)$ be the model triple for $V$ as in Corollary \ref{cor:model.triple.for.Gleason.soln}. Since $\Ga(0) := K_0^B D_{B(0)^*}^{-1}$ and $\Ga(\infty) := \mbf B D_{B(0)}^{-1} \oplus \Ga'(\infty)$, where $\mbf B$ is the unique extremal Gleason solution for $B$, are onto isometries, it follows that $\Theta_X$ coincides weakly with
    \begin{equation*}
       \widetilde{\Theta}_X(Z) := -[\Ga(0)^* X \Ga(\infty) \otimes I_n] + [\Ga(0) D_{X^*} \otimes I_n] [I - ZX^*]^{-1} [I_\sH(B) \otimes Z] [D_X \Ga(\infty) \otimes I_n],
    \end{equation*}
    and, by Proposition \ref{prop:char.map.extremal.Gleason.soln.general}, we know that $$\delta^\Ga_X = - \Ga(0)^* X \Ga(\infty) = B(0) \oplus \mbf 0_{\cJ',\cJ}.$$ As $X$ is extremal, we use the property $D_{X^*} = \sqrt{K^B_0 {K^B_0}^*} = K^B_0 D_{B(0)}^{-1} {K^B_0}^*$ to compute
    \begin{align}\label{eqn:NFP.char.coincide.weakly.with.B_T.proof.1}
        \widetilde\Theta_X(Z) &= B(0_n) \oplus [\mbf 0_{\cJ', \cJ} \otimes I_n] \ + \ [\Ga(0) D_{X^*} \otimes I_n] [I - ZX^*]^{-1} [I_{\sH(B)} \otimes Z] [D_X \Ga(\infty) \otimes I_n] \nonumber\\
        &= B(0_n) \oplus [\mbf 0_{\cJ', \cJ} \otimes I_n] \ + \ [{K^B_0}^* \otimes I_n] [I - ZX^*]^{-1} [I_{\sH(B)} \otimes Z] [D_X \Ga(\infty) \otimes I_n].
    \end{align}
    As $\Ga(\infty)$ is an isometry onto $\nbker V$ and $X^* X = V^* V + C^* C$, we note that
    \begin{align*}
        D_X \Ga(\infty) &= \sqrt{I_{\sH(B)} \otimes I_d - X^* X} \ \Ga(\infty) \\
        &= \sqrt{P_{\nbker V} - P_{\nbker V} C^* C P_{\nbker V}} \ \Ga(\infty).
    \end{align*}
    It was observed in \eqref{eqn:partial.iso.part,of.Gleason.soln} that
    \begin{equation*}
        V^* = X^* \Big[ I_{\sH(B)} - K^B_0 D_{B(0)^*}^{-2} {K^B_0}^* \Big],
    \end{equation*}
    from which it follows using \eqref{eqn:kernel.at.0.and.X*} that
    \begin{equation*}
        C^* = X^* K^B_0 D_{B(0)^*}^{-2} {K^B_0}^* = -\mbf B B(0)^* D_{B(0)^*}^{-2} {K^B_0}^* = -\mbf B D_{B(0)}^{-1} B(0)^* D_{B(0)^*}^{-1} {K^B_0}^*.
    \end{equation*}
    From here, it is straightforward to obtain, for each $k \in \bN$,
    \begin{equation*}
        (C^* C)^k = \mbf B D_{B(0)}^{-1} \big(B(0)^* B(0)\big)^k D_{B(0)}^{-1} \mbf B^*.
    \end{equation*}
    Consequently, using the functional calculus and the fact that $\mbf B$ is extremal, we get
    \begin{align*}
        \sqrt{I_{\sH(B) \otimes \bC^d} - X^* X} \ \Ga(\infty) &= P_{\nbker V} \mbf B D_{B(0)}^{-1} \sqrt{I - B(0)^* B(0)} D_{B(0)}^{-1} \mbf B^* P_{\nbker V} \ \big[ \mbf B D_{B(0)}^{-1} \oplus \Ga'(\infty) \big] \\
        &= \mbf B D_{B(0)}^{-1}\mbf B^* \mbf B D_{B(0)}^{-1} \oplus \mbf B D_{B(0)}^{-1} \mbf B^* \Ga'(\infty) \\
        &= \mbf B \oplus \mbf B D_{B(0)}^{-1} \mbf B^* \Ga'(\infty).
    \end{align*}
    From the definition of $\Ga'(\infty)$ (see \eqref{eqn:gamma.prime.infty}), we have $\nbran \Ga'(\infty) = \nbker \mbf B^*$ and so the last equation above becomes
    \begin{equation*}
        \sqrt{I_{\sH(B)} \otimes I_d - X^* X} \Ga(\infty) = \mbf B \oplus \mbf 0_{\cJ', \sH(B) \otimes \bC^d}.
    \end{equation*}
    Plugging this into \eqref{eqn:NFP.char.coincide.weakly.with.B_T.proof.1} and using \eqref{eqn:char.map.gleason.sol.proof.3} gives us
    \begin{align*}
        \widetilde\Theta_X(Z) = B(0_n) \oplus [\mbf 0_{\cJ', \cJ} \otimes I_n] \ + \ \big(B(Z) - B(0_n)\big) \oplus [\mbf 0_{\cJ', \cJ} \otimes I_n] = B(Z) \oplus [\mbf 0_{\cJ', \cJ} \otimes I_n].
    \end{align*}
    As discussed in Remark \ref{rem:weak.coincidence.vs.coincidence}, we then conclude at once that $\widetilde\Theta_X$ coincides weakly with $B$ and, consequently, so does $\Theta_X$. This completes the proof.
\end{proof}

\section{NC de Branges--Rovnyak model via realizations}\label{appx:Model.via.TFR}

There is an alternate and simple proof of the fact that any CNC row contraction can be modeled by extremal Gleason solutions of an NC de Branges--Rovnyak space using the transfer function and realization theory of \cite{BBF-ncSchur}. We believe that our approach gives additional information and is of independent interest, and yet we would be remiss if we did not explain this alternative proof. 

Given complex, separable Hilbert spaces, $\cH$, $\cJ$ and $\cK$ consider a block operator matrix,
$$ M := \bpm \hat{A} & \hat{B} \\ \hat{C} & \hat{D} \epm : \bpm \cH \\ \cJ \epm \rightarrow \bpm \cH \otimes \C ^d \\ \cK \epm. $$ Such a block operator matrix is called a \emph{colligation}. If $M$ is contractive, then the quadruple $(\hat{A},\hat{B},\hat{C},\hat{D})$ defines an \emph{NC transfer function}, $B \in \sS _d (\cJ, \cK)$ via the \emph{realization formula}, 
$$ B(X) := I_n \otimes \hat{D} + I_n \otimes \hat{C} L_{\hat{A}} (X ) ^{-1} X \otimes \hat{B}; \quad \quad X \in \bdn, $$ where $L_{\hat{A}} (X) := I_n \otimes I_\cH - \sum _{j=1} ^d X_j \otimes \hat{A}_j$ is the monic, affine linear pencil of $\hat{A}$ and $X\otimes \hat{B}:= \sum _{j=1} ^d X_j \otimes \hat{B}_j$ \cite[Theorem 3.1]{BBF-ncSchur}. The quadruple, $(\hat{A},\hat{B},\hat{C},\hat{D})$ is a called a Fornasini--Marchesini (FM) \emph{realization} of the NC function, $B$. Conversely, any $B\in \sS _d (\cJ , \cK)$ has the canonical \emph{de Branges--Rovnyak realization} and colligation given by
$$ \bpm X^* & \mbf{b} \\ K_0 ^{B *} & b(0) \epm : \bpm \scr{H} (B) \\ \cJ \epm \rightarrow \bpm \scr{H} (B) \\ \cK \epm, $$ where $X ^* _j := R^* \otimes I _\cK | _{\scr{H} (B)}$ is the unique Gleason solution for $\scr{H} (B)$ and $\mbf{B} := R^* \otimes I_\cJ B \in \scr{H} (B) \otimes \C ^d$ is the unique Gleason solution for $B$. This realization is always coisometric and \emph{observable} in the sense that 
$$ \bigvee _{\om \in \F} \nbran \hat{A} ^{*\om} \hat{C} ^* = \scr{H} (B), $$ where $\hat{A}  = X^*$ and $\hat{C} ^* = K_0 ^B$. By \cite[Corollary 3.9 and Theorem 4.3]{BBF-ncSchur}, any two coisometric and observable realizations of $B \in \sS _d (\cJ, \cK)$, 
$$ \bpm \hat{A} & \hat{B} \\ \hat{C} & \hat{D} \epm : \bpm \cH \\ \cJ \epm \rightarrow \bpm \cH \otimes \C ^d \\ \cK \epm, \quad \mbox{and} \quad \bpm \wt{A} & \wt{B} \\ \wt{C} & \wt{D} \epm : \bpm \wt{\cH} \\ \cJ \epm \rightarrow \bpm \wt{\cH} \otimes \C ^d \\ \cK \epm, $$ are unitarily equivalent in the sense that there exists a unitary $U: \cH \twoheadrightarrow \wt{\cH}$ obeying
$$ \bpm U \otimes I_d & 0 \\ 0 & I _\cK \epm \bpm \hat{A} & \hat{B} \\ \hat{C} & \hat{D} \epm = \bpm \wt{A} & \wt{B} \\ \wt{C} & \wt{D} \epm \bpm U & 0 \\  0 & I _\cJ \epm. $$

If $T : \cH \otimes \C ^d \rightarrow \cH$ is any row contraction, its \emph{Julia matrix}, 
$$ U_T := \bpm T^* & D_T \\ D_{T^*} & -T \epm : \bpm \cH \\ \nbran D_T \epm \rightarrow \bpm \cH \otimes \C ^d \\ \nbran D_{T^*} \epm, $$ is a surjective isometry. In particular, $U_T$ is coisometric and setting $A:= T^*$, $C:= D_{T^*}$, 
$$ \bigvee A^{*\om} C^* = \bigvee _{\om \in \F} T^\om D_{T^*}, $$ so that this FM colligation is observable provided $T$ is CNC by \eqref{CNCformula}. 

By \cite[Theorem 3.1]{BBF-ncSchur}, the NC transfer function, $B_T$, of $U_T$ belongs to the NC Schur class, $B_T \in \sS _d (\scr{D} _{T}, \scr{D} _{T^*})$, and by \cite[Theorem 4.3]{BBF-ncSchur}, the canonical de Branges--Rovnyak colligation for $B_T$ is unitarily equivalent to $U_T$. Hence, $T^*$ is unitarily equivalent to $X^*$, where $X^* := R^* \otimes I _{\scr{D} _{T^*}} | _{\scr{H} (B_T)}$ is the unique contractive and extremal Gleason solution for $\scr{H} (B_T)$ and so $T$ is unitarily equivalent to $X$. Moreover, this operator--valued NC Schur class function, $B_T$, coincides with NC characteristic function of $T$, by our previous results. 

\printbibliography

\end{document}